\numberwithin{equation}{section}
\pgfplotsset{compat=1.14}
\newtheorem{thm}{Theorem}[section]
\newtheorem{lem}[thm]{Lemma}
\newtheorem{cor}[thm]{Corollary}
\newtheorem{prop}[thm]{Proposition}
\newtheorem{rem}[thm]{Remark}
\newcommand{\cA}{{\mathcal A}}
\newcommand{\cB}{{\mathcal B}}
\newcommand{\cC}{{\mathcal C}} 
\newcommand{\cD}{{\mathcal D}}
\newcommand{\cE}{{\mathcal E}}
\newcommand{\cH}{{\mathcal H}} 
\newcommand{\cI}{{\mathcal I}} 
\newcommand{\cJ}{{\mathcal J}} 
\newcommand{\cK}{{\mathcal K}} 
\newcommand{\cL}{{\mathcal L}} 
\newcommand{\cO}{{\mathcal O}}
\newcommand{\cP}{{\mathcal P}}
\newcommand{\cQ}{{\mathcal Q}}
\newcommand{\cM}{{\mathcal M}}  
\newcommand{\cR}{{\mathcal R}}
\newcommand{\cT}{{\mathcal T}} 
\newcommand{\cU}{{\mathcal U}} 
\newcommand{\bC}{{\mathbb C}} 
\newcommand{\bN}{{\mathbb N}} 
\newcommand{\bM}{{\mathbb M}}
\newcommand{\bR}{{\mathbb R}} 
\newcommand{\bT}{{\mathbb T}} 
\newcommand{\bV}{{\mathbb V}}
\newcommand{\bZ}{{\mathbb Z}} 
\newcommand\ve{\varepsilon}
\newcommand\vf{\varphi}
\newcommand{\Id}{\mathbbm{1}} 
\newcommand{\bbh}{\mathbbm{h}}
\newcommand{\SBV}{{\textrm{SBV}}}
\DeclareMathOperator{\sign}{sign}
\newcommand{\dist}{\operatorname{dist}}
\newcommand{\Const}{C_{\sharp}}
\newcommand{\supp}{\operatorname{supp}} 
\newcommand{\srb}{\mu_{\scriptscriptstyle\operatorname{SRB}}} 
\newcommand{\mme}{\mu_{\scriptscriptstyle\operatorname{BM}}} 
\newcommand{\htop}{h_{\textrm{top}}}  
\newcommand{\abs}[1]{\left|#1\right|} 
\newcommand{\indicator}[1]{\mathbf{1}_{#1}} 
\newcommand{\poly}[1]{\mathfrak{P}_{#1}} 
\newcommand{\lin}{\operatorname{lin}}
\begin{document}

\title{Locating Ruelle-Pollicott resonances}
\author{Oliver Butterley}
\author{Niloofar Kiamari}
\author{Carlangelo Liverani}
\address{Oliver Butterley\\
Dipartimento di Matematica\\
II Universit\`{a} di Roma (Tor Vergata)\\
Via della Ricerca Scientifica, 00133 Roma, Italy.}
\email{{\tt butterley@mat.uniroma2.it}}
\address{Niloofar Kiamari\\
Dipartimento di Matematica e Fisica\\
III Universit\`{a} di Roma\\
Largo San Leonardo Murialdo, 1 -- 00146 Roma, Italy.}
\email{{\tt niloofar.kiamari@uniroma3.it}}
\address{Carlangelo Liverani\\
Dipartimento di Matematica\\
II Universit\`{a} di Roma (Tor Vergata)\\
Via della Ricerca Scientifica -- 00133 Roma, Italy.}
\email{{\tt liverani@mat.uniroma2.it}}

\begin{abstract}
    We study the spectrum of transfer operators associated to various dynamical systems.
    Our aim is to obtain precise information on the discrete spectrum. To this end we propose a unitary approach.
    We consider various settings where new information can be obtained following different branches along the proposed path.
    These settings include affine expanding Markov maps, uniformly expanding Markov maps, non-uniformly expanding or simply monotone maps, hyperbolic diffeomorphisms. We believe this approach could be greatly generalized.
\end{abstract}

\thanks{This work was partially supported by the PRIN Grant ``Regular and stochastic behaviour in dynamical systems" (PRIN 2017S35EHN).
    L.C. and O.B. acknowledge the MIUR Excellence Department Project awarded to the Department of Mathematics, University of Rome Tor Vergata, CUP E83C18000100006.
    We thank Péter Bálint, Viviane Baladi, Oscar Bandtlow, Giovanni Forni, Stefano Galatolo, Daniele Galli, Thomas Gilbert, Marco Lenci, Domokos Szász and Imre Péter Tóth for useful remarks.
    We are grateful to the anonymous referees whose comments and suggestions greatly improved this text.}
\maketitle

\setcounter{tocdepth}{1}
\tableofcontents
\section{Introduction}
Transfer operators are used widely in  Dynamical Systems. Their first manifestations going back, at least, to the Koopman  operator, and its use by John von Neumann to prove the mean ergodic theorem. Next, the Russian school developed the spectral theory for the Koopman operator acting on $L^2$ and its relation to the statistical properties of the system (such as ergodicity, mixing, ...), \cite{CFS82}. Later attention concentrated on the adjoint of the Koopman operator, now called the Frobenius-Perron or the Ruelle-Frobenius-Perron transfer operator. First such an operator appeared after coding the system \cite{Bowen75}. Subsequently, starting with \cite{LY73, Ruelle89}, the direct study of the transfer operator acting on functions and, more recently, starting with \cite{BKL02}, acting on spaces of distributions, acquired progressively more importance.\footnote{But see  \cite{RS75} for precursors of this point of view.}
This is the current focus.

Historically research was mostly focussed on the study of the peripheral spectrum (which encodes sharp, quantitative, information on ergodicity and mixing), on establishing a spectral gap (which yields the rate of mixing) (e.g, \cite{Liverani95,Baladi00}), and on estimates of the essential spectrum and relations with the Ruelle zeta function (which encodes information on the spectrum of periodic orbits), see \cite{Baladi18} for a recent review.
For flows or systems with a neutral direction the study is often more involved but there is still the possibility of some type of spectral gap, e.g., \cite{Dolgopyat98,Butterley16,BW20,BE17,TZ20,CL20}.

However, recently it has become apparent the need of a much deeper and detailed understanding of the point spectrum \cite{FT17, DZ16, DZ17, GF18, GL19, FGL19, BS20, KW20, GGHW20}.
Possibilities include identifying the point spectrum by understanding the connection to the action of the dynamics on cohomology \cite{FGL19}
or obtaining results related to bands of spectrum for transfer operators associated to systems with a neutral direction \cite{Faure07,FT13,FT15,FT17, BS20}.
Additionally various works investigated the possibility of an explicit description of the spectrum for analytic expanding or hyperbolic maps \cite{SBJ13, BJS17, SBJ17} (using Blaschke products), or perturbative and generic results \cite{KR04, Naud12, Adam17, BN19}. Clearly, a more explicit description of the spectrum is important also in applications as it provides precise quantitative information on the invariant measures, entropy, decay of correlation, variance in the CLT and so on.

Unfortunately, no general theory  exists to address this issue. One exception being the Hilbert metric technique, see \cite{Liverani95}, however such an approach can yield results only for the spectral gap and they are often far from optimal (see Remarks \ref{rem:hilbert0}, \ref{rem:hilbert} and \ref{rem:hilbert2}), hence the need for an alternative approach. The special cases in which some results have been obtained seem to point to a general philosophy:  to study the commutator between some type of differentiation and the transfer operator (e.g. see \cite{DFG15, GF18, FGL19}).  Although this idea is rather vague, we believe it can give rise to a general theory. In fact, it is surprising that this approach has not been explored in any systematic way, in spite of the vast literature devoted to transfer operators.  Hence, the first step to substantiate our claim is an investigation of several concrete examples. This is the task of the present work.

We apply the above philosophy to different classes of dynamical system, starting from very simple ones and increasing progressively in complexity. For each example we obtain non trivial results that illustrate the power of this approach to the problem. Although our results fall short of a general theory, we believe they suffice to argue that walking further along this path is likely to yield interesting results in a much more general setting.
Let us describe our results more in detail.

In section \ref{sec:piecewise} we discuss the point spectrum for a family of transfer operators of Markov piecewise affine expanding maps. This is the simplest possible, non trivial, example. Yet, it goes a long way in illustrating our strategy.

In section \ref{sec:smoothexp} we address similar questions in the case of full branch piecewise smooth expanding maps of the interval. The simplest non-linear case. This is a class of maps which has been extensively studied and for which one could expect that all has been said already. Yet, we are able to obtain new interesting information. In particular, we concentrate on two transfer operators. The one associated to the SRB measure for which we obtain effective bounds on the spectral gap and fine informations about the spectra. The other is the operator associated to the measure of maximal entropy for which we establish a spectral gap of size at least $e^{h_{\operatorname{top}}}-1$. This illustrates the fact, seen also later in other examples, that the transfer operator associated to the measure of maximal entropy enjoys  surprisingly large gaps.

In section \ref{sec:non-unifexp} we study the spectral gap for the operator associated to the measure of maximal entropy for full branch monotone maps. This includes the case of maps with attracting periodic orbits. We show that the measures of maximal entropy are exponentially mixing with a rate, at least, $\htop$. We are not aware of similar results. Apart from the case of intermittent maps (when only neutral fixed points are present) for which it is known to exist a unique measure of maximal entropy which is exponentially mixing. However, even in this special case, nothing quantitatively precise was known on the speed of mixing.

Finally, in section \ref{sec:hyp}, we study hyperbolic maps. We start, as an illustration, with automorphisms of the torus. This sheds some light on the difficulties involved in extending the approach to the general hyperbolic case. Next, we propose a possible solution to such difficulties: to study the spectrum of the action of the pushforward, for hyperbolic maps, on forms. This allows, for example, to study, again, the measure of maximal entropy. Once more we obtain a large gap. In particular, our approach provides alternative proofs, and a slight strengthening, of recent results by Baladi \cite[Theorem 2.1]{Baladi20} and Forni \cite{Forni20a}, moreover we establish a topological interpretation of the point spectrum which should hold in more generality.

\section{Affine Expanding Markov maps}
\label{sec:piecewise}
In this section we discuss the simplest possible case: one dimensional piecewise affine Markov maps. This allows us to show our approach in the simplest possible form and it is presented mainly for didactical purposes.
Indeed, the results of this section are essentially not new (for prior results see \cite{MSO81,BYOM84}, \cite[Section 2 \& Appendix]{SBJ13b} and also Remark~\ref{rem:use-sbj}).

In this setting the invariant densities can be computed easily since the Frobenius-Perron operator can be represented by a finite-dimensional matrix (see \cite[Chapter 9]{BG97} for full details).
Here we go beyond the peripheral spectrum and show that studying a particular family of matrices yields the full Ruelle-Pollicott spectrum.
To this end, the smoothness of the observables is relevant.
This will be a leitmotiv in the following and it is essential since it is known that  even the point spectrum of the transfer operator may change drastically if one considers a class of observables that allow discontinuities (e.g., see \cite{BSTV97, BKL02} and also Remark \ref{rem:disc_spect}).

Let $I:=[0,1]$ and let $f : I \rightarrow I$ be a \emph{piecewise  affine expanding Markov map} in the following sense:
there exists a collection of disjoint open intervals $\{I_j\}_{j=1}^N =\{(p_{j}, p_{j+1})\}_{j=1}^N$ which form a partition of a full measure subset of \(I\)
and,
for all $i,j$,
\[
    \text{either}
    \quad
    f(I_i) \cap I_j  = \emptyset,
    \quad \text{or} \quad
    I_j \subseteq f(I_i).
\]
Moreover, we suppose that \(f'\) is constant on each \(I_i\).
Finally we suppose that there exists \(\lambda > 1\) such that\footnote{ We consider only the transformations \(f\) which are orientation preserving. See Remark~\ref{rem:signed-derviative} concerning the general case $ \abs{f'}\geq \lambda$.} $ f'(x)\geq \lambda$ for all \(x\in \cup_i I_i\).

The partition \(\{I_i\}_{i=1}^N\) is called a \emph{Markov partition}.
Let $\cI= \cup_{i=1}^N I_i$ be the disjoint union of the partition elements.
The $N\times N$ matrix $A$ defined by
\begin{center}
    $A[i,j] = 1$, if $I_j \subseteq f(I_i)$, and $A[i,j] = 0$, if $f(I_i) \cap I_j = \emptyset$,
\end{center}
is called the {\em adjacency matrix}\footnote{ It is also called the \emph{incidence} matrix~\cite{BG97}.} of the Markov map $f$.
For convenience let \(\lambda_j := {f'|}_{I_j}\)
and \(\lambda = \min_j \lambda_j\).
For any\footnote{We use throughout the convention \(\bN := \{1,2,\ldots\} \) and \(\bN_0 := \{0,1,2,\ldots\} \).} \(k\in \bN_0\), let  $B_k$ be the $N\times N$ matrix defined by
\begin{equation}
    \label{eq:defB}
    B_k[i,j]:=\lambda_j^{-k} {A[j,i]}.
\end{equation}
If partition elements are equally sized then \(B_1\) is left stochastic, i.e., \(\sum_{i} B_1[i,j] =1\) for each \(j\).
In general there exists a diagonal matrix \(D\) such that \(D^{-1}B_{1}D\) is left stochastic~\cite[\S 9.3]{BG97}.

For simplicity, in the following theorem we additionally suppose that \(f\) is topologically transitive.
This means that there exists\footnote{ The existence of these invariant measures is well known in this context and also follows from the results later in this section.} a unique \(f\)-invariant probability measure which is absolutely continuous with respect to Lebesgue (denoted \(\srb\))
and a unique measure of maximal entropy (also known as the Bowen-Margulis measure) (denoted \(\mme\)). We let \(\htop\) denote the topological entropy.

Also, we use \(\cC^\infty(\cI)\) denote the set of functions on \(I\) which are \(\cC^\infty\) when restricted to each \(I_j\).

Finally, we use \(\sigma\) to denote the spectrum of a matrix and \(\sigma_{B}(\cL)\) for the spectrum of an operator \(\cL\) acting on a Banach space \(B\).

We can now state a result concerning Ruelle-Pollicott resonances.

\begin{thm}
    \label{thm:pa-ruelle}
    There exists a set of complex numbers \(\Xi_1=\{\xi_1, \xi_2, \cdots\}\) and, for each \(\xi_i \in \Xi_1\), an associated integer\footnote{ The integer \(m_{i}\) is the Jordan block dimension. A given \(\xi_i\) might be repeated in \(\Xi_{1}\) according to the geometric multiplicity.} \(m_i\) such that,
    for any \(\phi, \varphi \in \cC^\infty(\cI)\) and \(\epsilon >0\) there is an asymptotic expansion
    \[
        \int_{I} \phi \cdot \varphi \circ f^n \ d\srb
        =
        \sum_{ \xi_i \in \Xi_1 : |\xi_i| \geq \epsilon } \sum_{k=0}^{ m_i -1} \xi_i^n n^k C_{i,k}(\phi,\varphi)
        + o(\epsilon^n)
    \]
    where \(C_{i,k}(\phi,\varphi)\) are finite rank and non-zero bilinear functions of \(\phi,\varphi\).\\
    The set \(\Xi_1\) is equal (as a subset of \(\bC\)) to \({\bigcup_{l=1}^{\infty} \sigma(B_l)}\)
    and the equality holds also for the total multiplicity of each eigenvalue.\footnote{ More can be said about the multiplicities and Jordan blocks, see Theorem~\ref{thm:pa-spectrum} and Remark~\ref{rem:multiplicity}.}\\
    Similarly there exists a set of complex numbers \(\Xi_0=\{\xi_1, \xi_2, \cdots\}\) and for each \(\xi_i \in \Xi_0\) an associated integer \(m_i\) such that,
    for any \(\phi, \varphi \in \cC^\infty(\cI)\) and \(\epsilon >0\) there is an asymptotic expansion
    \[
        \int_{I} \phi \cdot \varphi \circ f^n \ d\mme
        =
        e^{-n \htop}
        \sum_{  \xi_i \in \Xi_0 : |\xi_i| \geq \epsilon  } \sum_{k=0}^{ m_i -1} \xi_i^n n^k C_{i,k}'(\phi,\varphi)
        + o(\epsilon^n)
    \]
    where \(C_{i,k}'(\phi,\varphi)\) are finite rank and non-zero bilinear functions of \(\phi,\varphi\).
    The set \(\Xi_0\) is equal (as a subset of \(\bC\)) to \({\bigcup_{l=0}^{\infty} \sigma(B_l)}\)
    and the equality holds also for the total multiplicity of each eigenvalue.
\end{thm}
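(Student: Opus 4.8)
The plan is to rewrite both correlations in the form $\int_I\varphi\cdot\cL^n(\phi\,w)\,dm$ for a suitable transfer operator $\cL$, reference measure $m$ and weight $w$, and then to read off the asymptotics from the spectral data of $\cL$ acting on smooth observables. For $k\in\bN_0$ let $(\cL_k\psi)(x):=\sum_{f(y)=x}(f'(y))^{-k}\psi(y)$, so that $\cL_1$ is the Ruelle--Frobenius--Perron operator dual to Lebesgue and $\cL_0$ is the topological transfer operator; since the inverse branches of $f$ are affine, a one line computation gives the commutation relation $\partial\cL_k=\cL_{k+1}\partial$, which will be the workhorse of the argument. As $f$ is piecewise affine the invariant density $h=d\srb/d\Leb$ is piecewise constant on the Markov partition (\cite[Ch.~9]{BG97}), and duality gives $\int_I\phi\cdot(\varphi\circ f^n)\,d\srb=\int_I\varphi\cdot\cL_1^n(\phi h)\,d\Leb$. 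Likewise, let $\nu$ be the conformal measure $\cL_0^*\nu=e^{\htop}\nu$ and $g=\sum_i u_i\indicator{I_i}$ the positive piecewise constant eigenfunction $\cL_0 g=e^{\htop}g$ (so $u$ is the Perron eigenvector of $B_0=A^{\mathsf T}$ and $e^{\htop}=\rho(A)$), normalised so that $\int_I g\,d\nu=1$; then $\mme=g\,\nu$ and, using $\int_I\cL_0(\cdot)\,d\nu=e^{\htop}\int_I(\cdot)\,d\nu$ together with $\cL_0^n\big(\psi\cdot(\chi\circ f^n)\big)=\chi\,\cL_0^n\psi$,
\[
\int_I\phi\cdot(\varphi\circ f^n)\,d\mme=e^{-n\htop}\int_I\varphi\cdot\cL_0^n(\phi g)\,d\nu .
\]
Since $\phi h,\phi g\in\cC^\infty(\cI)$ whenever $\phi\in\cC^\infty(\cI)$, it suffices to understand $\cL_1^n$ and $\cL_0^n$ on $\cC^\infty(\cI)$.

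For the finite dimensional input, one checks that each $\cL_k$ maps the space $\poly{d}$ of functions which are polynomials of degree $\le d$ on every $I_j$ into itself; that on $\poly{0}=\lin\{\indicator{I_i}\}_{i=1}^{N}$ it is represented, in this basis, by the matrix $B_k$ of \eqref{eq:defB}; and that the filtration $\poly{0}\subset\poly{1}\subset\cdots\subset\poly{d}$ is $\cL_k$-invariant, with $\partial^{m}$ restricting to an isomorphism $\poly{m}/\poly{m-1}\to\poly{0}$ which intertwines $\cL_k$ with $\cL_{k+m}$. Hence the characteristic polynomial of $\cL_k$ on $\poly{d}$ equals $\prod_{m=0}^{d}\det(z-B_{k+m})$, so $\sigma(\cL_k|_{\poly d})=\bigcup_{m=0}^{d}\sigma(B_{k+m})$ with the algebraic multiplicity of each eigenvalue equal to the sum of its multiplicities in the matrices $B_{k+m}$; letting $d\to\infty$ yields $\bigcup_{l\ge1}\sigma(B_l)$ when $k=1$ and $\bigcup_{l\ge0}\sigma(B_l)$ when $k=0$. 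Since $\rho(B_l)\le N\lambda^{-l}\to0$, for each $\epsilon>0$ only finitely many $B_l$ contribute an eigenvalue of modulus $\ge\epsilon$.

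To pass to arbitrary smooth observables, fix $\epsilon>0$. Standard Lasota--Yorke estimates \cite{LY73} show that $\cL_1$ and $\cL_0$ act quasi-compactly on the space $\cC^r(\cI)$ (functions $\cC^r$ on each $I_j$) with essential spectral radius $\le\Const\,\lambda^{-r}$; choosing $r$ large makes this $<\epsilon$, so the spectrum in $\{|z|\ge\epsilon\}$ consists of finitely many eigenvalues $\xi_i$ with finite-rank spectral projectors $\Pi_i$ and Jordan block sizes $m_i$, and we set $\Pi:=\sum_{|\xi_i|\ge\epsilon}\Pi_i$. The crucial step is that $\operatorname{range}\Pi\subset\poly{d_\epsilon}$ for a suitable $d_\epsilon$: if $(\cL_0-\xi)^M\psi=0$ in $\cC^r(\cI)$ with $|\xi|\ge\epsilon$ then, by the commutation relation, $\partial^m\psi\in\cC^{r-m}(\cI)$ satisfies $(\cL_m-\xi)^M(\partial^m\psi)=0$; but a direct estimate gives $\|\cL_m^n\|_{\cC^{s}}\le(N\lambda^{-m})^n$ for every $s$, so $\xi\notin\sigma(\cL_m|_{\cC^{r-m}})$ as soon as $N\lambda^{-m}<\epsilon$, forcing $\partial^m\psi=0$, i.e. $\psi\in\poly{m-1}$ (the same argument, with $\cL_{1+m}$ in place of $\cL_m$, applies to $\cL_1$). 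Since $\poly{d}$ is $\cL_k$-invariant, comparing $\cL_k$ on $\poly{d}$ (for $d\ge d_\epsilon$) with $\cL_k$ on $\cC^r(\cI)$ shows that the eigenvalues of modulus $\ge\epsilon$, their algebraic multiplicities and the block sizes $m_i$ all agree; together with the previous paragraph this identifies $\Xi_1$ with $\bigcup_{l\ge1}\sigma(B_l)$ and $\Xi_0$ with $\bigcup_{l\ge0}\sigma(B_l)$, including total multiplicities.

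Finally, quasi-compactness gives, for $|\xi_i|\ge\epsilon$, $\cL_1^n\Pi_i=\sum_{k=0}^{m_i-1}\xi_i^n n^{k}\,Q_{i,k}$ with each $Q_{i,k}$ finite rank (suitable binomially weighted powers of the nilpotent part of $\cL_1$ on $\operatorname{range}\Pi_i$, with lower order terms in $n$ absorbed), while $\|\cL_1^n(\Id-\Pi)\|=o(\epsilon^n)$ because $\rho(\cL_1(\Id-\Pi))<\epsilon$. Applying this to $\cL_1^n(\phi h)$ and pairing against $\varphi$ with respect to $\Leb$ gives the first expansion, with $C_{i,k}(\phi,\varphi)=\int_I\varphi\cdot Q_{i,k}(\phi h)\,d\Leb$ a finite rank bilinear form; the same computation for $\cL_0^n(\phi g)$ against $\nu$ gives the second, the $e^{-n\htop}$ prefactor being carried through unchanged. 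Non-triviality of the leading ($k=m_i-1$) coefficient follows from $Q_{i,m_i-1}\ne0$, its range lying in $\poly{d_\epsilon}$ (a nonzero piecewise polynomial is nonzero on a set of positive measure), and $\cC^\infty(\cI)$ separating such functions. I expect the main obstacle to be the spectral identification of the third paragraph --- pinning down not merely which eigenvalues occur on $\cC^r(\cI)$ but their exact algebraic multiplicities --- for which the ``generalised eigenfunctions are piecewise polynomial'' argument, combined with the exactness of $\poly{m}/\poly{m-1}\cong\poly{0}$, is the essential ingredient; the finer question of how the Jordan blocks of $\cL_k|_{\poly d}$ relate to those of the individual $B_{k+m}$ (possibly via non-split extensions along the filtration) is not needed here and is best treated separately.
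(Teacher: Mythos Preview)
Your proposal is correct and follows essentially the same route as the paper: the commutation relation $\partial\cL_k=\cL_{k+1}\partial$, the block-triangular structure of $\cL_k$ on piecewise polynomials (your filtration/quotient argument is exactly the paper's Lemma~\ref{lem:triangle} rephrased), the Lasota--Yorke/Hennion quasi-compactness, the observation that generalised eigenvectors for large eigenvalues must be piecewise polynomial, and the rewriting of correlations via $\cL_k^n(\phi h_k)$ against the conformal measure. The only cosmetic difference is that you work on $\cC^r(\cI)$ with the spectral-radius bound $\rho(\cL_m)\le N\lambda^{-m}$, whereas the paper works on $W^{r,1}(\cI)$ with $\rho(\cL_m)\le\lambda^{-(m-1)}$; the paper itself remarks that the $\cC^r$ route is equally viable.
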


\noindent
The proof of the above Theorem is included towards the end of the section and follows from a significantly stronger result (Theorem~\ref{thm:pa-spectrum}), described in terms of transfer operators, that needs some further preliminaries to be properly stated.

\begin{rem}
    The assumption of topological transitivity means that \(B_{1}\) is irreducible.
    Since also \(D^{-1}B_{1}D\) is left stochastic for some diagonal matrix \(D\) it follows that \(1\) is the leading eigenvalue of \(B_1\) and this eigenvalue has multiplicity \(1\).
    Moreover \(C_{1,0}(\phi,\varphi) = \int \phi \ d\srb \int \varphi \ d\srb\).
\end{rem}

\begin{rem}\label{rem:disc_spect}
    In the case where \(f\) has the form \(x\mapsto \kappa x \mod 1\) for some \(\kappa \in \{2,3,\ldots\}\) we could consider \(f\) as a smooth map of the circle. In this case, restricting our attention to observables which are smooth on the circle, the set of Ruelle-Pollicott resonances would reduce\footnote{ This can be seen by considering the action of the dynamics on Fourier series.} to \(\{0\}\).
    However, studying these same systems for observables that are smooth on the interval, we see a much more interesting spectrum,  see Remark \ref{rem:full_branch}.
\end{rem}

Observe that for any $r \geq 0$, the Sobolev space \(W^{r,1}(I)\) is the set of all \(h\in L^1(I)\) such that $h$ and all of its weak derivatives up to the $r$'th belong to $L^1(I)$.
Consider, for any $r \geq 0$, the space \(W^{r,1}(\cI)\) which is the set of all \(h\in L^1(I)\) such that, for each \(i\), the restriction of \(h\) to \(I_i\) is in \(W^{r,1}(I_i)\).
For convenience we write  \(h'\) and $h^{(l)}$ to mean the weak derivative and $l$ weak derivative respectively of $h$ restricted to $\cI$.
For each \(r\in \bN_0\) the space \(W^{r,1}(\cI)\) is a Banach space equipped with the norm
\[
    {\|h\|}_{r,1} =\sum_{l=0}^r  \int_{\cI} |h^{(l)}(x)|dx.
\]
In the following, to simplify notation, we will write $W_r$ for $W^{r,1}(\cI)$ and
we will write \({\|\cdot\|}_{r}\) for \({\|\cdot\|}_{r,1}\).
Observe that \(W_0\) coincides with \(L^{1}(I)\).

Since, by assumption, ${f|}_{I_j}$ is invertible on its range, let us call $g_j$ its inverse
(\(g_j := {{f|}_{I_j}}^{-1} \)).
The domain of \(g_j\) is the interval \(f(I_j)\) which might not be equal to the unit interval.
If \(f(I_j) = (0,1)\) for all \(j\) then \(f\) is said to be a \emph{full branch} map.
We can now define our main objects of investigation: the transfer operators.
For all $k \in \bN_0$, \(h\in L^1(I)\) and $x\in I_i$ we define\footnote{The second sum here is understood in the sense that, when the summands are defined on a subset of the full integral, they are extended to the full interval by taking the value zero where not defined.}
\[
    \cL_k h(x)
    :=\sum_{y\in f^{-1}(x)}\frac{h(y)}{{[f'(y)]}^k}
    =\sum_{j} B_k[i,j] \ h\circ g_j(x).
\]
Since \(f\) preserves the Markov partition, composition with an affine transformation preserves Sobolev space and the sum consists of a finite number of terms it follows that these operators are well defined as operators \(\cL_k : W_r \to W_r \).
Similarly they are well defined, by this same formula, on \(\cC^r(\cI) = \bigoplus_{i} \cC^{r}(I_i)\).

Observe that \(\cL_1\) coincides with the usual transfer operator: the dual of the Koopman operator.

We define $\poly{r}(\cI)$ to be the set of  polynomial functions\footnote{ Studying the action on polynomials was also used for the vertical direction in the pseudo Anosov case \cite{FGL19}.} of degree $r$ on each interval $I_j$.
Since \(f\) is piecewise affine, the space $\poly{r}(\cI)$ is invariant under \(\cL_k\) for each \(r,k\in \bN_0\).
Thus, it is natural to consider the finite rank operator \({\cL_k|}_{\poly{r}(\cI)}\).
\begin{thm}
    \label{thm:pa-spectrum}
    Let \(k\in \bN_0\), \(r\in \bN\).
    There exists a projector \(\Pi_{k,r}: W_r \to \poly{r}(\cI) \subset W_r \) such that the spectral radius of
    \(  \cL_k (\Id - \Pi_{k,r})\) on \(W_r\)  is not greater than \( \lambda^{-(k+r-1)}\).
    Moreover
    \[
        \sigma\big({\cL_k|}_{\poly{r}(\cI)}\big)
        =
        \bigcup_{l=0}^{r}\sigma(B_{k+l})
    \]
    and the multiplicity of each eigenvalue \(\xi \in \sigma\big({\cL_k|}_{\poly{r}(\cI)}\big)\) is equal to the sum of the multiplicities of \(\xi\) as eigenvalues of \(B_{k+l}\), \(l \in \{0,\ldots,r\}\).
\end{thm}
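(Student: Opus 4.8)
The plan is to exploit the fact that $\poly{r}(\cI)$ is a finite-dimensional $\cL_k$-invariant subspace and to understand the action of $\cL_k$ on it via a filtration by degree. For each $l \in \{0,\ldots,r\}$ let $\poly{\le l}(\cI) \subset \poly{r}(\cI)$ be the subspace of functions which, restricted to each $I_j$, are polynomials of degree at most $l$. Since $f$ is piecewise affine, the change of variables $h \mapsto h\circ g_j$ sends a polynomial of degree $d$ to a polynomial of degree $d$ (the leading coefficient gets multiplied by $\lambda_j^{-d}$ and lower-order terms are produced), so each $\poly{\le l}(\cI)$ is $\cL_k$-invariant and we get a flag $\poly{\le 0}(\cI) \subset \poly{\le 1}(\cI) \subset \cdots \subset \poly{\le r}(\cI) = \poly{r}(\cI)$. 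The first key step is to identify the induced operator on the graded piece $\poly{\le l}(\cI)/\poly{\le l-1}(\cI)$: this quotient is naturally identified with $\bC^N$ by sending the class of $h$ to the vector of leading ($l$-th degree) coefficients $(a_l^{(j)})_j$, and under this identification the induced map is exactly $B_{k+l}$, because $\cL_k h(x) = \sum_j B_k[i,j]\, h\circ g_j(x)$ on $I_i$ and the $l$-th coefficient of $h\circ g_j$ on $I_i$ (when $I_i \subseteq f(I_j)$, i.e. $A[j,i]=1$) is $\lambda_j^{-l} a_l^{(j)}$, so the $I_i$-component of the image is $\sum_j B_k[i,j]\lambda_j^{-l} a_l^{(j)} = \sum_j \lambda_j^{-(k+l)} A[j,i]\, a_l^{(j)} = (B_{k+l} a_l)_i$.

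From the flag, the characteristic polynomial of ${\cL_k|}_{\poly{r}(\cI)}$ factors as the product of the characteristic polynomials of the graded pieces, giving $\sigma\big({\cL_k|}_{\poly{r}(\cI)}\big) = \bigcup_{l=0}^r \sigma(B_{k+l})$ with algebraic multiplicities adding up as claimed; this settles the spectral identity. (The statement's "multiplicity" should be read as algebraic multiplicity summed over the blocks $B_{k+l}$; the finer Jordan structure, alluded to in the footnote and Remark~\ref{rem:multiplicity}, is not asserted here.)

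The remaining and more substantial step is constructing the spectral projector $\Pi_{k,r}: W_r \to \poly{r}(\cI)$ and bounding the spectral radius of $\cL_k(\Id-\Pi_{k,r})$ on $W_r$ by $\lambda^{-(k+r-1)}$. The natural approach is a Lasota–Yorke / Doeblin–Fortet argument: one shows that for $h \in W_r$, after subtracting off the "Taylor part" of degree $r$ on each $I_j$, the operator $\cL_k$ contracts the top derivative. Concretely, differentiating $\cL_k h$ $r$ times and using the chain rule with $g_j' = \lambda_j^{-1}$ constant, one gets $(\cL_k h)^{(r)}(x) = \sum_j B_k[i,j]\,\lambda_j^{-r}\, h^{(r)}\circ g_j(x)$, so in $L^1$ the $r$-th derivative is contracted by the matrix $B_{k+r}$ acting by composition; iterating and using that the remaining (lower-order) part lives in the finite-dimensional $\poly{r}(\cI)$ gives a bound of Lasota–Yorke type $\|\cL_k^n h\|_r \le C\lambda^{-n(k+r-1)}\|h\|_r + C\|h\|_{r-1}$ or similar, and hence by Hennion/Nussbaum the essential spectral radius of $\cL_k$ on $W_r$ is at most $\lambda^{-(k+r-1)}$. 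One must be slightly careful with the exponent $k+r-1$ versus $k+r$: this is because $\|h^{(r)}\|_{L^1}$ controls $h$ only up to the lower Sobolev norm, and the sharp rate for the "new" part after projecting away polynomials is $\lambda^{-(k+r-1)}$, reflecting that the worst mode is the $(r-1)$-st derivative of functions not captured by the polynomial part — this bookkeeping is where I expect the main technical obstacle. Then $\Pi_{k,r}$ is defined as the Riesz spectral projection onto the sum of generalized eigenspaces for eigenvalues of modulus $> \lambda^{-(k+r-1)}$; since $\poly{r}(\cI)$ is $\cL_k$-invariant and finite-dimensional and contains (by the graded computation above) all the eigenvalues $\bigcup_{l=0}^r \sigma(B_{k+l})$, and since the essential spectral radius bound ensures no other spectrum of that modulus exists, $\Pi_{k,r}$ has range exactly $\poly{r}(\cI)$, as required.
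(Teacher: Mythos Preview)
Your approach is essentially the paper's: the filtration by degree is exactly the block lower-triangular representation of Lemma~\ref{lem:triangle}, and the Lasota--Yorke/Hennion route to the essential spectral radius is the content of Lemmata~\ref{lem:LY}--\ref{lem:spectrum0}. The graded-piece computation correctly identifies the characteristic polynomial of ${\cL_k|}_{\poly{r}(\cI)}$ and hence the spectral identity with multiplicities.

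There is, however, a genuine gap in your construction of $\Pi_{k,r}$. You define it as the Riesz projection onto eigenvalues of modulus $> \lambda^{-(k+r-1)}$ and then assert that its range lies in $\poly{r}(\cI)$, arguing that $\poly{r}(\cI)$ is invariant, carries all the relevant eigenvalues, and ``no other spectrum of that modulus exists''. But this only controls the \emph{set} of eigenvalues; it does not exclude a non-polynomial generalized eigenvector in $W_r$ for some $\nu \in \bigcup_l \sigma(B_{k+l})$. Invariance of $\poly{r}(\cI)$ gives $\sigma\big({\cL_k|}_{\poly{r}(\cI)}\big) \subset \sigma_{W_r}(\cL_k)$, not that the corresponding generalized eigenspaces in $W_r$ are contained in $\poly{r}(\cI)$. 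The paper closes this gap via the commutation $\cD^r (\cL_k - \nu)^m = (\cL_{k+r} - \nu)^m \cD^r$ (an immediate consequence of the identity $(\cL_k h)^{(r)} = \cL_{k+r} h^{(r)}$ that you already invoke): if $h \in E_k(\nu) \subset W_r$ with $|\nu| > \lambda^{-(k+r-1)}$, then $h^{(r)}$ is a generalized eigenvector of $\cL_{k+r}$ on $L^1$ at $\nu$, and since ${\|\cL_{k+r}\|}_{L^1} \le \lambda^{-(k+r-1)}$ this forces $h^{(r)} = 0$, i.e.\ $h \in \poly{r}(\cI)$. Without this step the first assertion of the theorem is unproven. (Separately, your claim that the range is \emph{exactly} $\poly{r}(\cI)$ is too strong: eigenvalues of some $B_{k+l}$ may well have modulus $\le \lambda^{-(k+r-1)}$ and are then not captured by the Riesz projector, so in general the range is only a subspace of $\poly{r}(\cI)$ --- which is all the theorem requires.)
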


\noindent
The remainder of this section is devoted to the proofs of the two above theorems.

\begin{rem}
    This result tells nothing about the spectrum of \(\cL_k\) within the disk \(\{\abs{z}\leq  \lambda^{-(k+r-1)}\}\) more than the fact that the essential spectrum is contained somewhere within.
    Indeed a full disk of essential spectrum is expected (see \cite{CI91}).
\end{rem}

\begin{rem}
    \label{rem:use-sbj}
    As mentioned earlier, the results of this section are essential not new although our strategy differs substantially.
    Indeed Theorem~\ref{thm:pa-spectrum} can be deduced \cite[Proposition A.5]{SBJ13b} by the following argument.\footnote{We are grateful to the anonymous referee who brought the relevant references to our attention and who suggested this argument.}
    First observe that the space \(\cH\) in  \cite{SBJ13b} (which is a space of holomorphic functions) is densely and continuously embedded in \(W_r\). Consequently the part of the spectrum of \(\cL_k\) on \(W_r\) with modulus greater that the essential spectral radius coincides with the spectrum of \(\cL_k\) on \(\cH\) with modulus greater that the essential spectral radius.
    Note also that the result in the reference \cite[Proposition A.5]{SBJ13b} doesn't require \(f\) to be orientation preserving.
\end{rem}

The next equality is our key observation. Albeit very simple, the rest of the paper relies on it and variants thereof.

\begin{lem}
    \label{lem:derivatives}
    For all $k,r\in \bN_0$, $h\in W_r$ and $l \in \{0,\dots, r\} $,
    \begin{equation*}
        (\cL_k h)^{(l)}
        =\cL_{k+l} h^{(l)}.
    \end{equation*}
\end{lem}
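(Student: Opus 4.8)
The plan is to verify the identity by direct computation on each partition element, exploiting the fact that $f$ is piecewise affine so that the inverse branches $g_j$ are affine maps with constant derivative $g_j' = \lambda_j^{-1}$ on their domain. First I would fix $x \in I_i$ and write, using the definition of $\cL_k$,
\[
    \cL_k h(x) = \sum_j B_k[i,j]\, h\circ g_j(x)
    = \sum_j \lambda_j^{-k} A[j,i]\, h\circ g_j(x),
\]
where only the terms with $A[j,i] = 1$ (equivalently $I_i \subseteq f(I_j)$) contribute, and for those terms $g_j$ maps a neighbourhood of $x$ affinely into $I_j$, so $h\circ g_j$ is as smooth on $I_i$ as $h$ is on $I_j$; in particular $\cL_k h \in W_r$ whenever $h\in W_r$, which is already noted in the text. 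The core of the argument is then the elementary chain rule for an affine change of variables: since $g_j(y) = \lambda_j^{-1} y + c_j$ for constants $c_j$, one has
\[
    \frac{d^l}{dx^l}\big( h\circ g_j \big)(x) = \lambda_j^{-l}\, h^{(l)}\circ g_j(x)
\]
for every $l \in \{0,\dots,r\}$, with no lower-order terms appearing precisely because the derivative of $g_j$ is constant.

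Next I would differentiate the displayed formula for $\cL_k h$ term by term $l$ times — legitimate because it is a finite sum and each summand is $\cC^l$-regular in the relevant Sobolev sense on $I_i$ — and substitute the affine chain rule:
\[
    (\cL_k h)^{(l)}(x)
    = \sum_j \lambda_j^{-k} A[j,i]\, \lambda_j^{-l}\, h^{(l)}\circ g_j(x)
    = \sum_j \lambda_j^{-(k+l)} A[j,i]\, h^{(l)}\circ g_j(x).
\]
Recognising $\lambda_j^{-(k+l)} A[j,i] = B_{k+l}[i,j]$ from \eqref{eq:defB}, the right-hand side is exactly $\sum_j B_{k+l}[i,j]\, h^{(l)}\circ g_j(x) = \cL_{k+l} h^{(l)}(x)$, since $h^{(l)}$ restricted to $\cI$ is again an $L^1$ function (indeed in $W_{r-l}$) and $\cL_{k+l}$ is defined by the same formula on all of $L^1(I)$. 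As $x \in I_i$ was arbitrary and $i$ ranged over all partition elements, this establishes $(\cL_k h)^{(l)} = \cL_{k+l} h^{(l)}$ on $\cI$, as claimed.

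There is essentially no hard step here; the only point requiring a little care is the bookkeeping of weak derivatives. One must check that differentiating $\cL_k h$ in the weak sense on $\cI$ (i.e.\ on each open interval $I_i$ separately, ignoring jumps at the partition points) commutes with the finite sum and with the affine precomposition — this is routine because affine maps preserve $W^{l,1}$ and the chain rule holds for weak derivatives under smooth (here affine) changes of variables. I would state this commutation once and then let the computation above carry the rest. The essential mechanism — that each differentiation pulls out one factor of $\lambda_j^{-1}$ and thereby advances the index $k$ by one — is exactly the ``commutator between differentiation and the transfer operator'' heuristic advertised in the introduction, and it is what makes the polynomial subspaces $\poly{r}(\cI)$ behave so transparently.
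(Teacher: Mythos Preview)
Your proposal is correct and follows essentially the same route as the paper: both arguments reduce to the affine chain rule $(h\circ g_j)^{(l)} = \lambda_j^{-l}\, h^{(l)}\circ g_j$ applied termwise to the finite sum defining $\cL_k$. The only cosmetic differences are that the paper proceeds by induction on $l$ (one derivative at a time) and passes from $\cC^\infty(\cI)$ to $W_r$ by density at the end, whereas you differentiate $l$ times in one stroke and invoke the weak-derivative chain rule for affine precompositions directly; neither variant is substantively different from the other.
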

\begin{proof}
    Fix $k, r\in \bN_0$.
    The claimed equality holds trivially in the case \(l=0\).
    Observe that, by chain rule, for all \(x\in \cI_{i}\), \(h\in \cC^\infty(\cI)\),
    \[
        (\cL_k h)'(x)
        =\sum_j \lambda_j \ B_{k}[i,j] h'\circ g_j(x)
        =\sum_j B_{k+1}[i,j] h'\circ g_j(x)
        =\cL_{k+1} h'(x).
    \]
    If we assume that, for some $l\geq 0$, the claimed equality holds, i.e., for all \(h\in \cC^\infty(\cI)\),
    \[
        (\cL_k h)^{(l)}(x)=\cL_{k+l} h^{(l)}(x),
    \]
    then, using the previous observation,
    \[
        (\cL_k h)^{(l+1)}(x)
        =(\cL_{k+l} h^{(l)})'(x)
        =(\cL_{k+l+1} h^{(l+1)})(x).
    \]
    The equality for all \(l\) follows by induction.
    Using the density of \(\cC^\infty(\cI)\) in \(W_r\) we obtain the result for \(h\in W_r\).
\end{proof}

\begin{rem}
    \label{rem:signed-derviative}
    In general we could allow the \(\lambda_j\) to be positive or negative.
    If \(\cL_1\) coincides with the operator associated to the SRB measure the derivative \(\lambda_j\) occurs with absolute value in the formula.
    However, as is clear from the proof of the above lemma, when the derivative occurs as a result of differentiating the sign of the derivative remains.
    This means that, if we are interested in \(\mme\) then we should consider \( B_k[i,j]:=\lambda_j^{-k} {A[j,i]}\)
    but, if we are interested in \(\srb\), then we should consider \( B_k[i,j]:=\lambda_j^{-(k-1)}\abs{\lambda_j}^{-1} {A[j,i]}\).
\end{rem}

To proceed we now prove a set of Lasota-Yorke inequalities for the operators $\cL_k : W_r \to W_r$.

Let \(\Gamma_0 :={\|f'\|}_{L^\infty}\)
and, for all \(k\in \bN\), let \(\Gamma_k := \lambda^{-(k-1)}\).

\begin{lem}
    \label{lem:LY}
    Let $k\in \bN_0$, $r\in \bN$.
    For all
    \(h\in W_r\),
    \begin{equation*}
        \begin{split}
            &{\|\cL_k h\|}_{r}\leq \Gamma_k {\| h\|}_{r}
            \\
            &{\|\cL_k h\|}_{r}\leq
            \lambda ^{-(k+r-1)}   {\| h\|}_{r} +  \Gamma_k  {\| h\|}_{r-1}.
        \end{split}
    \end{equation*}
    The first inequality also holds in the case \(r=0\).
\end{lem}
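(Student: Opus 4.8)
The plan is to exploit Lemma~\ref{lem:derivatives}, which says $(\cL_k h)^{(l)} = \cL_{k+l} h^{(l)}$, so that controlling $\cL_k$ on $W_r$ reduces to controlling each $\cL_{k+l}$ in the $L^1$-norm on the $l$-th derivative. First I would establish the basic $L^1$-bound: for any $j\in\bN_0$ and $u\in L^1(I)$, a change of variables in each branch gives
\[
    \int_{\cI} |\cL_j u(x)|\,dx
    \le \sum_i \int_{I_i} \sum_{\ell} B_j[i,\ell]\,|u\circ g_\ell(x)|\,dx
    = \sum_\ell \lambda_\ell^{-j}\,|\lambda_\ell^{-1}|^{-1}\!\!\int_{\cI}\!|u|\,dx \cdot(\text{something})\,,
\]
more carefully: since $\cL_j u(x) = \sum_\ell B_j[i,\ell]\, u\circ g_\ell(x)$ with $|g_\ell'| = \lambda_\ell^{-1}$, substituting $y=g_\ell(x)$ yields $\int |u\circ g_\ell| = \lambda_\ell^{-1}\int_{f(I_\ell)}$-contribution reversed, so $\|\cL_j u\|_{0} \le \big(\max_\ell \lambda_\ell^{-(j-1)}\big)\,\|u\|_0 = \Gamma_j\,\|u\|_0$ when $j\ge 1$, and for $j=0$ one instead picks up the factor $\|f'\|_{L^\infty} = \Gamma_0$ from the absence of any compensating power of the derivative. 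This is exactly the role of the two-case definition of $\Gamma_j$.

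Next, for the first inequality of the lemma I would write $\|\cL_k h\|_r = \sum_{l=0}^r \|(\cL_k h)^{(l)}\|_0 = \sum_{l=0}^r \|\cL_{k+l} h^{(l)}\|_0$ using Lemma~\ref{lem:derivatives}, then apply the $L^1$-bound term by term: $\|\cL_{k+l} h^{(l)}\|_0 \le \Gamma_{k+l}\,\|h^{(l)}\|_0$. Since $k\ge 0$ and $r\ge 1$ we have $k+l \ge 1$ for all $l\ge 1$, and for $l=0$ the factor is $\Gamma_k$; in all cases $\Gamma_{k+l} = \lambda^{-(k+l-1)} \le \lambda^{-(k-1)} = \Gamma_k$ (for $k\ge 1$; the $k=0$ case is handled by noting $\Gamma_0 = \|f'\|_{L^\infty}$ dominates, or by the remark that $r\ge 1$ forces us to compare against $\Gamma_0$). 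Summing over $l$ gives $\|\cL_k h\|_r \le \Gamma_k \sum_{l=0}^r \|h^{(l)}\|_0 = \Gamma_k\,\|h\|_r$.

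For the second (genuine Lasota-Yorke) inequality I would split the sum at $l=r$: the top-order term contributes $\|\cL_{k+r} h^{(r)}\|_0 \le \Gamma_{k+r}\,\|h^{(r)}\|_0 = \lambda^{-(k+r-1)}\|h^{(r)}\|_0 \le \lambda^{-(k+r-1)}\|h\|_r$, while the lower-order terms $l=0,\dots,r-1$ contribute $\sum_{l=0}^{r-1}\|\cL_{k+l} h^{(l)}\|_0 \le \Gamma_k \sum_{l=0}^{r-1}\|h^{(l)}\|_0 = \Gamma_k\,\|h\|_{r-1}$, again using $\Gamma_{k+l}\le\Gamma_k$. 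Adding the two pieces gives precisely $\|\cL_k h\|_r \le \lambda^{-(k+r-1)}\|h\|_r + \Gamma_k\,\|h\|_{r-1}$.

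I expect no serious obstacle here; the only points requiring a little care are the $j=0$ versus $j\ge 1$ bookkeeping in the base $L^1$-estimate (where the compensating Jacobian power disappears and one must fall back on $\|f'\|_{L^\infty}$), the monotonicity $\Gamma_{k+l}\le\Gamma_k$, and the density argument: all inequalities are first proved for $h\in\cC^\infty(\cI)$, where Lemma~\ref{lem:derivatives} was established, and then extended to all $h\in W_r$ by density of $\cC^\infty(\cI)$ in $W_r$ together with continuity of $\cL_k$ on $W_r$.
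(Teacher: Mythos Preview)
Your plan is correct and follows essentially the same route as the paper: use Lemma~\ref{lem:derivatives} to reduce $\|\cL_k h\|_r$ to $\sum_{l=0}^r \|\cL_{k+l} h^{(l)}\|_0$, bound each term by the basic $L^1$ estimate $\|\cL_j u\|_0 \le \Gamma_j\|u\|_0$, and then split off the top-order term $l=r$ for the Lasota--Yorke inequality. The only cosmetic difference is that the paper organises the argument by treating the cases $k\ge 1$ and $k=0$ separately (writing $\cL_{k+l} \le \lambda^{-(k+l-1)}\cL_1$ pointwise and using $\int \cL_1|g| = \int |g|$ for the former, and $\int|\cL_0 g| \le \|f'\|_{L^\infty}\int|g|$ for the latter), whereas you package both cases into the single bound $\|\cL_j u\|_0 \le \Gamma_j\|u\|_0$ together with the monotonicity $\Gamma_{k+l}\le\Gamma_k$; both amount to the same computation.
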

\begin{proof}
    We start by considering the case \(k\in\bN\).
    Let \(h \in W_r\).
    By definition of ${\Vert \cdot \Vert}_r$ and Lemma~\ref{lem:derivatives},
    \begin{equation*}
        \begin{split}
            {\|\cL_k h\|}_r
            &=\sum_{l=0}^r  \int_{\cI} |(\cL_k h)^{(l)}(x)|dx
            = \sum_{l=0}^r\int_{\cI} |\cL_{k+l} h^{(l)}(x)|dx \\
            &\leq \sum_{l=0}^r  \lambda ^{-(k+l-1)} \int_{\cI}\cL_1|h^{(l)}(x)|dx.
        \end{split}
    \end{equation*}
    Since, by the obvious change of variables,
    \( \int_{\cI}\cL_1|h^{(l)}(x)|dx = \int_{\cI}| h^{(l)}(x)|dx  \)
    the above implies that, for all \(r\in \bN_0\),
    \begin{equation}
        \label{eq:working1}
        {\|\cL_k h\|}_r
        \leq \sum_{l=0}^r  \lambda ^{-(k+l-1)} \int_{\cI}|h^{(l)}(x)|dx.
    \end{equation}
    That is,
    \(
    {\|\cL_k h\|}_r
    \leq  \lambda^{-(k-1)}{\| h\|}_r
    \)
    as required to prove the first inequality.
    Moreover, when \(r\geq 1\), the above \eqref{eq:working1} implies that (here we separate the term \(l=r\) from the rest of the sum)
    \[
        \begin{split}
            {\|\cL_k h\|}_{r}
            &\leq \lambda ^{-(k+r-1)} \int_{\cI} |h^{(r)}(x)|dx
            +  \lambda ^{-(k-1)} \sum_{l=0}^{r-1}\int_{\cI}|h^{(l)}(x)|dx \\
            &\leq \lambda^{-(k+r-1)}{\| h\|}_{r}+{ \lambda^{-(k-1)}}{\| h\|}_{r-1}
        \end{split}
    \]
    as required by the second estimate.

    To conclude we must consider the case  \(k=0\).
    First observe that, for any \(h\in\cC^\infty(\cI)\),
    \[
        \int_{\cI}|\cL_0h|(x)\ dx = \int_{\cI}|\cL_1 f' h|(x)\leq \int_{\cI}\left[\cL_1 |f'|\,| h|\right](x)\leq {\|f'\|}_{L^\infty} \int_{\cI}|h|(x)\ dx.
    \]
    Similar to the proof in the case \(k\in \bN\),
    by definition of the norm and Lemma~\ref{lem:derivatives},
    \begin{equation*}
        {\|\cL_0 h\|}_r
        =\sum_{l=0}^r  \int_{\cI} |(\cL_0 h)^{(l)}(x)|dx
        = \sum_{l=0}^r\int_{\cI} |\cL_{l} h^{(l)}(x)|dx.
    \end{equation*}
    This means that, for all \(r\in \bN_0\) (recall that \(\Gamma_0 ={\|f'\|}_{L^\infty}\)),
    \begin{equation*}
        {\|\cL_0 h\|}_r
        \leq \sum_{l=0}^r \lambda^{-l} \int_{\cI}  |\cL_{0} h^{(l)}(x)|dx
        \leq \Gamma_0 \sum_{l=0}^r \int_{\cI}  | h^{(l)}(x)|dx
    \end{equation*}
    and so proves the first inequality.
    On the other hand, now assuming that \(r\geq 1\),
    \begin{equation*}
        \begin{split}
            {\|\cL_0 h\|}_r
            &= \int_{\cI} |\cL_{r} h^{(r)}(x)|dx
            + \sum_{l=0}^{r-1} \int_{\cI} |\cL_{l} h^{(l)}(x)|dx  \\
            &\leq \lambda^{-(r-1)}\int_{\cI} |\cL_{1} h^{(r)}(x)|dx
            + \sum_{l=0}^{r-1} \int_{\cI} |\cL_{0} h^{(l)}(x)|dx.
        \end{split}
    \end{equation*}
    Consequently
    \begin{equation*}
        {\|\cL_0 h\|}_r
        \leq \lambda^{-(r-1)}\int_{\cI} | h^{(r)}(x)|dx
        + \Gamma_0 \sum_{l=0}^{r-1} \int_{\cI} |h^{(l)}(x)|dx.
    \end{equation*}
    Thus,
    \({\|\cL_0 h\|}_r
    \leq \lambda^{-(r-1)} {\|h\|}_{r}
    + \Gamma_0 \sum_{l=0}^{r-1} {\|h\|}_{r-1},\)
    as required.
\end{proof}

\begin{lem}
    \label{lem:spectrum0}
    Let \(k \in \bN_0\), \(r\in \bN\).
    The operator $\cL_k$ acting on $W_{r}$ has spectral radius bounded by $\Gamma_k$ and essential spectral radius bounded by $\lambda ^{-(k+r-1)}$.
\end{lem}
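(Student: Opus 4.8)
The plan is to prove the two assertions separately: the spectral radius bound is immediate from Lemma~\ref{lem:LY}, and the essential spectral radius bound is a standard Lasota--Yorke/Hennion argument.

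\textbf{Spectral radius.} The first inequality of Lemma~\ref{lem:LY} says \(\|\cL_k h\|_r \le \Gamma_k \|h\|_r\) for all \(h\in W_r\) (including the case \(r=0\)), i.e. \(\cL_k\) has operator norm at most \(\Gamma_k\) on \(W_r\). Since the spectral radius never exceeds the operator norm, \(\cL_k\) on \(W_r\) has spectral radius at most \(\Gamma_k\); nothing more is needed here.

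\textbf{Essential spectral radius.} I read the second inequality of Lemma~\ref{lem:LY},
\[
    \|\cL_k h\|_r \le \lambda^{-(k+r-1)}\|h\|_r + \Gamma_k \|h\|_{r-1},
\]
as a Doeblin--Fortet (Lasota--Yorke) inequality with strong norm \(\|\cdot\|_r\) and weak norm \(\|\cdot\|_{r-1}\) (legitimate since \(r\ge 1\), so \(W_{r-1}\) makes sense, with \(W_0=L^1\)). Iterating it, and bounding the intermediate terms by the crude estimate \(\|\cL_k\|_{r-1}\le \Gamma_k\) (first inequality of Lemma~\ref{lem:LY} applied on \(W_{r-1}\)), one obtains for every \(n\in\bN\)
\[
    \|\cL_k^n h\|_r \le \lambda^{-n(k+r-1)}\|h\|_r + C_n \|h\|_{r-1}, \qquad C_n := \Gamma_k\sum_{j=0}^{n-1}\lambda^{-j(k+r-1)}\Gamma_k^{\,n-1-j},
\]
where only the finiteness of \(C_n\) matters. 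The second ingredient is the compactness of the inclusion \(W_r \hookrightarrow W_{r-1}\): since \(\cI\) is a finite union of bounded intervals, it suffices that \(W^{1,1}(J)\hookrightarrow L^1(J)\) is compact on a bounded interval \(J\) (classical; e.g. Rellich--Kondrachov, or Helly's selection theorem as \(W^{1,1}\subset BV\)), applied to each weak derivative \(h^{(l)}\), \(l\le r-1\), on each partition element. With the iterated inequality and this compact embedding in hand, Hennion's theorem (equivalently the Ionescu--Tulcea--Marinescu theorem, or Nussbaum's formula for the essential spectral radius) yields that the essential spectral radius of \(\cL_k\) on \(W_r\) is at most \(\limsup_{n}\big(\lambda^{-n(k+r-1)}\big)^{1/n} = \lambda^{-(k+r-1)}\).

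\textbf{On the difficulty.} There is no genuine obstacle: everything follows from the already-established Lasota--Yorke inequalities together with a classical compact Sobolev embedding. The only points requiring care are (i) iterating the second Lasota--Yorke inequality while controlling the intermediate factors by the first (crude) inequality on \(W_{r-1}\), and (ii) stating the compactness of \(W_r \hookrightarrow W_{r-1}\) explicitly, since \(\cI\) is only a disjoint union of bounded intervals. As a consistency check, \(\lambda^{-(k+r-1)}\le \Gamma_k\) for all \(k\in\bN_0\) and \(r\ge 1\), so the essential spectral radius bound is compatible with the spectral radius bound.
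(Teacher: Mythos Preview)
Your proof is correct and follows essentially the same route as the paper: the spectral radius bound comes from the first inequality of Lemma~\ref{lem:LY}, and the essential spectral radius bound from the second (Lasota--Yorke) inequality together with the compact embedding \(W_r\hookrightarrow W_{r-1}\) via Hennion's theorem. You have simply made explicit the iteration and the reason for compactness, which the paper leaves implicit.
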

\begin{proof}
    The first inequality of Lemma~\ref{lem:LY} implies that the spectral radius is bounded by  $\Gamma_k$.
    For all \(r\in \bN\), \(W_{r}\) is compactly embedded in $W_{r-1}$.
    This means the Lasota-Yorke inequalities of Lemma~\ref{lem:LY} imply,
    by the argument of Hennion~\cite{Hennion93}, based on the formula of Nussbaum~\cite{Nussbaum70} (see \cite[Appendix B]{DKL} for a pedagogical illustration of the Hennion--Nussbaum theory), that the essential spectral radius of $ \cL_k $ is bounded by $\lambda ^{-(k+r-1)}$.
\end{proof}

For convenience we use the notation \(\cD : h \mapsto h'\).
For any \(k\in\bN\), \(\nu\in\bC\), let \(E_k(\nu)\) denote the generalised eigenspace for \(\cL_k\) associated to the eigenvalue \(\nu\).
I.e.,  \(E_k(\nu)\) is the set of \(h\) such that \((\cL_k-\nu)^m h = 0\) for some \(m\in\bN\).
An immediate consequence of Lemma~\ref{lem:derivatives} is the following commutation relation:
For any \(l,k,m\in \bN_0\), \(\nu\in\bC\), \(h\in W_r\)
\[
    \cD^l \circ (\cL_k - \nu)^m h
    =
    (\cL_{k+l} - \nu)^m \circ \cD^l h.
\]
This in turn means that
\begin{equation}
    \label{eq:pa-diff-map}
    \cD^l E_k(\nu) \subset E_{k+l}(\nu).
\end{equation}

\begin{proof}[{\bfseries Proof of the first statement of Theorem~\ref{thm:pa-spectrum}}]
    Let \(k\in \bN_{0}\), \(r\in\bN\).
    According to Lemma~\ref{lem:spectrum0} the essential spectral radius of \(\cL_k :  W_{r} \to W_{r}\) is not greater that $\lambda ^{-(k+r-1)}$.
    Fix some arbitrarily small $\epsilon>0$ and define
    \(\cH_{k,r}:=\{\nu\in \sigma_{W_{r}}(\cL_k), |\nu|> \lambda^{-(k+r-1)}+\epsilon\}\).
    For each \(\nu\in\cH_{k,r}\) let \(P_{\nu}\) denote the associated spectral projector and hence let \(\Pi_{k,r} := \sum_{\nu\in \cH_{k,r}}P_{\nu}\).
    Consequently
    \(  \cL_k - \cL_k\circ\Pi_{k,r}   : W_r \to W_r  \)
    has spectral radius not greater than \(\lambda^{-(k+r-1)}+\epsilon\).
    For any \(l\in\bN\) Lemma~\ref{lem:spectrum0} gives an upper bound of \(\lambda^{-(l-1)}\) for the spectral radius of \(\cL_l:W_1 \to W_1\) and so \(E_l(\nu)= \{0\}\) whenever \(|\nu| > \lambda^{-(l-1)}\).
    As observed above \eqref{eq:pa-diff-map}, differentiating \(r\) times takes the generalised eigenspace \(E_k(\nu)\) to the generalised eigenspace \(E_{k+r}(\nu)\) of the operator \(\cL_{k+r}\).
    However \(E_{k+r}(\nu) = \{0\}\) since $|\nu|> \lambda^{-(k+r-1)}$.
    This means that \(E_k(\nu) \subset \poly{r}(\cI)\) whenever \(\nu\in \cH_{k,r}\)
    and so we have shown that the image of \( \Pi_{k,r} \) is contained in \(  \cup_{\nu\in \cH_{k,r}} E_{k}(\nu) \subset \poly{r}(\cI)\).
    The claim follows by the arbitrariness of $\epsilon$.
\end{proof}

We can identify \(\bR^N\) with \(\poly{0}(\cI)\), the set of functions that are constant on each partition element, in the sense that we associate the function \(\sum_i a_i \indicator{I_i}\in \poly{0}(\cI)\) to each \(a = (a_i) \in \bR^N\) ( \(\indicator{A}\) denotes the characteristic function of the set \(A\)).

Let \(r \in \bN\).
The space \((\bR^{N})^{(r+1)}\) is identified with \(\poly{r}(\cI)\) as follows.
We use the notation \((a^0,a^1,\ldots , a^r) \in (\bR^{N})^{(r+1)}\) where  \(a^{j} = (a_{1}^{j},a_{2}^{j},\ldots,a_{N}^{j})\) for each \(j\).
Let\footnote{ Abusing notation we will often write the same symbol for \(a\in \bR^{N}\) and the corresponding \(a \in \poly{0}(\cI)\) with the interpretation given by context.}
\(\cJ : \bR^{N(r+1)} \to \poly{r}(\cI)\),
\[
    \cJ(a^0,\ldots , a^r): x
    \mapsto
    \sum_{l=0}^{r}  x^{l} \sum_{j=1}^{N}  a^l_j \indicator{I_j}(x).
\]
Observe that \(\cJ : \bR^{N(r+1)} \to \poly{r}(\cI)\) is onto and invertible.

For any \(k\in \bN_0\), \(r \in \bN\) we define the \(N(r+1) \times N(r+1) \) matrix
\[
    {\cT}_{k,r} := \cJ^{-1} \circ {\cL_k|}_{\poly{r}(\cI)} \circ   \cJ.
\]
In order to understand the spectrum of  \({\cL_k|}_{\poly{r}(\cI)}\) it suffices to study the spectrum of the matrix \({\cT}_{k,r}\).

\begin{lem}
    \label{lem:triangle}
    Let \(k\in \bN_0\), \(r \in \bN\).
    Then \(\cT_{k,r}\) has lower block triangular form
    \[
        {\cT}_{k,r} = \begin{pmatrix}
            B_{k}   &         &        & 0       \\
            F_{1,0} & B_{k+1} &        &         \\
            \vdots  & \vdots  & \ddots &         \\
            F_{r,0} & F_{r,1} & \ldots & B_{k+r}
        \end{pmatrix},
    \]
    where the matrices on the diagonal are the ones previously introduced in \eqref{eq:defB}.
\end{lem}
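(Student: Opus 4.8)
The plan is to compute the matrix $\cT_{k,r}$ explicitly by applying $\cL_k$ to basis elements of $\poly{r}(\cI)$ and re-expanding in the same basis, then reading off the block structure. Concretely, a basis for $\poly{r}(\cI)$ is given by the functions $x^l \indicator{I_j}(x)$ for $l \in \{0,\ldots,r\}$ and $j \in \{1,\ldots,N\}$; the index $l$ labels the "block row/column'' and the index $j$ labels the position within a block. So I would fix such a basis element $e_{l,j} : x \mapsto x^l \indicator{I_j}(x)$ and compute $\cL_k e_{l,j}$ using the formula $\cL_k h(x) = \sum_i B_k[i,j]\, h\circ g_j(x)$ on $I_i$.

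The key computation is this: since $f$ is piecewise affine, on the branch over $I_j$ we have $g_j(x) = \lambda_j^{-1} x + c_j$ for some constant $c_j$ (with the sign convention that $f' = \lambda_j > 0$ here, matching the running hypothesis). Therefore, for $x \in I_i$,
\[
    \cL_k e_{l,j}(x)
    = \sum_{i} B_k[i,j]\, \bigl(\lambda_j^{-1} x + c_j\bigr)^l \indicator{I_i}(x)
    = \sum_{i} B_k[i,j]\, \indicator{I_i}(x) \sum_{m=0}^{l} \binom{l}{m} \lambda_j^{-m} c_j^{\,l-m} x^m.
\]
The crucial structural observation is that the top-degree term ($m = l$) comes with coefficient $\lambda_j^{-l} B_k[i,j] = B_{k+l}[i,j]$ (using the definition \eqref{eq:defB} and $B_{k+l}[i,j] = \lambda_j^{-(k+l)} A[j,i] = \lambda_j^{-l} B_k[i,j]$), while all lower-degree terms $m < l$ feed only into blocks indexed by $m < l$. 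Hence, writing any $h \in \poly{r}(\cI)$ as $\sum_{l=0}^r \cJ(0,\ldots,a^l,\ldots,0)$ with the degree-$l$ part isolated, the image $\cL_k h$ has degree-$l$ part depending only on the degree-$l',\, l' \ge l$, parts of $h$, with the "diagonal'' contribution (degree-$l$ part of image from degree-$l$ part of input) being exactly multiplication by the matrix $B_{k+l}$ acting on the coefficient vector $a^l \in \bR^N$. This is precisely the statement that $\cT_{k,r}$ is lower block triangular with diagonal blocks $B_k, B_{k+1},\ldots,B_{k+r}$, and the off-diagonal blocks $F_{l,l'}$ (for $l > l'$) are the finite-rank maps collecting the binomial cross-terms $\binom{l'}{l}\lambda_j^{-l} c_j^{\,l'-l}$.

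Actually, the cleanest way to see the diagonal blocks — avoiding bookkeeping of the constants $c_j$ entirely — is to invoke Lemma~\ref{lem:derivatives}: differentiating $l$ times is, under the identification $\cJ$, the operation that reads off (a scalar multiple of) the degree-$\ge l$ part and shifts degrees down by $l$, and it intertwines $\cL_k$ with $\cL_{k+l}$. More precisely, the map $\cD^l$ restricted to $\poly{r}(\cI)$ sends the top block structure of $\cT_{k,r}$ onto that of $\cT_{k+l,r-l}$, and in the quotient $\poly{l}(\cI)/\poly{l-1}(\cI) \cong \bR^N$ the induced action of $\cL_k$ is conjugate (via $\cD^l$, up to the factor $l!$ which cancels) to the action of $\cL_{k+l}$ on $\poly{0}(\cI) \cong \bR^N$, which by the computation $\cL_{k+l}\indicator{I_j} = \sum_i \lambda_j^{-(k+l)} A[j,i] \indicator{I_i} = \sum_i B_{k+l}[i,j]\indicator{I_i}$ is exactly the matrix $B_{k+l}$. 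Filtering $\poly{r}(\cI)$ by degree and using that $\cL_k$ respects this filtration (it cannot raise degree, since $f$ is affine) then gives the block triangular form with the claimed diagonal blocks.

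I do not expect a serious obstacle here; the only mild care needed is the orientation/sign convention (handled by Remark~\ref{rem:signed-derviative}: if $\lambda_j < 0$ the linear map $g_j$ still has slope $\lambda_j^{-1}$ and the same computation goes through with $\lambda_j^{-l}$ rather than $|\lambda_j|^{-l}$, which is consistent with the definition of $B_k$ used for $\mme$) and making sure the identification $\cJ$ is set up so that "block $l$'' genuinely corresponds to the degree-exactly-$l$ monomial coefficient rather than a mixed basis — with the monomial basis $x^l\indicator{I_j}$ this is automatic. The substantive content is entirely the affine-ness of $f$ (degrees don't increase) plus the identity $\lambda_j^{-l} B_k = B_{k+l}$ componentwise, so the proof is a short direct verification.
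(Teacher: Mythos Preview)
Your proposal is correct and takes essentially the same approach as the paper: both compute $\cL_k$ on a monomial $\mathbf{x}^l a$ by expanding $(g_j(x))^l$ via the affine form of $g_j$, then read off that the top-degree coefficient is governed by $\lambda_j^{-l}B_k[i,j]=B_{k+l}[i,j]$ while the remainder lies in $\poly{l-1}(\cI)$. Your alternative phrasing via Lemma~\ref{lem:derivatives} and the degree filtration is a pleasant repackaging of the same computation, not a genuinely different route.
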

\begin{proof}
    Fix \(k\in \bN_0\), \(r \in \bN\).
    For any \(l\in \{0,\ldots,r\}\)
    we consider \((a^0,a^1,\ldots, a^r) \in \bR^{N(r+1)} \) and suppose that \(a^j =0\) for all \(j\neq l\).
    This means that \(\cJ (a^0,a^1,\ldots, a^r) = \mathbf{x}^{l} a\).
    We wish to compute \(\cJ^{-1} \circ \cL_k \circ   \cJ (a^0,a^1,\ldots, a^r) = \cJ^{-1} \circ \cL_k (\mathbf{x}^{l} a)\).
    For each \(j\) let \(q_j := f(p_j^+)\) (i.e., \(\lim_{\epsilon\to 0}f(p_j + \epsilon)\)).
    Observe that, for all \(x\in I_j\),
    \(  f(x) = \lambda_j(x-{ p_{j}}) + q_j \).
    Consequently, for all \(x\in f(I_j)\),
    \begin{equation}
        \label{eq:inverse}
        { g_j(x)} = (x - q_j)\lambda_j^{-1} + { p_{j}}.
    \end{equation}
    Using this inverse,
    \begin{equation}
        \label{eq:Lkxa}
        \begin{aligned}
            \cL_{k} (\mathbf{x}^{l}a)(x)
             & =\sum_{i,j} \indicator{I_i}(x) B_k [i,j]a_j \left(x\lambda_j^{-1} -q_j{\lambda_j^{-1}}+{ p_{j}}\right)^{l}
            \\
             & =\sum_{i,j} \indicator{I_i}(x) B_{l+k} [i,j]a_j x^{l} + \rho(x),
        \end{aligned}
    \end{equation}
    where \(\rho \in \poly{l-1}(\cI)\).
    I.e.,
    \(\cL_{k} (\mathbf{x}^{l}a^l) =  \mathbf{x}^{l} B_{k+l} a^l + \rho\),
    where $ \rho \in \poly{l-1}(\cI)$.
    This proves that \({\cT}_{k,r}\) has lower diagonal block form and that the diagonal elements of the block matrix are the \(B_{k+l}\).
    The exact form of the matrices \(F_{i,j}\) which appear below the diagonal are superfluous to our present argument and we won't identify them further.
\end{proof}

\begin{proof}[{\bfseries Proof of Theorem~\ref{thm:pa-spectrum}}]
    The first statement of the theorem was proven above, it remains to prove the second statement.
    Recall the lower triangular block form of \({\cT}_{k,r}\) as shown in Lemma~\ref{lem:triangle}.
    We can assume without loss of generality that each \(B_{k+l}\) is in lower triangular form.
    If a matrix is in triangular form then the values on the diagonal are the eigenvalues repeated according to multiplicity.
    That each \(B_{k+l}\) is in triangular form means that the \(N(r+1) \times N(r+1)\) matrix \({\cT}_{k,r}\) is in triangular form.
    Moreover the diagonal is the union of the diagonals of the \(B_{k+l}\).
    This implies the claimed correspondence of the eigenvalues of \({\cT}_{k,r}\) and the union of the set of eigenvalues of the \({\{B_{k+l}\}_{l=0}^{r}}\), including correspondence in multiplicity.
\end{proof}

\begin{rem}
    \label{rem:multiplicity}
    The lower triangular block form shown in Lemma~\ref{lem:triangle} and the argument of the above proof further implies that, if some   \(B_{k+l}\) has a Jordan block of dimension \(m\in \bN\), then  \({\cT}_{k,r}\) has a corresponding Jordan block of dimension \(m\) or greater.
    Indeed \({\cT}_{k,r}\) has the possibility to have a Jordan block of greater dimension if a given eigenvalue appears in more than one of the \(B_{k+l}\).
\end{rem}

\begin{proof}[{\bfseries Proof of Theorem~\ref{thm:pa-ruelle}}]
    Fix \(k\in \bN_0\).
    For each $r \in \bN$ consider the finite set of eigenvalues
    \[
        {\{\xi_{j}\}}_{j=0}^{K_r}
        =
        \sigma_{W_r} (\cL_k)
        \setminus
        \{|z|\leq \lambda ^{-(k+r-1)}\}
    \]
    described by Theorem~\ref{thm:pa-spectrum}.
    We define as usual the corresponding eigen projectors \({\{\Pi_{j}:W_r \to W_r\}}_{j=0}^{K_r}\) and eigen nilpotents \({\{Q_{j}:W_r \to W_r\}}_{j=0}^{K_r}\)
    which satisfy the commutation relations: \(\Pi_{j}\Pi_{k} = \delta_{j,k}\), \(\Pi_{j}Q_{k} =Q_{k}\Pi_{j} = \delta_{j,k}Q_{k}\).
    Let \(S_r : = \Id - (\Pi_1+\Pi_2+\cdots + \Pi_{K_r})\) and observe that \( \cL_k S_r\) has spectral radius not greater than \(\lambda ^{-(k+r-1)}\).
    This means that the operator \(  \cL_k:  W_r \to  W_r\) satisfies the decomposition
    \begin{equation}
        \label{eq:Decomposition}
        \cL_k
        =
        \sum_{j=1}^{K_r} \left(\xi_{j}\Pi_{j}  + Q_{j}\right)
        +  \cL_k S_{r}.
    \end{equation}
    Further observe that each operator we define remains defined by the same formula on \(W_r\) for any \(r\) sufficiently large.

    Now let us recall the connection between the transfer operators and invariant measures (see \cite{LSV98} or \cite{Baladi00} for full details).
    For each \(k\in \{0,1\}\) there exists \(h_k\in W_r\) (\emph{the invariant density}), a probability measure \(\nu_k\) (\emph{the conformal measure}), \(\gamma_k>0\) (\emph{the spectral radius}) and a probability measure \(\mu_k\) defined as \(\mu_k(\varphi):=\nu_k(h_k \varphi)\) (\emph{the invariant measure}).
    Moreover
    \(\nu_k(\cL_k \varphi) = \gamma_k \nu_k(\varphi)\)
    and
    \(\mu_k(\varphi\circ f) = \mu_k(\varphi)\).

    In our present setting \(\mu_0\) is the measure of maximal entropy \(\mme\) and \(\mu_1\) is the SRB measure \(\srb\).
    Furthermore \(\ln \gamma_0\) is equal to the topological entropy, \(\gamma_1 = 1\) and \(\nu_1\) coincides with Lebesgue measure.

    Continuing for  \(k\in \{0,1\}\) we observe that
    \[
        \begin{aligned}
            \int_{I} \phi \cdot \varphi \circ f^n \ d\mu_k
             & = \int_{I} (\phi \cdot h_k)(x) \cdot \varphi \circ f^n(x) \ d\nu_k(x)             \\
             & = \gamma_k^{-n} \int_{I} \cL_k^n(\phi \cdot h_k)(x) \cdot \varphi (x) \ d\nu_k(x)
        \end{aligned}
    \]
    We then combine this formula with the spectral decomposition above \eqref{eq:Decomposition} to produce the asymptotic expansion required.
\end{proof}

\begin{rem}\label{rem:full_branch}
    If \(f\) is full branch, the matrices $B_{k+l}$ are such that all the entries in any column $j$ is equal to ${\lambda_j^{-(k+l)}}$.
    The spectrum of this type of matrix is the union of zero and the sum of entries on different columns.
    Consequently Theorem~\ref{thm:pa-spectrum} implies that, outside of the disk
    \(\lbrace\vert \nu\vert \leq \lambda ^{-(k+r-1)}\rbrace\),
    the spectrum of \(\cL_k :W_r \to W_r \)
    is equal to \(\lbrace \xi_0, \ldots, \xi_{r -1} \rbrace\)
    where $ \xi_l:={\sum}_{j=1}^N \lambda_j^{-(k+l)} $.
    \begin{itemize}
        \item In the case \(k=0\) we obtain \(\xi_0 = \sum_{j=1}^{N} \lambda_j^{0} = N\);
        \item In the case \(k=1\) we see that \(\xi_0 = \sum_{j=1}^{N} \lambda_j^{-1} =  \sum_{j=1}^{N} |I_j| = 1\).
    \end{itemize}
    A comprehensive investigation of the resonances of \(x \mapsto 2 x \mod 1\) can be found in \cite[\S3]{Driebe99}).
    The eigenfunctions for this map are the Bernoulli polynomials.
\end{rem}

\begin{rem}
    Observe that \(B_0\) is the transpose of \(A\) and   that, for any Markov map as considered in the present section, the logarithm of the spectral radius of \(B_0\) is equal~\cite[\S2.1]{Buzzi10} to the topological entropy.
\end{rem}

\begin{rem}
    In this section we used Sobolev spaces \(W_r\) but, with a slightly more complex argument, we could equally well have worked with \(\cC^r(\cI) = \bigoplus_{i} \cC^{r}(I_i)\).
\end{rem}

\subsection{A Jordan block example}\ \\
In the following we construct an example of a Markov expanding map such that \(B_1\) has a Jordan block of dimension two.
Previously Driebe~\cite{Driebe97} presented an example of a linear Markov expanding map such that, for each \(n\in \bN\), the eigenvalue \(3^{-2n}\) has a Jordan block of dimension two. However the example isn't orientation-preserving and the mechanism giving rise to the Jordan blocks appears to depend on this.
On the contrary, the example constructed here is orientation-preserving.

Let \(I_1 = (0,\frac{1}{4})\), \(I_2 = (\frac{1}{4},\frac{1}{2})\), \(I_3 = (\frac{1}{2},\frac{3}{4})\), \(I_4 = (\frac{3}{4},1)\)
and let \(f:I\to I\) be as shown in Figure~\ref{fig:jordan}, defined by
\[
    f(x):=
    \begin{cases}
        3 x + \frac{1}{4}   & \text{if \(x \in I_1\)}  \\
        3 (x - \frac{1}{4}) & \text{if \(x \in I_2\)}  \\
        2 (x - \frac{1}{2}) & \text{if \(x \in I_3\)}  \\
        3 (x - \frac{3}{4}) & \text{if \(x \in I_4\)}.
    \end{cases}
\]
This means that
\[
    A = \begin{pmatrix}
        0 & 1 & 1 & 1 \\
        1 & 1 & 1 & 0 \\
        1 & 1 & 0 & 0 \\
        1 & 1 & 1 & 0
    \end{pmatrix},
    \quad
    B_1 = \begin{pmatrix}
        0           & \frac{1}{3} & \frac{1}{2} & \frac{1}{3} \\[3pt]
        \frac{1}{3} & \frac{1}{3} & \frac{1}{2} & \frac{1}{3} \\[3pt]
        \frac{1}{3} & \frac{1}{3} & 0           & \frac{1}{3} \\[3pt]
        \frac{1}{3} & 0           & 0           & 0
    \end{pmatrix}.
\]
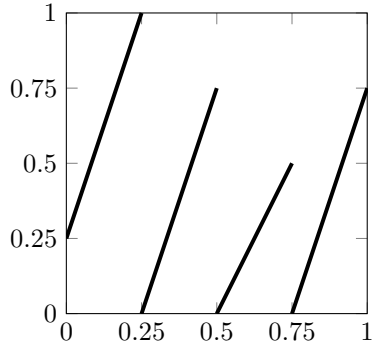
\begin{figure}[htbp]
    \begin{center}
        \begin{tikzpicture}
            \begin{axis}[
                    xmin=0, xmax=1, ymin=0, ymax=1,
                    xtick distance=.25, ytick distance=.25,
                    x=4cm, y=4cm]
                \addplot[line width=1.5, domain=0:.25, samples=21]{3*x + 0.25};
                \addplot[line width=1.5, domain=.25:.5, samples=21]{3*x - 3/4};
                \addplot[line width=1.5, domain=.5:.75, samples=21]{2*x -1};
                \addplot[line width=1.5, domain=.75:1, samples=21]{3*x -9/4};
            \end{axis}
        \end{tikzpicture}
    \end{center}
    \caption{An orientation-preserving Markov map with Jordan block}
    \label{fig:jordan}
\end{figure}
The matrix \(B_1\) has the eigenvalues \(\{-\frac{1}{3},0,1\}\) and the eigenvalue \(-\frac{1}{3}\) has a Jordan block of dimension two.
If we let
\[
    a_1 := \left(\begin{smallmatrix}
            -1\\
            0\\
            0\\
            1
        \end{smallmatrix}\right), \quad
    a_2 := \left(\begin{smallmatrix}
            3\\
            3\\
            -6\\
            0
        \end{smallmatrix}\right), \quad
    a_3 := \left(\begin{smallmatrix}
            0\\
            -1\\
            0\\
            1
        \end{smallmatrix}\right), \quad
    a_4 := \left(\begin{smallmatrix}
            9\\
            12\\
            8\\
            3
        \end{smallmatrix}\right),
\]
then \(B_1 a_1 = -\frac{1}{3} a_1\),
\((B_1 + \frac{1}{3}\Id)a_2 = a_1\),
\(B_1 a_3 = 0\)
and \(B_1 a_4 = a_4\).
In particular \(\{a_1,a_2\}\) span the generalised eigenspace associated to the eigenvalue \(-\frac{1}{3}\).
Since the essential spectral radius of $\cL_1$, when acting on $W_r$, $r\geq 2$, is smaller than $1/4$, then $\cL_1$, on such spaces, has a Jordan block in the point spectrum.

\begin{rem}
    Another interesting example of an affine expanding Markov map is the Baladi map studied in \cite{CE04}. This is a system which exhibits non-trivial complex resonances. In the reference the connection between resonances and decay of correlation was considered and the outer set of resonances identified. Our results give a description of the full set of resonances for this system and the connection to the decay of correlations.
\end{rem}

\section{Piecewise smooth full branch expanding maps}
\label{sec:smoothexp}
In this section we discuss the simplest non-linear case: full branch maps. For such maps there exists already some general quantitative results on the spectral gap, e.g., \cite[Section 2]{Liverani95}, however they are not optimal, we will comment about the comparison case by case.

Let $f\in \cC^r([0,1], [0,1])$, $r\geq 2$, be a full branched piecewise expanding map, $f'> \lambda>1$. For $k\in\bN_0$ let us consider the transfer operator
\begin{equation}
    \label{eq:trans-op}
    \cL_k h(x)=\sum_{y\in f^{-1}(x)} \frac{h(y)}{[f'(y)]^k}.
\end{equation}
Observe that $\cL_0$ is the operator associated to the measure of maximal entropy while $\cL_1$ is the operator associated to the SRB measure \cite{Baladi00}.\footnote{ For the measure of maximal entropy see also the beginning of section \ref{sec:non-unifexp} where it is explained in a  more general setting.}

For convenience, throughout this section, we denote the \emph{distortion} by\footnote{ Here, as in the previous section, the derivative is taken only at the smoothness points of $f$.}
\[D_{f}=\left(\frac{1}{f'}\right)'.\]
The key fact we wish to leverage on, in analogy with Lemma~\ref{lem:derivatives}, is the  formula
\begin{equation}\label{eq:derivL}
    (\cL_k h )' =\cL_{k+1} h'+k\cL_k(h \cdot D_{f})
\end{equation}
which is obtained simply by differentiating \eqref{eq:trans-op}.
Notice that the distortion $D_f$ is 0 when the map is piecewise affne and this is the reason why the
treatment in Section 3 is more complicated than in Section 2 (See Lemma \ref{lem:derivatives}). Amazingly, the above formula yields several non trivial facts. To illustrate its power we start discussing the operator $\cL_0$.
\subsection{Measure of maximal entropy}
\begin{lem}\label{lem:gapone} If $f$ is a $N$ covering, then $\sigma_{\cC^1}(\cL_0)\subset \{ N\}\cup\{z\in\bC\;:\; |z|\leq 1\}$.
    Moreover, $\sigma_{\cC^2}(\cL_0)\cap\{z\in\cC\;:\; |z|\geq \lambda^{-1}\}=\{ N\}\cup(\sigma_{\cC^1}(\cL_1)\cap\{z\in\bC\;:\; |z|\geq \lambda^{-1}\})$
\end{lem}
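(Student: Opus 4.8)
The plan is to leverage the intertwining identity \eqref{eq:derivL} in the case $k=0$, which states $(\cL_0 h)' = \cL_1 h'$, i.e. $\cD\circ\cL_0 = \cL_1\circ\cD$ with $\cD : h\mapsto h'$. I would work on the genuine spaces $\cC^m([0,1])$ rather than the piecewise spaces $\cC^m(\cI)$: since $f$ is full branch (it is an $N$-covering), each inverse branch $g_j$ is defined and $\cC^r$ on all of $[0,1]$, so $\cL_0 h = \sum_{j=1}^N h\circ g_j$ maps $\cC^m([0,1])$ boundedly into itself for $0\le m\le r$, and likewise $\cL_1$. Two structural remarks drive everything: first, $\cL_0\mathbf 1 = N\mathbf 1$, so the line of constants $\bC\mathbf 1$ is an $\cL_0$-invariant subspace on which $\cL_0$ acts as $N\,\Id$; second, $\cD : \cC^{m+1}([0,1])\to\cC^m([0,1])$ is bounded, surjective (take an antiderivative), with $\ker\cD = \bC\mathbf 1$, so by the open mapping theorem it induces a Banach-space isomorphism $\cC^{m+1}([0,1])/\bC\mathbf 1 \cong \cC^m([0,1])$ which, by $\cD\cL_0 = \cL_1\cD$, conjugates the operator induced by $\cL_0$ on the quotient to $\cL_1$ on $\cC^m([0,1])$.

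From here the spectral statement follows purely formally. Consider the $(\cL_0-z)$-equivariant short exact sequence $0\to\bC\mathbf 1\to\cC^{m+1}([0,1])\to\cC^{m+1}([0,1])/\bC\mathbf 1\to 0$, on whose outer terms the induced maps are $(N-z)\,\Id$ and (via the conjugacy) $\cL_1-z$. For every $z\ne N$ the left outer map is invertible, so the five lemma (with bounded inverse supplied by the open mapping theorem) shows that $\cL_0-z$ and $\cL_1-z$ are invertible, on $\cC^{m+1}([0,1])$ and $\cC^m([0,1])$ respectively, together. Therefore $\sigma_{\cC^{m+1}}(\cL_0)\setminus\{N\} = \sigma_{\cC^m}(\cL_1)\setminus\{N\}$, and since $N\in\sigma_{\cC^{m+1}}(\cL_0)$ (eigenfunction $\mathbf 1$) this yields
\[
  \sigma_{\cC^{m+1}}(\cL_0) = \{N\}\cup\sigma_{\cC^m}(\cL_1), \qquad m\in\{0,1\}.
\]

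The two assertions of the lemma are read off. For the first, take $m=0$: the spectral radius of $\cL_1$ on $\cC^0([0,1])$ is $1$ — a classical consequence of bounded distortion, available since $f\in\cC^r$ with $r\ge2$ ($\cL_1$ is positive, $\int\cL_1^n\mathbf 1 = 1$, and $\cL_1^n\mathbf 1$ has uniformly bounded oscillation, so $\|\cL_1^n\mathbf 1\|_\infty\le C$) — hence $\sigma_{\cC^0}(\cL_1)\subset\{|z|\le 1\}$ and $\sigma_{\cC^1}(\cL_0)\subset\{N\}\cup\{|z|\le 1\}$. For the second, take $m=1$: intersecting $\sigma_{\cC^2}(\cL_0)=\{N\}\cup\sigma_{\cC^1}(\cL_1)$ with $\{|z|\ge\lambda^{-1}\}$ and using $N\ge 2>\lambda^{-1}$ gives the stated identity. (This argument in fact proves the unrestricted equality $\sigma_{\cC^2}(\cL_0)=\{N\}\cup\sigma_{\cC^1}(\cL_1)$; the restriction to $\{|z|\ge\lambda^{-1}\}$ merely keeps one inside the region where the spectrum of $\cL_1$ on $\cC^1$ is discrete.)

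I expect the only genuinely non-formal points to be: (i) the insistence on the non-piecewise spaces $\cC^m([0,1])$ — this is precisely what the full-branch hypothesis buys, and it is what keeps $\ker\cD$ one-dimensional and hence produces only the single extra resonance $N$; were one to use $\cC^m(\cI)$ the kernel of $\cD$ would be the $N$-dimensional space of piecewise constants and one would pick up the whole spectrum of a matrix (compare $B_0$ in Section~\ref{sec:piecewise}); and (ii) the bounded-distortion input giving $r_{\mathrm{sp}}(\cL_1|_{\cC^0})\le 1$. Everything else — boundedness of $\cL_0,\cL_1$ on the relevant spaces, the identity $\cD\cL_0=\cL_1\cD$ (which is \eqref{eq:derivL} with $k=0$), the isomorphism $\cC^{m+1}/\bC\mathbf 1\cong\cC^m$, and the five-lemma/open-mapping manipulation — is routine.
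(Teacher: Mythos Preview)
Your argument is correct and rests on the same two ingredients as the paper's proof --- the intertwining $\cD\cL_0=\cL_1\cD$ from \eqref{eq:derivL} with $k=0$, and the bounded-distortion fact that $\cL_1^n$ is uniformly bounded on $\cC^0$ --- but the packaging is genuinely different. The paper proceeds concretely: it first establishes a Lasota--Yorke inequality for $\cL_0$ on $\cC^1$ (via $\|(\cL_0^n h)'\|_\infty=\|\cL_1^n h'\|_\infty\le C\|h'\|_\infty$) to bound the essential spectral radius by $1$, then shows directly that any eigenfunction with $|\nu|>1$ must be constant using $\|\cL_1^n\|_{L^1}=1$; for the second assertion it builds the eigenfunction correspondence by hand, taking $H_0(x)=\int_0^x h$ and adjusting by the constant $c=-\alpha/(\nu-N)$ to kill the obstruction $(\nu-\cL_0)H_0=\alpha$. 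Your exact-sequence/five-lemma argument replaces both the Lasota--Yorke step and the explicit antiderivative construction by a single structural observation, and in fact yields the unrestricted identity $\sigma_{\cC^{m+1}}(\cL_0)=\{N\}\cup\sigma_{\cC^m}(\cL_1)$, stronger than what the paper states. The paper's route is more elementary and keeps the analytic content (essential spectral radii, eigenfunction construction) visible; yours is cleaner and makes transparent that the only obstruction to lifting the resolvent is the one-dimensional kernel $\bC\mathbf 1$, which is exactly the point you emphasize in your remark~(i).
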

\begin{proof}
    Note that $\cL_0 1= N$, so $ N\in\sigma(\cL_0)$.
    Since
    \[
        \left(\frac{1}{(f^n)'}\right)'=\sum_{k=0}^{n-1}\frac{D_{f}\circ f^k}{(f^{n-k-1})'\circ f^{k+1}},
    \]
    $\Big| \Big(\frac{1}{(f^n)'}\Big)'\Big|\leq  C = {{\|D_{f}\|}_\infty}({1-\lambda^{-1}})^{-1}$. In particular we may use \eqref{eq:derivL} for $f^n$, rather than for $f$ and \({\|\cL_1\|}_{\infty}\leq C\).
    Using these estimates, \eqref{eq:derivL} and a direct computation
    \[
        \begin{split}
            {\|\cL_0 h\|}_\infty
            &\leq N {\|h\|}_\infty,\\
            {\|(\cL_0^n h)'\|}_\infty
            &\leq {\|\cL_1^nh'\|}_\infty\leq C{\|h'\|}_\infty.
        \end{split}
    \]
    By the usual arguments the above inequalities imply that the essential spectral radius of $\cL_0$, acting on $\cC^1$, is at most one. On the other hand if $\cL_0h=\nu h$, with $|\nu|> 1$ we have, for all $n\in\bN$,
    \[
        \nu^nh'=(\cL_0^nh)'=\cL_1^n h'
    \]
    which, since \({\|\cL_1^n\|}_{L^1}=1\), implies $|h'|=0$, that is, $h$ must be constant.

    To conclude observe that on the one hand, if $\cL_0 H=\nu H$, $H\in\cC^2$, then $\cL_1 H'=\nu H'$. On the other hand,  if $\cL_1 h=\nu h$, $h\in\cC^1$, let $H_0(x)=\int_0^x h(y) dy$ and observe that $H_0\in \cC^2$ and
    \[
        \left((\nu-\cL_0)H_0\right)' =(\nu-\cL_1)h=0.
    \]
    Thus $(\nu-\cL_0)H_0$ is a constant function. Let $\alpha$ denote this constant value and let $c=-(\nu- N)^{-1}\alpha$.
    Now let $H_c(x)=\int_0^x h(y) dy + c$ and observe that
    \[
        (\nu-\cL_0)H_c=(\nu-\cL_0)H_0+(\nu- \cL_0)c =\alpha+(\nu- N)c = 0.
    \]
    The result follows since the essential spectrum of $\cL_1$, when acting on $\cC^1$, is bounded by $\lambda^{-1}$.

\end{proof}
\begin{rem}\label{rem:hilbert0}
    Note that the proof of Lemma \ref{lem:gapone} implies that $\cL_0 h=\int h \ d\mme +Qh$, where $\mme$ is the measure of maximal entropy, and ${\|Q^n\|}_{\cC^1}\leq C$. That is, $\cL_0$ has a spectral gap $N-1$ while the Hilbert metric technique  can yield, at the very best, a spectral gap $N -\lambda$, see \cite{Liverani95}.
\end{rem}

The above shows that the spectrum of $\cL_0$ is largely determined by the spectrum of $\cL_1$. Hence, before continuing our investigation of the spectrum of $\cL_0$, it is necessary to undertake an investigation of the spectrum of $\cL_1$.
\subsection{ The SRB measure}\ \\
Note that the vector space $\bV=\{h\in\cC^1\;:\; \int_0^1 h=0\}$ is invariant under $\cL_1$, we can thus restrict $\cL_1$ to $\bV$.
If we define
\begin{equation}\label{eq:phi}
    \phi(g)(x)=\int_0^x g(y) dy-\int_0^1 (1-y)g(y)dy=\int_0^x yg(y)dy+\int_x^1(y-1)g(y)dy,
\end{equation}
then $\phi:\cC^0\to\bV$ and \(\phi(h')=h\) for all \(h\in \bV\).
Thus, for each $h\in\bV$,
\begin{equation}\label{eq:structure}
    (\cL_1 h )'    =\cL_2 h'+\cL_1(\phi(h') D_{f})=:\cL_\star(h').
\end{equation}
The relevance of the operator $\cL_\star$ rests in the next Lemma.
\begin{lem}
    \label{lem:Lstar}
    If $f\in\cC^2 ([0,1],[0,1])$, then the spectrum of $\cL_1:\cC^1\to\cC^1$ satisfies
    \[
        \sigma_{\cC^1}(\cL_1)\cap\{z\in\bC\;:\;|z|>\lambda^{-1}\}
        =\{1\}\cup\sigma_{\cC^0}(\cL_\star)
        \cap\{z\in\bC\;:\;|z|>\lambda^{-1}\}.
    \]
\end{lem}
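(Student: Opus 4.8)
The plan is to exploit the intertwining relation \eqref{eq:structure} between $\cL_1$ restricted to $\bV$ and $\cL_\star$ acting on $\cC^0$, via the conjugating operators $\cD:\bV\to\cC^0$, $h\mapsto h'$, and $\phi:\cC^0\to\bV$, which by construction are mutual inverses: $\phi(h')=h$ on $\bV$ and $(\phi(g))'=g$ on $\cC^0$. Since $1$ is the simple leading eigenvalue of $\cL_1$ on $\cC^1$ with eigenfunction the invariant density $h_1$, and the complement of the corresponding spectral projector is exactly $\bV$ (because $\int \cL_1 h = \int h$, so $\bV$ is $\cL_1$-invariant and closed, of codimension one, containing no eigendirection for the eigenvalue $1$ as the density is strictly positive), we have the direct-sum decomposition $\cC^1 = \bC\, h_1 \oplus \bV$ as $\cL_1$-invariant subspaces. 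Consequently
\[
    \sigma_{\cC^1}(\cL_1) = \{1\} \cup \sigma_{\bV}(\cL_1|_{\bV}),
\]
and it remains only to identify $\sigma_{\bV}(\cL_1|_{\bV}) \cap \{|z|>\lambda^{-1}\}$ with $\sigma_{\cC^0}(\cL_\star)\cap\{|z|>\lambda^{-1}\}$.

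Next I would establish the conjugacy on the nose. From \eqref{eq:structure} we have $\cD \circ (\cL_1|_{\bV}) = \cL_\star \circ \cD$ as maps $\bV \to \cC^0$, and composing on the right with $\phi$ and using $\cD\phi = \mathrm{Id}_{\cC^0}$ gives $\cD \circ (\cL_1|_{\bV}) \circ \phi = \cL_\star$; similarly $\phi \circ \cL_\star \circ \cD = \cL_1|_{\bV}$. Both $\cD$ and $\phi$ are bounded ($\phi$ visibly so from its integral formula, $\cD$ as a map $\bV\to\cC^0$ with the $\cC^1$ norm on the source) and are mutual inverses on the relevant spaces, so $\cL_1|_{\bV}$ on $\bV$ and $\cL_\star$ on $\cC^0$ are similar Banach space operators; hence their full spectra coincide, $\sigma_{\bV}(\cL_1|_{\bV}) = \sigma_{\cC^0}(\cL_\star)$. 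Intersecting with $\{|z|>\lambda^{-1}\}$ and combining with the decomposition above yields the claim. (One should note that the point $1$ may a priori also lie in $\sigma_{\cC^0}(\cL_\star)$, which is why the statement writes a union rather than a disjoint union; this is harmless.)

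The one genuine subtlety — the step I expect to need the most care — is verifying that $\bV$ is precisely the $\cL_1$-invariant complement of $\bC h_1$ and that $\phi$ indeed maps $\cC^0$ into $\bV$, i.e.\ that $\int_0^1 \phi(g)(y)\,dy$ need not vanish but the \emph{relevant} normalization is consistent; in fact the definition \eqref{eq:phi} is rigged so that $\phi(g)(0)=0$ is irrelevant and what matters is $\phi(h')=h$ for $h\in\bV$, which is a one-line computation using $h(0)$ and $\int_0^1 h = 0$. The only thing to check is that the subtraction of $\int_0^1(1-y)g(y)\,dy$ is exactly the constant that makes $\phi(h')=h$ rather than $h - h(0)$ or $h+\text{const}$; this follows by integrating by parts. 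With that in hand, the argument is purely a similarity-of-operators argument and carries no analytic difficulty beyond the Lasota–Yorke / essential-spectrum bound $\lambda^{-1}$ for $\cL_1$ on $\cC^1$ (which controls why we restrict to $\{|z|>\lambda^{-1}\}$ and guarantees the relevant part of the spectrum is discrete), a fact already available from the standard theory invoked earlier in the section.
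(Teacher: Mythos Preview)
Your argument is correct but follows a different path from the paper. The paper argues eigenvalue by eigenvalue: it first uses the essential spectral radius bound to reduce to point spectrum in $\{|z|>\lambda^{-1}\}$, then shows that if $\cL_1 h=\nu h$ with $h\in\cC^1$ then $\cL_\star h'=\nu h'$, and conversely if $\cL_\star g=\nu g$ then $h=\phi(g)\in\bV$ satisfies $(\cL_1 h-\nu h)'=0$, whence $\cL_1 h-\nu h$ is constant and integrating forces the constant to vanish. Your route instead observes that $\cD:\bV\to\cC^0$ and $\phi:\cC^0\to\bV$ are bounded mutual inverses conjugating $\cL_1|_{\bV}$ and $\cL_\star$, so the two operators are similar and their \emph{full} spectra coincide --- this actually gives a stronger conclusion than the lemma states (the restriction to $\{|z|>\lambda^{-1}\}$ is then superfluous). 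Two small remarks: (i) your uncertainty about whether $\phi$ lands in $\bV$ is unfounded --- a one-line Fubini gives $\int_0^1\phi(g)(x)\,dx=\int_0^1(1-y)g(y)\,dy-\int_0^1(1-y)g(y)\,dy=0$; (ii) you invoke the invariant density $h_1\in\cC^1$ to split $\cC^1=\bC h_1\oplus\bV$, which is fine but is an extra ingredient; the paper avoids it by working directly with eigenvectors and using only that Lebesgue is a left eigenvector for $1$ and that $\bV$ is $\cL_1$-invariant (alternatively you could use the invariant codimension-one subspace $\bV$ and the induced action on $\cC^1/\bV\cong\bC$, which is multiplication by $1$, to get $\sigma_{\cC^1}(\cL_1)\subset\{1\}\cup\sigma_{\bV}(\cL_1|_{\bV})$ without ever naming $h_1$).
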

\begin{proof}
    It is well known that the essential spectral radius of $\cL_1$, when acting on $\cC^1$ is bounded by $\lambda^{-1}$, hence we can restrict ourselves to the point spectrum.

    Since $\int_{0}^{1}\vf\cL_1 h=\int_{0}^{1}(\vf\circ f) \cdot h$, it follows that the Lebesgue measure is an eigenvector, with eigenvalue one, of the dual operator, and hence $1\in \sigma_{\cC^1}(\cL_1)$. In addition, $\bV=\{h\in\cC^1\;:\; \int_0^1 h=0\}$ is invariant under $\cL_1$. It follows that if $\cL_1 h =\nu h$, $|\nu|>\lambda^{-1}$ and $h\in \cC^1$, then $h'\in \cC^0$ and  \eqref{eq:structure} implies $\cL_\star h'=\nu h'$. On the other hand if $g\in \cC^0$ and $\cL_\star g =\nu g$, $|\nu|> \lambda^{-1}$, then $h=\phi(g)\in \bV$ and
    \[
        ( \cL_1h-\nu h )' =  \cL_\star g-\nu g=0.
    \]
    Hence, there exists a constant $C$ such that $\cL_1h-\nu h=C$, but integrating we have $C=0$, thus $h$ is an eigenvector of $\cL_1$.
\end{proof}
\begin{rem} Note that the above Lemma holds verbatim with $W^{1,1}$ substituted to $\cC^1$. In the following, we find more convenient to consider the spectrum of $\cL_1$ when acting on $W^{1,1}$.
\end{rem}
Let
\[
    \tau=\lambda^{-1} +\int_0^1\left|\left(\frac{1}{f'(y)}\right)'\right|  dy  =\lambda^{-1}+ \|D_{f}\|_{L^1}.
\]
\begin{lem} \label{lem:Phi-est}
    The norm of $\cL_2$ on $L^1$ is bounded by $\lambda^{-1}$. The operator $\cL_c(g)=\cL_1(\phi(g) D_{f})$, acting on $L^1$, is a  compact operator.
    In addition, for all $g\in L^1$
    \[
        \begin{split}
            {\|\phi(g)\|}_{L^1}&\leq\frac 12 {\|g\|}_{L^1}\\
            {\|\phi(g)\|}_{L^\infty}&\leq {\|g\|}_{L^1}\\
            {\|\phi(g)\|}_{W^{1,1}}&\leq \frac 32{\|g\|}_{L^1}.
        \end{split}
    \]
    In particular \(\cL_{*} : L^{1} \to L^{1}\) is a well-defined operator and \({\|\cL_{*}\|}_{L^1} \leq \tau\).
\end{lem}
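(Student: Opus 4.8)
The plan is to treat the four assertions in order; the bounds on $\phi$ do most of the work, the estimates on $\cL_2$ and $\cL_\star$ then follow by routine bookkeeping, and the compactness of $\cL_c$ is the only place calling for care. I would begin by recording that $\cL_1$ is an $L^1$-contraction: for $h\in L^1$, ${\|\cL_1 h\|}_{L^1}\le\int_0^1\cL_1|h|=\int_0^1|h|={\|h\|}_{L^1}$, the middle equality being the defining duality $\int\cL_1 u=\int u$. Since $f'>\lambda$ gives $[f'(y)]^{-2}\le\lambda^{-1}[f'(y)]^{-1}$ termwise, one has $|\cL_2 h|\le\cL_2|h|\le\lambda^{-1}\cL_1|h|$ pointwise, so integrating yields ${\|\cL_2 h\|}_{L^1}\le\lambda^{-1}{\|h\|}_{L^1}$.

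For the three estimates on $\phi$ I would argue directly from \eqref{eq:phi}. The $L^\infty$ bound is immediate: in $\phi(g)(x)=\int_0^x y\,g(y)\,dy+\int_x^1(y-1)g(y)\,dy$ the weights $y$ and $1-y$ lie in $[0,1]$ on their respective ranges, so $|\phi(g)(x)|\le{\|g\|}_{L^1}$. For the $L^1$ bound I would integrate the pointwise estimate $|\phi(g)(x)|\le\int_0^x y|g(y)|\,dy+\int_x^1(1-y)|g(y)|\,dy$ over $x\in[0,1]$ and apply Tonelli; each of the two resulting double integrals equals $\int_0^1 y(1-y)|g(y)|\,dy$, whence ${\|\phi(g)\|}_{L^1}\le 2\int_0^1 y(1-y)|g(y)|\,dy\le\tfrac12{\|g\|}_{L^1}$, using $y(1-y)\le\tfrac14$. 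The $W^{1,1}$ bound then follows from $\phi(g)'=g$, evident from the first representation in \eqref{eq:phi}: ${\|\phi(g)\|}_{W^{1,1}}={\|\phi(g)\|}_{L^1}+{\|g\|}_{L^1}\le\tfrac32{\|g\|}_{L^1}$.

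For the compactness of $\cL_c(g)=\cL_1(\phi(g)D_{f})$ I would factor it as $\cL_1\circ M_{D_f}\circ\phi$, with $M_{D_f}$ multiplication by the distortion. The $W^{1,1}$ estimate says $\phi:L^1\to W^{1,1}(I)$ is bounded; the inclusion $W^{1,1}(I)\hookrightarrow L^1(I)$ is compact (the one-dimensional Rellich--Kondrachov statement, obtainable also by Helly selection together with the uniform $L^\infty$-bound that bounded subsets of $W^{1,1}(I)$ enjoy); since $f\in\cC^2$, $D_{f}=(1/f')'\in\cC^0\subset L^\infty$, so $M_{D_f}:L^1\to L^1$ is bounded; and $\cL_1:L^1\to L^1$ is bounded, being a contraction. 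As the composition of bounded operators with a compact one, $\cL_c$ is compact on $L^1$. Finally, since $\cL_\star g=\cL_2 g+\cL_c g$ by \eqref{eq:structure}, combining the bound on $\cL_2$ with ${\|\cL_c g\|}_{L^1}\le\int_0^1\cL_1|\phi(g)D_{f}|={\|\phi(g)D_{f}\|}_{L^1}\le{\|\phi(g)\|}_{L^\infty}{\|D_{f}\|}_{L^1}\le{\|g\|}_{L^1}{\|D_{f}\|}_{L^1}$ shows that $\cL_\star:L^1\to L^1$ is well-defined with ${\|\cL_\star\|}_{L^1}\le\lambda^{-1}+{\|D_{f}\|}_{L^1}=\tau$. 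The one genuinely delicate point is this compactness step: one must not route through $\cC^0$, since $W^{1,1}(I)\hookrightarrow\cC^0(I)$ is bounded but \emph{not} compact — the gain comes from $\phi$ producing an antiderivative of an $L^1$ function, i.e.\ a $BV$ function, which does sit inside a space compactly embedded in $L^1$.
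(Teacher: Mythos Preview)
Your proof is correct and follows essentially the same route as the paper's: the pointwise comparison $\cL_2\le\lambda^{-1}\cL_1$, the Tonelli computation yielding $2\int y(1-y)|g|$, the observation $\phi(g)'=g$, and compactness via the factorization through $W^{1,1}\hookrightarrow L^1$ all match the paper's argument. You have merely made the final $\tau$-bound and the Rellich--Kondrachov step more explicit than the paper does, and your cautionary remark about not routing through $\cC^0$ is a correct (and helpful) aside.
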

\begin{proof}
    Since
    \[
        {\|\cL_2 h\|}_{L^1}\leq \lambda^{-1}{\|\cL_1 h\|}_{L^1}\leq  \lambda^{-1}{\|h\|}_{L^1},
    \]
    the first statement follows. Moreover,
    \[
        \begin{split}
            \int_0^1|\phi(g)(x)|dx
            &\leq 2\int_0^1|g(y)|y(1-y) dy\leq \frac 12{\|g\|}_{L^1},\\
            |\phi(g)(x)|
            &\leq\int_0^1|g(y)| dy={\|g\|}_{L^1}.
        \end{split}
    \]
    Finally,  ${\|\phi(g)'\|}_{L^1}\leq {\|g\|}_{L^1}$ implies the last of the three inequalities and also that $\phi$ is compact, the compactness of $\cL_c$ follows.
\end{proof}
\begin{thm} \label{thm:simple-bound} Let us consider $\cL_1$ as an operator acting on $W^{1,1}$, then $\sigma_{\textrm{ess}}(\cL_1)\subset \{z\in\bC\;:\; |z|\le \lambda^{-1}\}$.
    Moreover $\sigma(\cL_1)\setminus \{1\}\subset \{z\in\bC\;:\; |z|\le \tau\}$.
\end{thm}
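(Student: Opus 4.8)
The plan is to combine the structural decomposition of Lemma~\ref{lem:Lstar} with the norm estimates of Lemma~\ref{lem:Phi-est} and the Lasota--Yorke machinery already invoked in the previous section. First I would establish Lasota--Yorke inequalities for $\cL_1$ on $W^{1,1}$. Using \eqref{eq:derivL} with $k=1$, namely $(\cL_1 h)' = \cL_2 h' + \cL_1(h\cdot D_f)$, together with ${\|\cL_2\|}_{L^1}\leq \lambda^{-1}$ and ${\|\cL_1 g\|}_{L^1} = {\|g\|}_{L^1}$, we get ${\|(\cL_1 h)'\|}_{L^1} \leq \lambda^{-1}{\|h'\|}_{L^1} + {\|D_f\|}_{L^\infty}{\|h\|}_{L^1}$, while trivially ${\|\cL_1 h\|}_{L^1}\leq {\|h\|}_{L^1}$. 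Since $W^{1,1}$ is compactly embedded in $L^1$, the Hennion--Nussbaum argument (exactly as in the proof of Lemma~\ref{lem:spectrum0}) yields $\sigma_{\textrm{ess}}(\cL_1)\subset\{|z|\leq\lambda^{-1}\}$, which is the first assertion.

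For the second assertion I would use Lemma~\ref{lem:Lstar} (in its $W^{1,1}$ version, as noted in the remark following it): outside the disk $\{|z|\leq\lambda^{-1}\}$, the spectrum of $\cL_1$ on $W^{1,1}$ is contained in $\{1\}\cup\sigma_{\cC^0}(\cL_\star)$. It therefore suffices to bound the part of $\sigma(\cL_\star)$ with modulus $>\lambda^{-1}$. Here I would work with $\cL_\star$ acting on $L^1$ rather than $\cC^0$; one should first note that any eigenvalue of $\cL_\star$ on $\cC^0$ with modulus exceeding $\lambda^{-1}$ is also an eigenvalue of $\cL_\star$ on $L^1$ — the eigenfunction $g\in\cC^0\subset L^1$ works directly, since $\cL_\star$ is defined by the same formula on $L^1$ by Lemma~\ref{lem:Phi-est}. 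Then, since $\cL_\star = \cL_2 + \cL_c$ with $\cL_c$ compact on $L^1$ (Lemma~\ref{lem:Phi-est}) and ${\|\cL_2\|}_{L^1}\leq\lambda^{-1}$, the essential spectral radius of $\cL_\star$ on $L^1$ is at most $\lambda^{-1}$, so the part of its spectrum outside $\{|z|\leq\lambda^{-1}\}$ consists of isolated eigenvalues, all bounded in modulus by the spectral radius ${\|\cL_\star\|}_{L^1}\leq\tau$ (again Lemma~\ref{lem:Phi-est}). Hence every point of $\sigma(\cL_1)$ on $W^{1,1}$ other than $1$ that lies outside $\{|z|\leq\lambda^{-1}\}$ lies in $\{|z|\leq\tau\}$; combined with the essential bound $\lambda^{-1}\leq\tau$, this gives $\sigma(\cL_1)\setminus\{1\}\subset\{|z|\leq\tau\}$.

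The main subtlety — not really an obstacle but the point requiring care — is the bookkeeping between the three spaces $W^{1,1}$, $\cC^0$ and $L^1$: the structural identity of Lemma~\ref{lem:Lstar} is naturally stated with $\cC^1$/$\cC^0$, whereas the clean operator-norm bound ${\|\cL_\star\|}\leq\tau$ and the compactness of $\cL_c$ are on $L^1$. One must check that passing an eigenfunction back and forth through $\phi$ and $\cD$ stays within the right spaces, which is guaranteed by the mapping properties $\phi:L^1\to W^{1,1}$ and $\phi(h')=h$ on $\bV$ recorded in \eqref{eq:phi}, \eqref{eq:structure} and Lemma~\ref{lem:Phi-est}. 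A secondary point is that one should confirm $\tau\geq\lambda^{-1}$ so that the essential-spectrum bound is subsumed by the $\{|z|\leq\tau\}$ bound; this is immediate from the definition $\tau=\lambda^{-1}+\|D_f\|_{L^1}$. Everything else is routine.
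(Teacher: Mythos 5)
Your proof is correct and takes essentially the same route as the paper: the Lasota--Yorke inequality derived from \eqref{eq:derivL} yields $\sigma_{\textrm{ess}}(\cL_1)\subset\{|z|\le\lambda^{-1}\}$, and for any $\nu\ne 1$ with $|\nu|>\lambda^{-1}$ one differentiates an eigenfunction $h\in\bV$ to obtain $\cL_\star h'=\nu h'$ and concludes from $\|\cL_\star\|_{L^1}\le\tau$ (Lemma~\ref{lem:Phi-est}). Your detour through $\cC^0$ and the appeal to compactness of $\cL_c$ are unnecessary --- $h\in W^{1,1}$ gives $h'\in L^1$ directly, and the operator-norm bound on $L^1$ already controls the entire spectrum of $\cL_\star$ --- but they do no harm.
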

\begin{proof}
    If $\nu\in\bC$ is such that $|\nu|>\lambda^{-1}$ and $\cL_1 h=\nu h$, for some $h\in W^{1,1}$ with $\int h=0$, then $\cL_\star g=\nu g$, for $g=h'$.
    Then, recalling Lemma \ref{lem:Phi-est} ,
    \[
        \begin{split}
            |\nu|\|g\|_{L^1}&\leq\lambda^{-1}\|g\|_{L^1}+\int_0^1\left|\left(\frac 1{f'(y)}\right)'\right|  dy \ {\| \phi(g)\|}_{L^\infty}\\
            &\leq \left[\lambda^{-1}+\|D_{f}\|_{L^1} \right] {\|g\|}_{L^1}.
        \end{split}
    \]
    This proves the theorem since $h'=0$ implies $h=0$.
\end{proof}
The above lemma provides an upper bound for the spectral gap, but it is very unsatisfactory. First, such a bound is of interest only if $\tau<1$ (for example, in the counterexample of Keller, Rugh \cite{KR04} $\tau>1$).   Second, even if $\tau<1$, it is unclear if there exists other point spectrum outside $\{z\in\bC\;:\;|z|\leq \lambda^{-1}\}$.
\begin{rem}
    \label{rem:hilbert}
    For $\cL_1$ the Hilbert metric approach yields a bound of the spectral gap given by a rather cumbersome formula. However, if one considers the limit of large $\lambda$ and small $D_{f}$, then, using \cite[Lemma 2.3]{Liverani95}, one can check that the bound of the spectral gap cannot be better than $\lambda^{-1}(1+2\|D_{f}\|_\infty)+\|D_{f}\|_\infty$, which is worse than the one provided, in the same limit, by Theorem \ref{thm:simple-bound}. However, for large $D_{f}$ the bound of Theorem \ref{thm:simple-bound} is useless while  \cite[Lemma 2.3]{Liverani95} provides an explicit, although rather poor, bound.
\end{rem}
Very few results are known on the existence of point spectrum with the notable exception of cases when the map has been explicitly constructed  to exhibit point spectrum \cite{KR04} or when one restricts the map to the class of holomorphic maps, often of a special nature, as in \cite{BJS17, SBJ13, SBJ17}. No analytical technique is available to treat $\cC^2$ open classes of maps. On the contrary a lot of work exists on the side of numerical computation, mainly of the invariant measure but also, to some extent, of the spectrum, e.g., see \cite{Liverani01} and references therein. While most of the numerical work does not track round off errors and hence it is unsatisfactory from the rigorous point of view, some notable exceptions use interval arithmetic and hence have the status of a proof, e.g., \cite{BGNX16, GN16, Ippei11}.

Hence, it is interesting to note that the present approach offers an alternative, possibly much more convenient,  route to a numerical computation of the spectrum.
\begin{rem}
    \label{rem:numerical}
    We conclude the section with a remark on how the above discussion can provide a numerical scheme to locate eigenvalues.
    Let $\cK:= \cL_1(\phi(g)D_{f})$, $\phi$ being defined in \eqref{eq:phi}. Also, let $\alpha>0$ and $\{\vf_i\}_{i=1}^\infty$ be a Schauder base of $W^{1,1}$ such that, calling $\Pi_N$ the projection onto $\operatorname{span}\{\vf_i\}_{i=1}^N$ along $\operatorname{span}\{\vf_i\}_{i=N+1}^\infty$,  we have $\|\Pi_N\|_{L^{1}}\leq \Const $ and
    \[
        \|\Id-\Pi_N\|_{W^{1,1}\to L^1}\leq \Const N^{-\alpha}.
    \]
    Then, to study the spectrum of $\nu- \cL_\star$, $1>|\nu|> \lambda^{-1}$ when acting on $L^1$, write
    \[
        \begin{split}
            & \nu-\cL_\star
            =(\nu-\cL_2)\left[\Id-(\nu-\cL_2)^{-1}\cK \right]=(\nu-\cL_2)\left[\Id-\Pi_N(\nu-\cL_2)^{-1}\Pi_N\cK-\Delta_N \right]\\
            &=(\nu-\cL_2)\left[\Id-\Pi_N(\nu-\cL_2)^{-1}\cK\Pi_N\right] \left(\Id-\left[\Id-\Pi_N(\nu-\cL_2)^{-1}\cK\Pi_N\right]^{-1}\Delta_N\right),\\
            & \Delta_N= (\Id-\Pi_N)(\nu-\cL_2)^{-1}\cK +\Pi_N(\nu-\cL_2)^{-1}(\Id-\Pi_N)\cK.
        \end{split}
    \]
    Note that Lemma \ref{lem:Phi-est} implies $\|\cK\|_{L^1}+\|\cK\|_{L^1\to W^{1,1}}\leq \Const$, hence
    \[
        {\|\Delta_N\|}_{L^1}\leq \Const (\lambda^{-1}-|\nu|)^{-1} N^{-\alpha}.
    \]
    Thus $\nu$ belongs to the resolvent of $\cL_\star$ if
    \[
        \|\left[\Id-\Pi_N(\nu-\cL_2)^{-1}\cK\Pi_N\right]^{-1}\Delta_N\|_{L^1}\leq \Const \frac{{\|\left[\Id-\Pi_N(\nu-\cL_2)^{-1}\Pi_N\cK\right]^{-1}\|}_{L^1}}{(\lambda^{-1}-|\nu|) N^\alpha}\leq 1.
    \]
    Since $\Pi_N(\nu-\cL_2)^{-1}\Pi_N\cK$ is a finite rank operator, $\left[\Id-\Pi_N(\nu-\cL_2)^{-1}\Pi_N\cK\right]^{-1}$ its norm can be evaluated numerically. In fact, by Neumann series, we have, for $\zeta$ small and setting $\cR_N=\Pi_N(\nu-\cL_2)^{-1}\Pi_N$,
    \[
        \left[\Id-\zeta\cR_N\cK\right]^{-1}=\Id+\zeta\left[\Id-\zeta\cR_N\cK\Pi_N\right]^{-1}\cR_N\cK,
    \]
    thus, by analyticity, the same holds for $\zeta=1$.
    Hence, the spectrum of $\cL_\star$ is close to the values of $\nu$ for which $\cR_N\cK\Pi_N$ has eigenvalue one.
    Since  $\cR_N\cK\Pi_N$ is a $N\times N$ matrix, this provides a rather quick way to determine rigorously if $\cL_1$ has point spectrum outside the spectral radius of $\cL_2$, aside from one.
\end{rem}

\subsection{Point spectrum}\ \\
If we consider class of maps with some special features, it is possible use arguments like the ones put forward in Remark \ref{rem:numerical} to obtain relevant information about the point spectrum without any computer assisted method.

As an example, the next Theorem provides more precise information on the spectrum in a special class of maps. Note that the following approach can be generalised, here we present only the simplest application to illustrate the logic of the argument.
\begin{thm}
    \label{thm:large-gap1}
    Let $I:=[0,1]$ and $f:I\to I$. Consider the partition $\{(p_i,p_{i+1})\}_{i=1}^{N}$ to be a partition of a full-measure subset of $[0,1]$ such that for any $1\leq i\leq N$, $f([p_i,p_{i+1}])=[0,1]$, $f\in\cC^3([p_i,p_{i+1}],[0,1])$, and $f'(p_i^+)=f'(p_{i}^-)$, $i\in \{2,\dots, N\}$.\footnote{ By $g(p^+)$,  $g(p^-)$ we mean the right and left limit, respectively, of the function \(g\) at the point \(p\). Since $f'(p_i^+)=f'(p_{i}^-)$ there is no need to distinguish between $p_i^-$ and $p_i^+$, so we will not do it anymore.} Also assume that $D_{f}\not\equiv 0$ and $D_{f}\geq 0$. Then,\footnote{Note that the following provides a spectral gap if $f'(1)\geq 2$.} for $\cL_1:W^{2,1}([0,1])\to W^{2,1}([0,1])$
    \[
        \begin{split}
            &\sigma(\cL_1)\subset\left\{z\in\bC\;:\; |z|\leq  \min\big\{1, \tfrac 2{f'(1)}-\tfrac 1{f'(0)} \big\}\right\}\cup \left\{1\right\}\\
            &\sigma_{\textrm{ess}}(\cL_1)\subset\left\{z\in\bC\;:\; |z|\leq  \tfrac 1{f'(1)^2}\right\}.
        \end{split}
    \]
    Moreover, $\left\{1\right\}$ is a simple eigenvalue of $\cL_1$. In addition, there exists $\mu_2< \tfrac 1{f'(1)}$ such that, setting $\Delta=\frac1{f'(1)}-\frac1{f'(0)}$, $\mu_*= \frac1{f'(1)}$, $\Gamma=\left(1-\sum_{i=1}^N\frac1{f'(p_i)}\right)$ and
    \[
        \begin{split}
            &A_0=\left\{a\in\bR\;:\; \mu_2<a< 1\right\}\\
            &A_1=\left\{a+ib\in\bC\;:\; a>\mu_*, b^2<\frac{(a-\mu_*)\Delta}{2(1+a^{-1}\Gamma)}\left[\sqrt{1+4\frac{(1+a^{-1}\Gamma)^2(a-\mu_*)^2}{\Delta^2}}-1\right]\right\}\\
            &A_2=\left\{a+ib\in\bR\;:\; a<-\mu_*,\, b^2< \frac{(|a|-\mu_*)^2(a^2-\mu_*^2-\Delta|a|)}{a^2-\mu_*^2+\Delta|a|}\right\}\\
            &A_3=\left\{a+ib\in\bC\;:\;a\geq 0,\, |a+ib-\mu_*|^2>\mu_*^2+\mu_*\frac{\Delta+\sqrt{4\mu_*^2+\Delta^2}}2\right\}\\
            &A_4=\left\{a+ib\in\bC\;:\;a< 0,\, b^2>\mu_*\Delta+\mu_*^2+2|a|\mu_*-a^2\right\}
        \end{split}
    \]
    we have
    \[
        \left(\cup_{i=0}^3A_i\right)\cap [\sigma(\cL_1)\setminus\{1\}]=\emptyset.
    \]
\end{thm}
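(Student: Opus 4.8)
\emph{The first three assertions.} These follow quickly from the machinery already in place. Because $D_f=(1/f')'\ge 0$ and, by hypothesis, $f'$ extends continuously across the partition points, the function $1/f'$ is continuous and non-decreasing on all of $[0,1]$; hence $f'$ is non-increasing, the sharp expansion constant is $\lambda=f'(1)=1/\mu_*$, and $\|D_f\|_{L^1}=\int_0^1(1/f')'\,dx=1/f'(1)-1/f'(0)=\Delta$. Applying Theorem~\ref{thm:simple-bound} with this optimal $\lambda$ gives $\sigma(\cL_1)\setminus\{1\}\subset\{|z|\le\tau\}$ with $\tau=\mu_*+\Delta=2/f'(1)-1/f'(0)$; since spectra outside the essential spectral radius are intrinsic, the same holds on the nested space $W^{2,1}$, and combining with the trivial bound that the spectral radius of $\cL_1$ equals $1$ (Lebesgue is fixed by the dual) yields the first displayed inclusion. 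For $\sigma_{\mathrm{ess}}$, I would differentiate \eqref{eq:trans-op} twice via \eqref{eq:derivL}: one gets $(\cL_1h)''=\cL_3h''+3\cL_2(h'D_f)+\cL_2(hD_f')+\cL_1(hD_f^2)$, whose leading term obeys $\|\cL_3\psi\|_{L^1}\le f'(1)^{-2}\|\psi\|_{L^1}$ while the remaining terms (which use $f\in\cC^3$) map $W^{2,1}$ compactly into $W^{1,1}$; this Lasota--Yorke inequality with contraction factor $1/f'(1)^2$ and the Hennion--Nussbaum argument, exactly as in Lemma~\ref{lem:spectrum0}, give $\sigma_{\mathrm{ess}}(\cL_1)\subset\{|z|\le 1/f'(1)^2\}$. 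Simplicity of the eigenvalue $1$ is the standard consequence of $f$ being full branch (topologically transitive, unique strictly positive absolutely continuous invariant density, aperiodic peripheral spectrum).

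\emph{The exclusion of the regions $A_i$.} This is the heart of the statement, and the plan is to turn the eigenvalue equation into a small closed system of linear inequalities in a few functionals of the eigenfunction, exploiting $D_f\ge 0$ to make those inequalities sharp. Fix $\nu$ with $|\nu|>1/f'(1)^2$ in the point spectrum with eigenfunction $h\in W^{2,1}$; since $\int\cL_1h=\int h$ forces $(\nu-1)\int h=0$, for $\nu\neq1$ we may take $\int h=0$, so with $g:=h'\in W^{1,1}$ one has $\phi(g)=h$ and \eqref{eq:structure} gives $\cL_\star g=\nu g$. Differentiating once more (apply \eqref{eq:derivL} to $\cL_2 g$ and to $\cL_1(\phi(g)D_f)$, and use $\phi(g)'=g$) produces a second identity, on $L^1$:
\[
   \nu g' = \cL_3 g' + 3\cL_2(gD_f) + \cL_2(hD_f') + \cL_1(hD_f^2).
\]
From the two identities I would extract scalar relations by: (i) taking $L^1$ norms, using $\|\cL_k\psi\|_{L^1}\le f'(1)^{-(k-1)}\|\psi\|_{L^1}$ and the bounds of Lemma~\ref{lem:Phi-est}, so that the terms carrying $D_f$ enter with a \emph{definite sign} thanks to $D_f\ge0$; (ii) integrating, using $\int\cL_1\psi=\int\psi$, $\int\cL_2\psi=\int\psi/f'$, and integration by parts on each branch together with the continuity of $1/f'$ at the $p_i$ — this is precisely the step that manufactures the combination $\Gamma=1-\sum_i 1/f'(p_i)\ge 0$ (a quantitative measure of the failure of affinity) and the boundary values $h(0),h(1)$; (iii) tracking the endpoints, which gives the relation $h(1)(\nu-\mu_*)=h(0)(\nu-\mu_*+\Delta)$. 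Writing $\nu=a+ib$ and combining real and imaginary parts of these relations yields quadratic inequalities of exactly the shape defining $A_1,\dots,A_4$: the circles $|a+ib-\mu_*|^2>\mu_*^2+\cdots$ come from an inequality of the type $|\nu|^2-2\mu_*\Re\nu\le(\text{quantity controlled by }\Delta,\Gamma,\mu_*)$, and the parabolas from the same after solving for $b^2$. The real interval $A_0=(\mu_2,1)$ is the one region not coming from these norm estimates: there I would argue separately, using that $1$ is the simple leading eigenvalue together with a sign/cone argument on the real eigenfunction that is made available by $D_f\ge 0$, ruling out real eigenvalues just below $1$; $\mu_2$ is the threshold produced by that argument.

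\emph{Main obstacle.} The principal difficulty is the bookkeeping in steps (ii)--(iii): one must pick the right pair (or triple) of functionals — the natural candidates are $\|g\|_{L^1}$, $\|h\|_{L^\infty}=\|\phi(g)\|_{L^\infty}$ and the boundary values $h(0),h(1)$ (equivalently $\int g$) — so that the identities genuinely close into a finite linear system, and then account for every lower-order and boundary contribution with the exact constants, so that $\Gamma$, $\Delta$ and $\mu_*$ appear as in the statement rather than with slack. A secondary point is the annulus $1/f'(1)^2<|z|\le1/f'(1)$, where the reduction to $\cL_\star$ of Lemma~\ref{lem:Lstar} is not directly licensed (that lemma requires $|z|>\lambda^{-1}$); on $W^{2,1}$ this is repaired by the once-more-differentiated identity above, whose contraction factor is $1/f'(1)^2$, so the reduction remains valid throughout the relevant annulus. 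Finally one must keep the $A_0$ argument genuinely separate from the quadratic-inequality arguments and check that the resulting $\mu_2$ satisfies $\mu_2\ge 1/f'(1)^2$, so that $A_0$ indeed lies outside the essential spectrum.
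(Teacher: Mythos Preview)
Your treatment of the first three assertions is essentially what the paper does: the $\tau$-bound from Theorem~\ref{thm:simple-bound} with $\lambda=f'(1)$, the $W^{2,1}$ Lasota--Yorke obtained by differentiating once more (your formula $\nu g'=\cL_3 g'+3\cL_2(gD_f)+\cL_2(hD_f')+\cL_1(hD_f^2)$ is exactly the paper's equation \eqref{eq:oneder}), and simplicity of $1$. No issue there.

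The gap is in the exclusion of the $A_i$. Your plan is to take $L^1$-norms and integrate the two differentiated identities, hoping to close a finite system in the functionals $\|g\|_{L^1}$, $\|h\|_{L^\infty}$, $h(0)$, $h(1)$. But norm inequalities of this type are exactly what produced the crude bound $|\nu|\le\tau$; they do not carry the cancellations needed for the sharper regions $A_1,\dots,A_4$, and there is no mechanism in your outline that would generate, say, the fourth-degree expression in $A_1$ or the shifted-circle condition in $A_3$. The paper's route is structurally different: set $\psi(g)(x)=\int_0^x g$ and
\[
\cL_+ g=\cL_2 g+\cL_1(D_f\cdot\psi(g)),\qquad \cL_\star=\cL_+-(\cL_1 D_f)\otimes\Big(g\mapsto\int_0^1(1-y)g(y)\,dy\Big),
\]
so $\cL_\star$ is a \emph{rank-one perturbation of a positive operator}. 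Positivity of $\cL_+$ (this is where $D_f\ge0$ is used decisively) gives $\|\cL_+\|_{L^1}=\mu_*$ with $1$ as left eigenvector of $\cL_+'$ (Lemma~\ref{lem:spec_one}), a spectral gap for $\cL_+$ (Lemma~\ref{lem:gap}), and the single scalar ``determinant''
\[
\Xi(z)=1+\int_0^1(1-y)\,(z-\cL_+)^{-1}\cL_1 D_f(y)\,dy,
\]
whose zeros are exactly the eigenvalues of $\cL_\star$. Each region $A_i$ then comes from a specific Neumann-type expansion of $(z-\cL_+)^{-1}$ (in $z^{-1}\cL_+$, in $(|z|^2-2a\cL_+)^{-1}\cL_+^2$, etc.) combined with the \emph{exact} identity $\int_0^1\cL_+^n\cL_1 D_f=\mu_*^n\Delta$ and the lower bound $\int_0^1(1-y)\cL_1 D_f\,dy=\Gamma$; this is what manufactures the precise constants. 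The interval $A_0$ is the one place the spectral gap of $\cL_+$ enters, via the pole of $\Xi$ at $\mu_*$; your ``cone argument'' is not a substitute, and indeed the paper's $\mu_2$ is defined through $\cL_+$, so one needs that operator already to formulate the statement.

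In short: the missing idea is the rank-one-perturbation/positive-operator decomposition and the scalar function $\Xi$. Without it the bookkeeping you flag as the ``main obstacle'' does not close, because the quantities you propose to track do not decouple into a finite system.
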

The proof of the above Theorem is a boring computation using the ideas illustrate in the previous section, so we postpone it to Appendix \ref{app:one}.
Here we provide an application of the Theorem to show what can be achieved with some, moderately lengthy, hand made computations. Much more could be obtained using the assistance of a computer as mentioned in Remark  \ref{rem:numerical}.

\begin{rem} Note that if $D_{f}\equiv 0$, then $f'(1)=N$ and $\cL_1$ has eigenvalue $N^{-1}$ with eigenfunction $g(x)=x-\frac{1}{2}$. Indeed,
    \[
        \begin{split}
            \cL_1 g(x)&=\sum_{i=1}^{N} N^{-1}g\left(\frac{x+i-1}N\right)=\sum_{i=1}^{N}\frac{\left(x+i-1\right)}{N^2}-\frac{1}{2N}\\
            &=N^{-1}\left( x+\frac{N-1}{2}-\frac{N}{2}\right)=Ng(x).
        \end{split}
    \]
    By perturbation theory, see \cite{KL99}, it follows that such an eigenvalue survives for small distortion. However, the above theorem implies that, for perturbations satisfying Theorem \ref{thm:large-gap1}, one cannot make it increase more than $\tfrac 2{f'(1)}-\tfrac 1{f'(0)}$.
\end{rem}

\begin{rem}
    As an example consider $f(x)=4x-x^2\mod 1$. In this case Theorem~\ref{thm:large-gap1} applies with $D_{f}=\frac{2}{(4-2x)^2}>0$, $f'(0)=4$, $f'(1)=2$, $\Delta=\frac{1}{4}$, $\mu_*= \frac{1}{2}$ and some  $\mu_2< \frac{1}{2}$.  Moreover $p_1=0$, $p_2=2-\sqrt 3$, $p_3=2-\sqrt 2$, hence
    $f'(p_2)=2\sqrt 3$,  $f'(p_3)=2\sqrt 2$ and $\Gamma=1-\frac 14-\frac1{2\sqrt 3}-\frac 1{2\sqrt 2}$.
    Consequently,
    for $\cL_1:W^{2,1}([0,1])\to W^{2,1}([0,1])$,
    \[
        \begin{split}
            &\sigma(\cL_1)\subset\left\{z\in\bC\;:\; |z|\leq  \tfrac 3{4}\right\}\cup \left\{1\right\}\\
            &\sigma_{\textrm{ess}}(\cL_1)\subset\left\{z\in\bC\;:\; |z|\leq  \tfrac 1{4}\right\}.
        \end{split}
    \]
    Moreover, the sets
    \[
        \begin{split}
            &A_0=\left\{a\in\bR\;:\; \mu_2<a< 1\right\}\\
            &A_1=\left\{a+ib\in\bC\;:\; a>\frac{1}{2}, \, b^2<\frac{(a-\frac{1}{2})}{8(1+a^{-1}\Gamma)}\left[\sqrt{1+64(1+a^{-1}\Gamma)^2(a-\frac{1}{2})^2}-1\right]\right\}\\
            &A_2=\left\{a+ib\in\bR\;:\; a<-\frac{1}{2},\, b^2< \frac{(|a|-\frac{1}{2})^2(4a^2-1-|a|)}{4 a^2-1 + {|a|}}\right\}\\
            &A_3=\left\{a+ib\in\bC\;:\;a\geq 0, \, \vert a+ib-\frac{1}{2}\vert^2 >\frac{5+\sqrt{17}}{16}\right\}\\
            &A_4=\left\{a+ib\in\bC\;:\;a< 0, \, b^2>\frac{3}{8} + |a| - a^2 \right\}
        \end{split}
    \]
    contain no spectrum of \(\cL_1\) except \(1\).
    These regions are illustrated in Figure~\ref{fig:A-regions}.
\end{rem}

\usetikzlibrary{math}
\tikzmath{ \gnum = 0.75 - 1/(2*sqrt(3)) - 1/(2*sqrt(2));}

\begin{figure}[tbp]
    \centering
    \noindent
    \begin{tikzpicture}[scale=4, domain=-1:1]
        \fill[fill=blue!10!white] (-1.1,-1.1) rectangle (1.1,1.1);
        \fill[fill=white] (0,0) circle (0.75 cm);
        \draw[color=gray!30!white, thin] (0,0) circle (0.5 cm);
        \path (0.5,-0.11) node (l2) {\(\frac{1}{2}\)};
        \draw (0.5,-0.05) -- (0.5,0.05);
        \draw[color=blue, very thick] (0,0) circle (0.75 cm);
        \path (0.75,-0.11) node (l2) {\(\frac{3}{4}\)};
        \draw (0.75,-0.05) -- (0.75,0.05);
        \draw[color=gray!30!white, thin] (0,0) circle (1 cm);
        \path (1,-0.11) node (l1) {\(1\)};
        \draw (1,-0.05) -- (1,0.05);
        \fill[color=gray!30!white, thin] (0,0) circle (0.25 cm);
        \path (0.25,-0.11) node (l3) {\(\frac{1}{4}\)};
        \draw (0.25,-0.05) -- (0.25,0.05);
        \fill (0:1 cm) circle (0.02);
        \draw[very thick, draw=blue] (0.45,0) node[above=2pt] {$A_0$} --  (3/4,0);
        \fill[fill=blue!10!white] plot[domain=0.5:0.8]
        (\x,{sqrt( (\x-0.5)*(sqrt(1+4^3*(1+\x^(-1)*\gnum )^2*(\x-0.5)^2)-1)/(8*(1+\x^(-1)*\gnum ))})
        |- (0.8,0) ;
        \draw[draw=blue, very thick] plot[domain=0.5:0.75]
        (\x,{sqrt( (\x-0.5)*(sqrt(1+4^3*(1+\x^(-1)*\gnum )^2*(\x-0.5)^2)-1)/(8*(1+\x^(-1)*\gnum ))})
        node[below right] {$A_1$} ;
        \fill[fill=blue!10!white] plot[domain=0.5:0.8]
        ((\x,{-sqrt( (\x-0.5)*(sqrt(1+4^3*(1+\x^(-1)*\gnum )^2*(\x-0.5)^2)-1)/(8*(1+\x^(-1)*\gnum ))})  |- (0.8,0) ;
        \draw[very thick, blue] plot[domain=0.5:0.75]
        ((\x,{-sqrt( (\x-0.5)*(sqrt(1+4^3*(1+\x^(-1)*\gnum )^2*(\x-0.5)^2)-1)/(8*(1+\x^(-1)*\gnum ))})  ;
        \fill[very thick, fill=blue!10!white] plot[domain=-0.8:-0.5] (\x,{sqrt(  (\x + 0.5)^2 * (4*\x^2 - 1 + \x  )/ (4*\x^2 - 1 - \x )  )})
        |- (-0.8,0) node[right,above=3pt] {$A_2$};
        \fill[very thick,  fill=blue!10!white] plot[domain=-0.8:-0.5] (\x,{-sqrt(  (\x + 0.5)^2 * (4*\x^2 - 1 + \x  )/ (4*\x^2 - 1 - \x )  )})  |- (-0.8,0);
        \draw[very thick, draw=blue] plot[domain=-0.8:-0.5]
        (\x,{sqrt(  (\x + 0.5)^2 * (4*\x^2 - 1 + \x  )/ (4*\x^2 - 1 - \x )  )});
        \draw[very thick,  draw=blue] plot[domain=-0.8:-0.5]
        (\x,{-sqrt(  (\x + 0.5)^2 * (4*\x^2 - 1 + \x  )/ (4*\x^2 - 1 - \x )  )});
        \fill[fill=blue!10!white]  plot[domain=0:0.3]
        (\x,{sqrt(\x - \x^2 + (1 + sqrt(17))/16)})
        |- (0,0.8)
        node[right] {$A_3$};
        \draw[very thick, blue] plot[domain=0.0:0.3]
        (\x,{sqrt(\x - \x^2 + (1 + sqrt(17))/16)});
        \fill[fill=blue!10!white]  plot[domain=0:0.3]
        (\x,{-sqrt(\x - \x^2 + (1 + sqrt(17))/16)})
        |- (0,-0.8)
        node[right] {$A_3$};
        \draw[very thick, blue] plot[domain=0.0:0.3]
        (\x,{-sqrt(\x - \x^2 + (1 + sqrt(17))/16)})  ;
        \draw (-0.05,0.56586) node[left] {\footnotesize \(\sim 0.56\)} -- (0.05,0.56586) ;
        \draw (-0.05,-0.56586) node[left] {\footnotesize \(\sim -0.56\)} -- (0.05,-0.56586) ;
        \fill[fill=blue!10!white]  plot[domain=-0.2:0]
        (\x,{sqrt(3/8 - \x - \x^2)})
        |- (0,0.8)
        node[left] {$A_4$};
        \draw[very thick, blue] plot[domain=-0.2:0]
        (\x,{sqrt(3/8 - \x - \x^2)});
        \fill[fill=blue!10!white]  plot[domain=-0.2:0]
        (\x,{-sqrt(3/8 - \x - \x^2)})
        |- (0,-0.8)
        node[left] {$A_4$};
        \draw[very thick, blue] plot[domain=-0.2:0]
        (\x,{-sqrt(3/8 - \x - \x^2)});
        \draw (-0.05,0.6123) node[left] {\footnotesize \(\sim 0.61\)} -- (0.05,0.6123) ;
        \draw (-0.05,-0.6123) node[left] {\footnotesize \(\sim -0.61\)} -- (0.05,-0.6123) ;
        \draw (-1.1,0) -- (1.1,0);
        \draw (0,-1.1) -- (0,1.1);
        \path (225:1.2cm) node (v0) {\(\mathbb{C}\)};
    \end{tikzpicture}
    \caption{Spectrum of the example. Theorem~\ref{thm:large-gap1} implies that the disk of radius 1/4 (central shaded region) contains the essential spectrum while the eigenvalues, except for 1, cannot belong to the exterior shaded region.}
    \label{fig:A-regions}
\end{figure}
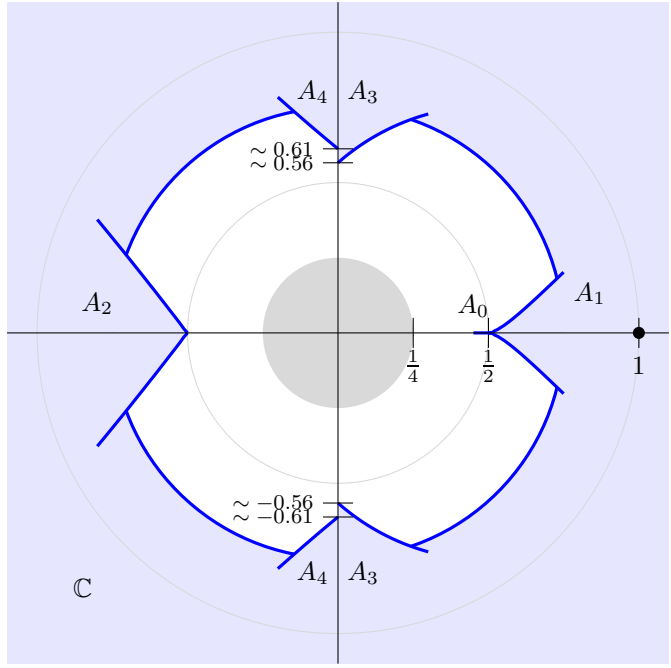

\noindent

\subsubsection{\bfseries Different operators}As a last comment on the present approach to the study of the spectrum of $\cL_1$, let us remark that it is possible to investigate the commutation relations with different operators. As an example, let us consider the operator $A(h)=h'+\alpha h$ for some function $\alpha$. Then
\begin{equation}\label{eq:altpf}
    A\cL_1 h=\cL_2 h'+ \cL_1 (D_{f}h) + \cL_1 ((\alpha\circ f) h)=\cL_2 (Ah)+\cL_1\left[(D_{f}-\frac{\alpha}{f'}+\alpha\circ f)h\right].
\end{equation}
In general, is not obvious what the best choice of $\alpha$ could be. To keep the discussion short let us consider only the special, well known, case in which $\ln f'$ is $\cC^1$ cohomologous to a constant.\footnote{ This happens if $f$ is $\cC^2$ conjugated to a map $f_\ell(x)=\ell x\mod 1$, $\ell\in\bZ$ with $|\ell|\geq 2$.} That is, there exists a $\cC^1$ function $B$ such that
\[
    \ln f'+B-B\circ f=c.
\]
Then we can choose $\alpha=B'$ and obtain
\[
    A\cL_1 h=\cL_2 (Ah).
\]
Accordingly, if there exists $h\in W^{1,1}$ such that $\cL_1 h=\nu h$, $|\nu|> \lambda^{-1}$, then $\cL_2(Ah)=\nu Ah$, thus $Ah=0$ (since the spectral radius of $\cL_2$ on $L^1$ is bounded by $\lambda^{-1}$). This implies that $h=e^{-B}a$, $a\in\bC$, hence
\[
    \cL_1e^{-B}=\cL_0e^{-c-B\circ f}=e^{-c-B}\cL_01=e^{-c}N e^{-B}.
\]
Integrating yields $e^{-c} N=1$, hence $\nu=1$. It follows that
\[
    \sigma_{W^{1,1}}(\cL_1)\subset\{1\}\cup \{z\in\bC\;:\; |z|\leq \lambda^{-1}\},
\]
hence, as expected, the existence of a large spectral gap.

In the general case, one could try to minimise $D_{f}-\frac{\alpha}{f'}+\alpha\circ f$ in order to produce estimates that improve  Theorem~\ref{thm:large-gap1}.

\section{Piecewise monotone maps}
\label{sec:non-unifexp}
Up to now we have considered uniformly expanding systems. However much of our arguments were rather general, it is then natural to ask if one can apply the present philosophy also to non-uniformly expanding maps or even maps that expand only in some part of the phase space. We believe the answer to be affirmative and to justify such a belief we discuss one of the simplest possibilities: one dimensional piecewise monotone map. Note however that we will develop the full theory only for one dimensional full branch monotone maps (See \cite{Milnor88} for full details on the related theory). Of course, for such more general systems one cannot expect to prove as many results as in the previous section. Yet, some interesting and novel results can be obtained. To illustrate such a fact we will discuss the operator associated to the measures of maximal entropy.

Let $\cP=\{I_1,\dots, I_N\}$, $N\in\bN$, be a partition of $[0,1]$ in the sense that the $I_i$ are open disjoint sets and $\cup_{i=1}^N \overline{I_i}=[0,1]$.
Let $f:[0,1]\to[0,1]$ be a map such that $f|_{I_i}$ is strictly monotone and $f|_{I_i}\in\cC^1$. Thus each point in $(0,1)$ has at most $N$ preimages. Suppose that $\Lambda={\|f'\|}_\infty<\infty$. We write $\bM_*$ for the set of maps satisfying the above properties.
\begin{rem} Note that maps in $\bM_*$ can have attracting fixed points or attracting periodic orbits and can be multimodal.
\end{rem}
\begin{rem} Note that we ask only $f|_{I_i}\in\cC^1$, rather than $f|_{I_i}\in\cC^{1+\alpha}$ as is necessary when studying the SRB measure.
\end{rem}

We start by studying the spectral and essential spectral radius of $\cL_0$. For each $h\in L^1$,
\begin{equation}
    \label{eq:deriv1}
    {\|\cL_0 h\|}_{L^1} \leq \int_0^1 \cL_1|f'h|=\int |f'| |h|\leq \Lambda {\|h\|}_{L^1}.
\end{equation}
On the other hand, for $h\in BV$ and $\vf\in\cC^1$ we have, calling $\phi_i$ the inverse of $f|_{I_i}$,
\[
    \begin{split}
        \int_0^1\vf'(x)\cL_0h(x) x&=\sum_{i=1}^N\int_0^1\vf'(x) \Id_{f(I_i)}(x)h(\phi_i(x)) dx\\
        &=\sum_{i=1}^N\int_{I_i}\vf'(f(x)) |f'(x)| h(x) dx\\
        &=\sum_{i=1}^N\int_{I_i}\frac{d}{dx}\left[\sign( f'(x))\cdot \vf\circ f(x))\right] h(x) dx.
    \end{split}
\]
We can then define the counterm, recall that $I_i=(p_i,p_{i+1})$,
\[
    \psi(x)=\sum_{i=1}^n\Id_{I_i}(x)\sign(f'(x)) \left\{\vf(f(p_{i}))+\frac{\vf(f(p_{i+1}))-\vf(f(p_{i}))}{p_{i+1}-p_i}(x-p_i)\right\}
\]
and $\vf_*(x)=\sign(f'(x))\vf(f(x)) -\psi(x)$. Note that, by construction, $\vf_*$ is Lipschitz, $\|\vf\|_{\cC^0}\leq 2\|\vf\|_{\cC^0}$  and
\begin{equation}\label{eq:weak_der0}
    \begin{split}
        \int_0^1\vf'(x)\cL_0h(x) x=&\int_0^1\vf_*'(x) h(x) dx\\
        &+\sum_{i=1}^N\int_{I_i}\sign( f'(x)) \frac{\vf(f(p_{i+1}))-\vf(f(p_{i}))}{p_{i+1}-p_i} h(x) dx.
    \end{split}
\end{equation}
Accordingly, there exists $C>0$ such that
\begin{equation}
    \label{eq:deriv2}
    \|\cL_0 h\|_{BV} \leq 2\|h\|_{BV}+C \| h\|_{L^1}.
\end{equation}
If we apply the above to the map $f^n$, rather than $f$, we have, since also $f\in\bM_*$, the Lasota-Yorke inequality
\begin{equation}
    \label{eq:LY}
    \begin{split}
        \|\cL_0 h\|_{L^1}&\leq  \Lambda {\|h\|}_{L^1},\\
        \|\cL_0^n h\|_{BV}&\leq 2\| h\|_{BV}+ C_n\|h\|_{L^1},
    \end{split}
\end{equation}
for each $n\in\bN$ and some constants $C_n>0$.

From \eqref{eq:LY}  and Hennion's Theorem \cite{Hennion93} (see also \cite[Appendix B]{DKL}) it follows that the spectral radius, on $BV$, of $\cL_0$ is bounded by $\Lambda$ while the essential spectral radius is bounded by one.\footnote{ In \eqref{eq:LY} choose $n_0$ such that $2^{1/n_0}\leq \Lambda$ and iterate with steps $n_0$.}

This establishes the first step of our strategy. Next we have to consider the commutation between the derivative and $\cL_0$. Since BV functions have weak derivatives that are measures, this makes sense, however such measures can be rather singular. To simplify matters it seems convenient to restrict our functions to SBV (special bounded variation) functions, that is functions for which the singular part
of the weak derivative is supported at most on countably many point, not a Cantor
set. Recall that SBV is a closed subspace of BV \cite[Corollary 4.3]{AFP}.
\begin{lem}\label{lem:closure} It holds true $\cL_0(\SBV)\subset \SBV$. The essential spectrum of $\cL_0$ is bounded by one. Moreover, the eigenvectors associated to eigenvalues of modulus strictly larger than one have zero absolutely continuous part. Hence, if a power of $f$ is a Markov map, then the eigenvectors associated to eigenvalues of modulus strictly larger than one are constant on the elements of the Markov partition.
\end{lem}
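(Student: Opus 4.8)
The plan is to establish the four claims in turn, the common engine being the pointwise formula $\cL_0 h=\sum_{i=1}^{N}\indicator{f(I_i)}\cdot(h\circ\phi_i)$, where $\phi_i=(f|_{I_i})^{-1}$ is the $\cC^1$ inverse branch, together with the change of variables $dx=\abs{f'}\,dy$ along each branch. For $\cL_0(\SBV)\subseteq\SBV$: a function lies in $\SBV$ exactly when (up to its good representative) it is the sum of an absolutely continuous function and a pure--jump function with summable jumps; each $\phi_i$, being a monotone $\cC^1$ reparametrisation, preserves both of these classes, and multiplication by $\indicator{f(I_i)}$ only inserts two point masses, located at $\partial f(I_i)$, into the distributional derivative, so $\indicator{f(I_i)}(h\circ\phi_i)\in\SBV$; a finite sum of $\SBV$ functions is again $\SBV$. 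Since $\SBV$ is then a closed $\cL_0$--invariant subspace of $BV$ on which the Lasota--Yorke estimate \eqref{eq:LY} holds and which embeds compactly into $L^1$, the very same Hennion argument that bounded the essential spectral radius of $\cL_0$ on $BV$ applies verbatim inside $\SBV$ and gives essential spectral radius at most $1$.

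For the claim on eigenvectors, let $\cL_0 h=\nu h$ with $h\in\SBV$ and $\abs{\nu}>1$, and write $Dh=h_{\mathrm{ac}}\,dx+(Dh)_{\mathrm{sing}}$. Differentiating the formula for $\cL_0 h$ and retaining only the absolutely continuous part of the resulting measure --- the point masses at the $\partial f(I_i)$ and the jumps of $h$ transported by the $\phi_i$ contribute only atoms --- one finds, using $\phi_i'(x)=f'(\phi_i(x))^{-1}$,
\[
  (D\cL_0 h)_{\mathrm{ac}}(x)=\sum_{y\in f^{-1}(x)}\frac{h_{\mathrm{ac}}(y)}{f'(y)}.
\]
The eigenvalue equation then forces $\nu\,h_{\mathrm{ac}}(x)=\sum_{y\in f^{-1}(x)} h_{\mathrm{ac}}(y)/f'(y)$ for a.e.\ $x$; bounding the right side by $\sum_{y\in f^{-1}(x)}\abs{f'(y)}^{-1}\abs{h_{\mathrm{ac}}(y)}$ and integrating, the change of variables gives $\abs{\nu}\,\|h_{\mathrm{ac}}\|_{L^1}\le\|h_{\mathrm{ac}}\|_{L^1}$, whence $h_{\mathrm{ac}}=0$. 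The same inequality shows that $g\mapsto\sum_{y\in f^{-1}(x)}g(y)/f'(y)$ has spectral radius at most $1$ on $L^1$, so the argument likewise rules out generalised eigenvectors for $\abs{\nu}>1$.

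For the final, Markov, statement, suppose $f^m$ is a Markov map with partition $\cQ=\{J_1,\dots,J_M\}$ and $\cL_0 h=\nu h$, $\abs{\nu}>1$; replacing $(f,\cL_0,\nu)$ by $(f^m,\cL_0^m,\nu^m)$ we may assume $m=1$. By the previous step $Dh=\sum_k c_k\,\delta_{x_k}$ with $\sum_k\abs{c_k}<\infty$; write $S=\{x_k\}$ and let $c(x)$ denote the jump of $h$ at $x$. The Markov property makes the skeleton forward invariant, $f(\partial\cQ)\subseteq\partial\cQ$, and each $f(J_j)$ a union of elements of $\cQ$, so $\partial f(J_j)\subseteq\partial\cQ$; hence for $x\notin\partial\cQ$ no branch of $\cL_0 h$ switches on or off near $x$ and the jump of $\cL_0 h$ there equals $\sum_{y\in f^{-1}(x)}\pm c(y)$. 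Comparison with the jump $\nu\,c(x)$ of $\nu h$ yields $\abs{\nu}\,\abs{c(x)}\le\sum_{y\in f^{-1}(x)}\abs{c(y)}$ for every $x\notin\partial\cQ$. Summing over $x\in S\setminus\partial\cQ$, reindexing the right side by $y$ (each $y$ is a preimage only of $f(y)$), and using that $f(y)\notin\partial\cQ$ forces $y\notin\partial\cQ$, one obtains
\[
  \abs{\nu}\sum_{x\in S\setminus\partial\cQ}\abs{c(x)}\;\le\;\sum_{y\in S\setminus\partial\cQ}\abs{c(y)}.
\]
Since the sum is finite and $\abs{\nu}>1$, it vanishes, so $Dh$ is supported on the finitely many partition points and therefore $h$ is constant on each element of $\cQ$.

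I expect the two delicate steps to be the first --- verifying that the inverse branches create no singular--continuous (Cantor) component, which is exactly what makes $\SBV$ the right space --- and the last, where one must keep careful track of how the jumps of $h$ are transported, possibly cancel among preimages, or escape into the forward--invariant skeleton $\partial\cQ$. In both the non--Markov and the Markov parts the decisive input is the same: the relevant transfer operator is an $L^1$ contraction, while $\abs{\nu}>1$.
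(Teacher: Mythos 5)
Your proof is correct and follows essentially the same route as the paper's: $\SBV$ is shown invariant via the additive AC-plus-jump decomposition, the essential spectral radius is bounded via Hennion on the closed subspace $\SBV$, the absolutely continuous part of an eigenvector is killed because the derived operator is an $L^1$ contraction, and in the Markov case the jump inequality $\abs{\nu}\abs{c(x)}\le\sum_{y\in f^{-1}(x)}\abs{c(y)}$ forces non-boundary atoms to vanish. The only cosmetic difference is in the final step: the paper iterates this inequality along $n$-fold preimages of a single non-boundary jump point and sends $n\to\infty$, whereas you sum it once over all non-boundary jump points and use the Markov forward-invariance of $\partial\cQ$ to reindex; both exploit exactly the same pointwise bound and give the same conclusion.
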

\begin{proof}
    Let $h\in \textrm{SBV}$, then we can write the weak derivative, seen as a measure, as $Dh=\sum_{a\in\cA}\alpha_a\delta_a+\bbh dx$, for some $\bbh\in W^{1,1}$,  a countable set $\cA$ and numbers $\alpha_a\in\bC$ such that $\sum_{a\in\cA}|\alpha_a|<\infty$, see \cite[Corollary 3.33]{AFP}. Then \eqref{eq:weak_der0}  implies, setting $\cA_0=\cA\setminus\{p_i\}_{i=1}^N$ and $\epsilon_i=\sign(f'(x))$ for $x\in I_i$,
    \[
        \begin{split}
            &\int_0^1\vf'(x)\cL_0h(x) dx=-\int_0^1\vf_*(x) Dh(dx)+\sum_{i=1}^N \epsilon_i\frac{\vf(f(p_{i+1}))-\vf(f(p_{i}))}{p_{i+1}-p_i} \int_{I_i} h(x) dx\\
            =&-\sum_{a\in\cA_0}\alpha_a[\sign(f'(a))\vf(f(a))-\psi(a)]-\int_0^1\vf\circ f(x)\sign(f'(x)) \bbh(x) dx\\
            &+\int_0^1\psi(x) \bbh(x) dx+\sum_{i=1}^N  \epsilon_i\frac{\vf(f(p_{i+1}))-\vf(f(p_{i}))}{p_{i+1}-p_i} \int_{I_i} h(x) dx .
        \end{split}
    \]
    Hence
    \[
        \begin{split}
            D\cL_0h=&\cL_1 \bbh(x) dx+\sum_{a\in\cA_0}\alpha_a\sign(f'(a))\delta_{f(a)}\\
            &-\sum_{i=1}^N\sum_{a\in\cA_0}\Id_{I_i}(a)\epsilon_i \left\{\frac{p_{i+1}-a}{p_{i+1}-p_i}\delta_{f(p_i)}+\frac{a-p_{i}}{p_{i+1}-p_i}\delta_{f(p_{i+1})}\right\}\\
            &-\sum_{i=1}^N \epsilon_i\left\{\delta_{f(p_i)}\int_{I_i}\frac{p_{i+1}-x}{p_{i+1}-p_i}\bbh(x)dx+\delta_{f(p_{i+1})}\int_{I_i}\frac{x-p_{i}}{p_{i+1}-p_i}\bbh(x)dx\right\}\\
            &-\sum_{i=1}^N  \epsilon_i\frac{\delta_{f(p_{i+1})}-\delta_{f(p_i)}}{p_{i+1}-p_i} \int_{I_i} h(x) dx\\
        \end{split}
    \]
    \[
        \begin{split}
            \phantom{D\cL_0h}
            &=\cL_1 \bbh(x) dx-\sum_{i=1}^N \epsilon_i \delta_{f(p_{i+1})}\int_{I_i}\bbh(x)dx+\sum_{a\in\cA_0}\alpha_a\sign(f'(a))\delta_{f(a)}\\
            &+\sum_{i=1}^N\sum_{a\in\cA_0}\Id_{I_i}(a)\epsilon_i \left\{\frac{p_{i+1}-a+\alpha_a}{p_{i+1}-p_i}\delta_{f(p_i)}+\frac{a-p_{i}-\alpha_a}{p_{i+1}-p_i}\delta_{f(p_{i+1})}\right\} \in \textrm{SBV}.
        \end{split}
    \]
    Thus $\cL_0$ is well defined on SBV.

    The bound on the essential spectral radius follows from \eqref{eq:LY} and Hennion's Theorem \cite{Hennion93} (see also \cite[Appendix B]{DKL}).

    Next, assume that $\nu\in\sigma_{\SBV}(\cL_0)$ and $|\nu|>1$. Then $\nu$ must be point spectrum and there exists $h\in\SBV$ such that $\cL_0 h=\nu h$. But, differentiating, this would imply that the absolutely continuous part of $Dh$, call it $\bbh$, satisfies $\cL_1\bbh=\nu\bbh$ and, since the spectral radius of $\cL_1$ is one, this implies $\bbh=0$.

    Finally, assume that $f$ admits a Markov partition. Then we can assume, without loss of generality, that $\{I_i=(p_i,p_{i+1})\}_{i=1}^N$ is the Markov partition hence, $f(p_i)=p_j$ for some $j$. Let $\cA$ be the set of jumps of $h$, $\{\alpha_a\}_{a\in\cA}$ the size of the jumps and set $\cA_*=\cA\setminus \{p_i\}_{i}^N$. By the Markov property if $a\in\cA_*$ then $f^{-1}(a)\not \in \{p_i\}_{i}^N$. Hence, the above formula implies that if $\beta_n(a)=\{b\in\cA_*\;:\; f^n(b)=a\}$ then, for each $n\in\bN$,
    \[
        |\nu|^n |\alpha_a|\leq \sum_{b\in\beta_n(a)}|\alpha_b|\leq \|h\|_{\SBV}.
    \]
    which is possible only if $\alpha_p=0$. Hence $h$ can jump only at the boundaries of the partition and it is constant inside.
\end{proof}
The above theorem shows that, for Markov maps, the study of the eigenvalues larger than one is reduced, as in section \ref{sec:piecewise}, to the study of the finite dimensional matrix $B_0$ defined in \eqref{eq:defB}. However, in the general case identifying all the measures of maximal entropy requires some work that, in this generality, exceeds our scopes.
To give an idea of what can be done let us restrict ourselves to the simplest Markov example: full branch maps.

\subsection{Full branch monotone maps}\ \\
Let $\bM=\{f\in\bM_*\;:\; f(I_i)=(0,1)\}$. In this case, the previous computations show that $\cL_0 W^{1,1}\subset W^{1,1}$ and $\frac d{dx}\cL_0=\cL_1\frac {d}{dx}$. From now on we will thus work in $W^{1,1}$.
For this maps we want to investigate the measure of maximal entropy.
To start, note that, in this case,  $\Lambda\geq N$ since
\[
    N =\sum_{i=1}^N |f(I_i)|= \sum_{i=1}^N \int_{I_i} |f'(x)| \ dx \leq \Lambda \sum_{i=1}^N |I_i|=\Lambda,
\]
where the inequality is strict if $f'$ is not constant.

Next, let us recall some well known facts (see \cite{Buzzi10} for a review).
\begin{lem}[{\cite[Theorem 1]{MS80}}]\label{lem:mius}
    For $f\in\bM$ holds the variational principle
    \[
        \htop=   \ln N = \sup_{\mu\in\cM}h_\mu(f)
    \]
    where $\cM$ is the set of invariant measures of $f$ and $h_\mu(f)$ is the Kolmogorov-Sinai entropy.
\end{lem}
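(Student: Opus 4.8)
The plan is to establish the two equalities separately, both via the coding of $f\in\bM$ by the full one-sided shift on $N$ symbols. For $\htop(f)=\log N$: recall that for a piecewise monotone map the topological entropy equals $\lim_{n\to\infty}\tfrac1n\log\ell(f^n)$, where $\ell(f^n)$ is the number of maximal intervals of monotonicity of $f^n$ (Misiurewicz--Szlenk). Since every branch $f|_{I_i}$ is a monotone bijection onto $(0,1)$, one checks by induction that the atoms of $\cP_n:=\bigvee_{k=0}^{n-1}f^{-k}\cP$ are exactly the $N^n$ intervals $g_{a_0}\circ\cdots\circ g_{a_{n-1}}((0,1))$, indexed by words of length $n$, with $g_i:=(f|_{I_i})^{-1}$, each carried monotonically by $f^n$ onto $(0,1)$; hence $f^n$ takes every value in $(0,1)$ exactly $N^n$ times and $\ell(f^n)=N^n$, so $\htop(f)=\log N$.

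For $\sup_{\mu\in\cM}h_\mu(f)\ge\log N$ I would produce the Bernoulli measure of maximal entropy explicitly. Define $\pi(\underline a):=\bigcap_{n\ge 1}\overline{g_{a_0}\circ\cdots\circ g_{a_{n-1}}((0,1))}$ whenever this nested intersection of closed intervals is a single point; then $\pi$ is a Borel map defined off an at most countable set of sequences (precisely those subtending a homterval, i.e. an interval all of whose iterates are monotone), and it semiconjugates the shift $\sigma$ to $f$. Pushing forward the uniform Bernoulli measure $\mathbb P$---which is non-atomic and hence gives zero mass to that countable set---yields an $f$-invariant probability $\mme:=\pi_*\mathbb P$ with $\mme(I_i)=1/N$ and with $\cP$ a one-sided generator modulo $\mme$. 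By the Kolmogorov--Sinai theorem and $H_{\mme}(\cP_n)=n\log N$ one gets $h_{\mme}(f)=h_{\mme}(f,\cP)=\log N$.

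Finally, $h_\mu(f)\le\log N$ for every invariant $\mu$: whenever $\cP$ generates modulo $\mu$ this is immediate, since $h_\mu(f)=h_\mu(f,\cP)\le\limsup_n\tfrac1n H_\mu(\cP_n)\le\limsup_n\tfrac1n\log N^n=\log N$, using only that a partition into $N^n$ atoms has entropy at most $n\log N$. Generation can fail only on the at most countable family of homtervals and their preimages; but $\mu$ assigns zero mass to any wandering interval (its disjoint forward iterates would otherwise have infinite total mass) and, on the remaining exceptional set, the dynamics reduces to a cycle of intervals around an attracting or neutral periodic orbit and so carries zero entropy. Passing to the ergodic decomposition, $\mu$-a.e. component falls into one of the cases ``$\cP$ generating'' or ``zero entropy'', in both of which the bound $\log N$ holds; integrating gives $h_\mu(f)\le\log N$, and combining with the above, $\htop=\log N=\sup_{\mu\in\cM}h_\mu(f)$. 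I expect the last verification---that $\cP$ generates modulo an arbitrary invariant measure---to be the main obstacle: since we assume only $f|_{I_i}\in\cC^1$, wandering intervals are not excluded a priori, and controlling the exceptional set requires the structure theory of homtervals for piecewise monotone maps, which is exactly the point where one leans on \cite{MS80}; the remainder is the standard entropy computation for the full shift.
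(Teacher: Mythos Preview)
The paper does not give a proof of this lemma; it is quoted directly as \cite[Theorem~1]{MS80} (with a pointer to the survey \cite{Buzzi10}). Your sketch is a correct outline of how the argument specializes to the full-branch class $\bM$, and you rightly isolate the only genuine difficulty: showing that $\cP$ generates modulo an \emph{arbitrary} invariant measure, i.e.\ controlling homtervals, which is precisely the structural input one takes from \cite{MS80}.

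One small point of circularity worth flagging: you open by invoking the lap-number formula $\htop(f)=\lim_n\tfrac1n\log\ell(f^n)$ and attribute it to Misiurewicz--Szlenk, but that formula is itself part of \cite[Theorem~1]{MS80}, the very result being quoted. For full-branch maps this is easy to avoid: the semiconjugacy $\pi$ from the full $N$-shift that you construct already yields $\htop(f)\ge\log N$ under any of the standard definitions, and the upper bound follows from the same $N^n$-atom count of $\cP_n$ that you use for the measure-theoretic inequality. With that adjustment the argument is self-contained apart from the homterval analysis you explicitly defer to \cite{MS80}.
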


\begin{thm}\label{thm:gap}
    The operator $\cL_0$ when acting on $W^{1,1}$ has the spectral decomposition $\cL_0 h= N \cdot \mme(h)+Q(h)$ where $Q1=0$, $\mme(Q(h))=0$, for all $h\in W^{1,1}$, $\sigma_{W^{1,1}}(Q)\subset \{z\in\bC\;:\;|z|\leq 1\}$, and $\mme$ is a measure of maximal entropy.
\end{thm}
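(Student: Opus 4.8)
The plan is to leverage the commutation relation $\frac{d}{dx}\cL_0 = \cL_1 \frac{d}{dx}$ valid on $W^{1,1}$ for full branch maps, together with the Lasota--Yorke inequality \eqref{eq:LY} which already gives spectral radius $\Lambda$ and essential spectral radius at most $1$ for $\cL_0$ on $BV$ (hence also on $W^{1,1}$, where the analogous inequalities hold by the computation preceding \eqref{eq:LY}). First I would show that $N$ is an eigenvalue: since $f$ is a full branch $N$-covering, $\cL_0 1 = N$, so the constant function $1$ is an eigenvector with eigenvalue $N$. Next I would show that $N$ is the spectral radius and that the rest of the spectrum lies in the closed unit disk. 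Indeed, if $\cL_0 h = \nu h$ with $h\in W^{1,1}$ and $|\nu|>1$, then differentiating and using the commutation relation gives $\cL_1 h' = \nu h'$; but the spectral radius of $\cL_1$ on $L^1$ is $1$ (it is a transfer operator, $\int_0^1 \cL_1 g = \int_0^1 g$, so $\|\cL_1\|_{L^1}\le 1$), whence $h'=0$ and $h$ is constant. Since $\cL_0$ maps constants to $N$ times constants, $\nu = N$. The same argument applied to generalized eigenvectors (if $(\cL_0-N)^2 h = 0$ then $(\cL_1-N)^2 h' = 0$, forcing $h'=0$ again) shows $N$ is a semisimple eigenvalue with one-dimensional eigenspace, so it is simple.

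This yields a spectral decomposition $\cL_0 = N\,P + Q$ where $P$ is the rank-one spectral projector onto $\operatorname{span}\{1\}$ associated to the eigenvalue $N$, $Q = \cL_0(\Id - P)$ has spectral radius at most $1$ (the essential spectral radius being at most $1$ and there being no point spectrum strictly outside the unit disk other than $N$), and $PQ = QP = 0$, so $Q1 = 0$. It remains to identify $P$ as $h\mapsto 1\cdot \mme(h)$ for a measure $\mme$ of maximal entropy. For this I would look at the dual: since $P$ has rank one and image $\operatorname{span}\{1\}$, necessarily $Ph = \mme(h)\cdot 1$ for some functional $\mme$ on $W^{1,1}$, normalized by $P1 = 1$ so that $\mme(1)=1$. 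The functional $\mme$ is the eigenvector of $\cL_0^*$ with eigenvalue $N$; positivity of $\cL_0$ (it maps nonnegative functions to nonnegative functions) together with the standard Perron--Frobenius-type argument shows $\mme$ is a positive functional, hence extends to a Borel probability measure. Then $\mme(Q h) = \mme(\cL_0 h - N P h) = \mme(\cL_0 h) - N\mme(h) = N\mme(h) - N\mme(h) = 0$ using $\cL_0^*\mme = N\mme$.

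Finally I would verify that $\mme$ is $f$-invariant and has entropy $\log N$, hence is a measure of maximal entropy by Lemma~\ref{lem:mius}. Invariance: for $\varphi\in\cC^0$, the defining property of $\cL_0$ (as in the computations before \eqref{eq:deriv1}, specialized to full branch maps) should give $\mme(\varphi\circ f) = \mme(\varphi)$ after the appropriate normalization, because $\cL_0$ is, up to the factor $N$, the transfer operator dual to the Koopman operator in the counting sense: each point has exactly $N$ preimages and $\cL_0$ weights them all by $1$. The entropy computation is the place I expect the most care is needed: one must show $h_{\mme}(f) = \log N$, which follows because $\mme$ gives each of the $N$ branches ``equal weight'' in the sense that the preimages under $f^n$ are equidistributed — more precisely, $\mme(\indicator{J})$ for a cylinder $J$ of generation $n$ is controlled by $N^{-n}$ up to the bounded distortion encoded in the $W^{1,1}$ density estimates, so the Shannon--McMillan--Breiman / Rokhlin formula gives entropy exactly $\log N$. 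The main obstacle, I expect, is precisely this last identification: establishing that the invariant functional produced abstractly by the spectral decomposition genuinely realizes the maximal entropy $\log N$ rather than merely being *some* invariant measure; this requires relating the analytic object (the eigenfunctional of $\cL_0^*$) to the combinatorics of preimage counting, presumably via the bounded-distortion control on $\cL_0^n 1$ that the Lasota--Yorke inequality provides.
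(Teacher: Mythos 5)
Your proposal follows essentially the same route as the paper's proof: eigenvalue $N$ with constant eigenvector forced by the commutation relation $\frac{d}{dx}\cL_0 = \cL_1\frac{d}{dx}$ and contractivity of $\cL_1$ on $L^1$, simplicity (via either generalized eigenvectors or power-boundedness), positivity of the resulting dual eigenfunctional, invariance, and finally a cylinder estimate combined with Shannon--McMillan--Breiman to get entropy $\geq\log N$. The one place you anticipate more difficulty than there actually is lies at the end: no bounded-distortion input is needed, because $\cL_0^n$ counts $f^n$-preimages with weight $1$ and $\cL_0^n 1 = N^n$ \emph{exactly}, so the bound $\mme(p)\leq 3N^{-n}$ for $p\in\cP_n$ follows from the purely combinatorial observation that $p$ together with its two neighbours contains at most $3$ of the $N^n$ preimages of any point.
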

\begin{proof}
    We know that if $\nu\in \sigma(\cL_0)$ and $|\nu|>1$, then $\nu$ is point spectrum. That is, there exist $h\in W^{1,1}$ such that $\cL_0h=\nu h$.
    But then, differentiating, we have $\cL_1 h'=\nu h'$ where $h'\in L^1$.
    However, $\cL_1$ is a contraction on $L^1$, hence it must be either $|\nu|\leq 1$, contrary to the hypothesis, or $h'=0$. The latter implies that $h$ is constant, hence, we can always normalise it so that $h=1$.
    On the other hand $\cL_01(x)=\sum_{y\in f^{-1}(x)} 1 =N$. Hence $\nu=N$ and has geometric multiplicity one.
    Indeed, if the geometric multiplicity is not one, then there must exists $h\in W^{1,1}$ such that $\cL_0 h=N h+c$ for some constant $c$.
    But then, differentiating, $\cL_1 h'=N h'$, so $h$ must be constant again.

    In addition, note that for each $h\in W^{1,1}$ we have
    \[
        \begin{split}
            \|N^{-n}\cL_0^n h\|_{W^{1,1}}&\leq \int_0^1N^{-n}\cL_0^n |h|+\int_0^1 N^{-n}\cL_1^n|h'|\leq \|h\|_{\cC^0}+N^{-n}\|h\|_{W^{1,1}}\\
            &\leq (1+N^{-n})\|h\|_{W^{1,1}}.
        \end{split}
    \]
    Hence $N^{-n}\cL_0^n$ is uniformly bounded on $W^{1,1}$ and thus, by \cite[Lemma VIII.8.1]{DS58}, $N$ is semi-simple. It follows that the maximal eigenvalue is simple.

    Accordingly, we have the spectral decomposition $\cL_0 = N 1\otimes \mu+Q$ where $Q$ has spectral radius smaller or equal one, $Q1=0$, $\mu(Q h)=0$ for all $h\in W^{1,1}$, $\mu(1)=1$, and $\mu$ belongs to the dual of $W^{1,1}$. It remains to prove that $\mu$ is a measure and, indeed, a measure of maximal entropy $\mme$.

    Note that, for each $h\in W^{1,1}$,
    \[
        |\mu(h)|=\lim_{n\to\infty}\left|\int_0^1 N^{-n}\cL_0^nh\right|\leq \lim_{n\to\infty}{\|h\|}_\infty \int_0^1 N^{-n}\cL_0^n1={\|h\|}_\infty.
    \]
    Thus $\mu$ is a measure. In addition, for each $h\in \cC^1$ such that $h\geq 0$, we have
    \[
        \mu(h)=\lim_{n\to\infty}\int_0^1 N^{-n}\cL_0^{n}h\geq 0
    \]
    thus $\mu$ is a positive measure and, since it is normalized, it is a probability measure.
    Next, note that
    \[
        \mu(\cL_0 h)=\lim_{n\to\infty}\int_0^1 N^{-n}\cL_0^{n+1}h= N \lim_{n\to\infty}\int_0^1 N^{-n}\cL_0^{n}h= N\cdot \mu(h).
    \]
    It follows
    \[
        \mu(h\circ f)= N^{-1}\mu(\cL_0 h\circ f)= N^{-1}\mu( h \cL_01)=\mu(h).
    \]
    That is $\mu$ is an invariant measure. In addition, by the above considerations, $([0,1], f,\mu)$ is ergodic.

    The proof is concluded if we show $h_\mu(f)\geq \htop$.
    Let $\cP_n$ denote the $n^{\textrm{th}}$ canonical dynamical refinement of the partition $\cP$.
    Let $p\in\cP_n$  and $p_-,p_+\in\cP_{n}$ be the elements on the left and the right of $p$, respectively, if they exist. Let $J=p_-\cup p\cup p_+$. Let $h\in\cC^1(\bR,[0,1])$ be supported in $J$ and such that $h|_{p}=1$. Then $\cL_0^{n} h(x)\leq 3$ and
    \[
        \begin{split}
            \mu(p)\leq \mu(h)&=\lim_{m\to\infty} \int_0^1 N^{-m-n} \cL_0^{m+n} h\leq \lim_{m\to\infty} 3\int_0^1 N^{-m-n} \cL_0^{m} 1\\
            &= 3N^{-n}\mu(1)= 3N^{-n}.
        \end{split}
    \]
    Accordingly, calling $p_n(x)$ the element of $\cP_n$ which contains $x$, the Shannon-McMillan-Breiman Theorem (e.g., see \cite[Section 6.2, Theorem 2.3]{Petersen89}) states that for $\mu$ almost every point
    \[
        h_\mu(f)\geq     h_\mu(\cP, f)=\lim_{n\to\infty}-\frac 1n\ln\mu(p_n(x))\geq \lim_{n\to\infty} \ln N^{1-1/n}=\ln N,
    \]
    which concludes the proof by Lemma \ref{lem:mius}.
\end{proof}

\begin{rem}
    We do not know if $\mme$ is unique in this case, we have just constructed a measure of maximal entropy. Look at subsection \ref{sec:non_uniform} for a case where it is easy to prove that the measure of maximal entropy is unique.
\end{rem}

\begin{rem}
    The monotone interval maps and transfer operators studied in this section fit into the framework considered in \cite{BK90}. In \cite{BK90} the essential spectral radius (as operators acting on \(BV\)) is obtained and consequently  a spectral decomposition. Here we show that the spectral gap is large for the operator associated to the measure of maximal entropy.
\end{rem}

We have finally the announced mixing rate estimate
\begin{cor}
    For any $\nu>\frac 1{N}$ there exists $C_\nu>0$ such that, for each $h\in W^{1,1}$ and $\vf\in L^1(\mme)$
    \[
        \left| \int h \ \vf\circ f^n \ d\mme- \int h \ d\mme\int\vf\circ f^n \ d\mme\right|\leq C_\nu\nu^n{\|h\|}_{W^{1,1}} {\|\vf\|}_{L^1(\mme)}.
    \]
\end{cor}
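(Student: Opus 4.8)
The plan is to read the estimate off the spectral decomposition $\cL_0 h = N\cdot\mme(h) + Q(h)$, with $\sigma_{W^{1,1}}(Q)\subset\{|z|\le 1\}$, provided by Theorem~\ref{thm:gap}, exactly as in the correlation-function argument at the end of Section~\ref{sec:piecewise}.

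First I would turn the spectral radius bound into a decay rate: since $Q$ has spectral radius at most $1$ on $W^{1,1}$, Gelfand's formula gives $\limsup_n\|Q^n\|_{W^{1,1}}^{1/n}\le 1$, so for any $\nu>\frac1N$ --- whence $N\nu>1$ --- there is $C_\nu>0$ with $\|Q^n\|_{W^{1,1}}\le C_\nu(N\nu)^n$ for all $n\in\bN$. This is the only step where the size of the gap matters, and it is why the conclusion holds for every $\nu>\frac1N$.

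Next I would rewrite the correlation as a matrix coefficient of $\cL_0^n$. Iterating the pointwise identity $\cL_0(g\cdot(\psi\circ f))=\psi\cdot\cL_0 g$ and the conformality relation $\mme(\cL_0 g)=N\,\mme(g)$ established in the proof of Theorem~\ref{thm:gap} (the latter valid for bounded measurable $g$ as well, since $\mme$ is non-atomic and is the weak-$\ast$ limit of the absolutely continuous probabilities $N^{-m}|(f^m)'|\,dx$), one obtains for $h\in W^{1,1}$ and continuous $\varphi$
\[
\int h\cdot\varphi\circ f^n\,d\mme
= N^{-n}\int\varphi\cdot\cL_0^n h\,d\mme
= \mme(h)\,\mme(\varphi)+N^{-n}\int\varphi\cdot Q^n h\,d\mme,
\]
where the last equality uses $\cL_0^n h=N^n\mme(h)\cdot 1+Q^n h$ (valid because $Q1=0$ and $\mme(Qh)=0$ for all $h$). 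Since $f$-invariance of $\mme$ also gives $\int\varphi\circ f^n\,d\mme=\mme(\varphi)$, the quantity to be bounded is exactly $\big|N^{-n}\int\varphi\cdot Q^n h\,d\mme\big|$. Both sides of the displayed identity are continuous in $\varphi$ for the $L^1(\mme)$-norm --- the left by invariance, the right because $Q^n h$ is bounded --- so the identity, and hence this reduction, extends from $\varphi\in\cC^0$ to all $\varphi\in L^1(\mme)$ by density.

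Finally, the bound: $\big|\int\varphi\cdot Q^n h\,d\mme\big|\le\|\varphi\|_{L^1(\mme)}\|Q^n h\|_{L^\infty}$, the one-dimensional Sobolev embedding $\|g\|_{L^\infty}\le\|g\|_{W^{1,1}}$, and the first step combine to give
\[
\Big|N^{-n}\int\varphi\cdot Q^n h\,d\mme\Big|
\le N^{-n}C_\nu(N\nu)^n\,\|h\|_{W^{1,1}}\|\varphi\|_{L^1(\mme)}
= C_\nu\,\nu^n\,\|h\|_{W^{1,1}}\|\varphi\|_{L^1(\mme)},
\]
which is the assertion. I expect no real obstacle here; the only point requiring a little care is the extension in the second step to $L^1(\mme)$ (and bounded) observables, handled via density of $\cC^0$ in $L^1(\mme)$ and the non-atomicity of $\mme$ recorded in the proof of Theorem~\ref{thm:gap}.
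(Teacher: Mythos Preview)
Your proposal is correct and follows essentially the same approach as the paper: derive the identity $\int h\,\varphi\circ f^n\,d\mme=\mme(h)\mme(\varphi)+N^{-n}\int\varphi\,Q^nh\,d\mme$, bound the remainder via the Sobolev embedding $\|Q^nh\|_{L^\infty}\le\|Q^nh\|_{W^{1,1}}$ together with the spectral-radius estimate on $Q$, and extend to $\varphi\in L^1(\mme)$ by density. The only cosmetic difference is that the paper obtains the key identity by passing through the explicit limit $\mme(g)=\lim_{m}\int_0^1 N^{-m}\cL_0^m g$ (staying in $W^{1,1}$ by first taking $\varphi\in\cC^1$), whereas you invoke the conformality relation $\mme(\cL_0 g)=N\,\mme(g)$ directly; these are equivalent.
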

\begin{proof}
    We start assuming that $\vf\in \cC^1$. Then, using Theorem \ref{thm:gap},
    \begin{multline*}
        \left| \int h \ \vf\circ f^n \ d\mme- \int h \ d\mme\int\vf \ d\mme\right|\\
        = \left|\lim_{m\to\infty}\int_0^1 N^{-m}(\cL_0^{m} h \ \vf\circ f^n)(x) \ dx  -\int h d\mme\int\vf \ d\mme\right|\\
        =\left|\lim_{m\to\infty} \int_0^1[ N^{-m+n}\cL_0^{m-n} \vf N^{-n}\cL_0^{n} h](x) \ dx-\int h d\mme\int\vf \ d\mme\right|\\
        =\left| \int \vf N^{-n}\cL_0^n h \ d\mme- \int h \ d\mme\int\vf \ d\mme\right|\\
        =\left| \int \vf N^{-n}Q^n h \ d\mme\right|
        \leq C_\nu\nu^n{\|h\|}_{W^{1,1}}\int|\vf| \ d\mme.
    \end{multline*}
    The corollary follows by a simple approximation argument.
\end{proof}

\subsection{Non-uniformly expanding maps}\ \\ \label{sec:non_uniform}
Let $\cE\subset \bM$ the set of maps such that $f'\geq 1$, $f'=1$ at finitely many points and $\Lambda={\|f'\|}_\infty<\infty$.

This class of maps includes the well known Manneville--Pomeau map \cite{PM80, LSV99}.
\begin{rem}\label{rem:hilbert2}
    In \cite{CV13} non-uniformly expanding systems are studied and the existence of a spectral gap (and hence decay of correlations) is proven for a class of equilibrium states which includes the measure of maximal entropy.
    The approach in \cite{CV13} is based on Hilbert metrics and, although not stated explicitly, it  provides a poor estimate of the spectral gap (see Remark~\ref{rem:hilbert} for similar considerations) whereas our present approach  provides an explicit and close to optimal estimate.

    Here, we limit ourselves to the one dimensional case to present the idea in its simpler form. It is likely that similar results can be obtained for more general non-uniformly expanding maps, e.g., the higher dimensional examples treated in \cite{CV13}.
\end{rem}

Note that the maps in $\cE$ have a basic property.
\begin{lem}
    Any map $f\in \cE$ is expansive.
\end{lem}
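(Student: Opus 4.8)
The plan is to show $f$ is (positively) expansive: there is $\delta>0$ such that whenever $x\neq y$ one has $|f^nx-f^ny|\geq\delta$ for some $n\geq 0$; equivalently, no pair $x\neq y$ satisfies $|f^nx-f^ny|<\delta$ for all $n\geq0$. Since every $f\in\cE$ has $f'\geq1>0$, each branch is increasing, and being full ($f(I_i)=(0,1)$) gives $f(p_i^+)=0$, $f(p_{i+1}^-)=1$; the inverse branches $\phi_i:[0,1]\to\overline{I_i}$ are then well defined, increasing, and $1$-Lipschitz. For a word $\mathbf w=(w_1,\dots,w_n)$ set $I_{\mathbf w}:=\phi_{w_1}\circ\cdots\circ\phi_{w_n}([0,1])$, a closed subinterval with $f(I_{w_1\dots w_n})=I_{w_2\dots w_n}$ and $f^n(I_{\mathbf w})=[0,1]$; cylinders of a fixed length tile $[0,1]$ and nest under word extension. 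The central step is the claim that $c_n:=\max_{|\mathbf w|=n}|I_{\mathbf w}|\to0$.

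To prove this, fix $n$ and a word $\mathbf w$ of length $n$ with $|I_{\mathbf w}|=c_n$. Because the $\phi_i$ are $1$-Lipschitz one has $|f^k(I_{\mathbf w})|\geq|I_{\mathbf w}|=c_n$ for $0\leq k\leq n$, while $|f^{k+1}(I_{\mathbf w})|-|f^k(I_{\mathbf w})|=\int_{f^k(I_{\mathbf w})}(f'-1)\,dx\geq0$ and $|f^n(I_{\mathbf w})|=1$, so the telescoping sum gives $\sum_{k=0}^{n-1}\int_{f^k(I_{\mathbf w})}(f'-1)\,dx=1-|I_{\mathbf w}|\leq1$. Now suppose, for contradiction, that $c_n\geq c>0$ for all $n$. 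Choose a neighbourhood $U$ of the finite set $\{f'=1\}$ with $|U|<c/2$ and put $\kappa:=\inf_{[0,1]\setminus U}(f'-1)>0$. Then each summand is $\geq\kappa\,(|f^k(I_{\mathbf w})|-|U|)\geq\kappa c/2$, whence $1\geq n\kappa c/2$, absurd for large $n$. Hence $c_n\to0$. It is precisely the finiteness of $\{f'=1\}$ — not uniform expansion — that makes $f'-1$ uniformly positive off a small set, so this step is exactly where the non-uniformly expanding case is handled.

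With cylinders shrinking, choose $\delta>0$ less than both $\tfrac12\min_i|I_i|$ and $\tfrac1{1+2\Lambda}$. Suppose $x<y$ with $|f^nx-f^ny|<\delta$ for all $n$. As long as $f^kx,f^ky$ lie in a common partition element they stay ordered, $f^kx<f^ky$. If the interval $[f^nx,f^ny]$ ever contains a partition point $p_j$ in its interior, then it contains only that one (its length being $<\min_i|I_i|$), so $f^nx\in(p_j-\delta,p_j)$ and $f^ny\in(p_j,p_j+\delta)$; since the branch left of $p_j$ increases to $f(p_j^-)=1$, the branch right of $p_j$ increases from $f(p_j^+)=0$, and $f'\leq\Lambda$, we get $f^{n+1}x\geq1-\Lambda\delta$ and $f^{n+1}y\leq\Lambda\delta$, so $|f^{n+1}x-f^{n+1}y|\geq1-2\Lambda\delta\geq\delta$, contradicting the hypothesis. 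Therefore every $[f^nx,f^ny]$ lies in a single $\overline{I_{i_n}}$; since the branches are increasing, $f([f^nx,f^ny])=[f^{n+1}x,f^{n+1}y]$ and one gets inductively $[x,y]\subseteq\overline{I_{i_0i_1\cdots i_n}}$ for every $n$, hence $|x-y|\leq c_{n+1}\to0$, so $x=y$.

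The only genuinely non-routine step is the shrinking of cylinders for maps that are merely non-uniformly expanding; the rest is bookkeeping with the tiling structure and the discontinuity of $f$ at the partition points. One should also pin down the mild regularity point — that $f'-1$ is bounded below by a positive constant on the complement of any neighbourhood of $\{f'=1\}$ (which is what "$f'=1$ at finitely many points" is meant to encode, including one-sided behaviour at the partition endpoints) — and fix a convention for $f$ on the null set of points whose orbit meets a partition boundary.
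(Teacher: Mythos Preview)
Your proof is correct. The route, however, differs from the paper's in the key step. The paper argues directly via a compactness argument: defining $\varphi(a,b)=\tfrac{1}{|b-a|}\int_a^b f'$ on closed subintervals of length $\geq\delta$ contained in a partition element, one has $\varphi>1$ pointwise, hence $\varphi\geq\tau_\delta>1$ by compactness; so if two points always share a partition element, the distance between their orbits grows by a factor $\geq\tau_{|x-y|}$ at each step and cannot stay bounded. You instead package the same finiteness hypothesis into a telescoping identity $\sum_{k=0}^{n-1}\int_{f^k(I_{\mathbf w})}(f'-1)=1-|I_{\mathbf w}|\leq 1$ and bound each summand from below by removing a small neighbourhood $U$ of $\{f'=1\}$; this gives $c_n\to 0$ directly, i.e.\ shrinking of cylinders, from which expansivity follows. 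Both arguments exploit exactly the same fact (uniform positivity of $f'-1$ off a small set), and both dispose of the boundary-crossing case in the same way (the paper only writes ``it is easy to see'', while you make the $1-2\Lambda\delta>\delta$ estimate explicit). Your version is longer but entirely self-contained and yields the useful byproduct that cylinder diameters tend to zero; the paper's compactness argument is shorter but leaves more to the reader.
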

\begin{proof}
    Let $\kappa=\min_{I\in \cP}|I|$. For each $\delta>0$ let $\cI_\delta=\{[a,b]\subset [0,1]\;:\; [a,b]\subset \bar{I}, I\in \cP \;;|b-a|\geq \delta\}$  and, for each $[a,b]\in \cI_\delta$, define $\vf(a,b):=\frac 1{|b-a|}\int_a^b f'(\xi)d\xi$.
    Note that, by hypothesis, $\vf(a,b)>1$, and since it depends continuously from $a,b$ (which vary in a compact set) there must be $\tau_\delta>1$ such
    $\vf(a,b)\geq\tau_\delta$.
    Accordingly,  $f^n(x)$ and $f^n(y)$ always belong to the same partition element we have $|f^n(x)-f^n(y)|\leq \kappa$  for all $n\in\bN$ which is possible only for $x=y$.
    On the other hand, if for some $f^n(x)$ and $f^n(y)$ belong to two different partition element, then either $|f^n(x)-f^n(y)|\geq \kappa$ or they belong to contiguous elements of $\cP$.
    In such a case it is easy to see that there exists $\delta$ such that either $|f^n(x)-f^n(y)|\geq \delta$ or $|f^{n+1}(x)-f^{n+1}(y)|\geq \delta$, hence the expansivity.
\end{proof}
The above fact allows to prove that the measure of maximal entropy is unique.
\begin{lem}
    For $f\in\cE$   the measure of maximal entropy $\mme$ is unique.
\end{lem}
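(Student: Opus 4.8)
The plan is to show that \emph{every} measure of maximal entropy $\mu$ assigns mass exactly $N^{-n}$ to each element of the $n$-th dynamical refinement $\cP_n:=\bigvee_{j=0}^{n-1}f^{-j}\cP$; since these partitions become arbitrarily fine this determines $\mu$ completely, and existence of one such measure is already guaranteed by Theorem~\ref{thm:gap}. It suffices to treat ergodic $\mu$: every measure of maximal entropy is an average of ergodic ones and, by affinity of the entropy functional together with $h_{\cdot}(f)\le\htop$, almost every ergodic component of a measure of maximal entropy is again one. In particular an ergodic measure of maximal entropy is non-atomic (a measure with an atom has an ergodic component supported on a periodic orbit, of zero entropy, contradicting maximality), so the $\cP_n$ are genuine partitions mod $\mu$ and carry no mass on the finitely many partition-boundary points.

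First I would record that, since $f$ is full branch, $\cP_n$ has exactly $N^n$ elements, each an interval mapped bijectively onto $(0,1)$ by $f^n$, and that $f$ carries each level-$n$ cylinder onto the level-$(n-1)$ cylinder obtained by dropping its first symbol. Next I would show $\max_{p\in\cP_n}|p|\to 0$ as $n\to\infty$, so that $\cP$ is a (one-sided) generator and a Borel probability measure is determined by its values on the atoms of all the $\cP_n$. This is where the hypothesis $f\in\cE$ enters, and it is a repetition of the estimate in the proof of the expansivity lemma: if some $p\in\cP_n$ had $|p|\ge\delta$, then applying the uniform bound $\vf(a,b)\ge\tau_\delta>1$ for averaged derivatives over intervals of length $\ge\delta$ contained in a single partition element, to the successive images $f^k(p)$, gives $|f^{k}(p)|\ge\tau_\delta^{k}\delta$ for $k=0,\dots,n-1$, which is absurd once $\tau_\delta^{\,n-1}\delta>1$.

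Now let $\mu$ be an ergodic measure of maximal entropy, so $h_\mu(f)=\htop=\ln N$ by Lemma~\ref{lem:mius}. Since $\cP$ generates, the Kolmogorov--Sinai theorem gives
\[
    \ln N=h_\mu(f)=h_\mu(\cP,f)=\lim_{n\to\infty}\tfrac1n H_\mu(\cP_n).
\]
The sequence $a_n:=H_\mu(\cP_n)$ is subadditive, because $\cP_{n+m}=\cP_n\vee f^{-n}\cP_m$ and $\mu$ is $f$-invariant; hence by Fekete's lemma $\tfrac1n a_n\downarrow\ln N$, and in particular $H_\mu(\cP_n)\ge n\ln N$ for every $n$. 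On the other hand $H_\mu(\cP_n)\le\ln\#\cP_n=\ln N^{\,n}$, with equality precisely when $\mu$ is equidistributed among the $N^n$ atoms of $\cP_n$. Therefore $\mu(p)=N^{-n}$ for every $p\in\cP_n$ and every $n$, which by the previous paragraph pins down $\mu$. As this holds for all ergodic measures of maximal entropy, they all coincide, and so $\mme$ is unique.

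The only genuinely delicate point is the generating property $\max_{p\in\cP_n}|p|\to0$; everything else is soft (Kolmogorov--Sinai, Fekete, and the elementary fact that a finite partition into $M$ pieces has entropy $\ln M$ only when it is uniform). Equivalently, one may run the whole argument through the coding semiconjugacy $\pi\colon\{1,\dots,N\}^{\bN}\to[0,1]$ provided by expansivity: any shift-invariant lift of $\mu$ has entropy $\ge h_\mu(f)=\ln N=\htop(\sigma)$, hence equals the uniform Bernoulli measure, the unique measure of maximal entropy of the full shift, so $\mu=\pi_{*}(\text{uniform Bernoulli})$ is uniquely determined.
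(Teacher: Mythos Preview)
Your argument is correct. The paper's proof is precisely the alternative you sketch in your final paragraph: it invokes the expansivity lemma to obtain a symbolic coding $\Phi:[0,1]\to\{1,\dots,N\}^{\bN}$ which is a measurable isomorphism for every non-atomic invariant measure, observes that atomic measures have zero entropy and can be discarded by affinity, and then transports the (classical) uniqueness of the measure of maximal entropy for the full shift back to $f$.

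Your main argument takes a more direct route: rather than quoting uniqueness for the full shift as a black box, you unwind it inside $[0,1]$, using Kolmogorov--Sinai plus Fekete to force $H_\mu(\cP_n)=n\ln N$ and hence $\mu(p)=N^{-n}$ on every cylinder. Both approaches rest on the same substantive input, namely that the diameters of the $\cP_n$ shrink to zero, and you correctly extract this from the expansivity estimate of the preceding lemma. Your version is marginally longer but entirely self-contained; the paper's is shorter but presupposes the reader knows the Bernoulli measure is the unique measure of maximal entropy for the full shift.
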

\begin{proof}
    Since the map is expansive, there exists a map $\Phi:[0,1]\to \{1,\dots, d\}^\bN=:\Sigma$ which is well defined and invertible, apart from countably many points, that conjugates $f$ with the full shift $\sigma$.
    Hence, $\Phi$  induces a measurable isomorphism for each non-atomic measure.
    On the other hand for $(\Sigma,\sigma)$ holds the variational principle, hence the sup of the metric entropies is the topological entropy, which is $\ln N$, and there exists a unique measure of maximal entropy.
    Since atomic measures have zero entropy, and since the entropy is an affine function of the measures, it follows that the sup on the measure entropies is achieved on non-atomic measures.
    Thus, via the isomorphism $\Phi$ and since the entropy is an invariant for measure-preserving conjugacy, it follows that measure of maximal entropy for $f$ is unique.
\end{proof}

\section{Hyperbolic maps}
\label{sec:hyp}
For hyperbolic, or partially hyperbolic maps the situation is less clear than in the expanding case and much more remains to be understood. Yet, the ghost of a general theory seems to be present. Let us start with the simplest possible case: linear maps.

In this case it is possible to study the problem using Fourier series (see \cite{Lind82}), however it is interesting to develop an alternative approach that  does not rely on the algebraic structure of the map and thus has the potential to be applicable in greater generality.

\subsection{Automorphisms of the torus}\ \\
Here we consider a linear map $f:\bT^n\to\bT^n$ defined by $f(x)=Ax\mod 1$ where $A\in SL(d,\bZ)$, i.e., a matrix with integer coefficient and $\det A=1$.
Let us call $E^u$ the unstable subspace, $E^s$ the stable one and $E^c$ the central one.

Note that, by hypothesis $f$ preserves the volume, thus the volume is the SRB measure. We are interested in its statistical properties, hence in the transfer operator
\[
    \cL h=h\circ f^{-1}.
\]

Next we introduce a norm. Let $\{v^s_i\}$, $\|v^s_i\|=1$, be a basis of $E^s$ and $\{v^u_i\}$, $\|v^u_i\|=1$, be a basis of $E^u$ and define, for each $h,\vf\in\cC^\infty$ and $p,q\in\bN_0$, $\partial^{s/u}_ih=\langle v^{s/u}_i,\nabla h\rangle$, and
\begin{equation}\label{eq:linear_norms}
    \begin{split}
        &|\vf|^s_q=\sup_{0\leq k\leq q}\sup_{i_1,\dots,i_k}\|\partial^s_{i_1}\cdots \partial^s_{i_k} \vf\|_\infty\\
        &\|h\|_{p,q}=\sum_{0\leq k\leq p}\sup_{i_1,\dots,i_k}\sup_{|\vf|^s_{k+q}\leq 1}\int_{\bT^n} \vf\partial^u_{i_1}\cdots \partial^u_{i_k}h.
    \end{split}
\end{equation}
We call $\cB^{p,q}$ the completion of $\cC^\infty$ with respect to the norms $\|\cdot\|_{p,q}$.

In the following we assume $E^u\neq \{0\}$. In addition, to simplify the exposition, we assume that $A$ has no Jordan blocks. The general case can be treated with a slight sophistication of the following arguments. We can thus choose the $v_i^u$ such that $Av_i^u=\lambda_iv^u_i$, with $\lambda_i\geq \lambda>1$. Also let $\lambda$ be such that $\|A|_{E^s}\|\leq \lambda^{-1}$.
\begin{rem}
    The above norms are inspired by \cite{BL20}. They are one of the many possible constructions of anisotropic Banach spaces adapted to hyperbolic maps or flows, see \cite{Baladi18} for an extensive discussion. Given the linear structure of the invariant foliations, the norms \eqref{eq:linear_norms} turn out to be especially convenient and simple to deal with, hence allowing a completely self-contained discussion. In the next section, we will use instead the norms defined in \cite{GLP13} in order to avoid having to redevelop the all theory (e.g. the Lasota-Yorke inequality) in the style of \cite{BL20}, which would certainly be possible.
\end{rem}
The following is the equivalent of \cite[Proposition 3.2]{BL20}.
\begin{prop}\label{prop:LY_auto}
    For each $p,q\in\bN_0, n\in\bN$, and $\nu\in (\lambda^{-\min\{p+1,q\}},1)$, there exist $A, B>0$ such that,
    \[
        \begin{split}
            &\|\cL h\|_{p,q}\leq \|h\|_{p,q}\\
            &\|\cL^n h\|_{p+1,q}\leq A \nu^{n}\|h\|_{p+1,q}+B\|h\|_{p,q+1}.
        \end{split}
    \]
\end{prop}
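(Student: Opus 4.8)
The plan is to prove the two inequalities by directly differentiating the iterates $\cL^n h = h\circ f^{-n}$ in the unstable directions and then estimating the resulting expressions against the test functions appearing in the norm $\|\cdot\|_{p+1,q}$. The starting point is the chain rule: since $f^{-n}(x) = A^{-n}x \bmod 1$ and the $v_i^u$ are eigenvectors with $Av_i^u = \lambda_i v_i^u$, one has $\partial^u_i (\cL^n h) = \lambda_i^{-n}\, \cL^n(\partial^u_i h)$, and more generally
\[
    \partial^u_{i_1}\cdots \partial^u_{i_k}(\cL^n h)
    = \Big(\prod_{j=1}^k \lambda_{i_j}^{-n}\Big)\,\cL^n\big(\partial^u_{i_1}\cdots\partial^u_{i_k}h\big).
\]
Since each $\lambda_{i_j}\geq \lambda > 1$, the prefactor is bounded by $\lambda^{-kn}$. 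For the test function side, given $\vf$ with $|\vf|^s_{k+q}\leq 1$, write $\int \vf\, \partial^u_{i_1}\cdots\partial^u_{i_k}(\cL^n h) = (\prod \lambda_{i_j}^{-n}) \int (\vf\circ f^n)\,\partial^u_{i_1}\cdots\partial^u_{i_k}h$ after a change of variables (using that $f$ preserves volume), where now the new test function is $\lambda_{i_j}$-weighted and composed with $f^n$; the point is that composition with $f^n$ contracts stable derivatives, so $|\vf\circ f^n|^s_{k+q}$ is controlled by $\lambda^{-n}$ times norms of $\vf$ of the same order. This is exactly the mechanism that produces the $\nu^n$ gain in the unstable-derivative terms.

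The first inequality ($\|\cL h\|_{p,q}\leq \|h\|_{p,q}$) is the $n=1$ case carried out without trying to extract a gain: one simply checks that the change of variables is volume-preserving, that each unstable eigenvalue satisfies $\lambda_i \geq 1$ (which makes $\lambda_i^{-1}\leq 1$), and that composition with $f$ does not increase the stable $\cC^k$ seminorms of test functions (again because $\|A|_{E^s}\|\leq \lambda^{-1} < 1$). Hence every term in the sum defining $\|\cL h\|_{p,q}$ is bounded by the corresponding term for $h$. For the second inequality I would split the sum defining $\|\cL^n h\|_{p+1,q}$ into the terms with $0\leq k\leq p$ and the single block of terms with $k = p+1$. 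In the top block $k=p+1$, the prefactor $\prod_{j}\lambda_{i_j}^{-n}\leq \lambda^{-(p+1)n}$ combines with the contraction of the composed stable test function to give a bound of the form $A\,\lambda^{-\min\{p+1,q\}n}\|h\|_{p+1,q} \leq A\nu^n\|h\|_{p+1,q}$ (this is where the hypothesis $\nu\in(\lambda^{-\min\{p+1,q\}},1)$ enters: both the $p+1$ unstable derivatives and the up-to-$q$ stable derivatives of the test function contribute factors $\lambda^{-n}$, and the binding constraint is whichever of the two is smaller). In the lower block $k\leq p$, one does not need any gain: the terms are bounded crudely by $B\|h\|_{p,q+1}$, since taking one fewer unstable derivative of $h$ but allowing one more stable derivative on the test function is precisely what the $\|\cdot\|_{p,q+1}$ norm measures, and the composition/change-of-variables estimates give a uniform (in $n$) constant $B$.

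The main obstacle is bookkeeping rather than conceptual: one must track carefully how composition with $f^n$ acts on the test functions $\vf$, since after the change of variables the relevant object is $(\vf\circ f^n)$ together with the weights $\lambda_{i_j}$, and one needs $|\vf\circ f^n|^s_{k+q}$ (or the appropriately weighted version) to be bounded by a constant times $|\vf|^s_{k+q}$ with the contraction factor made explicit. For a linear map this is elementary — $\partial^s_i(\vf\circ f^n) = \sum_j c_{ij}^{(n)} (\partial^s_j\vf)\circ f^n$ with $|c_{ij}^{(n)}|\leq \lambda^{-n}$ because $\|A|_{E^s}\|\leq \lambda^{-1}$ — but iterating to $k+q$ derivatives requires keeping the multi-index notation under control, and one has to be slightly careful that the central directions, if present, neither help nor hurt (they contribute neither expansion nor contraction, which is why they are excluded from the seminorms and why the hypothesis $E^u\neq\{0\}$ suffices for the stated estimate). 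Once these composition estimates are in hand, assembling the Lasota--Yorke inequality is a routine sum over the finitely many multi-indices.
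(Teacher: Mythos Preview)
Your argument for the first inequality is correct and matches the paper: the change of variables is measure-preserving, each unstable factor gives $\lambda_{i_j}^{-1}\leq 1$, and $|\vf\circ f|^s_{k+q}\leq |\vf|^s_{k+q}$.

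There is, however, a genuine gap in your treatment of the second inequality. For the lower block $k\leq p$ you claim that the terms are ``bounded crudely by $B\|h\|_{p,q+1}$'' with a uniform constant. This does not work: after the change of variables you are left with $\int (\vf\circ f^n)\,\partial^u_{i_1}\cdots\partial^u_{i_k}h$, and to invoke the $k$-th term of $\|h\|_{p,q+1}$ you would need $|\vf\circ f^n|^s_{k+q+1}$ to be bounded. But $\vf$ was only assumed to satisfy $|\vf|^s_{k+q}\leq 1$; it has no $(k+q+1)$-th stable derivative to speak of, and composing with $f^n$ does not manufacture one. So there is no constant $B$ (uniform in $n$ or not) making that bound hold. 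Relatedly, your accounting for the top term is off: the block $k=p+1$ yields only the factor $\lambda^{-(p+1)n}$ from $\prod_j\lambda_{i_j}^{-n}$; the composed test function satisfies $|\vf\circ f^n|^s_{p+1+q}\leq 1$ but gives no further gain. The $\lambda^{-qn}$ rate in $\min\{p+1,q\}$ does \emph{not} come from the top block.

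The paper closes this gap by a mollification argument on the lower-order terms. For $k\leq p$ and $|\vf|^s_{k+q}\leq 1$, one chooses $\vf_\ve$ (a stable-direction mollification) with $|\vf-\vf_\ve|^s_{k+q-1}\leq\ve$, $|\vf-\vf_\ve|^s_{k+q}\leq 2$, and $|\vf_\ve|^s_{k+q+1}\leq C\ve^{-1}$. The $\vf_\ve$ piece now has the extra stable derivative and is controlled by $C\ve^{-1}\|h\|_{k,q+1}$; the $(\vf-\vf_\ve)$ piece, after composing with $f^n$, has $|(\vf-\vf_\ve)\circ f^n|^s_{k+q}\leq\max\{\ve,2\lambda^{-(k+q)n}\}$, so it is controlled by the strong norm with a small coefficient. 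Optimizing $\ve=2\lambda^{-(k+q)n}$ gives, for each $k\leq p$, a contribution $2\lambda^{-(2k+q)n}\|h\|_{p,q}$ to the strong-norm term and an $n$-dependent constant times $\|h\|_{p,q+1}$. The worst strong-norm rate over $0\leq k\leq p$ is $\lambda^{-qn}$ (at $k=0$), which together with the $\lambda^{-(p+1)n}$ from the top block explains the $\min\{p+1,q\}$. One then iterates in blocks of fixed length $n_0$ to replace $\lambda^{-\min\{p+1,q\}n}$ by $\nu^n$ and absorb constants into $A,B$.
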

\begin{proof}
    Since $\langle v,\nabla (h\circ f^{-1})\rangle=\langle Df^{-1}v, \nabla h\rangle \circ f^{-1}$. We have
    \[
        \int_{\bT^n} \vf\partial^u_{i_1}\dots \partial^u_{i_k}\cL h=\prod_{j=1}^k\lambda_{i_j}^{-1}\int_{\bT^n} \vf\circ f \partial^u_{i_1}\cdots \partial^u_{i_k}h.
    \]
    Since $|\vf\circ f|^s_{k+q}\leq |\vf|^s_{k+q}$, the first inequality follows.

    Next, note that
    \[
        \|h\|_{p,q}=\sup_{i_1,\dots,i_p}\sup_{|\vf|^s_{p+q}\leq 1}\int_{\bT^n} \vf\partial^u_{i_1}\cdots \partial^u_{i_p}h+\|h\|_{p-1,q}.
    \]
    Thus, by the above computations,
    \[
        \|\cL^n h\|_{p,q}\leq \lambda^{-pn}\|h\|_{p,q}+\|\cL h\|_{p-1,q}.
    \]
    It thus suffices to consider the case $k<p$. If $|\vf|^s_{k+q}\leq 1$, then, for each $\ve>0$, let $\vf_\ve$ be such that $|\vf-\vf_\ve|^s_{k+q-1}\leq \ve$, $|\vf-\vf_\ve|^s_{k+q}\leq 2$ and $|\vf_\ve|^s_{k+q+1}\leq C\ve^{-1}$, for some fixed constant $C>2$.\footnote{ Such a function can be constructed by convolving with a mollifier in the space $E^s$.}
    \[
        \begin{split}
            \left|\int_{\bT^n} \vf\partial^u_{i_1}\cdots \partial^u_{i_k}\cL^n h\right|&\leq \prod_{j=1}^k\lambda_{i_j}^{-n}\left\{\left|\int_{\bT^n} (\vf-\vf_\ve)\circ f^n \partial^u_{i_1}\cdots \partial^u_{i_k}h\right|+C\ve^{-1}\|h\|_{k,q+1}\right\}\\
            &\leq  \prod_{j=1}^k\lambda_{i_j}^{-n}\left\{\max\{\ve, 2\lambda^{-(k+q)n}\}\|h\|_{k,q}+C\ve^{-1}\|h\|_{k,q+1}\right\}\\
            &\leq  2\lambda^{-(2k+q)n}\|h\|_{k,q}+\frac{C}2\lambda^{(k+q)n}\|h\|_{k,q+1}
        \end{split}
    \]
    where, in the last line, we have chosen $\ve=2\lambda^{-(k+q)n}$. Accordingly,
    \[
        \|\cL^n h\|_{p,q}\leq (\lambda^{-pn}+2\lambda^{-qn})\|h\|_{p,q}+C\lambda^{(q+p)n}\|h\|_{k,q+1}.
    \]
    Next, choose $n_0\in\bN$ such that $3\lambda^{-\min\{p,q\}n_0}\leq \nu^{n_0}$, write $n=kn_0+m$ with $m<n_0$ and iterate the above equation to obtain
    \[
        \|\cL^n h\|_{p,q}\leq \frac{3\lambda^{-\min\{p,q\}n_0}}{\nu^{n_0}}\nu^n\|h\|_{p,q}+\frac{C}{1-\nu}\lambda^{(q+p)n_0}\|h\|_{k,q+1},
    \]
    which proves the Proposition.
\end{proof}
\begin{rem} Note that Proposition \ref{prop:LY_auto} implies that the spectral radius of $\cL$ when acting on any space $\cB^{p,q}$ is bounded by one. On the other hand, since $\cL 1=1$, the spectral radius must be exactly one.
\end{rem}
The following is the equivalent of \cite[Lemma 4.1]{BL20}, although the proof follows a different path, easier in this particular case.
\begin{lem}\label{lem:compact}
    If $E^c=\{0\}$, then, for each $p,q\in\bN$, $\{h\in\cB^{p,q}\;:\; \|h\|_{p,q}\leq 1\}$ is relatively compact in $\cB^{p-1,q+1}$.
\end{lem}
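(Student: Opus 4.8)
The plan is to prove relative compactness by a mollification argument, in which the hypothesis $E^{c}=\{0\}$ is used exactly once, through the resulting splitting $\bR^{n}=E^{u}\oplus E^{s}$. I would fix a smooth mollifier $j_{\ve}$ on $\bT^{n}$ supported in a ball of radius $\ve$, set $h^{\ve}:=h\ast j_{\ve}$, and note at the outset that, since $\partial^{u}_{i}$ and $\partial^{s}_{i}$ are constant-coefficient vector fields, they commute with one another and with convolution, and every seminorm appearing in \eqref{eq:linear_norms} is translation invariant; these two facts are used throughout.

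First I would prove the quantitative approximation bound $\|h-h^{\ve}\|_{p-1,q+1}\le C\,\ve\,\|h\|_{p,q}$ for $h\in\cC^{\infty}$. Writing $h-h^{\ve}=\int j_{\ve}(y)\bigl(h-h(\cdot-y)\bigr)\,dy$ and $h(x)-h(x-y)=-\int_{0}^{1}\langle y,\nabla h(x-ty)\rangle\,dt$, I would decompose $y=y^{u}+y^{s}\in E^{u}\oplus E^{s}$, with $|y^{u}|,|y^{s}|\le C\ve$ --- this is the only place the absence of a central direction enters. For a generic term $\int_{\bT^{n}}\vf\,\partial^{u}_{i_{1}}\cdots\partial^{u}_{i_{k}}(h-h^{\ve})$ with $k\le p-1$ and $|\vf|^{s}_{k+q+1}\le1$: the $y^{u}$-part puts one more unstable derivative on $h$, i.e.\ $k+1\le p$ of them, tested (after substituting $x\mapsto x+ty$) against a translate of $\vf$ whose seminorm $|\cdot|^{s}_{(k+1)+q}=|\cdot|^{s}_{k+q+1}$ remains $\le1$, so this part is $\le C\ve\|h\|_{p,q}$; the $y^{s}$-part is handled by integrating the stable derivative by parts onto $\vf$ (legitimate by commutativity), which turns $\vf$ into a translate of $\partial^{s}\vf$, of $|\cdot|^{s}_{k+q}$-seminorm $\le|\vf|^{s}_{k+q+1}\le1$, and leaves only $k\le p-1$ unstable derivatives on $h$, so this part too is $\le C\ve\|h\|_{p,q}$. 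Summing over $k$ gives the bound.

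Next I would show that for each fixed $\ve>0$ the set $K_{\ve}:=\{h^{\ve}:\|h\|_{p,q}\le1\}$ is relatively compact in $\cB^{p-1,q+1}$. Since $\partial^{\alpha}h^{\ve}(x)=\langle h,(\partial^{\alpha}j_{\ve})(x-\cdot)\rangle$ and the $k=0$ term of $\|\cdot\|_{p,q}$ yields $|\langle h,\psi\rangle|\le\|h\|_{p,q}\,|\psi|^{s}_{q}$ for every $\psi$, one gets $\|\partial^{\alpha}h^{\ve}\|_{\infty}\le C(\ve,\alpha)\|h\|_{p,q}$ for every multi-index $\alpha$. Thus $K_{\ve}$ is bounded in $\cC^{m}(\bT^{n})$ for every $m$, hence relatively compact in $\cC^{p-1}(\bT^{n})$ by Arzel\`a--Ascoli, and since $\cC^{p-1}(\bT^{n})$ embeds continuously into $\cB^{p-1,q+1}$ (the test functions have sup-norm $\le1$ and at most $p-1$ unstable derivatives fall on $h$), $K_{\ve}$ is relatively compact there. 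Observing finally that the inclusion $\cB^{p,q}\hookrightarrow\cB^{p-1,q+1}$ is continuous (term-by-term comparison using $|\cdot|^{s}_{k+q}\le|\cdot|^{s}_{k+q+1}$), I would combine the two steps: a smooth $h$ in the unit ball of $\cB^{p,q}$ lies within $C\ve$ of $K_{\ve}$, so letting $\ve\to0$ shows the image of the smooth unit ball is totally bounded; density of $\cC^{\infty}$ and continuity of the inclusion extend this to the whole unit ball, and completeness of $\cB^{p-1,q+1}$ then yields relative compactness.

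The hard part will be the approximation bound of the first step: the delicate point is the index bookkeeping --- arranging that the $y^{u}$-contribution is absorbed by the spare unstable derivative ($k+1\le p$) and the $y^{s}$-contribution by the spare stable derivative allowed on the test functions (matching $(k+1)+q$ with $k+q+1$), and checking that the shifts by $ty$ are harmless, which they are thanks to translation invariance of all the seminorms. Everything else is routine.
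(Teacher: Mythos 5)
Your proof is correct. It is essentially the mirror image of the paper's argument: the paper mollifies the test function $\vf$, you mollify the element $h$; both rely on the $E^u\oplus E^s$ splitting furnished by $E^c=\{0\}$ and conclude via Arzel\`a--Ascoli. Where the paper mollifies in two stages, with separate kernels $j^s_\ve$ and $j^u_\ve$ along $E^s$ and $E^u$, you use a single $j_\ve$ and split the increment $y=y^u+y^s$ inside the Taylor remainder; after the change of variable $x\mapsto x+ty$ these are the same computation. The final step is also packaged differently: the paper produces a finite $\cC^{p+q}$-net $\{\phi_i\}$ of mollified test functions, obtains the Lasota--Yorke-type bound $\|h\|_{p-1,q+1}\le C\ve\|h\|_{p,q}+\sup_{i\le N_\ve}|\int\phi_i h|$, and concludes by diagonalisation, whereas you show the image of the unit ball lies within $C\ve$ of the relatively compact set $K_\ve$, hence is totally bounded, and invoke completeness of $\cB^{p-1,q+1}$. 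The paper's phrasing exposes a structure that can be fed directly into Hennion's theorem; yours is the more economical route to the bare compactness statement. The index bookkeeping you flag as the delicate point is indeed the crux, and you carry it out correctly: the $y^u$-piece consumes the spare unstable derivative ($k+1\le p$), the $y^s$-piece consumes the spare stable derivative of the test function ($|\partial^s\vf|^s_{k+q}\le|\vf|^s_{k+q+1}\le1$), and translation invariance of the seminorms neutralises the shift by $ty$.
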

\begin{proof}
    Let $d_s=\dim(E^s)$ and $d_u=\dim(E^u)$. By hypothesis $d=d_s+d_u$.

    Let $U:\bR^{d_s}\to\bR^{d_u}$ such that $\{{\boldsymbol v}^s(v)=(v,Uv)\}_{v\in\bR^{d_s}}=E^s$ and $V:\bR^{d_u}\to\bR^{d_s}$ such that $\{{\boldsymbol v}^u(v)=(Vv,v)\}_{v\in\bR^{d_u}}=E^u$.\footnote{ We can always choose coordinates in which this is possible.}

    Finally, consider mollifiers $j_\ve^{s/u}(x)=\ve^{-d_{s/u}}j^{s/u}(\ve^{-1}x)$, where  $j^{s/u}\in \cC^\infty(\bR^{d_{s/u}},\bR_+)$ such that $\supp j^{s/u}\subset \{\|x\|\leq 1\}$ and $\int_{\bR^{d_{s/u}}} j^{s/u}(x) dx=1$.
    Then, for each $|\vf|_{p+q}\leq 1$ and $h\in \cB^{p,q}$,
    \[
        \begin{split}
            &\int_{\bT^d}dx\, \vf \partial^{u}_{i_1}\cdots\partial^{u}_{i_{p-1}} h=\int_{\bT^d}dx \int_{\bR^{d_s}}dv\,\vf(x+{\boldsymbol v}^s(v))j_\ve^s(v)\partial^{u}_{i_1}\cdots\partial^{u}_{i_{p-1}} h(x)+\cO(\ve\|h\|_{p,q})\\
            &=\int_{\bT^d}dx \int_{\bR^{d_s}}dv\int_{\bR^{d_u}}dw\vf(x+{\boldsymbol v}^s(v)-{\boldsymbol v}^u(w))j_\ve^s(v)j_\ve^u(w)\partial^{u}_{i_1}\cdots\partial^{u}_{i_{p-1}} h(x)+\cO(\ve\|h\|_{p,q})\\
            &=:\int_{\bT^d}dx\; {{\boldsymbol \vf}_\ve}(x) \partial^{u}_{i_1}\cdots\partial^{u}_{i_{p-1}} h(x)+\cO(\ve\|h\|_{p,q}).
        \end{split}
    \]
    Note that $\|{{\boldsymbol \vf}_\ve}\|_{\cC^{p+q+1}}\leq C\ve^{-p-1}$ and hence for each $\ve$ there is a set $\{\phi_i\}_{i=1}^{N_\ve}\subset \cC^{p+q}$ such that, for all $\vf$ we have $\|{{\boldsymbol \vf}_\ve}-\phi_i\|_{\cC^{p+q}}\leq \ve$ for some $\phi_i$.  It follows that, for each $\ve$,
    \[
        \|h\|_{p-1,q+1}\leq \Const\ve\|h\|_{p,q}+\sup_{i\leq N_\ve} \left|\int_{\bT^d}\phi_i h\right|.
    \]
    From the above the wanted compactness follows by a standard diagonalization argument.
\end{proof}

We can now define the operators $\cD_i h=\partial^u_i h$. Then
\begin{equation}\label{eq:yet_another_commutation}
    \cD_i\cL h=\langle Df^{-1} v^u_i, \nabla h\circ f^{-1}\rangle=\lambda_i^{-1} \langle v^u_i, h\rangle\circ f^{-1}=\lambda_i^{-1} \cL\cD_i h.
\end{equation}

The usefulness of these operators rests in the following Lemma. This is the only place in which we use Fourier series, however the result follows essentially from the accessibility property although with a more cumbersome proof.
\begin{lem} \label{lem:homology}
    The $\cD_i$ are bounded operators from $\cB^{p-1,q+1}$ to $\cB^{p,q}$. In addition, if we assume that $A$ has no eigenvalues that are roths of unity, then if $h\in \cB^{p,q}$, $p\geq 1$ and $\cD_i h=0$, for all $i$, then  $h$ is constant.
\end{lem}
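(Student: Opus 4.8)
The first assertion is immediate from the definition of the norms. For $h\in\cC^\infty$, a test function $\vf$ and a multi-index $i_1,\dots,i_k$ with $k\le p-1$, the pairing of $\vf$ with $\partial^u_{i_1}\cdots\partial^u_{i_k}(\cD_i h)$ is the pairing of $\vf$ with $k+1$ unstable derivatives of $h$, which is precisely one of the quantities estimated by the $(k+1)$-st term of $\|h\|_{p,q}$, whose admissible test functions are exactly those with $|\vf|^s_{(k+1)+q}\le1$; since the $k$-th term of $\|\cdot\|_{p-1,q+1}$ admits exactly the $\vf$ with $|\vf|^s_{k+q+1}\le1$, summing over $k$ and over the finitely many directions gives $\|\cD_i h\|_{p-1,q+1}\le\|h\|_{p,q}$. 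Hence $\cD_i$ extends from $\cC^\infty$ to a bounded operator $\cB^{p,q}\to\cB^{p-1,q+1}$, which requires $p\ge1$.

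For the second assertion the plan is to pass to Fourier series on $\bT^n$. First note that $\|h\|_{p,q}$ dominates its $k=0$ term, hence $|\langle h,\psi\rangle|\le|\psi|^s_q\,\|h\|_{p,q}\le\|\psi\|_{\cC^q(\bT^n)}\,\|h\|_{p,q}$ for every $\psi\in\cC^\infty(\bT^n)$; thus every $h\in\cB^{p,q}$ is a distribution of finite order, the inclusion of $\cB^{p,q}$ into the distributions on $\bT^n$ is injective, and $h$ is determined by its Fourier coefficients $\hat h(m)$, $m\in\bZ^n$. In particular $1\in\cB^{p,q}$, and if $\hat h(m)=0$ for all $m\ne0$ then $h=\hat h(0)\cdot 1$ is constant. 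Now $\cD_j h=0$ is equivalent to $\langle v^u_j,m\rangle\,\hat h(m)=0$ for every $m$, so imposing this for all $j$ forces $\hat h(m)=0$ unless $m\perp\operatorname{span}\{v^u_j\}=E^u$. It therefore suffices to prove the arithmetic fact $\bZ^n\cap(E^u)^\perp=\{0\}$.

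To see this, observe that $(E^u)^\perp$, the annihilator of the $A$-invariant subspace $E^u$, is $A^T$-invariant, and $A^T$ acts on it with spectral radius at most $1$ (its eigenvalues are those eigenvalues of $A$ of modulus at most $1$). Under the standing assumption that $A$ has no Jordan blocks, $A^T|_{(E^u)^\perp}$ is diagonalizable over $\bC$, so $\sup_{k\ge0}\|(A^T)^k v\|<\infty$ for every $v\in(E^u)^\perp$. If some $m\in\bZ^n\cap(E^u)^\perp$ were non-zero, its forward orbit $\{(A^T)^k m\}_{k\ge0}$ would be a bounded subset of $\bZ^n$, hence finite, and invertibility of $A^T$ would give $(A^T)^\ell m=m$ for some $\ell\ge1$; then $(A^T)^\ell$, and hence $A^\ell$, would have $1$ as an eigenvalue, i.e.\ $A$ would have an $\ell$-th root of unity among its eigenvalues, contrary to hypothesis. (Without the no-Jordan-block assumption one argues instead with Kronecker's theorem applied to the $A^T$-invariant sublattice of $\bZ^n$ generated by $m$, on which $A^T$ acts by an integer matrix of determinant $\pm1$ all of whose eigenvalues have modulus at most $1$.) Hence $\hat h(m)=0$ for all $m\ne0$, so $h$ is constant; together with $\cL 1=1$ and \eqref{eq:yet_another_commutation} this is exactly the ingredient needed for a large spectral gap.

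The step that needs genuine care is the opening of the second paragraph: confirming that an abstract element of the completion $\cB^{p,q}$ really is an honest distribution, and that vanishing of its non-zero Fourier coefficients pins $h$ down to the constant $\hat h(0)$ \emph{inside} $\cB^{p,q}$ rather than merely as a distribution. The number-theoretic core is then elementary. The accessibility-based argument alluded to in the statement would replace the Fourier analysis by the density of unstable leaves, but it must then attach a meaning to a distribution being constant along each such (dense) leaf, which is why it is the more cumbersome route.
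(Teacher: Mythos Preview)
Your argument is correct and follows the same Fourier-analytic route as the paper, with two differences worth noting.

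First, on the boundedness, you prove $\cD_i:\cB^{p,q}\to\cB^{p-1,q+1}$, the reverse of what the statement literally asserts. This is almost certainly a typo in the paper (compare Lemma~\ref{lem:d-action}, where $d:\cB^{p,q,\ell}\to\cB^{p-1,q+1,\ell+1}$), and your direction is the one that follows directly from counting unstable derivatives in the norm; the paper's mention of ``integration by parts'' does not alter this.

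Second, for the main assertion both you and the paper pass to Fourier coefficients and reduce to the arithmetic fact $\bZ^n\cap(E^u)^\perp=\{0\}$. The paper invokes Katznelson's Diophantine lemma $\dist(k,(E^u\oplus E^c)^\perp)\ge C_0\|k\|^{-n}$, whereas you give a self-contained argument: a nonzero $m\in\bZ^n\cap(E^u)^\perp$ has bounded (hence periodic) forward $A^T$-orbit, forcing a root-of-unity eigenvalue. Your route is more elementary and, unlike the paper's literal argument (which only excludes $k\in(E^u\oplus E^c)^\perp$, not $k\in(E^u)^\perp$), does not tacitly require $E^c=\{0\}$; it therefore matches the hypothesis of the lemma more precisely. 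The caveat you flag about injectivity of $\cB^{p,q}\hookrightarrow\cD'(\bT^n)$ is well placed and is handled exactly as you say: the norm is computed via distributional pairings, so vanishing as a distribution forces $\|h\|_{p,q}=0$.
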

\begin{proof}
    The fact that the $\cD_j$ are bounded operators from $\cB^{p-1,q+1}$ to $\cB^{p,q}$ is a direct consequence of the definition of the norms in \eqref{eq:linear_norms} and integration by parts.

    Next,  Katznelson Lemma \cite[Lemma 3]{Katz71} (applied to $A^*$) implies that there exists $C_0>0$ such that, for each $k\in\bZ^n\setminus\{0\}$,
    \begin{equation}\label{eq:katz}
        \begin{split}
            &\dist (k,(E^u\oplus E^c)^\perp)\geq C_0 \|k\|^{-n},
        \end{split}
    \end{equation}
    Let $\hat h_k$ be the Fourier coefficient of $h$. Suppose tha $\hat h_k\neq 0$ for some $k\neq 0$. Since, by hypothesis $0=\widehat{\cD_j h}_k= i\langle v^u_j,k\rangle \hat h_k$, we have $\langle v^s_j,k\rangle=0$ for all $j$. Thus $k\perp E^u$, contradicting \eqref{eq:katz}. Thus $h$ must be constant.
\end{proof}

We are now ready to draw our conclusions.
\begin{lem}\label{eq:lin_spectra}
    If $E^c=\{0\}$, then for each $\ve>0$ and $p,q$ large enough we have $\sigma_{\cB^{p,q}}(\cL)\subset\{1\}\cup\{z\in\bC\;:\; |z|\leq \ve\}$.
\end{lem}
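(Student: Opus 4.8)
The plan is to combine a Hennion-type bound on the essential spectral radius with a differentiation/magnification argument. First I would use Proposition~\ref{prop:LY_auto} together with the compact embedding $\cB^{p,q}\hookrightarrow\cB^{p-1,q+1}$ of Lemma~\ref{lem:compact} (which is exactly where the hypothesis $E^c=\{0\}$ enters) and Hennion's theorem \cite{Hennion93} to conclude that, for $p,q$ large (in particular $p,q\geq 1$), the essential spectral radius of $\cL$ on $\cB^{p,q}$ does not exceed $\lambda^{-\min\{p,q\}}<\ve$. Since moreover $\|\cL^n\|_{p,q}\leq 1$ and $\cL 1=1$, the spectral radius of $\cL$ on $\cB^{p,q}$ equals $1$, and the part of $\sigma_{\cB^{p,q}}(\cL)$ lying in $\{|z|>\ve\}$ consists of finitely many eigenvalues of finite multiplicity. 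We may clearly assume $\ve<1$, since otherwise there is nothing to prove.

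Next, fix such $p,q$ and suppose $\cL h=\mu h$ with $h\neq 0$ and $|\mu|>\ve$. By Lemma~\ref{lem:compact} the embedding $\cB^{p,q}\subset\cB^{p-1,q+1}$ is in particular continuous, so by Lemma~\ref{lem:homology} each $\cD_i$ maps $\cB^{p,q}$ into $\cB^{p,q}$; the same is then true of any finite product $\cD_{i_1}\cdots\cD_{i_m}$ (the $\cD_i$ being constant-coefficient directional derivatives, they commute with one another). Iterating the commutation relation~\eqref{eq:yet_another_commutation} yields, for every multi-index $(i_1,\dots,i_m)$,
\[
    \cL\,\cD_{i_1}\cdots\cD_{i_m}h=\Big(\mu\prod_{j=1}^{m}\lambda_{i_j}\Big)\cD_{i_1}\cdots\cD_{i_m}h .
\]
Since $\big|\mu\prod_{j}\lambda_{i_j}\big|\geq\ve\lambda^{m}$ and the spectral radius of $\cL$ on $\cB^{p,q}$ equals $1$, the element $\cD_{i_1}\cdots\cD_{i_m}h$ must vanish as soon as $\ve\lambda^{m}>1$. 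Hence there is a largest integer $\ell\geq 0$ for which $\cD_{i_1}\cdots\cD_{i_\ell}h\neq 0$ for some choice of indices, and $\ell$ is finite.

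Finally I would read off the value of $\mu$. Choose indices realising $\ell$ and set $g:=\cD_{i_1}\cdots\cD_{i_\ell}h\neq 0$; by maximality $\cD_j g=0$ for all $j$, and since $E^c=\{0\}$ rules out root-of-unity eigenvalues of $A$, Lemma~\ref{lem:homology} (with $p\geq 1$) gives that $g$ is constant. If $\ell=0$ then $h=g$ is a non-zero constant, hence $\cL h=h$ and $\mu=1$. If $\ell\geq 1$ then $g$ lies in the range of $\cD_{i_\ell}=\partial^u_{i_\ell}$; but the mean $h\mapsto\int_{\bT^n}h$ is a continuous linear functional on $\cB^{p,q}$ that annihilates the range of every $\cD_j$ (because $\int_{\bT^n}\partial^u_j\psi=0$ for smooth $\psi$, hence by density for all of $\cB^{p-1,q+1}$), so $\int_{\bT^n}g=0$; a constant of zero mean is $0$, contradicting $g\neq 0$. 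Thus $\ell=0$ and $\mu=1$, which proves $\sigma_{\cB^{p,q}}(\cL)\subset\{1\}\cup\{z\in\bC:|z|\leq\ve\}$. The only genuinely non-formal ingredient is the combined use of Lemmas~\ref{lem:compact} and~\ref{lem:homology} to see that differentiation along the unstable directions preserves $\cB^{p,q}$, which is what makes the magnification iterable; once that is in place the remainder is bookkeeping, with the mean-zero remark accounting for the exceptional point $1$.
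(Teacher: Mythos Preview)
Your argument is correct and follows the paper's route (Hennion via Proposition~\ref{prop:LY_auto} and Lemma~\ref{lem:compact}, then the commutation~\eqref{eq:yet_another_commutation} to magnify the eigenvalue, then Lemma~\ref{lem:homology} and the mean-zero observation to force constancy); in fact your maximal-$\ell$ descent with mixed multi-indices makes explicit a step the paper leaves rather cryptic in passing from ``$\cD_j^{q}h=0$ for every $j$'' to ``$h$ is constant''. One small caveat: your claim that $\cD_i$ sends $\cB^{p,q}$ into itself leans on the direction $\cD_i:\cB^{p-1,q+1}\to\cB^{p,q}$ recorded in Lemma~\ref{lem:homology}, whereas the norm definitions (and the analogue Lemma~\ref{lem:d-action}) rather give $\cD_i:\cB^{p,q}\to\cB^{p-1,q+1}$; this is harmless for your proof, since you only need finitely many iterations and can simply track $\cD_{i_1}\cdots\cD_{i_m}h\in\cB^{p-m,q+m}$ with $m\leq\min\{p,q\}$, on each of which $\cL$ still has spectral radius at most~$1$.
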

\begin{proof}
    By Proposition \ref{prop:LY_auto} and Lemma \ref{lem:compact}, together with Hennion argument \cite{Hennion93} (or \cite[Appendix B]{DKL}) we have that the spectrum in the considered region is only point spectrum provided $\lambda^{-\min\{p,q\}}<\ve$. We thus require $p,q$ to be such that $\lambda^{-\min\{p,q\}}<\ve$.

    Next, suppose that $\cL h=\nu h$ with $|\nu|>\ve$, then, for all $j$, \eqref{eq:yet_another_commutation} implies
    \[
        \cL(\cD_j^qh)=\lambda_j^{q}\nu (\cD_j^q h).
    \]
    But since $|\lambda_j^{q}\nu|>1$ it cannot be an eigenvalue of $\cL$, thus it must be $\cD_j^qh=0$. But, since integrating by parts yields
    \[
        \int_{\bT^d} \cD_l^qh=0,
    \]
    for all $0<l\leq j$, Lemma \ref{lem:homology} implies that $h$ must be a constant, which, in turn, implies $\nu=1$.
\end{proof}
\begin{rem} Lemma \ref{eq:lin_spectra}, by the usual arguments, implies that $\cC^\infty$ observables have a super-exponential decay of correlations.
\end{rem}
\begin{rem}
    It seems reasonable to expect that a similar result should hold also if $E^c$ is not trivial, but the map is ergodic ($A$ does not have eigenvalues that are roots of unity).
    However, the proof of Lemma \ref{lem:compact} fails in this case. To overcome this problem it may be  necessary to use a different Banach space. Thus, at present, it is not clear how to apply this strategy to partially hyperbolic systems, even in the simplest case.
\end{rem}
The nonlinear case is much more subtle even in the Anosov setting. The obvious idea would be to consider an unstable vector field $w$ and the operator $\cD=\langle w, \nabla h\rangle$. Unfortunately, in general, unstable vector fields are only H\"older. Hence, it is not clear if $\cD$ is a well defined bounded operator from $\cB^{p,q}$ to $\cB^{p-1,q+1}$. To solve this problem one should probably use different banach spaces (for some appropriate version of such spaces such as the ones introduced in \cite{GL08}, see \cite{Ts18} for some recent progress along these lines).

Indeed, on the one hand $w$ is smooth along unstable manifolds, on the other hand in the stable direction is only H\"older so its derivatives must be regarded as distribution, like $h$, and multiplication of distributions is a rather touchy business. So the situation, although not hopeless, is rather unclear.

Such issue needs further thought. Here we limit ourselves to explore an interesting alternative: considering the external derivative $d$ as the appropriate differential operator.
This simple change of perspective yields interesting results since it seems to provide a connection with the topology of the manifold.
At least, this is the situation in the following where we discuss only the simplest case: two dimensional Anosov maps.
\subsection{ Anosov map on two dimensional manifolds}\ \\
Let $M$ be a smooth two dimensional compact and connected Riemannian manifold and $f\in \operatorname{Diff}^\infty(M, M)$,  be a transitive Anosov  map.
In other words, there exists $\lambda >1$ and two continuous strictly invariant  cone fields $\cC^s,\cC^u$ such that, for all $x\in M$,
\[
    \begin{split}
        &\|d_xf v\|\geq \lambda \|v\| \quad  \forall v\in \cC^u(x)\\
        &\|d_xf^{-1} v\|\geq \lambda \|v\| \quad  \forall v\in \cC^s(x).
    \end{split}
\]
\begin{rem}\label{rem:comment}
    According to the Franks-Newhouse Theorem \cite{Franks70, Newhouse70}, every Anosov diffeomorphism of a two-dimensional compact Riemannian manifold is topologically conjugate to a hyperbolic toral automorphism. Hence, in our case,  $M$ must be homeomorphic to $\bT^2$. Note however that in the following the smoothness of the map plays a fundamental role, hence one cannot in general reduce the discussion to the case
    $\bT^2=\bR^2\backslash\bZ^2$. It is thus convenient to argue considering $M$ a general two dimensional manifold. This has also the advantage to emphasise the possibility of a higher dimensional extension. Indeed, we will use the  Franks-Newhouse Theorem only at the end of the argument (Lemma \ref{lem:thisistheend}), to characterise the cohomology groups.
\end{rem}
In analogy with the previous sections, we will obtain results on the mixing properties of the measure of maximal entropy $\mme$.
\begin{thm}\label{thm:max_hyp}
    The exists $r\in\bN$, $C>0$ and $\kappa\in (0,1)$ such that for all $g,h\in \cC^\infty$ and $n\in\bN$ we have
    \[
        \left|\int_M g\circ f^n h d\mme-\int_M gd\mme\int_M h d\mme\right|\leq C{\|g\|}_{\cC^r}{\|h\|}_{\cC^r} e^{-\htop n}\kappa^n.
    \]
\end{thm}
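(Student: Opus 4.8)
The plan is to carry out the differentiation--commutation strategy of Sections~\ref{sec:piecewise}--\ref{sec:non-unifexp}, now with the exterior derivative $d$ in the role of $\cD$ and the pushforward $f_{*}\omega:=(f^{-1})^{*}\omega$ acting on anisotropic spaces of differential forms of every degree. For $p,q\in\bN$ and $j\in\{0,1,2\}$ let $\cB^{p,q}_{j}$ be the anisotropic Banach space of $j$-forms on $M$ obtained by completing $\cC^{\infty}$ with the norms of \cite{GLP13}; then $f_{*}$ acts continuously on each $\cB^{p,q}_{j}$, the exterior derivative gives bounded maps $d\colon\cB^{p,q}_{j}\to\cB^{p-1,q+1}_{j+1}$, and, since $d$ commutes with pullback, $d\circ f_{*}=f_{*}\circ d$. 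Adapting the Lasota--Yorke inequalities of \cite{GLP13}, the operator $f_{*}$ is quasi-compact on each $\cB^{p,q}_{j}$, with essential spectral radius at most $e^{\htop}\lambda^{-\min\{p,q\}}$ if $j=1$ and at most $\lambda^{-\min\{p,q\}}$ if $j\in\{0,2\}$; moreover its spectral radius equals $1$ in degrees $0$ and $2$ (leading eigenvalue $1$, simple, the constants and the SRB $2$-form respectively, by topological transitivity) and equals $e^{\htop}$ in degree $1$, the lower bound coming from a smooth closed $1$-form representing the $e^{\htop}$-eigendirection of $f_{*}$ on $H^{1}(M;\bR)$. Fix $p,q$ large enough that all three essential spectral radii lie below some $\epsilon_{0}\in(0,1)$, and fix any $\kappa\in(0,1)$ with $\kappa>\epsilon_{0}$.

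The core step is to show that, apart from $e^{\htop}$, the spectrum of $f_{*}$ on $\cB^{p,q}_{1}$ lies in $\{|z|<1\}$. Let $T\in\cB^{p,q}_{1}$ lie in the generalised eigenspace of $f_{*}$ for an eigenvalue $\nu$ with $|\nu|\ge 1$. Then $dT\in\cB^{p-1,q+1}_{2}$ lies in the generalised eigenspace of $f_{*}$ in degree $2$ for $\nu$; the spectral radius there is $1$ and its only peripheral eigenvalue is the simple eigenvalue $1$, so $dT=0$ unless $\nu=1$, and if $\nu=1$ then $dT=c\cdot(\text{SRB }2\text{-form})$ while, $dT$ being exact and $M$ closed, Stokes' theorem gives $\int_{M}dT=0$, hence $c=0$; in every case $dT=0$, i.e.\ $T$ is closed. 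If $[T]\in H^{1}(M;\bR)$ is nonzero it is an eigenvector of $f_{*}$ on $H^{1}(M;\bR)$, which by the Franks--Newhouse theorem \cite{Franks70,Newhouse70} is isomorphic to $H^{1}(\bT^{2};\bR)\cong\bR^{2}$ with $f_{*}$ acting with eigenvalues $\{e^{\htop},e^{-\htop}\}$; only $e^{\htop}$ has modulus $\ge 1$ and it is simple, so $\nu=e^{\htop}$. If instead $[T]=0$ then $T=d\psi$ with $\psi\in\cB^{p+1,q-1}_{0}$ (using closed range of $d$), and $d(f_{*}\psi-\nu\psi)=0$, so $f_{*}\psi-\nu\psi$ is a constant $c$; if $\nu\neq1$ then $\psi+(\nu-1)^{-1}c$ is a $\nu$-eigenfunction of $f_{*}$ in degree $0$, impossible since there the spectral radius is $1$ with $1$ its only peripheral eigenvalue, and if $\nu=1$ then integrating $f_{*}\psi-\psi=c$ against the $f$-invariant SRB measure gives $c=0$, whence $f_{*}\psi=\psi$, so $\psi$ is constant (the $1$-eigenspace in degree $0$ being the constants) and $T=0$. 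Thus $\sigma_{\cB^{p,q}_{1}}(f_{*})\cap\{|z|\ge1\}=\{e^{\htop}\}$ with $e^{\htop}$ simple; by quasi-compactness only finitely many eigenvalues have modulus $>\epsilon_{0}$, so after enlarging $\kappa$ we may assume $\sigma_{\cB^{p,q}_{1}}(f_{*})\setminus\{e^{\htop}\}\subset\{|z|\le\kappa\}$ with $\kappa<1$, and on $\cB^{p,q}_{1}$ we obtain $f_{*}^{\,n}=e^{n\htop}P+R^{n}$ with $P$ a rank-one projector and $\|R^{n}\|\le C\kappa^{n}$.

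It remains to rewrite the correlations. By the Margulis-type identification of the leading eigendata, which recovers (and slightly strengthens) \cite{Baladi20,Forni20a}, there are $m$ spanning the range of $P$ (so $f_{*}m=e^{\htop}m$) and $\ell$ in the dual with $f_{*}^{*}\ell=e^{\htop}\ell$, $\langle m,\ell\rangle=1$, such that $P=m\otimes\ell$ and $\mme(\psi)=\langle\psi\,m,\ell\rangle$ for $\psi\in\cC^{\infty}$; here multiplication of a form by a smooth function is bounded on $\cB^{p,q}_{1}$ and its dual by a fixed $\cC^{r}$-norm of the function. Using the $f$-invariance of $\mme$ (to replace $g\circ f^{n}$ by $g\cdot(h\circ f^{-n})$) together with $f_{*}(h\,m)=e^{\htop}(h\circ f^{-1})m$, hence $(h\circ f^{-n})m=e^{-n\htop}f_{*}^{\,n}(h\,m)$, and the identity $\langle g\,T,\ell\rangle=\langle T,g\,\ell\rangle$, one gets
\[
    \int_{M}(g\circ f^{n})\,h\ d\mme
    = e^{-n\htop}\,\big\langle f_{*}^{\,n}(h\,m),\ g\,\ell\big\rangle .
\]
Inserting $f_{*}^{\,n}=e^{n\htop}P+R^{n}$, the $P$-term equals $\mme(h)\,\langle m,g\,\ell\rangle=\mme(h)\,\mme(g)$, while the remainder is bounded by $e^{-n\htop}\,\|R^{n}\|\,\|h\,m\|\,\|g\,\ell\|\le C\,e^{-\htop n}\kappa^{n}\|h\|_{\cC^{r}}\|g\|_{\cC^{r}}$, which is the claimed estimate.

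The main obstacle is making the second paragraph rigorous at the level of the spaces $\cB^{p,q}_{j}$: that $d$ has closed range there, that the finite-dimensional generalised eigenspaces of $f_{*}$ in degree $1$ with eigenvalue of modulus $\ge1$ genuinely compute de Rham cohomology (so that Franks--Newhouse may be invoked), and that the peripheral point spectrum of $f_{*}$ in degrees $0$ and $2$ reduces to the simple eigenvalue $1$. By contrast the Lasota--Yorke inequalities follow from \cite{GLP13}, and the identification of the leading eigendata with $\mme$ together with the final computation are routine.
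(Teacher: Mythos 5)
Your proposal is correct and follows essentially the same route as the paper: it commutes the exterior derivative with $f_*$ on anisotropic spaces of $j$-forms, shows peripheral degree-$1$ eigenvectors are closed, classifies them via de Rham cohomology and Franks--Newhouse to isolate $e^{\htop}$, and extracts the correlation bound from the resulting rank-one spectral decomposition together with boundedness of multiplication by smooth functions. The gaps you explicitly flag (closed range of $d$ on the anisotropic spaces, identification of anisotropic cohomology with ordinary de Rham cohomology, and absence of peripheral Jordan blocks) are exactly the technical content the paper develops in its lemmas and in the section on anisotropic de Rham cohomology before arriving at this theorem.
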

This result is a corollary of the much more precise Theorem \ref{thm:maximal_entropy_gap} and it is proven in section \ref{sec:max_proof}.
To state Theorem \ref{thm:maximal_entropy_gap} we need to first introduce several objects.
\subsection{The operators}\ \\
The operator associated to the SRB measure is simply (e.g., see \cite{GL06})
\[
    \cL h(x)=(\det D_{f^{-1}(x)}f)^{-1}h\circ f^{-1}(x).
\]
However, in the present context the interesting object to study seems to be the action of forms, or rather {\em currents}.\footnote{ The idea that currents are a relevant object to study in the context of the statistical properties of dynamical systems goes back, at least, to \cite{RS75}. See, for example, \cite{Ruelle90} and \cite{BB05} for further use of \(k\)-forms in the dynamical systems context.}
Recall that the pullback on a differential form $\omega$ by  a map $g$ is defined as
\[
    {(g^*\omega)}_x( v_1, v_2)=\omega_{g(x)}(d_xg(v_1), d_xg(v_2)).
\]
If $g$ is a diffeomorphism we can define the pushforward as $g_*\omega=(g^{-1})^*\omega$. It is then natural to define the action of the dynamics on forms as the pushforward $f_*$.

Let $\omega_0$ be the Riemannian volume. Then any two form can be written as $\omega= h\omega_0$ for some function $h$. Then
\begin{equation}
    \label{eq:duality-zero-due}
    \begin{split}
        [f_*\omega(v_1,v_2)](x)&=h\circ f^{-1}(x)\omega_0((d_xf^{-1}(v_1), d_xf^{-1}(v_2))(x)\\
        &=h\circ f^{-1}(x)\det(D_{f^{-1}(x)}f)^{-1}\omega_0(v_1,v_2)(x)\\
        &=\left[\left(\cL h\cdot \omega_0\right)(v_1,v_2)\right](x).
    \end{split}
\end{equation}
That is, the operator $\cL$ is equivalent  to the pushforward on two forms.

Recall that
\begin{equation}
    \label{eq:1-0}
    d (f_*  h)=f_*dh.
\end{equation}
where, if $h$ is a zero form, then $f_*dh(x)=\left[D_xf^{-1}\right]^T (\nabla h)\circ f^{-1}(x)$.

The scalar product in $T^*M$ is canonically defined by using the canonical duality $\pi: T^*M\to T_*M$ defined by $\omega(v)=\langle \pi(\omega),v\rangle$, for all $v\in T_*M$. That is,
\begin{equation}\label{eq:scalare}
    \langle \omega_1,\omega_2\rangle=\langle \pi(\omega_1),\pi(\omega_2)\rangle=\omega_1(\pi(\omega_2)).
\end{equation}

For each $x\in M$ and $v_1, v_2, w_1, w_2\in T_x^*M$ we define
\begin{equation}
    \label{eq:scalarproduct}
    \langle v_1\wedge v_2,w_1\wedge w_\rangle=\det (\langle v_i,w_j\rangle).
\end{equation}
Assuming bilinearity, the above formula defines uniquely a scalar product among 2-forms. Also, we define a duality from $\ell$ to $2-\ell$ forms via (see \cite[Appendix A]{GLP13} for more details)
\begin{equation}
    \label{eq:hdgedef}
    \langle v,w\rangle \omega_0=(-1)^{\ell(2-\ell)}v\wedge *w=(-1)^{\ell(2-\ell)}w\wedge *v=*v\wedge w.
\end{equation}
Since such a formula must hold for all $\ell$-forms, the $(2-\ell)$-forms $*w, *v$ are uniquely defined. The operator ``$*$'' is the so called {\em Hodge operator}.

\subsection{The Banach spaces and the main result}\label{sec:banch}\ \\
The operators $f_*$ have been studied for flows in \cite{GLP13} using appropriate Banach spaces. We use the same notation and almost the same Banach spaces defined in \cite[Section 3]{GLP13}. However, since here we consider maps rather than flows, we do not have the requirement that the forms be null in the flow direction (see \cite[ equation (3.5)]{GLP13}).  Here we provide some more detail but we refer to \cite{GLP13} for the full story.

For any $r\in \mathbb{N}$, we assume that there exists $\delta_0>0$ such that, for each $\delta\in(0,\delta_0)$ and $\rho\in(0,4)$, there exists an atlas $\{(U_\alpha, \Theta_\alpha)\}_{\alpha\in\mathit{A}}$, where $\mathit{A}$ is a finite set, such that\footnote{ We use the notation
    $B_d(x,r)=\{y\in\bR^d\;:\; \|y-x\|<r\}$.}
\begin{equation}
    \begin{cases}
         & \Theta(U_\alpha)=B_2(0,30\delta\sqrt{1+\rho^2}),                                                                                                        \\
         & \cup_\alpha \Theta_\alpha^{-1}(B_2(0,2\delta))=M,                                                                                                       \\
         & \Vert (\Theta_\alpha)_*\Vert_\infty+\Vert (\Theta_\alpha^{-1})_*\Vert_\infty\leq 2; \quad \Vert \Theta_\alpha \circ \Theta_\beta^{-1}\Vert_{C^r}\leq 2.
    \end{cases}
\end{equation}
Fix $L_0>0$. For any $L > L_0$, let us define
\[
    \begin{split}
        \mathcal{F}_r(\rho,L):=\big\{F\in \cC^r(B_{1}(0,6\delta), \mathbb{R})\;:\;& F(0)=0;\\
        &\Vert DF\Vert_{\cC^0(B_{1}(0,6\delta))}\leq \rho; \Vert F\Vert_{\cC^r(B_{1}(0,6\delta))}\leq L\big\}.
    \end{split}
\]
Where the $\cC^r$ is defined as usual, e.g. see \cite[ equation (3.6)]{GLP13}.
For each $F\in \mathcal{F}_r(\rho, L)$, $x\in \mathbb{R}^2$, $\xi\in \mathbb{R}^{1}$, let $G_{x,F}(\xi):B_{1}(0,6\delta)\rightarrow \mathbb{R}^2$ be defined by $G_{x,F}(\xi):=x+(\xi,F(\xi))$. Let us also define $\tilde{\Sigma}(\rho,L):=\{G_{x,F}:x\in B_{1}(0,2\delta), F\in \mathcal{F}_r(\rho,L)\}$.
For each $\alpha\in \mathit{A}$ and $G\in \tilde{\Sigma}(\rho,L)$, we define the leaf
\[
    W_{\alpha, G}:=\{\Theta_\alpha^{-1}\circ G(\xi)\}_{\xi \in B_{1}(0,3\delta)}.
\]
For each $\alpha \in \mathit{A}$, $G\in \tilde{\Sigma}(\rho,L)$, note that $W_{\alpha, G}\subset \hat{U}_\alpha:=\Theta_\alpha^{-1}(B_d(0,6\delta\sqrt{1+\rho^2}))\subseteq U_\alpha$. Finally, we define $\Sigma_\alpha=\cup_{G\in \tilde{\Sigma}(\rho,L)} W_{\alpha,G}$. This is the set of  ``almost stable" leaves that we will use to define our norms.\\

Given a curve $W_{\alpha,G}\in\Sigma_\alpha$, we consider $\Gamma^{\ell, s}_c(\alpha, G)$ as the $\cC^s$ sections of the fiber bundle on $W_{\alpha,G}$ with fibers in $\wedge^\ell(T^*M)$, as defined in \cite[equation (3.8)]{GLP13}, equipped with the $\cC^s$ norm. \\

Following \cite[Section 3]{GLP13} let ${V}^s(\alpha, G)$ be the set of uniformly $\cC^s(U_{\alpha,G})$ vector fields, where $U_{\alpha,G}$ is any open set such that $U_\alpha\supset U_{\alpha,G}\supset W_{\alpha,G}$.

Let $\omega_{vol}$ be the volume form induced on $W_{\alpha, G}$ by the push-forward of Riemannian volume via the chart $\Theta_\alpha^{-1}$. Write $L_\nu$ for the Lie derivative along a vector field $\nu$. For all $\alpha\in \mathit{A}, G\in \Sigma_\alpha, g\in \Gamma^{l,0}_c(\alpha,G), \bar{v}^p=(v_1,\cdots, v_p)\in V^s(\alpha, G)^p$, let us define the functionals $J_{\alpha,G,g,\bar{\nu}^p}: C^p\to \bC$ by\footnote{ By $\langle\cdot,\cdot\rangle$ we mean the usual scalar product between forms.}
\begin{equation}\label{eq:norm-anosov}
    J_{\alpha,G,g,\bar{\nu}^p}(h)=\int_{W_{\alpha,G}}\langle g, L_{v_1}\cdots L_{v_p} h\rangle \omega_{vol}.
\end{equation}
Next, for all $p\in \mathbb{N}$, $q\in\mathbb{R}_+$, $p+q<r-1$, $l\in\{0,1,2\}$, let
\[ \mathbb{U}_{\rho,L,p,q,\ell}=\{J_{\alpha,G,g,\bar{\nu}^p}\vert \alpha \in A, G\in \Sigma_\alpha(\rho,L), g\in \Gamma^{\ell,p+q}_c,\nu_j\in V^{p+q},\] \[\Vert g\Vert_{\Gamma_c^{\ell,p+q}(\alpha,G)}\leq 1,\Vert \nu_j\Vert_{C^{p+q}(U_{\alpha,G})}\leq 1\}.\]
where, for $\nu \in V^s(\alpha,G)$, $\Vert \nu\Vert_{C^s(U_{\alpha,G})}=\sup_{\alpha,i}\Vert \langle \nu,e_{\alpha,i}\rangle \circ \Theta_\alpha^{-1}\Vert_{C^s(\Theta_\alpha(U_\alpha,G)}$.\\
For all $p\in \mathbb{N}$, $q\in \mathbb{R}_+$, $\ell\in\{0,1, 2\}$, we finally define the spaces $\cB^{p,q,\ell}$ as the closure of the $\cC^\infty$ $\ell$ forms with respect to the norm
\[
    \Vert h \Vert_{p,q,\ell}= \sup_{n\leq p} \sup_{J\in \mathbb{U}_{\rho,L,n,q,\ell}} J(h).
\]
Note that $\cB^{p,q,\ell}$ is contained in the space of $\ell$ ($p+q$ smooth) currents (see \cite{GLP13}).
\begin{rem}\label{rem:notation_der}
    Note that $(\Theta_\alpha)_*(L_vh)=L_{(\Theta_\alpha)_* v}(\Theta_\alpha)_*h$, thus, in coordinates, $L_vh$ will have the form $\sum_{i=1}^2\alpha_iL_{e_i} h=:\sum_{i=1}^2\alpha_i\partial_{x_i} h$, for some functions $\alpha_i$ and where  $\{e_i\}_{i=1}^2$ is the standard basis of $\bR^2$. If follows that restricting the vector fields in \eqref{eq:norm-anosov} to $\Theta_\alpha^*e_i$ yields an equivalent norm. Hence, to simplify notation, in the following we will use the notation $\partial^\beta h$ to designate the application of $|\beta|$ vector fields to $h$, where $\beta$ are the usual multiindexes used in PDE.
\end{rem}
The main result of this section consists in the following Theorem which provides a rather precise characterisation of the spectrum of the action on one forms, which is well known to be related to the measure of maximal entropy and thus plays the same role of the operators $\cL_0$ in the previous sections.

\begin{thm}
    \label{thm:maximal_entropy_gap}
    For each $\ve>0$, for $p,q$ large enough,
    \[
        \begin{split}
            &\{e^{-\htop}, e^{\htop}\}\cup (\sigma_{\cB^{p+1,q-1,0}}(f_*)\setminus \{z\in\bC\;:\; |z|<\ve\})\subset \sigma_{\cB^{p,q,1}}(f_*)\\
            &\sigma_{\cB^{p,q,1}}(f_*) \subset  \{z\in\bC\;:\; |z|<\ve\}\cup\{e^{-\htop}, e^{\htop}\}\cup \sigma_{\cB^{p+1,q-1,0}}(f_*)\cup\sigma_{\cB^{p-1,q+1,0}}(\cL) ,
        \end{split}
    \]
    moreover $1\not \in \sigma_{\cB^{p,q,1}}(f_*)$. \\
    In addition, $\sigma_{\cB^{p+1,q-1,0}}(f_*)\setminus \{z\in\bC\;:\; |z|<\ve\}$ consists only of point spectrum and  there exists $\kappa\in (0,1)$ such that $\sigma_{\cB^{p+1,q-1,0}}(f_*)\subset\{1\}\cup\{z\in\bC\;:\; |z|<\kappa\}$. The same holds for $\sigma_{\cB^{p-1,q+1,0}}(\cL)$.
\end{thm}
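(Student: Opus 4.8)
The strategy mirrors the torus case (Lemmas \ref{prop:LY_auto}--\ref{eq:lin_spectra}) but now the role of the differentiation operator $\cD_i$ is played by the external derivative $d$, and the role of "taking primitives" is played by a choice of homotopy/primitive operator. First I would establish the two basic analytic ingredients for $f_*$ acting on $\cB^{p,q,\ell}$ for each $\ell\in\{0,1,2\}$: a Lasota--Yorke inequality and a compact embedding $\cB^{p,q,\ell}\hookrightarrow \cB^{p-1,q+1,\ell}$. Both are quoted essentially verbatim from \cite{GLP13} (the only change being the absence of the flow-direction constraint, which only makes the spaces larger and the estimates no harder), so via Hennion--Nussbaum the essential spectral radius of $f_*$ on $\cB^{p,q,\ell}$ is at most $\lambda^{-\min\{p,q\}}\cdot(\text{spectral radius})$; in particular for $p,q$ large the spectrum of $f_*$ outside $\{|z|<\ve\}$ is discrete, of finite multiplicity, and (on the relevant part) point spectrum. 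The normalisations: on $0$-forms $f_*=\text{Koopman}$ has spectral radius $1$; on $2$-forms $f_*$ is conjugate (via $h\mapsto h\omega_0$, eq. \eqref{eq:duality-zero-due}) to $\cL$, whose spectral radius is $1$ since it preserves $\int$; on $1$-forms the relevant scaling is $e^{\pm\htop}$ coming from the growth/decay rates of the invariant line bundles — this is where the entropy enters, and I would record it by exhibiting the explicit eigen-currents (integration against unstable, resp. stable, manifolds carrying the relevant conditional densities) associated to $e^{\htop}$ and $e^{-\htop}$, which also gives the inclusion $\{e^{-\htop},e^{\htop}\}\subset\sigma_{\cB^{p,q,1}}(f_*)$.

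The heart of the argument is the commutation $d\,f_* = f_*\,d$ (eq. \eqref{eq:1-0}), which gives, for $\ell\in\{0,1\}$, bounded maps
\[
    d:\cB^{p,q,\ell}\longrightarrow \cB^{p-1,q+1,\ell+1},\qquad d f_* = f_* d.
\]
Hence $d$ intertwines the $f_*$-actions on consecutive exterior powers. From this one gets the two inclusions of the theorem by the same "transfer along the de Rham complex" bookkeeping used in \cite{FGL19}: if $\nu\in\sigma_{\cB^{p,q,1}}(f_*)$ with $|\nu|\ge\ve$ is an eigenvalue with eigen-current $\omega$, then $d\omega\in\cB^{p-1,q+1,2}$ is either $0$ or an eigen-current of $f_*$ on $2$-forms (hence $\nu\in\sigma_{\cB^{p-1,q+1,0}}(\cL)$ after the $\omega_0$-identification); if $d\omega=0$, then $\omega$ is a closed $1$-current and, by the de Rham theorem for currents together with $H^1(M)\cong H^1(\bT^2)\cong\bR^2$ (Franks--Newhouse, Remark \ref{rem:comment}), $\omega = c_1\eta_1+c_2\eta_2 + d\,u$ with $\eta_i$ a basis of harmonic $1$-forms and $u$ a $0$-current in the appropriate space; the $f_*$-action on the two-dimensional cohomology $H^1$ is given by $(A^{-1})^T$ (or $A$, depending on orientation conventions) which, since $A$ hyperbolic with $\det A=\pm1$, has eigenvalues $e^{\pm\htop}$ — this pins the "new" eigenvalues on $1$-forms to be exactly $\{e^{-\htop},e^{\htop}\}$ together with what is inherited from $d u$, i.e. from $\sigma_{\cB^{p+1,q-1,0}}(f_*)$. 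Running this in both directions gives the sandwich in the statement; the exclusion $1\notin\sigma_{\cB^{p,q,1}}(f_*)$ follows because $1$ is not an eigenvalue of $A$ acting on $H^1$ (hyperbolicity) and, by the argument of Lemma \ref{eq:lin_spectra} applied to the $0$-form pieces, the only surviving eigenvalue of $f_*$ on $0$-forms outside a small disk is $1$ with constant eigenfunction, which contributes a closed but exact (indeed zero after $d$) term that does not produce a $1$-form eigenvalue equal to $1$.

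Finally, for the last sentence: $\sigma_{\cB^{p+1,q-1,0}}(f_*)$ is the spectrum of the Koopman operator of $f$ on an anisotropic space; by the Lasota--Yorke inequality its essential radius is $<\ve$ so outside a small disk it is point spectrum, and since $f$ is topologically transitive (in fact conjugate to a linear Anosov automorphism) the leading eigenvalue is the simple eigenvalue $1$ with constant eigenfunction, and there are no other eigenvalues of modulus $1$ — giving $\kappa\in(0,1)$ with $\sigma_{\cB^{p+1,q-1,0}}(f_*)\subset\{1\}\cup\{|z|<\kappa\}$. The identical reasoning applied to $\cL=f_*$ on $2$-forms (preserving volume, transitive) gives the same conclusion for $\sigma_{\cB^{p-1,q+1,0}}(\cL)$. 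The main obstacle I anticipate is purely bookkeeping rather than conceptual: correctly matching the $(p,q)$ indices across the maps $d$ and $*$ so that all three statements hold simultaneously for a single large $(p,q)$, and making precise the de Rham / Hodge decomposition \emph{at the level of currents in these anisotropic spaces} (that a closed current in $\cB^{p,q,1}$ decomposes as harmonic part plus $d$ of a current in the right space, with control of norms) — this requires a small amount of elliptic regularity for the Hodge Laplacian compatible with the $\cB^{p,q,\ell}$ scale, which I would either extract from \cite[Appendix A]{GLP13} or prove by a partition-of-unity-plus-mollification argument as in the proof of Lemma \ref{lem:compact}.
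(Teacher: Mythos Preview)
Your overall strategy matches the paper's: Lasota--Yorke plus compact embedding to control the essential spectrum, then the commutation $df_*=f_*d$ to transfer eigenvalues along the de Rham complex, with the action on $H^1$ (computed via Franks--Newhouse) supplying $\{e^{\pm\htop}\}$ and the $0$- and $2$-form pieces supplying $\sigma(f_*)$ and $\sigma(\cL)$ respectively. The paper organises this exactly as you outline (Lemmas \ref{eq:lasota-yorke-hyp}--\ref{lem:ps-holo}).

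The one substantive divergence is at the step you correctly flag as the main obstacle: showing that a closed current $\omega\in\cB^{p,q,1}$ decomposes as a cohomology representative plus $du$ with $u\in\cB^{p+1,q-1,0}$. You propose to obtain this from elliptic regularity for the Hodge Laplacian. The paper deliberately avoids this route, and for good reason: the Hodge Laplacian is isotropic, so its Green's operator has no obvious reason to respect the anisotropic scale $\cB^{p,q,\ell}$, which treats stable and unstable directions asymmetrically. Nothing in \cite[Appendix~A]{GLP13} supplies such regularity --- that appendix only records the algebra of $*$ and the scalar product on forms. Instead, the paper constructs local primitives by hand (the linear homotopy operator \eqref{eq:primitiva} on each chart, Lemma \ref{lem:primitiva}), verifies directly that they land in $\cB^{p+1,q-1,0}$, and then reduces the global gluing problem to \v{C}ech cohomology of a good cover (Lemmas \ref{lem:1-chain}, \ref{lem:derham}). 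This is more elementary than Hodge theory and sidesteps the anisotropic-compatibility issue entirely; your proposed mollification argument would have to reproduce essentially this construction anyway.

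A smaller point: your argument for $1\notin\sigma_{\cB^{p,q,1}}(f_*)$ is slightly too quick. One must also exclude the possibility $d\omega\neq 0$: the paper handles this (Lemma \ref{lem:cosed-exact}) by observing that $f_*\omega=\omega$ with $d\omega\neq 0$ would force $d\omega=\srb$, which is impossible since $\int_M d\omega=0$ while $\srb$ is a probability measure. The exact case then requires the SRB measure of $f^{-1}$ (as a dual eigenvector) to kill the constant of integration and force $\omega=0$.
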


\begin{rem} In fact, we conjecture that, for $\lambda^{\min\{p,q\}}> \ve^{-1}$,
    \[
        \sigma_{\cB^{p,q,1}}(f_*)\setminus  \{z\in\bC\;:\; |z|<\ve\}= \Big[\{e^{-\htop}, e^{\htop}\}\cup \sigma_{\cB^{p+1,q-1,0}}(f_*)\cup\sigma_{\cB^{p-1,q+1,0}}(\cL) \Big]\setminus\{1\},
    \]
    see Remark \ref{rem:conjecture}. This would be consistent with the fact that, by duality, the spectrum of $\cL$ equals the spectrum of $f^{-1}_*$ and that the spectra of $f_*$ on forms determine the Ruelle zeta function, see \cite{Ruelle76}, and the latter is described in term of periodic orbits, which are the same for $f$ and $f^{-1}$. Accordingly, one expects a symmetry between the spectra of $f_*$ and $f_*^{-1}$.\footnote{ Note that if $f_*$ acts on some Banach space, then here one considers $f_*^{-1}$ acting on its dual, so the relation is not obvious a priori. Indeed, one does not necessarily expect $f^{-1}_*$ to be a bounded operator when acting on a Banach space on which $f_*$ is bounded.}
\end{rem}
The next section is devoted to the proof of the above Theorem, while in Section \ref{sec:derham} we present a minimalistic discussion of cohomology in the spaces $\cB^{p,q,1}$ and Section \ref{sec:5comp} is devoted to comments on the implications of such a Theorem and a comparison with existing results.

\subsection{Proof of Theorem \ref{thm:maximal_entropy_gap}}\ \\
We start with some preliminary results establishing minimal information about Hodge duality and exterior differentials in our spaces of currents $\cB^{p,q,\ell}$.

\begin{lem}
    \label{lem:duality}
    The  Hodge duality map $\Phi h:=\star h=h\omega_0$, between zero forms and two forms, extends to a bounded isomorphism between $\cB^{p,q,0}$ and $\cB^{p,q,2}$ and $\Phi \cL=f_*\Phi$. In particular, $\sigma_{\cB^{p,q,2}}(f_*)=\sigma_{\cB^{p,q,0}}(\cL)$.
\end{lem}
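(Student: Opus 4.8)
\emph{Plan.} The identity $\Phi\cL=f_*\Phi$ is just a reformulation of \eqref{eq:duality-zero-due}: for $h\in\cC^\infty$ that computation reads $f_*(h\,\omega_0)=(\cL h)\,\omega_0$, i.e. $f_*\Phi h=\Phi\cL h$. So the first step is to observe that, once $\Phi$ is known to be a bounded isomorphism $\cB^{p,q,0}\to\cB^{p,q,2}$, this relation (valid on the dense subspace $\cC^\infty$) extends by continuity, and $\Phi$ intertwines $\cL$ on $\cB^{p,q,0}$ with $f_*$ on $\cB^{p,q,2}$; hence these are similar operators and $\sigma_{\cB^{p,q,2}}(f_*)=\sigma_{\cB^{p,q,0}}(\cL)$. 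Everything therefore reduces to the boundedness of $\Phi$ and of $\Phi^{-1}$ between the stated spaces.

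For that I would argue chart by chart, $A$ being finite. In the chart $\Theta_\alpha$ write $(\Theta_\alpha)_*\omega_0=a_\alpha\,dx_1\wedge dx_2$; since $M$ is smooth and the transition maps are $\cC^r$, the density $a_\alpha$ is of class $\cC^{r-1}$ and is bounded above and below, and the hypothesis $p+q<r-1$ guarantees that $a_\alpha$ and $a_\alpha^{-1}$ carry strictly more than $p+q$ derivatives. Trivialising $\wedge^0T^*M$ by the constant section $1$ and $\wedge^2T^*M$ by $dx_1\wedge dx_2$, the map $\Phi$ becomes multiplication of the component function by $a_\alpha$, $\Phi^{-1}$ becomes multiplication by $a_\alpha^{-1}$, and the fibrewise scalar product of two $2$-forms becomes the product of the component functions times the fixed positive $\cC^{r-1}$ function $m_\alpha:=\langle dx_1\wedge dx_2,dx_1\wedge dx_2\rangle$. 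So it is enough to prove that multiplication by a fixed function $b$ with finite $\cC^{p+q}$ norm is bounded between the relevant norms on component functions.

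For this I would take one functional $J_{\alpha,G,g,\bar v}$ defining the norm, with $\bar v=(v_1,\dots,v_n)$ and $n\le p$, applied to $\Phi h$ (so the component is $a_\alpha h$, $h$ a $0$-form). Expanding $L_{v_1}\cdots L_{v_n}\bigl((a_\alpha h)\,dx_1\wedge dx_2\bigr)$ by the Leibniz rule — using $L_v(\phi\,dx_1\wedge dx_2)=(L_v\phi+\phi\,\operatorname{div}v)\,dx_1\wedge dx_2$ and $L_v(\phi h)=\phi\,L_vh+(L_v\phi)\,h$ — produces a finite sum of terms of the form $c_S\,(L_{v_S}h)\,dx_1\wedge dx_2$, where $S$ ranges over subtuples of $(v_1,\dots,v_n)$, $L_{v_S}$ is the (ordered) product of the $L_{v_i}$, $i\in S$, and $c_S$ is a product of $a_\alpha$ with at most $n-|S|$ derivatives of $a_\alpha$ and of the $v_i$ with $i\notin S$; since $a_\alpha\in\cC^{p+q}$ and $\|v_i\|_{\cC^{p+q}}\le 1$ one checks $\|c_S\|_{\cC^{|S|+q}}\le C$. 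Pairing with $g$ and integrating over the leaf, each such term equals $\int_{W_{\alpha,G}}(g_\alpha c_S m_\alpha)\,(L_{v_S}h)\,\omega_{vol}$, which is, up to the factor $\|g_\alpha c_S m_\alpha\|_{\cC^{|S|+q}}\le C$, one of the functionals defining $\|h\|_{p,q,0}$ (note $|S|\le n\le p$). Summing the finitely many terms gives $\|\Phi h\|_{p,q,2}\le C\|h\|_{p,q,0}$ on $\cC^\infty$, hence on the completion. The bound for $\Phi^{-1}$ is symmetric (now $b=a_\alpha^{-1}$); here it is convenient to use that coordinate vector fields are divergence free, so $L_{e_{i_1}}\cdots L_{e_{i_k}}(\phi\,dx_1\wedge dx_2)=(\partial^\beta\phi)\,dx_1\wedge dx_2$ with no lower-order terms, which lets one recognise the resulting integrals as functionals defining $\|\eta\|_{p,q,2}$. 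This proves $\Phi$ is the asserted isomorphism, and with the first step the lemma follows.

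The only delicate point is the bookkeeping in the last step: one has to make sure that each Leibniz step moving a derivative off $h$ produces a coefficient $c_S$ that never needs more than $(|S|+q)$ derivatives of control — which is exactly what is available, since the test sections and vector fields carry $\cC^{p+q}$ regularity and $p+q<r-1$. Everything else is formal, so I do not anticipate a real obstacle.
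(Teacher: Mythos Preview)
Your proposal is correct and follows essentially the same route as the paper: both arguments reduce to the Leibniz expansion of $\partial^\alpha(h\,\omega_0)$ and absorb the derivatives falling on $\omega_0$ (your $a_\alpha$) into the test section, using that $p+q<r-1$. The paper is considerably terser---it writes the bound for $\Phi$ in a single line $\left|\int_W\langle\omega,\partial^\alpha\Phi(h)\rangle\right|\le\sum_{\beta+\gamma=\alpha}\left|\int_W\langle\omega,\partial^\beta\omega_0\rangle\,\partial^\gamma h\right|\le\Const\|\omega\|_{\cC^{q+p}(W)}\|h\|_{p,q,0}$ and, rather than bounding $\Phi^{-1}$ explicitly as you do, invokes the embedding of $\cB^{p,q,\ell}$ into currents (\cite[Lemma~3.10]{GLP13}) to get injectivity; your direct treatment of $\Phi^{-1}$ via multiplication by $a_\alpha^{-1}$ is a cleaner way to conclude the isomorphism.
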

\begin{proof}
    By equation \eqref{eq:hdgedef}, for each smooth zero form $h$, $\Phi h= h\omega_0$. Thus equation \eqref{eq:duality-zero-due} implies $\Phi \cL h=f_*\Phi h$ for each smooth zero form. The injectivity follows since $\cB^{p,q,0}, \cB^{p,q,2}$ are isomorphic to a subspace of the space of currents, see \cite[Lemma 3.10]{GLP13}, and the extension of $\Phi$ to the current is an isomorphism. The result then follows by proving that $\Phi$ is a bounded operator. For each multi-index $\alpha$, $|\alpha|=p$, smooth two form $\omega$ and zero form $h$ we have
    \[
        \left|\int_W\langle \omega, \partial^\alpha\Phi(h)\rangle\right| \le \sum_{\beta+\gamma=\alpha}\left|\int_W \langle \omega,\partial^\beta\omega_0\rangle\partial^\gamma h\right|\leq \Const {\|\omega\|}_{\cC^{q+p}(W)}\|h\|_{p,q,0}
    \]
    from which the claim follows.
\end{proof}

\begin{lem}
    \label{lem:d-action}
    The exterior derivative $d$ extends to a bounded operator $\cB^{p,q,\ell}\to\cB^{p-1,q+1,\ell+1}$.\footnote{ With a slight abuse of notation we will call such an extension $d$ as well.} If $h\in\cB^{p,q,0}$, $\psi\in\cC^\infty(M,\bR_+)$, with the interior of $\supp(\psi)$ connected, and $\psi dh=0$, then there exists $c\in\bC$ such that $\psi(h-c)=0$.\footnote{ This essentially says that closed  anisotropic zero currents are constant. Since for zero currents being closed and being harmonic is the same, this is a little piece of Hodge theory, all that is presently needed. Yet, it would be clearly useful to develop the Hodge theory in the context of anisotropic spaces.} Finally,  $d(\cB^{p,q,0})$ is closed in $\cB^{p-1,q+1,1}$.
\end{lem}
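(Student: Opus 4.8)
The plan is to handle the three assertions separately: the boundedness of $d$ and the "closed currents are locally constant" statement are essentially bookkeeping inside the leafwise norms of \cite{GLP13}, while the closed–range statement is the substantive point and I would deduce it from the first two together with an a priori ("graph norm") estimate.

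For boundedness, note first that $d$ annihilates $2$-forms on a surface, so the case $\ell=2$ is trivial and only $\ell\in\{0,1\}$ matters. The key is that $d$ commutes with Lie derivatives, $dL_v=L_vd$, so for any admissible leaf $W_{\alpha,G}$, test form $g$ and vector fields $\bar v^{p-1}$,
\[
  J_{\alpha,G,g,\bar v^{p-1}}(dh)=\int_{W_{\alpha,G}}\big\langle g,\, d(L_{v_1}\cdots L_{v_{p-1}}h)\big\rangle\,\omega_{vol}.
\]
Writing $\langle g, d\eta\rangle = L_{\pi g}\eta$ by the canonical duality \eqref{eq:scalare} and decomposing the smooth vector field $\pi g$, along the (genuinely smooth) leaf $W_{\alpha,G}$, into its component tangent to the leaf and a fixed transverse component, the tangential derivative can be commuted back through the $L_{v_j}$ (modulo lower‑order commutators) and absorbed as one further Lie derivative, and likewise the transverse one; each resulting term is of the form $J'(h)$ with $J'\in\mathbb{U}_{\rho,L,p,q,\ell}$ and controlled data, whence $\|dh\|_{p-1,q+1,\ell+1}\leq\Const\|h\|_{p,q,\ell}$. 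It is crucial that the admissible leaves are smooth graphs, so that this splitting never invokes the merely Hölder bundles $E^s,E^u$.

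For the second assertion, put $\Omega:=\operatorname{int}\supp(\psi)$, a connected open set on which $\psi>0$. Since $\cB^{p-1,q+1,1}$ embeds in the space of $1$‑currents, testing $\psi\,dh=0$ against $\psi(\phi/\psi)$ for test $1$‑forms $\phi$ with compact support in $\Omega$ (so $\phi/\psi$ is again a smooth compactly supported form there) shows $dh=0$ as a current on $\Omega$; the classical fact that a distribution with vanishing differential on a connected open set is constant then gives $h\equiv c$ on $\Omega$ for some $c\in\bC$, and hence $\psi(h-c)=0$.

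The heart of the matter is that $d(\cB^{p,q,0})$ is closed in $\cB^{p-1,q+1,1}$. By the standard criterion this holds iff $\dist_{\cB^{p,q,0}}(h,\ker d)\leq\Const\|dh\|_{p-1,q+1,1}$ for all $h$, and by the second assertion applied with $\psi\equiv\Id$ (the interior of $\supp\Id=M$ being connected) $\ker d=\bC\cdot\Id$. I would establish the graph estimate
\[
  \|h\|_{p,q,0}\leq\Const\big(\|dh\|_{p-1,q+1,1}+\|h\|_{p-1,q+1,0}\big),\qquad h\in\cB^{p,q,0},
\]
by the same tangential/transverse splitting of the vector fields in $J_{\alpha,G,g,\bar v^p}(h)$: tangential derivatives are integrated by parts onto the compactly supported test data (no boundary terms), producing a term bounded by $\|h\|_{p-1,q+1,0}$, whereas any summand with at least one transverse derivative becomes, via $dL_v=L_vd$ and Cartan's formula, an interior product $\iota_{v^\perp}$ of at most $p-1$ Lie derivatives of $dh$, hence bounded by $\|dh\|_{p-1,q+1,1}$; the commutator remainders carry fewer than $p$ derivatives of $h$ and feed into the $\|h\|_{p-1,q+1,0}$ term. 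Granting this, suppose closed range fails: there are $h_n$ with $\|h_n\|_{p,q,0}\leq 2$, $\dist(h_n,\bC\cdot\Id)=1$, $\|dh_n\|_{p-1,q+1,1}\to 0$. By the compact embedding of $\cB^{p,q,0}$ into $\cB^{p-1,q+1,0}$ from \cite{GLP13} pass to a subsequence $h_n\to h$ in $\cB^{p-1,q+1,0}$; continuity of $d:\cB^{p-1,q+1,0}\to\cB^{p-2,q+2,1}$ and $dh_n\to 0$ give $dh=0$, so $h=c_0$ is constant by the second assertion; then $h_n-c_0\to 0$ in $\cB^{p-1,q+1,0}$ and $d(h_n-c_0)=dh_n\to 0$ in $\cB^{p-1,q+1,1}$, and the graph estimate forces $\|h_n-c_0\|_{p,q,0}\to 0$, contradicting $1=\dist(h_n,\bC\cdot\Id)\leq\|h_n-c_0\|_{p,q,0}$. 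I expect the main obstacle to be the careful implementation of this tangential/transverse splitting and of the leafwise integrations by parts inside the precise norms of \cite{GLP13} — checking the absence of boundary terms, respecting the budgets $p+q<r-1$, and verifying that the transverse components (taken with respect to the smooth admissible foliation, not the invariant one) stay in the appropriate $\cC^{p+q}$ classes.
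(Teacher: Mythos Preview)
Your treatment of the first two assertions is essentially correct and close in spirit to the paper's, though the paper dispatches boundedness more directly: in local coordinates $dh$ is a sum of $(\partial_i h)\,dx^i$, so $\partial^\alpha dh$ involves one extra partial of $h$ and is trivially controlled by $\|h\|_{p,q,\ell}$; no tangential/transverse split is needed there.

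The third assertion is where your argument genuinely departs from the paper's, and there is a real index mismatch that prevents it from closing. Your claimed graph estimate
\[
\|h\|_{p,q,0}\leq\Const\big(\|dh\|_{p-1,q+1,1}+\|h\|_{p-1,q+1,0}\big)
\]
does not follow from the splitting you describe. For the top functional $J_{\alpha,G,g,\bar v^p}(h)$ the data $g,v_j$ lie in $\cC^{p+q}$. After integrating one tangential derivative by parts on the leaf, the new test function $g'$ (involving $L_{v^\parallel}g$ and the leaf divergence of $v^\parallel$) is only $\cC^{p+q-1}$. That is exactly the regularity required for the $n=p-1$ term of $\|h\|_{p-1,q,0}$ (test data in $\cC^{(p-1)+q}$), \emph{not} for $\|h\|_{p-1,q+1,0}$ (which demands $\cC^{(p-1)+(q+1)}=\cC^{p+q}$). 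Likewise, if you skip the split and use $L_{v_p}h=\iota_{v_p}dh$ throughout, you control all $n\geq 1$ terms by $\|dh\|_{p-1,q+1,1}$ but are still left with $\|h\|_{0,q,0}$, again with the same $q$. Either way, the estimate your argument actually yields has the weak norm at the \emph{same} $q$-level. The compact embedding available from \cite{GLP13} is $\cB^{p,q,0}\hookrightarrow\cB^{p-1,q+1,0}$, i.e.\ it requires $q$ to \emph{increase}; bounded sets in $\cB^{p,q,0}$ are not known to be precompact in $\cB^{p-1,q,0}$. So in your contradiction argument you obtain $h_n-c_0\to 0$ only in $\cB^{p-1,q+1,0}$, and the graph estimate at hand does not force $\|h_n-c_0\|_{p,q,0}\to 0$.

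The paper sidesteps this entirely with a direct, constructive argument: given $\omega_n=d\Xi_n\to\omega$ in $\cB^{p-1,q+1,1}$, it normalises $\widehat\Xi_n=\Xi_n-\int_M\Xi_n$ and, working in a chart, writes $\int\varphi\,\widehat\Xi_n$ as an explicit integral pairing of a one-form built from $\varphi$ (via a straight-line homotopy to a point where $\varphi$ equals its mean) against $\omega_n$. Repeating this for $\partial^\alpha\widehat\Xi_n$ shows $\{\widehat\Xi_n\}$ is Cauchy in $\cB^{p,q,0}$, and continuity of $d$ gives $d\Xi=\omega$ for the limit. This is effectively an anisotropic Poincar\'e inequality, and it is precisely the ingredient your compactness scheme would need to bridge the $q$-gap.
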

\begin{proof}
    If $h$ is an $\ell$ form, then, for each $\ell+1$ form $\omega$ and multi-index $\alpha$, $|\alpha|=p-1$, we have that there exists a constant $C_\ell>0$ such that
    \[
        \left|\int_W\langle \omega, \partial^\alpha dh\rangle\right|\leq C_\ell\|\omega\|_{\cC^{p+q}(W)}\|h\|_{p,q,\ell}
    \]
    from which it follows that $\|d h\|_{p-1,q+1,\ell+1}\leq C_\ell\|h\|_{p,q,\ell}$.

    Next, let $h,\psi$ be such that $\psi dh=0$. Note that $\cB^{p,q,0}$ is isomorphic to a subspace of the space of distributions $(\cC^{p+q})'$, see \cite[Lemma 3.10]{GLP13}. Let $K=\supp\psi$ and $U=\overset{\,\circ}{K}$, note that $U$ is connected by hypothesis. Thus for each smooth local function $\vf$, $\supp\vf\subset U$, and disintegration of $\omega$ along a smooth foliation $\{W_t\}\subset \Sigma$, we have\footnote{ Since the foliation is smooth the Jacobian $J$ of the disintegration is a smooth function and $\hat\vf=J\vf$. Note that $\frac{\hat \vf}{\psi}$ is a smooth function on $W_t$.}
    \begin{equation}\label{eq:image}
        \int_M \vf\partial_{x_i} h=\int dt \int_{W_t} \langle \frac{\vf}{\psi} d x_i, \psi dh\rangle =0.
    \end{equation}
    It follows that $\partial_{x_i} h=0$ as a distribution on $U$, hence $h=c$ on $U$, for some $c\in\bC$. That is $\psi(h-c)=0$ on $M$. From \cite[Lemma 3.10]{GLP13}, again, it follows that $\psi(h-c)=0$ as an element of $\cB^{p,q,0}$.

    To conclude the Lemma we want to prove that $d(\cB^{p,q,0})$ is closed in $\cB^{p-1,q+1,1}$. Let us suppose that $\omega_n\to\omega$, in $\cB^{p-1,q+1,1}$, with $\omega_n\in d(\cB^{p,q,0})$. That is, there exists $ \Xi_n\in \cB^{p,q,0}$ such that $\omega_n=d\Xi_n$. Let $\widehat\Xi_n=\Xi_n-\int_M\Xi_n$. Then, for each function $\vf$ supported in a chart $(U_\alpha,\Theta_\alpha)$ we can write
    \[
        \int \vf \widehat\Xi_n=\int_M\left( \vf-\int_M\vf\right)\Xi_n.
    \]
    Let $\bar x$ be such that $\int_M\vf=\vf(\bar x)$, then \footnote{To simplify notation we do not write explicitly the change of coordinates
        $\Xi_\alpha $.}
    \[
        \begin{split}
            \int \vf \widehat\Xi_n&=\int_{U_\alpha}dx\int_0^1dt \frac{d}{dt}\vf(\bar x+(x-\bar x)t) \Xi_n(x)\\
            &=-\sum_{i=1}^2\int_M \vf(\bar x+(x-\bar x)t){\langle (x_i-\bar x_i),\partial_{x_i}\Xi_n(x)\rangle}.
        \end{split}
    \]
    Hence, setting $\Psi_t(x)=-\sum_{i=1}^2\vf(\bar x+(x-\bar x)t)(x_i-\bar x_i)d x_{i}$, we have, recalling equation \ref{eq:scalare},
    \[
        \int \vf \widehat\Xi_n=\int_0^1 dt\int_M\langle \Psi_t, \omega_n\rangle.
    \]
    Arguing similarly for $\partial^\alpha\widehat\Xi_n$ it follows that $\widehat\Xi_n$ is a Cauchy sequence. Let $\Xi$ be the limit, then, by the continuity of $d$, $d\Xi=\omega$, hence $\omega\in d(\cB^{p,q,0})$.
\end{proof}

\subsubsection{\bfseries Spectral radius and essential spectral radius of $f_*$ and $\cL$}\ \\
The first step in the study of the operators $f_*,\cL$ is the following.
\begin{lem}
    \label{eq:lasota-yorke-hyp}
    The action of $f_*$ on $\ell$-forms extends to a linear bounded operator from $\cB^{p,q,\ell}$ to itself. With a slight abuse of notation we use $f_*$ for the action on each $\cB^{p,q,\ell}$. Then, there exists a constant $C_\sharp>0$ such that
    \[
        \begin{split}
            &{\|f_*^n h\|}_{0,q,0}\leq C_\sharp {\|h\|}_{0,q,0}\\
            &{\|f_*^n h\|}_{0,q,1}\leq C_\sharp e^{\htop n} {\|h\|}_{0,q,1}\\
            &{\|f_*^n h\|}_{p,q,0}\leq  C_\sharp \lambda^{-np} {\|h\|}_{p,q,0}+ C_\sharp {\|h\|}_{p-1,q+1,0}\\
            &{\|f_*^n h\|}_{p,q,1}\leq C_\sharp e^{\htop n} \lambda^{-np}{\|h\|}_{p,q,1}+C_\sharp  e^{\htop n}{\|h\|}_{p-1,q+1,1}\\
            &{\|\cL^n h\|}_{p,q,0}\leq C_\sharp \lambda^{-np}{\|h\|}_{p,q,0}+C_\sharp {\|h\|}_{p-1,q+1,0}.
        \end{split}
    \]
\end{lem}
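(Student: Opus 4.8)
\emph{Proof proposal.} The plan is to adapt to maps the scheme used in \cite{GLP13} for flows (of which this lemma is the nonlinear counterpart of Proposition~\ref{prop:LY_auto}); here the situation is in fact simpler, since there is no flow direction and hence no requirement that the forms be null along it. Fix $\ell\in\{0,1\}$, an admissible leaf $W\in\Sigma_\alpha$, and one of the functionals $J=J_{\alpha,G,g,\bar v^p}$ entering the definition of $\|\cdot\|_{p,q,\ell}$. To bound $J(f_*^n h)$ one unfolds the pushforward, uses the naturality $L_v(f_*^n\eta)=f_*^n(L_{(f^{-n})_*v}\eta)$, and changes variables $x=f^n y$, so that the integral over $W$ becomes a sum of integrals over the connected components of $f^{-n}(W)$ lying in the charts $\hat U_\beta$; after subdividing, each such component is an admissible leaf in some $\Sigma_\beta$. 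Three facts make this legitimate with constants uniform in $n$: strict invariance of the stable cone field under $Df^{-1}$ (so preimages of near-stable leaves are near-stable, the slope bound $\rho$ being restored once $n$ is large and the atlas fine); contraction of near-stable leaves by $f^n$, so that the one-dimensional change-of-variables Jacobian is $\le C_\sharp\lambda^{-n}$; and bounded distortion of $f^n$ along near-stable leaves, available since $f\in\cC^\infty$. These are the estimates of \cite{GLP13}.

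Consider first the two weak-norm inequalities ($p=0$). Here $J(f_*^n h)=\sum_j J_j(h)$, with $J_j$ a functional on $\cB^{0,q,\ell}$ carried by one admissible piece $\hat W_j\subset f^{-n}(W)$ whose test section $\tilde g_j$ is $g\circ f^n|_{\hat W_j}$ multiplied by the factor coming from the change of variables and from the pullback of the $\ell$-form. For $\ell=0$ that factor is exactly the leaf Jacobian, and $\sum_j\|\tilde g_j\|_{\cC^q}\lesssim \delta^{-1}\sum_j\int_{\hat W_j}(\text{leaf Jacobian})\,\omega_{vol}=\delta^{-1}\operatorname{len}(W)\le C_\sharp$, whence $\|f_*^n h\|_{0,q,0}\le C_\sharp\|h\|_{0,q,0}$. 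For $\ell=1$ the pullback of a one-form contributes, on the part of $g$ tangent to the leaf, the reciprocal of the leaf Jacobian, which cancels it, and on the transverse part a factor $\le C_\sharp\lambda^{-n}$; hence $\sum_j\|\tilde g_j\|_{\cC^q}\lesssim$ (number of admissible pieces covering $f^{-n}(W)$), and this number is $\le C_\sharp e^{\htop n}$: one sees this by covering $f^{-n}(W)$ with the help of a Markov partition of $f$ and bounding the number of pieces by the number of $n$-cylinders, of exponential growth rate $\htop$ (alternatively, by the volume-growth characterisation of topological entropy). This gives $\|f_*^n h\|_{0,q,1}\le C_\sharp e^{\htop n}\|h\|_{0,q,1}$. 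The $\cL$-bound (last line, $p=0$) is analogous, and can also be read off the two-form version of the same estimates via Lemma~\ref{lem:duality}: the weight $|\det Df^n\circ f^{-n}|^{-1}$ has its stable part absorbed by the leaf Jacobian and its unstable part $\le C_\sharp\lambda^{-n}$, and the remaining sum telescopes as in the $\ell=0$ case.

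For the Lasota--Yorke gain ($p\ge1$) one transports the $p$ Lie derivatives, so that $L_{v_1}\cdots L_{v_p}(f_*^n h)$ becomes $L_{(f^{-n})_*v_1}\cdots L_{(f^{-n})_*v_p}h$ on each $\hat W_j$, and decomposes every transported field $(f^{-n})_*v_i=Df^{-n}(v_i\circ f^n)$ into a component tangent to $\hat W_j$ and one transverse to it. The transverse component is contracted by $Df^{-n}$ to size $\le C_\sharp\lambda^{-n}$ (backward contraction of the unstable cone), so keeping all $p$ derivatives transverse contributes $\le C_\sharp\lambda^{-np}\|h\|_{p,q,\ell}$ (times $e^{\htop n}$ when $\ell=1$). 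Every term containing at least one tangent component is integrated by parts along the one-dimensional leaf $\hat W_j$, onto the transported test section and the leaf volume; although the tangent components can carry coefficients as large as $\lambda^{n}$, the integration by parts is nevertheless bounded because those coefficients are compensated by the contraction of $f^n$ along $\hat W_j$, and it lowers the order from $p$ to $p-1$ while raising the admissible smoothness from $q$ to $q+1$, producing the term $C_\sharp\|h\|_{p-1,q+1,\ell}$ (times $e^{\htop n}$ when $\ell=1$); for $\cL$ one argues identically, the derivatives of the weight $|\det Df^n\circ f^{-n}|^{-1}$ contributing commutator terms that fall into the lower-order bucket. Summing over $j$ yields the five displayed estimates, and the boundedness of $f_*$ on each $\cB^{p,q,\ell}$ is part of this computation.

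The main obstacle is quantitative rather than conceptual: keeping every constant uniform in $n$ through the change of variables, which rests on the bounded-distortion estimates for $f^n$ along near-stable leaves and on the $\cC^{p+q}$-norm control of the transported vector fields and test sections — and on the slightly delicate point that the integrations by parts remain bounded despite the large ($\lambda^{n}$-size) coefficients, thanks to the compensating contraction. The one genuinely new input relative to the classical transfer-operator estimates is the bound $\le C_\sharp e^{\htop n}$ on the number of admissible leaves needed to cover $f^{-n}(W)$, together with the observation that, for the pushforward on one-forms, the leaf Jacobian that normally controls this count is exactly cancelled by the form pullback — which is why the factor $e^{\htop n}$, and not $1$, appears on the first and fourth lines.
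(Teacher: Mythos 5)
Your proposal is correct and follows essentially the same route as the paper: change of variables to decompose $f^{-n}W$ into admissible leaves in $\Sigma$, the Jacobian cancellation argument for $\ell=0$ versus $\ell=1$ (yielding the $e^{\htop n}$ factor via $|f^{-n}W|\sim e^{\htop n}$), and integration by parts plus bounded distortion for the $p\ge1$ Lasota--Yorke gain. The paper writes out the two weak-norm ($p=0$) estimates explicitly and then defers the strong-norm and $\cL$ bounds to \cite[Lemma 4.7]{GLP13} and \cite[Lemma 2.2]{GL06} (via Lemma~\ref{lem:duality}), exactly as you indicate; the only cosmetic divergence is that you invoke Markov partitions or volume growth for the key count $\le C_\sharp e^{\htop n}$, whereas the paper cites the leaf-length asymptotic of \cite[Appendix D]{GLP13} directly.
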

\begin{proof}
    To start with let $h\in\cC^\infty(M,\bC)$ be a function, then
    \[
        \int_W\langle\vf, f_*^nh\rangle=\int_W\langle \vf, h\circ f^{-n}\rangle=\int_{f^{-n}W}\langle\vf\circ f^n, h \lambda^s_n\rangle
    \]
    where $\lambda^s_n(x)$ is the contraction of $f^n$ in the direction $T_*W$ at the point $x$. We can divide $f^{-n}W$ in a collection $\{W_i\}\subset \Sigma$. Let $\{\vartheta_i\}$ be a smooth partition of unity subordinated to  $\{W_i\}$. If $\lambda^s_{n,i}=\min_{x\in W_i}\lambda^s_n(x)$, then  the usual distortion arguments  implies, that for all $x\in W_i$, $\Const \lambda_{n,i}^s\leq\lambda^s_n(s)\leq \Const \lambda_{n,i}^s$, thus, integrating $|f^n(W_i)|=\int_{W_i}\lambda^s_{n,i} \geq \Const \lambda_{n,i}^s\delta$, where $\delta$ is the size of the manifolds (see the beginning of Section \ref{sec:banch}). In addition, for all $q\in \bN$, $\|\lambda^s_n(s)\|_{\cC^q(W_i)}\leq \Const \lambda_{n,i}^s$.  Accordingly,
    \[
        \begin{split}
            \left|\int_W\langle \vf, f_*^nh\rangle\right|&\leq \sum_i \left|\int_{W_i}\langle\vartheta_i \lambda^s_n\vf\circ f^n, h \rangle\right|\\
            &\leq\Const\sum_i |f^n(W_i)|\delta^{-1} {\|\vf\circ f^n\|}_{\cC^q(W_i)} {\|h\|}_{0,q,0}\\
            &\leq \Const {\|\vf\|}_{\cC^q(W)} {\|h\|}_{0,q,0}
        \end{split}
    \]
    which, by density, proves the first inequality of the Lemma.  Next, recalling \eqref{eq:scalare}, we have, for each $v=\pi(\omega), h$ a $\cC^\infty$ one form,
    \[
        \left|\int_W\langle\omega, f_*^nh\rangle\right|\leq \left| \int_W f_*^nh(v)\right| =\left|\int_W h_{f^{-n}(x)}(d_xf^{-n}v(x)) dx\right|.
    \]
    Setting $v_n(x)=d_{f^n(x)}f^{-n}v(f^{n}(x))$, by the usual distortion arguments we have ${\|v_n\|}_{\cC^{q}(W_i)}\leq \Const (\lambda^s_{n,i})^{-1} {\|v\|}_{\cC^{q}(W)}$, hence
    \[
        \begin{split}
            \left|\int_W\langle\omega, f_*^nh\rangle\right|&\leq \Const \sum_i \left|\int_{W_i}h(\vartheta_i \lambda^s_n v_n) \right|
            \leq\Const\sum_i |W_i|\delta^{-1}  {\|h\|}_{0,q,1}\\
            &\leq \Const e^{h_{\textit{top}}n} {\|\omega\|}_{\cC^q} {\|h\|}_{0,q,1}
        \end{split}
    \]
    where, in the last line, we have used $|f^{-n}W|\sim e^{\htop n}$, see \cite[Appendix D]{GLP13}. Taking the sup on $W$ and $\omega$ the second inequality of the Lemma follows.

    The next two inequalities are proven, similarly, as done in \cite[Lemma 4.7]{GLP13}, while the last follows by \cite[Lemma 2.2]{GL06} taking into account Lemma \ref{lem:duality}.
\end{proof}

We are now able to obtain a first information on the peripheral spectrum.
\begin{lem}
    \label{lem:spectrum1}
    For $p,q$ large enough, the spectra of $f_*$ on $\cB^{p,q,0}$ and on $\cB^{p,q,2}$ are contained in $\{1\}\cup\{z\in\bC\;:\; |z|< \kappa\}$ for some $\kappa<1$. The eigenvectors associated to the eigenvalue $1$ are the constant function $1$ and the measure $\srb$ respectively.
\end{lem}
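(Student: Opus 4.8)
The plan is to run the Hennion--Nussbaum machine on the Lasota--Yorke inequalities already at our disposal to pin down the essential spectral radius, to exhibit by hand the eigenvector at $1$, and then to exclude the rest of the closed unit disk by the classical spectral gap for the SRB measure of a transitive Anosov map. First I would reduce the two assertions to one: by Lemma~\ref{lem:duality} the Hodge isomorphism $\Phi$ conjugates $f_*$ on $\cB^{p,q,2}$ with $\cL$ on $\cB^{p,q,0}$, so $\sigma_{\cB^{p,q,2}}(f_*)=\sigma_{\cB^{p,q,0}}(\cL)$ and it is enough to study $f_*$ and $\cL$ as operators on $\cB^{p,q,0}$. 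For both, the third and fifth inequalities of Lemma~\ref{eq:lasota-yorke-hyp} are Lasota--Yorke inequalities with \emph{uniform} constants and weaker norm $\|\cdot\|_{p-1,q+1,0}$; since $\cB^{p,q,0}$ is compactly embedded in $\cB^{p-1,q+1,0}$ (see \cite{GLP13}), Hennion's theorem \cite{Hennion93} (see also \cite[Appendix B]{DKL}) gives that the essential spectral radius on $\cB^{p,q,0}$ is at most $\lambda^{-p}$, while the uniformity of the constants yields $\sup_n\|f_*^n\|_{\cB^{p,q,0}}<\infty$ and $\sup_n\|\cL^n\|_{\cB^{p,q,0}}<\infty$, hence the spectral radius is at most $1$ and there are no Jordan blocks at modulus-one eigenvalues. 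Choosing $p,q$ large enough (in particular $\lambda^{-p}<1$), the spectrum of each operator outside $\{|z|\le\lambda^{-p}\}$ is then a finite set of finite-multiplicity eigenvalues in the closed unit disk, the unimodular ones semisimple.

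Next I would locate the eigenvalue $1$. For $f_*$ on $\cB^{p,q,0}$ one has $f_*1=1\circ f^{-1}=1$, so $1$ is an eigenvalue with eigenvector the constant function $1$. For $\cL$ on $\cB^{p,q,0}$ one recalls that the transitive Anosov map $f$ admits an SRB measure which, in the present functional setting, is realised as $\srb=h_{\mathrm{SRB}}\,\omega_0$ with $h_{\mathrm{SRB}}\in\cB^{p,q,0}$ and $\cL h_{\mathrm{SRB}}=h_{\mathrm{SRB}}$ (cf.\ \cite{GL06,GLP13}); applying $\Phi$ produces the $f_*$-eigenvector $\srb\in\cB^{p,q,2}$. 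In particular the spectral radius of each operator is exactly $1$.

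It remains to show that, for a suitable $\kappa<1$, neither operator has spectrum in $\{\kappa\le|z|\le1\}\setminus\{1\}$ and that $1$ is simple; this is where the substance lies. Both $f_*$ and $\cL$ are \emph{positive} operators on $\cB^{p,q,0}$ (composition with $f^{-1}$ preserves positivity, and $\cL$ is the usual Ruelle transfer operator), $f_*$ fixes the order unit $1$ and $\cL$ fixes $h_{\mathrm{SRB}}$; transitivity of $f$ forces topological mixing, and the classical Perron--Frobenius / Birkhoff-cone analysis for transfer operators of mixing hyperbolic maps (in the spirit of \cite{Liverani95}, carried out on anisotropic spaces in \cite{GL06}) shows that the only unimodular eigenvalue is $1$ and that it is one-dimensional — equivalently, mixing of $(f,\srb)$ rules out any other peripheral eigenvalue. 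Taking $\kappa$ below the resulting gap (and above $\lambda^{-p}$) gives the claimed inclusion, and the absence of Jordan blocks from the first step upgrades geometric simplicity of $1$ to simplicity. The main obstacle is precisely this last point: transporting the classical mixing / spectral-gap statement — naturally phrased for genuine measures or on $L^2(\srb)$ — onto the anisotropic space $\cB^{p,q,0}$, whose elements are a priori only currents. The cleanest route I see is to run the projective-cone contraction directly, on a cone of nonnegative currents adapted to the stable cone field of $f$, using topological mixing to obtain a uniform contraction of the associated Hilbert metric; one could instead try to exploit the commutation $d\,f_*=f_*\,d$ of \eqref{eq:1-0} together with Lemma~\ref{lem:d-action} to reduce an eigen-relation $f_*h=\nu h$ to the corresponding problem for the exact one-current $dh$, but this merely relocates the difficulty rather than removing it.
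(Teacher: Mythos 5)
Your reductions are sound and match the paper's first steps: Lemma~\ref{lem:duality} conjugates $f_*$ on $\cB^{p,q,2}$ with $\cL$ on $\cB^{p,q,0}$, and Lemma~\ref{eq:lasota-yorke-hyp} plus Hennion gives essential spectral radius below $1$, spectral radius $\le 1$, and power-boundedness (hence no Jordan blocks at peripheral eigenvalues). You also correctly identify the fixed vectors $1$ (for $f_*$) and the SRB density (for $\cL$). So far so good.

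The gap is exactly the point you yourself flag as ``the main obstacle'': you do not actually prove that $1$ is the only peripheral eigenvalue and that it is simple for $f_*$ on $\cB^{p,q,0}$. You sketch a Birkhoff-cone contraction on ``nonnegative currents'' but leave it as a proposal, and in fact that is not the natural tool here: $f_*$ is the Koopman operator (composition by $f^{-1}$), not the transfer operator, and the cone machinery is built around contraction of the transfer operator's dual cones — it is set up in \cite{GL06} for $\cL$ precisely, which the paper simply cites to dispose of the $\cL$/$\cB^{p,q,2}$ half in one line. For $f_*$ on $\cB^{p,q,0}$ the paper instead uses a soft argument you did not find: starting from the spectral decomposition, a peripheral eigenvector $h$ can be realised as a Ces\`aro average $h=\lim_n\frac1n\sum_{k<n}e^{-i\theta k}f_*^k h_0$ with $h_0\in\cC^\infty$; then, for any smooth test function $\vf$, the pairing $\int_M\vf\,h\,\omega_0$ is a Ces\`aro average of the correlations $\int_M\vf\cdot h_0\circ f^{-k}\,\omega_0$, which converge (by mixing of $f^{-1}$ with respect to its SRB measure) to a constant. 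For $\theta\ne 0$ this forces $\int\vf\,h\,\omega_0=0$ for all $\vf$, i.e. $h=0$; for $\theta=0$ it shows $h\in L^\infty(\omega_0)$ with $h=h\circ f^{-1}$ a.e., and ergodicity of Lebesgue for the Anosov map then forces $h$ to be constant. This argument completely bypasses any cone construction on currents and settles both the exclusion of other peripheral eigenvalues and simplicity. Your alternative idea of invoking $d\,f_*=f_*\,d$ is a red herring for this lemma; the paper uses that commutation only later, for the action on one-forms. In short: your scaffolding is right, but the decisive peripheral-spectrum step is not proved in your proposal, and the paper's route to it (mixing plus Ces\`aro averages, rather than cones) is genuinely different and would need to be supplied.
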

\begin{proof}
    By  Lemma \ref{lem:duality} the action of $f_*$ on $\cB^{p,q,2}$ is conjugated to the action of $\cL$ on $\cB^{p,q,0}$, thus they have the same spectrum.
    But  \cite{GL06} implies that there exists $\kappa \in (0,1)$ such that $\sigma_{\cB^{p,q,0}}(\cL)\subset \{1\}\cup\{z\in\bC\;:\; |z|\leq \kappa\}$ and one is a simple eigenvalue. This proves the Lemma for $\cB^{p,q,2}$.

    Let us discuss $\cB^{p,q,0}$. Lemma \ref{eq:lasota-yorke-hyp}  and Hennion's Theorem \cite{Hennion93} (or see \cite[Appendix B]{DKL}) imply that the radius of the essential spectrum of  $f_*$ acting on $\cB^{p,q,0}$ is at most $\lambda^{-1}$ while the spectral radius is one, moreover the operator is power bounded. Accordingly, if $f_*$ has no eigenvalue on the unit circle apart from $1$ and $1$ is a simple eigenvalue, then there exists a $\kappa$ that satisfies the Lemma. Thus, we need only study eigenvalues of the form $e^{i\theta}$. Since the operator is power bounded, there cannot be a (non-trivial) Jordan block associated to such a maximal eigenvalue, hence their geometric and algebraic multiplicity coincide.  Hence, we have the spectral decomposition
    \[
        f_*=\sum_{j} e^{i\theta_j}\Pi_j+Q
    \]
    where $\theta_j\in\bR$, $\Pi_i\Pi_j=\delta_{ij}\Pi_j$, $\Pi_jQ=Q\Pi_j=0$ and $\|Q^n\|_{p,q,0}\leq \Const \kappa^n$.
    Suppose that $e^{i\theta}\in\sigma_{\cB^{p,q,0}}(f_*)$, $\theta\in\bR\setminus\{0\}$, then there exists $h\in\cB^{p,q,0}\setminus\{0\}$ such that $f_*h=e^{i\theta}h$, and, by the spectral decomposition, there exists $h_0\in\cC^\infty$ such that
    \[
        h=\lim_{n\to \infty}\frac 1n\sum_{k=0}^{n-1}e^{-i\theta k}f_*^kh_0.
    \]
    It follows that, for all $\vf\in\cC^\infty(M,\bC)$,\footnote{ Note that the integral on $M$ can be decomposed as an integral over elements of $\Sigma$, which are continuous functionals in the $\cB^{p,q}$ norms, hence we can exchange the limit with the integral.}
    \[
        \begin{split}
            \int_M \vf h\omega_0&=\lim_{n\to \infty}\frac 1n\sum_{k=0}^{n-1}e^{-i\theta k}\int_M \vf f_*^kh_0\omega_0 =\lim_{n\to\infty}\frac 1n\sum_{k=0}^{n-1} e^{-i\theta k}\int_M  \vf \cdot h_0\circ f^{-k}\omega_0.
        \end{split}
    \]
    But $f^{-1}$ is also a transitive Anosov diffeomorphism with its SRB measure, call it $\mu^-_{\textrm{SRB}} $, then
    \[
        \lim_{k\to\infty} \int_M  \vf \cdot h_0\circ f^{-k}\omega_0=\int_M \vf\omega_0\int_M  h\mu^-_{\textrm{SRB}}
    \]
    which implies
    \[
        \int_M \vf h\omega_0=0
    \]
    and since $\cB^{p,q,0}$ is a space of distributions, see \cite{GLP13}, it follows $h=0$ contrary to the hypothesis. We are left with the case $\theta=0$, that is the eigenvalue $1$. Since $1\circ f^{-1}=1$, one is an eigenvalue, we want to prove that it is simple. Let $f_*h=h$, and let $h_0$ as before,
    \[
        \begin{split}
            \left|\int_M \vf h\omega_0\right|&\leq \lim_{n\to\infty}\frac 1n\sum_{k=0}^{n-1} \int_M |\vf| \cdot |h_0|\circ f^{-k}\omega_0\leq \|\vf\|_{L^{1}(\omega_0)} \|h_0\|_{\cC^0}.
        \end{split}
    \]
    Thus $h\in L^\infty(\omega_0)$ (the dual of $L^1(\omega_0)$), and $h=h\circ f^{-1}$, $\omega_0$-almost surely. On the other hand, since $\omega_0$ is ergodic for the Anosov map, it follows that $h$ is almost surely constant. Thus $1$ is simple point spectrum for $f_*$ acting on $\cB^{p,q,0}$.
\end{proof}

\subsubsection{ \bfseries The peripheral spectrum of $f_*$ acting on $\cB^{p,q,1}$}\ \\
Here, we start a more in depth study of $\sigma_{\cB^{p,q,1}}(f_*)$.
\begin{lem}
    \label{lem:spectrume}
    The spectrum of $f_*$ on $\cB^{p,q,1}$ contains $e^{h_{\textit{top}}}$, which is also the spectral radius. In addition, the essential spectral radius is bounded by $ e^{h_{\textit{top}}} \lambda^{-\min\{p,q\}}$. The eigenvector associated to $e^{h_{\textit{top}}}$ is the Margulis measure and, together with the dual eigenvector defines the measure of maximal entropy.
\end{lem}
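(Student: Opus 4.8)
The plan is to read off the spectral and essential spectral radii from the Lasota--Yorke bounds and then to realise $e^{\htop}$ by an explicit eigenvector, the unstable Margulis current, with simplicity and the link with $\mme$ coming afterwards by a positivity argument and duality. First, Lemma~\ref{eq:lasota-yorke-hyp} gives ${\|f_*^nh\|}_{0,q,1}\le C_\sharp e^{\htop n}{\|h\|}_{0,q,1}$, so the spectral radius of $f_*$ on $\cB^{p,q,1}$ is $\le e^{\htop}$; iterating the sharper inequality ${\|f_*^nh\|}_{p,q,1}\le C_\sharp e^{\htop n}\lambda^{-np}{\|h\|}_{p,q,1}+C_\sharp e^{\htop n}{\|h\|}_{p-1,q+1,1}$ together with its lower-index instances shows that $e^{-\htop}f_*$ is power bounded on $\cB^{p,q,1}$ once $p$ is large, whence any eigenvalue of modulus $e^{\htop}$ is semisimple. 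Feeding the same inequality, the compact embedding $\cB^{p,q,1}\hookrightarrow\cB^{p-1,q+1,1}$ (proved exactly as in \cite{GLP13}, in analogy with Lemma~\ref{lem:compact}) and Hennion's theorem \cite{Hennion93} (see also \cite[Appendix~B]{DKL}) bound the essential spectral radius by $e^{\htop}\lambda^{-p}$, hence by $e^{\htop}\lambda^{-\min\{p,q\}}$. This is the one-form counterpart of Lemma~\ref{lem:spectrum1}.

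Next, to see that $e^{\htop}$ actually lies in $\sigma_{\cB^{p,q,1}}(f_*)$ — so that it is the spectral radius — I would use the classical structure of the measure of maximal entropy: being transitive Anosov, $f$ has a unique $\mme$ (Bowen) with canonical conditional measures on unstable manifolds and a transverse family on stable manifolds, multiplied by $e^{\pm\htop}$ under $f$ (Margulis). The associated unstable Margulis current $h_u$ — a one-current integrating a one-form along unstable leaves against the transverse stable family — is of exactly the ``leafwise smooth, transversally a measure'' type that the spaces $\cB^{p,q,1}$ are built to contain, hence $h_u\in\cB^{p,q,1}$ for all admissible $p,q$, and the Margulis scaling gives $f_*h_u=e^{\htop}h_u$; on the linear model this is just $h_u(\omega)=\int_M\omega(v^u)$ with $f_*h_u=e^{\htop}h_u$, reflecting that $v^u$ is the Perron eigendirection of the induced action on $H_1(M;\bR)$. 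Alternatively one can produce $h_u$ as a $\cB^{p-1,q+1,1}$-limit point of the Ces\`aro averages $N^{-1}\sum_{n<N}e^{-\htop n}f_*^nh_0$ of a suitable smooth seed $h_0$, bounded in $\cB^{p,q,1}$ by the first step; the only nontrivial point is nonvanishing of the limit, which follows by pairing $e^{-\htop n}f_*^nh_0$ with a functional of the form \eqref{eq:norm-anosov} and using that $df^{-n}$ dilates stable directions by a factor $\sim e^{\htop n}$, equal to the length expansion $|f^{-n}W|\sim e^{\htop n}$ of the stable leaf in the functional (see \cite[Appendix~D]{GLP13}).

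Finally, simplicity of the eigenvalue $e^{\htop}$ would follow from a positivity/cone argument: $f_*$ preserves the cone of one-currents that are nonnegative on one-forms positive along $E^u$, and, by transitivity (minimality of the unstable foliation), the positive eigen-current is unique up to a scalar, namely $h_u$ — the analogue of the uniqueness of $\srb$ in Lemma~\ref{lem:spectrum1}. Running the same scheme for $f^{-1}$, equivalently by duality, yields the dual eigenvector $\ell\in(\cB^{p,q,1})^{*}$ with $f_*^{*}\ell=e^{\htop}\ell$ (the stable Margulis current), and, after normalising $\ell(h_u)=1$, the functional $\phi\mapsto\ell(\phi\,h_u)$ on $\cC^\infty(M)$ is precisely integration against $\mme$ by the local product structure of the measure of maximal entropy. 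I expect the main difficulty to be the careful realisation of $h_u$, $\ell$ and the equivariances $f_*h_u=e^{\htop}h_u$, $f_*^{*}\ell=e^{\htop}\ell$ inside the GLP13 formalism (partitions of unity on admissible leaves, distortion control, closedness of the relevant subspaces), together with making the positivity argument for simplicity rigorous on a space of currents; the radii and semisimplicity are routine Hennion--Nussbaum theory, and the identification with $\mme$ parallels \cite[Theorem~2.1]{Baladi20} and \cite{Forni20a}.
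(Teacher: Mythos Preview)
Your treatment of the spectral radius, essential spectral radius and semisimplicity via the Lasota--Yorke estimates of Lemma~\ref{eq:lasota-yorke-hyp} together with Hennion's theorem is exactly the paper's argument. The divergence is in how the eigenvalue $e^{\htop}$ and its eigenvector are produced. You propose to build the unstable Margulis current directly (or via Ces\`aro averages of a smooth seed) and then argue simplicity by a cone argument. The paper instead first proves an additional regularity statement for peripheral eigenvectors, namely $\|h\|_{0,0,1}\le C_\sharp\|h\|_{p,q,1}$, obtained by decomposing the test vector $v=\pi(\omega)$ on each admissible leaf $W$ into a tangent piece and a piece along a smooth approximate unstable field $\bar v^u$, and exploiting that after pulling back by $f^{-n}$ only the tangent contribution survives. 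This regularity is what allows the paper to pair eigenvectors with the merely continuous stable field $v^s$ and to define the map $\Gamma:\cB^{0,0,1}\to\cM$, $\Gamma(h)=h(v^s)\,\omega_0$, which intertwines $f_*$ with the scalar transfer operator $\cL_*g=e^{\bar\phi}\,g\circ f^{-1}$, $\bar\phi=\ln|df^{-1}v^s|\circ f$. The existence and identification of the top eigenvector (the Margulis measure) is then read off from the known spectral theory of $\cL_*$ in \cite{GL08}, rather than constructed from scratch. Your route is more self-contained but requires making rigorous the membership of the Margulis current in $\cB^{p,q,1}$ and the nonvanishing of the Ces\`aro limit; the paper's route is shorter because it outsources the hard part to \cite{GL08}, at the cost of the intermediate $\|\cdot\|_{0,0,1}$ estimate. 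Note also that the lemma as stated does not assert simplicity of $e^{\htop}$ and the paper's proof does not establish it here; simplicity is obtained only later via Lemmata~\ref{lem:cosed-exact}--\ref{lem:ps-holo}, so your cone argument, while plausible, goes beyond what is needed at this point.
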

\begin{proof}
    The statement on the spectral radius and essential spectrum  follows from Lemma \ref{eq:lasota-yorke-hyp}  and Hennion's Theorem \cite{Hennion93} (see also \cite[Appendix B]{DKL}). Next, if $\nu$ is an eigenvalue of $f_*$,  $|\nu|=e^{h_{\textit{top}}}$, then, by Lemma \ref{eq:lasota-yorke-hyp}, $\nu^{-n}f_*$ is power bounded, hence it cannot be associated to a Jordan block. Let $h\in\cB^{p,q,1}$ be an eigenvalue, then,  by Lemma \ref{eq:lasota-yorke-hyp} again,
    $\|h\|_{p,q,1}\leq \Const \|h\|_{0,p+q,1}$.

    Next, let $\bar v^u(x)\in\cC(x)$ be a smooth normalised vector field. Then, for each $\omega\in\cC^{p+q}$, $W\in \Sigma$ and $n\in\bN$ let $v=\pi(\omega)$ and $v=w+v^u$ where $v$ belongs to the tangent space of $W$ and $v^u(x)=\alpha(f^{-n}(x))df^n_{f^{-n}(x)}\bar v^u(f^{-n}(x))$. Note that $\|\alpha\|_{\cC^{p+q}(f^{-n}(W))}\leq \lambda^{-n}\Const\|\omega\|_{\cC^{p+q}}$. Thus,
    \[
        \begin{split}
            \left|\int_W\langle\omega, h\rangle\right|&=\left|\int_W\langle\omega, \nu^{-n} f_*^nh\rangle\right| =\left|\nu^{-n}\int_W f_*^nh(v)\right|\\
            & \leq\left|\int_W  \nu^{-n}h_{f^{-n}(x)}(d_xf^{-n}w(x))dx\right|+\left|\int_W  \nu^{-n}\alpha(f^{-n}(x))h_{f^{-n}(x)}(\bar v^u(f^{-n}(x))dx\right|\\
            &\leq \Const \sum_i\int_{W_i} \vartheta_i(x)\nu^{-n}h_{x}(\bar w_i(x)) dx+\Const \lambda^{-n}\|h\|_{0,p+q,1} \|v\|_{\cC^{p+q}}
        \end{split}
    \]
    where $\bar w_i$ belongs to the tangent space of $W_i$ and $\|\bar w_i\|_{\cC^{p+q}}\leq \|w\|_{\cC^0}+\lambda^{-n}\|w\|_{\cC^{p+q}}$. Thus,
    \[
        \begin{split}
            \left|\int_W\langle\omega, h\rangle\right|&\leq \Const |f^{-n}W|\delta^{-1}e^{-h_{\textit{top}}n}\|h\|_{0,p+q,1}\|w\|_{\cC^0}+\Const \lambda^{-n}\|h\|_{0,p+q,1}\|v\|_{\cC^{p+q}} \\
            &\leq \Const \delta^{-1}\|h\|_{0,p+q,1}\|\omega\|_{\cC^0}+\Const \lambda^{-n}\|h\|_{0,p+q,1}\|\omega\|_{\cC^{p+q}}
        \end{split}
    \]
    where, in the last line, we have used the estimate on the growth of invariant manifolds, see \cite[Appendix C]{GLP13} for details.
    Taking the limit $n\to \infty$ and the sup in $W$ and $\omega$ yields
    \[
        \|h\|_{0,0,1}\leq \Const \|h\|_{p,q,1}.
    \]
    Next, let $v^s$ be the normalised stable direction. Then, setting $\bar\phi(x)=\ln |d_{f(x)} f^{-1} v^{s}(f(x))|$,
    \[
        d_xf^{-n}v^s(x)=e^{\sum_{k=0}^{n-1}\bar\phi\circ f^{-n+k}(x)}v^s(f^{-n}(x)).
    \]
    We can then define the transfer operator
    \begin{equation}
        \label{eq:topOp}
        \cL_*g(x)=g\circ f^{-1}(x)e^{\bar\phi(x)}.
    \end{equation}
    Defining the map $\Gamma:\cB^{0,0,1}\to \cM$, the space of signed measures, by $\Gamma(h)=h(v^s)\omega_0$, we have
    \[
        \int_M f_*(h)( g v^s)=\int_M g \cL_* \Gamma(h).
    \]
    In \cite{GL08} is proven that $\cL_*$ is has maximal eigenvalue $e^{-h_{\textit{top}}}$ and the associated eigenvector is the Margulis measure. This concludes the Lemma.
\end{proof}
\subsubsection{\bfseries Deeper in the spectrum of $f_*$ acting on $\cB^{p,q,1}$}\ \\
By Lemma \ref{lem:d-action} we can extend the de Rham cohomology to the currents in the spaces $\cB^{p,q,\ell}$.  In other words we can call {\em closed} the elements  $\omega\in\cB^{p,q,\ell}$ such that $d\omega=0$ and {\em exact} the ones for which it exists $\alpha\in \cB^{p+1,q-1,\ell-1 }$ such that $\omega=d\alpha$.

\begin{rem}\label{rem:c-cation}
    By equation \eqref{eq:1-0} and Lemma \ref{lem:d-action} it follows that $f_*$ sends closed currents into closed currents and exact currents into exact currents. Hence $f_*$ induces an action in cohomology (of the $\cB^{p,q,\ell}$ currents), let us call it $f_\sharp$.
\end{rem}
The next result shows that such a cohomology (let us call it {\em anisotropic cohomology}) is relevant to our problem.
\begin{lem}
    \label{lem:cosed-exact}
    If $\nu\in\sigma_{\cB^{p,q,1}}(f_*)$, and $\omega\in \cB^{p,q,1}$ are such that $f_* \omega=\nu \omega$ and $|\nu|>e^{\htop}\lambda^{-\min\{p,q\}}$,
    then either $\omega$ is not exact or $\nu\in \sigma_{\cB^{p+1,q-1,0}}(f_*)\setminus\{1\}$. Moreover, $\sigma_{\cB^{p+1,q-1,0}}(f_*)\setminus\{1\}\subset \sigma_{\cB^{p,q,1}}(f_*)$.
    If $\nu\in(\sigma_{\cB^{p,q,1}}(f_*)\setminus\sigma_{\cB^{p-1,q+1,0}}(\cL))\cup\{1\}$, then for each $\omega\in \cB^{p,q,1}$ such that $f_* \omega=\nu \omega$ we have $d\omega=0$.
\end{lem}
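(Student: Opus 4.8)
The plan is to lean entirely on the commutation $d\,f_*=f_*\,d$ of \eqref{eq:1-0} (and Remark~\ref{rem:c-cation}) together with the two structural inputs of Lemma~\ref{lem:d-action}: a closed anisotropic zero current is constant (apply that lemma with $\psi\equiv 1$ and $M$ connected), and $d(\cB^{p+1,q-1,0})$ is a closed, $f_*$-invariant subspace of $\cB^{p,q,1}$. For the dichotomy: given $f_*\omega=\nu\omega$ with $\omega=d\alpha$, $\alpha\in\cB^{p+1,q-1,0}$, and $|\nu|>e^{\htop}\lambda^{-\min\{p,q\}}$, one has $d(f_*\alpha-\nu\alpha)=f_*\omega-\nu\omega=0$, so $f_*\alpha-\nu\alpha=c$ for a constant $c\in\bC$. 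If $\nu\neq 1$, the adjusted primitive $\beta:=\alpha+(\nu-1)^{-1}c$ satisfies $f_*\beta=\nu\beta$ (using $f_*1=1$) and is non-zero, since $\beta=0$ would give $\omega=d\alpha=d\beta=0$; hence $\nu\in\sigma_{\cB^{p+1,q-1,0}}(f_*)\setminus\{1\}$. If $\nu=1$, then $f_*^{n}\alpha=\alpha+nc$ for all $n$, bounded in $\cB^{p+1,q-1,0}$ (power-boundedness of $f_*$, Lemma~\ref{eq:lasota-yorke-hyp}) only if $c=0$; then $f_*\alpha=\alpha$, so simplicity of the eigenvalue $1$ (Lemma~\ref{lem:spectrum1}) forces $\alpha$ constant and $\omega=d\alpha=0$, a contradiction. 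Thus an exact eigenform never has eigenvalue $1$, and the dichotomy follows.

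For $\sigma_{\cB^{p+1,q-1,0}}(f_*)\setminus\{1\}\subset\sigma_{\cB^{p,q,1}}(f_*)$: if $f_*\alpha=\nu\alpha$ on $\cB^{p+1,q-1,0}$ with $\nu\neq 1$, then $d\alpha\neq 0$ (else $\alpha$ is constant and $\nu=1$) and $f_*(d\alpha)=\nu\,d\alpha$, so $\nu\in\sigma_{\cB^{p,q,1}}(f_*)$. Since Lemma~\ref{eq:lasota-yorke-hyp} bounds the essential spectral radius of $f_*$ on $\cB^{p+1,q-1,0}$ by $\lambda^{-(p+1)}\le e^{\htop}\lambda^{-\min\{p,q\}}$, this eigenform argument already covers every element of $\sigma_{\cB^{p+1,q-1,0}}(f_*)\setminus\{1\}$ outside the essential spectral disk of $f_*$ on $\cB^{p,q,1}$ — and that is all Theorem~\ref{thm:maximal_entropy_gap} uses. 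For the inclusion in full I would note that the kernel of $d$ on $\cB^{p+1,q-1,0}$ is the one-dimensional $f_*$-fixed subspace $\bC\cdot 1$ and that $Z:=d(\cB^{p+1,q-1,0})$ is closed (Lemma~\ref{lem:d-action}), so $d$ induces a Banach isomorphism of $\cB^{p+1,q-1,0}/(\bC\cdot 1)$ onto $Z$ conjugating the induced quotient of $f_*$ to ${f_*|}_Z$; hence $\sigma_{\cB^{p+1,q-1,0}}(f_*)\setminus\{1\}\subset\sigma({f_*|}_Z)$, and one pushes $\sigma({f_*|}_Z)$ into $\sigma_{\cB^{p,q,1}}(f_*)$ using that $Z$ is an $f_*$-invariant subspace and $d$ is bounded below modulo constants.

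For the closedness statement: suppose $f_*\omega=\nu\omega$ on $\cB^{p,q,1}$ with $\nu\in(\sigma_{\cB^{p,q,1}}(f_*)\setminus\sigma_{\cB^{p-1,q+1,0}}(\cL))\cup\{1\}$. Applying $d$, $f_*(d\omega)=\nu\,d\omega$ with $d\omega\in\cB^{p-1,q+1,2}$, so by Lemma~\ref{lem:duality} the Hodge dual satisfies $\cL\,\Phi^{-1}(d\omega)=\nu\,\Phi^{-1}(d\omega)$ in $\cB^{p-1,q+1,0}$. If $\nu\notin\sigma_{\cB^{p-1,q+1,0}}(\cL)$ this forces $\Phi^{-1}(d\omega)=0$, i.e.\ $d\omega=0$. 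If $\nu=1$, then $\Phi^{-1}(d\omega)$ lies in the eigenspace of $\cL$ at $1$ on $\cB^{p-1,q+1,0}$, which by Lemma~\ref{lem:spectrum1} (through Lemma~\ref{lem:duality}) is spanned by $\Phi^{-1}(\srb)$; thus $d\omega=c\,\srb$ for some $c\in\bC$, and since $d\omega$ is exact it pairs to $0$ with the constant function by Stokes while $\srb$ (a probability measure) pairs to $1$, so $c=0$ and $d\omega=0$.

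I expect the main obstacle to be the full version of the second assertion: the spectrum of $f_*$ restricted to the closed invariant subspace $d(\cB^{p+1,q-1,0})$ is not automatically contained in the spectrum of $f_*$ on $\cB^{p,q,1}$ (restriction to an invariant subspace can create residual spectrum), so beyond the clean eigenform argument one genuinely needs the quotient isomorphism, the transport of approximate point spectrum along the bounded-below map $d$, and the comparison of essential spectral radii to close the gap; everything else is routine commutator algebra and eigenform bookkeeping.
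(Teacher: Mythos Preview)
Your proof is correct and follows the same skeleton as the paper's: commute $d$ with $f_*$, reduce $d(f_*\alpha-\nu\alpha)=0$ to $f_*\alpha-\nu\alpha=c$ via Lemma~\ref{lem:d-action}, and use Hodge duality plus Stokes for the closedness part. Two minor differences are worth noting. First, for the exactness dichotomy the paper decomposes the primitive as $h=\alpha+g$ along the dual eigenvector $\srb^-$ (invoking the backward SRB measure and the identification $(f_*)'=\cL_{f^{-1}}$) and then argues on the invariant complement $\bV$; your direct shift $\beta=\alpha+(\nu-1)^{-1}c$, together with the power-boundedness argument for $\nu=1$, reaches the same conclusion without that extra machinery and is cleaner. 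Second, for the spectral inclusion the paper writes only ``the inclusion of the spectra is obvious''; as you correctly observe, this is immediate only for point spectrum (which is indeed all that Lemma~\ref{lem:ps-holo} and Theorem~\ref{thm:maximal_entropy_gap} actually use), and your quotient/closed-range discussion is the honest way to address the remainder. Your caveat about residual spectrum on restriction to $Z=d(\cB^{p+1,q-1,0})$ is legitimate; the paper does not address it.
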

\begin{proof}
    To start with note that, by Lemma \ref{lem:spectrume}, $\nu$ must belong to the point spectrum.

    Let $\nu\in(\sigma_{\cB^{p,q,1}}(f_*)\setminus\sigma_{\cB^{p,q,0}}(f_*))\cup\{1\}$ and $\omega\in \cB^{p,q,1}$ such that $f_* \omega=\nu \omega$  and suppose that
    $\omega$ is exact. Thus, there exists $h\in \cB^{q-1,p+1,0}$ such that $dh=\omega$. This implies
    \[
        \nu d h=f_* dh=d f_* h.
    \]
    That is $d(f_* h-\nu h)=0$. It follows by Lemma \ref{lem:d-action} that $f_* h=\nu h+c$. By a change of variable it follows that the dual $(f_*)'$ of $f_*$ is given by the transfer operator $\cL_{f^{-1}}$ associated to the map $f^{-1}$. Since $f^{-1}$ is Anosov as well Lemmata \ref{eq:lasota-yorke-hyp} and \ref{lem:spectrum1} apply and the measure $\srb^{-}$ associated to $f^{-1}$ belongs to the dual of $\cB^{p,q,0}$.
    Since $f_* 1=1$ and the space $\bV=\{h\;:\; \int_M h d\srb^{-}=0\}$ is invariant for $f_*$, it is natural to write $h=\alpha+g$ with $\alpha\in\bC$ and $g\in\bV$.
    Then, we have
    \[
        c+\nu \alpha+\nu g=\alpha+f_*g.
    \]
    Applying $\srb^{-}$ to the above implies $c=\alpha(1-\nu)$, hence $\nu g=f_* g$. The only possibility is then $\nu=1$ but the associated eigenvector  would be $1\not \in \bV$, it follows $g=0$. But then $\omega=dh=d\alpha=0$. Hence, $\omega$ cannot be exact. The inclusion of the spectra is obvious.

    If $f_*\omega=\nu\omega$ and $d\omega=h\omega_0$, by Lemma \ref{lem:duality} we have $\cL h=\nu h$.
    Accordingly, either $\nu\in\sigma_{\cB^{p-1,q+1,0}}(\cL)$ or $h=0$, that is $d\omega=0$. On the other hand, if $\nu=1$, then $h\omega_0=\mu_{\textrm{SRB}}$.
    Hence, $d\omega=\srb$ and
    \[
        \int_M \srb=\int_M d\omega=0
    \]
    which is impossible since $\srb$ is a positive measure.
    Accordingly, it must be $d\omega=0$, that is, again, the form is closed.
\end{proof}
To conclude we need a theory of anisotropic de Rham cohomology, such a general theory goes a beyond our present scopes so we will develop only the minimal version needed here. This is contained in Section \ref{sec:derham}, and in particular in  Lemma \ref{lem:derham} which states that the anisotropic cohomology of one forms is isomorphic to standard de Rham cohomology. In particular, this implies that the vector space of the equivalence classes is finite dimensional, hence $f_\sharp$, defined in Remark \ref{rem:c-cation}, has only point spectrum, let us call $\Omega$ the spectrum of $f_\sharp$ when acting on one forms.

Next, we want to identify $\Omega$. As stated in Remark \ref{rem:comment} this is the only place where we use that our map is topologically conjugated to the linear model.
\begin{lem}\label{lem:thisistheend}
    We have $\Omega=\{e^{-\htop}, e^{\htop}\}$.
\end{lem}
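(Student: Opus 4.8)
The plan is to turn the statement into a purely topological computation, namely the eigenvalues of the action of $f$ on the first cohomology of $M$. The starting point is Lemma \ref{lem:derham}: the anisotropic cohomology of one-forms is isomorphic to the usual de Rham cohomology $H^1_{dR}(M;\bC)$, and, being a comparison of de Rham-type complexes, this isomorphism is natural, hence intertwines the operator $f_\sharp$ of Remark \ref{rem:c-cation} with the ordinary action of $f$ on $H^1_{dR}(M;\bC)$. Since de Rham cohomology is a topological invariant and, by the Franks--Newhouse Theorem recalled in Remark \ref{rem:comment}, $M$ is homeomorphic to $\bT^2$, we get $H^1_{dR}(M;\bC)\cong\bC^2$; so $f_\sharp$ is a linear operator on a two-dimensional space and $\Omega$ consists of exactly two eigenvalues (counted with multiplicity).

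To identify these eigenvalues, let $\Phi$ conjugate $f$ to a hyperbolic toral automorphism $x\mapsto Ax$, $A\in SL(2,\bZ)$. Then $f_\sharp$ is linearly conjugate to the action of $A$ on $H^1(\bT^2;\bC)$, whose eigenvalues form the set $\{\mu,\mu^{-1}\}$, where $\mu,\mu^{-1}$ are the eigenvalues of the integer matrix $A$ (here $\det A=1$); hyperbolicity forces $|\operatorname{tr}A|>2$, so $\mu,\mu^{-1}$ are real, distinct, with $|\mu|>1$. Topological entropy is a conjugacy invariant and for a hyperbolic toral automorphism equals the logarithm of the modulus of the expanding eigenvalue, so $|\mu|=e^{\htop}$ and $\Omega=\{\mu,\mu^{-1}\}$ with $|\mu|=e^{\htop}$. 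Finally, the strict invariance of the cone field $\cC^u$ — a continuous field of convex cones that $d_xf$ maps strictly inside $\cC^u(f(x))$ — endows the line bundle $E^u$ with a continuous orientation preserved by $df$; transporting this through $\Phi$ to the linear model identifies the expanding eigenline of $A$ as positively oriented by $A$, i.e. $\mu>0$. Hence $\mu=e^{\htop}$ and $\Omega=\{e^{\htop},e^{-\htop}\}$. (Applying the same remark to $\cC^s$ under $f^{-1}$ simultaneously rules out the orientation-reversing case $\det A=-1$.)

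The only genuinely delicate step is this last sign determination: the cone-field data is differentiable, whereas the cohomology eigenvalue is homotopical, so one must argue with care that the $df$-invariant orientation of $E^u$ corresponds, under the merely continuous conjugacy $\Phi$, to the orientation of the expanding eigenline of $A$. As a reassuring cross-check that at least $e^{\htop}\in\Omega$: by Lemma \ref{lem:spectrume} one has $e^{\htop}\in\sigma_{\cB^{p,q,1}}(f_*)$ with eigencurrent the Margulis measure $m^+$; since $e^{\htop}\notin\sigma_{\cB^{p-1,q+1,0}}(\cL)\cup(\sigma_{\cB^{p+1,q-1,0}}(f_*)\setminus\{1\})$, both contained in $\{1\}\cup\{|z|<\kappa\}$, Lemma \ref{lem:cosed-exact} shows $m^+$ is closed and not exact, so $[m^+]\neq 0$ in the anisotropic $H^1$, i.e. $e^{\htop}\in\Omega$; as $\Omega$ has two elements and the remaining one is $e^{-\htop}$, everything else is bookkeeping with known facts.
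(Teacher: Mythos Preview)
Your overall route is exactly the paper's: invoke Lemma~\ref{lem:derham} to identify anisotropic and de Rham cohomology, use Franks--Newhouse (Remark~\ref{rem:comment}) to pass to a hyperbolic $A\in SL(2,\bZ)$, and read off $\Omega$ as the spectrum of $A$ on $H^1(\bT^2;\bC)$. The paper dispatches the actual identification of that spectrum by a citation to \cite[Section~3.2-e]{KH95}; you spell out the computation and, in particular, isolate the one nontrivial point the paper leaves implicit, namely why the expanding eigenvalue is $+e^{\htop}$ rather than $-e^{\htop}$.

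Your cone-field argument for this sign is not correct, however. The invariant cones appearing in the Anosov hypothesis are symmetric ($v\in\cC^u(x)\Leftrightarrow -v\in\cC^u(x)$); they are not one-sided ``convex cones'' and therefore carry no orientation information whatsoever. Concretely, for a linear model with $\operatorname{tr}A<-2$ the bundle $E^u$ is trivial (hence orientable) but $dA$ reverses orientation on it, and any strictly invariant cone field is necessarily symmetric and does not see this. So this paragraph does not determine the sign of $\mu$ (nor, by the same token, does the analogous remark rule out $\det A=-1$).

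By contrast, your ``cross-check'' paragraph is a complete and correct determination of the sign, and in fact a self-contained proof of the lemma: Lemma~\ref{lem:spectrume} gives $e^{\htop}\in\sigma_{\cB^{p,q,1}}(f_*)$; Lemma~\ref{lem:spectrum1} shows $e^{\htop}$ lies outside both $\sigma_{\cB^{p-1,q+1,0}}(\cL)$ and $\sigma_{\cB^{p+1,q-1,0}}(f_*)\setminus\{1\}$; Lemma~\ref{lem:cosed-exact} then forces the Margulis eigencurrent to be closed and non-exact, so $e^{\htop}\in\Omega$. Since $\Omega$ consists of the two eigenvalues of $A$ with product $\det A=1$, the other one is $e^{-\htop}$. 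I would drop the cone-field paragraph entirely and promote this argument from ``cross-check'' to the main line.
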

\begin{proof}
    Lemma \ref{lem:derham} implies that the anisotropic de Rham cohomology for one forms is a topological invariant, hence so is $f_\sharp$. Since our map is conjugated to a linear model (see \cite[Section 2.6]{KH95}), $f_\sharp$  is conjugated to the action of the linear model on homology. The Lemma follows by a direct computation, see \cite[Section 3.2-e]{KH95} for details.
\end{proof}
The following Lemma concludes the proof of Theorem \ref{thm:maximal_entropy_gap}.
\begin{lem}
    \label{lem:ps-holo}
    For each $\ve>0$, if $p,q$ are large enough, we have
    \[
        \begin{split}
            &\Big[\Omega\cup \sigma_{\cB^{p+1,q-1,0}}(f_*)\setminus\Big(\{1\}\cup  \{z\in\bC\;:\; |z|<\ve\}\Big)\Big]\subset \sigma_{\cB^{p,q,1}}(f_*)\\
            &\sigma_{\cB^{p,q,1}}(f_*) \subset \Big[ \{z\in\bC\;:\; |z|<\ve\}\cup \Omega\cup \sigma_{\cB^{p+1,q-1,0}}(f_*)\cup\sigma_{\cB^{p-1,q+1,0}}(\cL) \Big]\setminus\{1\}.
        \end{split}
    \]
\end{lem}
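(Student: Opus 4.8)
The plan is to assemble the statement from the structural facts already in place: Lemma~\ref{lem:cosed-exact}, the bound $e^{\htop}\lambda^{-\min\{p,q\}}$ on the essential spectral radius of $f_*$ on $\cB^{p,q,1}$ (Lemma~\ref{lem:spectrume}), the boundedness and commutation properties of the exterior derivative (Lemma~\ref{lem:d-action} together with equation~\eqref{eq:1-0}), and the finite-dimensionality of the anisotropic cohomology of one forms with the computation of its spectrum $\Omega=\{e^{-\htop},e^{\htop}\}$ (Lemmata~\ref{lem:derham} and~\ref{lem:thisistheend}). From the outset I would fix $\ve>0$ and take $p,q$ so large that $e^{\htop}\lambda^{-\min\{p,q\}}<\min\{\ve,e^{-\htop}\}$; then, by Lemma~\ref{lem:spectrume}, every point of $\sigma_{\cB^{p,q,1}}(f_*)$ of modulus $\geq\ve$ is an eigenvalue, and the same applies to the values $1$ and $e^{-\htop}$ if they belong to $\sigma_{\cB^{p,q,1}}(f_*)$.

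For the inclusion ``$\subset$'' I would argue as follows. Let $\nu\in\sigma_{\cB^{p,q,1}}(f_*)$ with $|\nu|\geq\ve$ and $\nu\neq 1$, and choose an eigenform $\omega\neq 0$, $f_*\omega=\nu\omega$. If $d\omega\neq 0$, the contrapositive of the last assertion of Lemma~\ref{lem:cosed-exact} forces $\nu\in\sigma_{\cB^{p-1,q+1,0}}(\cL)$; if $d\omega=0$ and $\omega$ is exact, the first assertion of Lemma~\ref{lem:cosed-exact} gives $\nu\in\sigma_{\cB^{p+1,q-1,0}}(f_*)$; if $d\omega=0$ and $\omega$ is not exact, then $[\omega]$ is a nonzero eigenvector of $f_\sharp$ (Remark~\ref{rem:c-cation}) for the eigenvalue $\nu$, so $\nu\in\Omega$ by Lemma~\ref{lem:thisistheend}. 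This settles every such $\nu$, and points of modulus $<\ve$ are in the right-hand side trivially. Finally I would rule out $\nu=1$: if $f_*\omega=\omega$ with $\omega\neq 0$, Lemma~\ref{lem:cosed-exact} gives $d\omega=0$ and, since $1\notin\sigma_{\cB^{p+1,q-1,0}}(f_*)\setminus\{1\}$, that $\omega$ is not exact, whence $1\in\Omega$, contradicting $\htop>0$; this simultaneously proves the assertion $1\notin\sigma_{\cB^{p,q,1}}(f_*)$ of Theorem~\ref{thm:maximal_entropy_gap}.

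For ``$\supset$'' there are two things to check. First, $\sigma_{\cB^{p+1,q-1,0}}(f_*)\setminus\{1\}\subset\sigma_{\cB^{p,q,1}}(f_*)$ is the last statement of Lemma~\ref{lem:cosed-exact}; concretely, if $f_*h=\nu h$ with $\nu\neq 1$ then $dh\neq 0$ by Lemma~\ref{lem:d-action} and $f_*(dh)=\nu\,dh$ by equation~\eqref{eq:1-0}. Second, $\Omega\subset\sigma_{\cB^{p,q,1}}(f_*)$: the value $e^{\htop}$ is the spectral radius and belongs to the spectrum by Lemma~\ref{lem:spectrume}; for $e^{-\htop}$, if $e^{-\htop}\in\sigma_{\cB^{p+1,q-1,0}}(f_*)$ we are done by the previous point, and otherwise $(f_*-e^{-\htop})$ is invertible on $\cB^{p+1,q-1,0}$, so, assuming for contradiction it is also invertible on $\cB^{p,q,1}$, the intertwining $d\,(f_*-e^{-\htop})=(f_*-e^{-\htop})\,d$ between the appropriately shifted spaces yields $(f_*-e^{-\htop})^{-1}d=d\,(f_*-e^{-\htop})^{-1}$; consequently every closed $\omega$ with $(f_*-e^{-\htop})\omega$ exact is itself exact. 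This contradicts the existence, guaranteed by Lemmata~\ref{lem:derham} and~\ref{lem:thisistheend}, of a nonzero class in the finite-dimensional space $\ker d/d(\cB^{p+1,q-1,0})$ sent by $f_\sharp$ to $e^{-\htop}$ times itself. Hence $e^{-\htop}\in\sigma_{\cB^{p,q,1}}(f_*)$.

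Most of this is assembly. The one point requiring genuine care is the ``lifting'' of an eigenvalue of the cohomological action $f_\sharp$ to an eigenvalue of $f_*$ on $\cB^{p,q,1}$: I would not attempt a direct quotient argument, since a priori the spectrum of $f_*$ on $\ker d/d(\cB^{p+1,q-1,0})$ could be strictly smaller than on $\ker d$; instead the dichotomy above (either the eigenvalue already appears on zero forms, in which case it lifts via $d$, or the intertwining with $d$ produces a contradiction) is what does the job. Beyond that, the only thing to keep honest is the bookkeeping of the index shifts $(p,q,\ell)\mapsto(p-1,q+1,\ell+1)$ under $d$ and the verification, via Lemma~\ref{lem:spectrume}, that all relevant spectral values sit above the essential spectral radius, so that one is always dealing with point spectrum.
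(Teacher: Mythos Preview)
Your proof is correct and follows essentially the same route as the paper: the second inclusion is obtained by the identical three-way case split via Lemma~\ref{lem:cosed-exact} ($d\omega\neq 0$, $\omega$ exact, $\omega$ closed non-exact), and the first inclusion via $d$ applied to eigenfunctions on zero forms together with the cohomological lifting. The only cosmetic difference is that for $\nu\in\Omega$ the paper constructs the eigenvector on one forms directly (setting $\theta=(\nu-f_*)^{-1}\psi$ and checking that $\omega+d\theta$ is a genuine eigenform), whereas you phrase the same computation as a contradiction; the two are equivalent and the paper also implicitly uses your dichotomy (``either $\nu$ already lies in $\sigma_{\cB^{p+1,q-1,0}}(f_*)$, or the resolvent on zero forms is available'').
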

\begin{proof}
    Lemma \ref{lem:spectrume} implies that if $p,q$ are large enough we have to worry only about point spectrum.

    Thus, if $\nu \in\sigma_{\cB^{p+1,q-1,0}}(f_*)$ then there exists $\theta\in \cB^{p+1,q-1,0}$ such that $f_*\theta=\nu\theta$. This implies that $f_*d\theta=\nu d\theta$ so either $d\theta=0$, but then by Lemma \ref{lem:d-action} we have $h$ constant and $\nu=1$, or $\nu\in\sigma_{\cB^{p,q,1}}(f_*)$.

    If $\nu\in\Omega$, the spectrum of $f_\sharp$ (defined in Remark \ref{rem:c-cation}), then it means that there exists $\omega\in\cB_0^{p,q,1}$ and $\psi\in\cB^{p+1,q-1,0}$ such that $f_*\omega=\nu\omega+ d\psi$, that is $f_\sharp [\omega]=\nu[\omega]$, where $[\omega]\neq 0$ is the equivalence class of $\omega$.  If $\nu \not \in \sigma_{\cB^{p+1,q-1,0}}(f_*)$, we can define $\theta=(\nu-f_*)^{-1}\psi$ and
    \[
        (\nu-f_*)d\theta=d\psi.
    \]
    But then $f_*(\omega+d\theta)=\nu(\omega+d\theta)$ which implies $\nu\in \sigma_{\cB_0^{p,q,1}}(f_*)$  unless $\omega+d\theta=0$. But the latter possibility would imply that $\omega$ is exact, that is $[\omega]=0$, contrary to the assumption. This proves the first inclusion of the Lemma.

    To prove the second inclusion note that if $f_*\omega=\nu\omega$, $\omega\in  \cB^{p,q,1}$ and $\nu\not \in \sigma_{\cB^{p-1,q+1,0}}(\cL)\setminus\{1\}$, then the last part of the Lemma \ref{lem:cosed-exact} implies $d\omega=0$. Then $f_\sharp[\omega]=\nu[\omega]$, thus either $\nu\in\Omega$ or $[\omega]=0$, i.e. $\omega$ is exact. But if $\nu\not \in \sigma_{\cB^{p+1,q-1,0}}(f_*)\setminus\{1\}$ the first part of Lemma \ref{lem:cosed-exact} implies that $\omega$ is not exact, hence $[\omega]\neq 0$.
    Since Lemma \ref{lem:thisistheend} implies $1\not \in\Omega$, the Lemma follows.
\end{proof}

\begin{rem} \label{rem:conjecture} It is conceivable that Lemma \ref{lem:ps-holo} could be upgraded to an equality. Indeed, suppose that for a two current $\int_M\omega=0$ implies that there exist a one current $\theta$ such that $\omega=d\theta$.\footnote{  This is equivalent to studying the cohomology for two forms.} Then if $f_*\omega=\nu\omega$, $\nu\neq 1$ and $\omega\not\equiv 0$, we have $\int_M\omega=0$ thus we can write $\omega=d\theta$ and $d(\nu\theta-f_*\theta)=0$. Thus $\nu\theta-f_*\theta=\psi$ with $d\psi=0$. Hence, if  $\nu\not\in \sigma_{\cB^{p,q,1}}(f_*)$, we have $\theta=(\nu-f_*)^{-1}\psi$. Since $d(z-f_*)^{-1}\psi$ is a meromorphic function and for large $z$ the von Neumann expansion implies the it is zero, we have $d\theta=0$, a contradiction. Hence the second inclusion of the  Lemma \ref{lem:ps-holo} would be an equality.
\end{rem}
\subsection{ \bfseries Application to the measure of maximal entropy}\label{sec:max_proof}
In this section we prove Theorem \ref{thm:max_hyp}.

Lemma \ref{lem:spectrume} implies that there exists $\ell_\star\in(\cB^{p,q,1})'$ and $h_\star\in\cB^{p,q,1}$ such that $f_* h_\star=e^{\htop}h_\star$ and
$\ell_\star(f_* \omega)=e^{\htop}\ell_\star(\omega) $, for all $\omega\in \cB^{p,q,1}$. In addition, $\ell_\star(\vf h_\star)=\mme(\vf)$. Lemmata \ref{lem:ps-holo} and \ref{lem:spectrum1} imply that the rest of the spectrum is contained in $\{z\in\bC\;:\; |z|<\kappa\}$ for some $\kappa\in (0,1)$. It follows that the spectral decomposition $f_*=e^{\htop}h_\star\otimes \ell_\star+\cQ$ with $\ell_\star\cQ=0$, $\cQ h_\star=0$, $\ell(h)=1$ and $\|\cQ^n\|_{p,q,1}\leq\Const \kappa^n$. Also note that the multiplication by a smooth function is a bounded operator. Thus
\[
    \begin{split}
        \int_M g\circ f^n h d\mme&=\ell_\star(g\circ f^n h h_\star)=e^{-n\htop}\ell_\star(f_*^n(g\circ f^n h h_\star))=e^{-n\htop}\ell_\star(gf_*^n( h h_\star))\\
        &=\ell_\star(g h_\star)\ell_*( h h_\star)+e^{-n\htop}\ell_\star(g\cQ^n( h h_\star)).
    \end{split}
\]
It follows that, for $r$ large enough,
\[
    \left|\int_M g\circ f^n h d\mme-\int_M g d\mme \int_M  h d\mme\right|\leq \Const\|g\|_{\cC^r}\|h\|_{\cC^r}e^{-n\htop}\kappa^n.
\]
\subsection{Anisotropic de Rham cohomology}\label{sec:derham}\ \\
While to develop a theory of anisotropic de Rham cohomology as well as the relative Hodge theory may certainly be of interest, in this section we will develop only the bare minimum necessary to our needs and we will keep the arguments as elementary as possible.

Without loss of generality we can, and will, assume that there exist  {\em good covers} $\{U^+_\alpha\}$, and $\{U_\alpha\}$ such that $U^+_\alpha\supset\overline{U}_\alpha$,  and a partition of unity  $\{\psi_\alpha\}$ subordinated to  $\{U_\alpha\}$. Also let $\{\psi^+_\alpha\}$ be such that $\supp(\psi^+_\alpha)\subset U^+_\alpha$ and $\psi^+_\alpha|_{U_\alpha}=1$.\footnote{ Recall that a good cover is a cover such that, for each collection $\cA$ of indexes, $\cap_{\alpha\in\cA} U_\alpha$ is contractible.}

\begin{lem}\label{lem:primitiva}
    If $h\in\cB^{p,q,1}$ is closed then, for each $\alpha$, there exists $H_\alpha, H_\alpha^+\in \cB^{p+1,q-1,0}$ such that $dH_\alpha=h\psi_\alpha +H^+_\alpha d\psi_\alpha$ and $H_\alpha=H^+_\alpha \psi_\alpha$.
\end{lem}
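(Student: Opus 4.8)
The plan is to produce $H^+_\alpha$ as a normalised local primitive of $h$ inside the chart $U^+_\alpha$, and then to localise by multiplying with $\psi^+_\alpha$ and $\psi_\alpha$. First I would transport everything by $\Theta_\alpha$ to a ball $B\subset\bR^2$; on $B$ the closed $1$-current $h$ can be mollified, and since convolution on $\bR^2$ commutes with $d$ the mollifications $h_\eta$ are smooth \emph{closed} $1$-forms on a slightly smaller ball, converging to $h$ in the $\|\cdot\|_{p,q,1}$-norm (the standard approximation property of the spaces of \cite{GLP13}). For smooth closed forms the Poincaré lemma on the ball gives smooth primitives; I would fix the one, $\hat H_\eta$, with vanishing $\chi$-average for a fixed $\chi\in\cC^\infty_c(B)$ with $\int\chi=1$, so that $d\hat H_\eta=h_\eta$ and, using coordinate vector fields (an equivalent norm, Remark~\ref{rem:notation_der}), $\partial_j\hat H_\eta=(h_\eta)_j$ for $j=1,2$.

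The crux is a uniform Poincaré-type inequality $\|\hat H_\eta\|_{p+1,q-1,0}\le\Const\|h_\eta\|_{p,q,1}$, from which $\{\hat H_\eta\}$ is Cauchy in $\cB^{p+1,q-1,0}$ and one obtains $H^+:=\lim_\eta\hat H_\eta$ with $dH^+=h$ on $U^+_\alpha$. To prove it I would test $\hat H_\eta$ against the functionals defining $\cB^{p+1,q-1,0}$, i.e. $\int_W\langle g',\partial^\gamma\hat H_\eta\rangle\omega_{vol}$ with $|\gamma|=n\le p+1$, and split on $n$. When $n\ge1$, writing $\gamma=e_j+\gamma'$ with $|\gamma'|=n-1\le p$ and using $\partial^\gamma\hat H_\eta=\partial^{\gamma'}(h_\eta)_j$ turns the functional into a defining functional of $\cB^{p,q,1}$ applied to $h_\eta$ (up to bounded factors; the regularities match since $n+(q-1)=(n-1)+q$), hence it is $\le\|h_\eta\|_{p,q,1}$. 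When $n=0$ there is no derivative to trade, and this is exactly the situation handled in the closedness part of Lemma~\ref{lem:d-action}: integrating along the stable leaf $W$ one gains one order of dual regularity — $\tfrac{d}{d\xi}\hat H_\eta(\gamma_W(\xi))=h_\eta(\dot\gamma_W)$ is controlled by $\|h_\eta\|_{0,q,1}$, and a Fubini rearrangement plus the $\chi$-average normalisation (which controls the additive constant by pairing $h_\eta$ against smooth test objects along a smooth foliation of $B$ by stable curves) yields the bound. Passing to the limit gives $H^+$; then I would set $H^+_\alpha:=\psi^+_\alpha\,\Theta_\alpha^\ast H^+$ and $H_\alpha:=\psi_\alpha H^+_\alpha$, both in $\cB^{p+1,q-1,0}$ because multiplication by $\cC^\infty$ functions is bounded on these spaces, and the Leibniz rule for $d$ (Lemma~\ref{lem:d-action}) together with $\psi^+_\alpha\equiv1$ near $\supp\psi_\alpha$ gives $H_\alpha=H^+_\alpha\psi_\alpha$ and $dH_\alpha=\psi_\alpha\,dH^+_\alpha+H^+_\alpha d\psi_\alpha=h\psi_\alpha+H^+_\alpha d\psi_\alpha$.

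The step I expect to be the main obstacle is the $n=0$ case of the Poincaré inequality: primitivising is precisely the operation that is delicate in the stable direction, so one must use both the closedness of $h$ (to identify $\tfrac{d}{d\xi}\hat H_\eta(\gamma_W(\xi))$ with the controlled restriction $h_\eta|_W$) and an appropriate normalisation of the primitive, and must carry out the disintegration/Fubini over the family of admissible stable leaves so that all auxiliary test sections stay in $\Gamma^{1,q}_c$ with uniformly bounded norms — essentially the argument of Lemma~\ref{lem:d-action} pushed one order of smoothness further. Everything else — the index bookkeeping for $n\ge1$, the continuity of $d$ and of multiplication by smooth functions, the extension by zero across $\partial U^+_\alpha$ — is routine; a secondary point to verify is that the chart mollification converges in $\|\cdot\|_{p,q,1}$, which is the standard approximation property of these anisotropic spaces.
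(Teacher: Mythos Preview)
Your architecture --- approximate by smooth closed forms, take a normalised primitive, prove a Poincar\'e-type bound, pass to the limit, then localise by $\psi^+_\alpha,\psi_\alpha$ --- is the same skeleton as the paper's, and your $n\ge1$ argument (trade one derivative of $\hat H_\eta$ for a component of $h_\eta$) is correct. The gap is exactly where you locate it, the $n=0$ case, and your proposed fix does not close it.

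After integrating by parts along the leaf $W$ you are left with a boundary term $(\int_W g')\cdot\hat H_\eta(b)$, and you must bound the point value $\hat H_\eta(b)$ by $\|h_\eta\|_{0,q,1}$. Your $\chi$-average normalisation gives $\hat H_\eta(b)=\int\chi(y)\!\int_{\sigma_y}\!h_\eta$ for paths $\sigma_y$ from $b$ to $y\in\supp\chi$, but the norm $\|\cdot\|_{0,q,1}$ only integrates $h$ along admissible (near-stable) leaves of the prescribed size, against \emph{compactly supported} $\cC^q$ sections. Any transversal piece of $\sigma_y$ is not directly controlled; and if one Fubinis the resulting two-dimensional integral back into a one-parameter family of stable-leaf integrals, the test sections that appear are not compactly supported on admissible leaves and their norms depend on the position of $b$, which ranges over the whole chart. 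Producing uniform $\Gamma^{1,q}_c$ bounds here is precisely the missing work, and it is not clear it can be done without essentially rediscovering the paper's device.

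The paper sidesteps this by choosing the primitive \emph{explicitly}: it uses the radial Poincar\'e operator from a base point $x_\alpha\in\Theta_\alpha(U^+_\alpha\setminus\supp\psi^+_\alpha)$, namely $\bar H_\alpha(x)=\int_0^1 h_{(1-t)x_\alpha+tx}(x-x_\alpha)\,dt$, and sets $H_\alpha=\psi_\alpha\bar H_\alpha$, $H^+_\alpha=\psi^+_\alpha\bar H_\alpha$. Testing $H_\alpha$ against $\int_\gamma\varphi(\cdot)$ and changing variables $s\mapsto ts$ turns the functional into $\int_{c_\alpha}^{1}t^{-1}\!\int_{\gamma_t}\langle\bar\varphi_{\alpha,t},h\rangle\,dt$, where $\gamma_t(s)=(1-t)x_\alpha+t\gamma(t^{-1}s)$. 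Two observations make this an admissible functional of $h$: homothety about $x_\alpha$ keeps $\gamma_t$ in the admissible family $\Sigma$ (the stable cone and the $\cC^r$ bounds are scale-invariant), and since $x_\alpha\notin\supp\psi^+_\alpha$ the cutoff forces $t\ge c_\alpha>0$, so the Jacobian $t^{-1}$ is integrable. This gives the $\|\cdot\|_{0,q-1,0}$ bound on $H_\alpha$ directly from $\|h\|_{0,q,1}$, with no point evaluations and no leaf-to-leaf comparison. The same change of variables applied to $\partial_{x_i}H_\alpha$ produces a formula with an explicit $*dh$ term, which is how closedness of $h$ is used when passing to the limit along a smooth (not necessarily closed) approximating sequence.
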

\begin{proof}
    For each $U_\alpha$ let us choose $x_\alpha\in \Theta_\alpha (U_\alpha^+\setminus \supp\psi^+_\alpha)$.
    We start assuming that $h$ is a smooth one form and we define, for all $x\in U_\alpha$,
    \begin{equation}\label{eq:primitiva}
        \bar H_\alpha (x)=\int_0^1\Theta_{\alpha*} h_{x_\alpha(1-t)+tx}(x-x_\alpha) dt.
    \end{equation}
    Also, for simplicity of notation, we confuse $h$ and $\Theta_{\alpha*}h=:\sum_i h_idx_i$ and set $\gamma_{\lin,x}(t)=x_\alpha(1-t)+tx$.
    Then
    \begin{equation}\label{eq:basic_der}
        \begin{split}
            \partial_{x_i}\bar H_\alpha(x)
            &= \sum_{k=1}^2\int_0^1 \left[t(\partial_{x_i}h_k)\circ \gamma_{\lin,x}(t) (x-x_\alpha)_k+h_i\circ \gamma_{\lin,x}(t)\right]\\
            &=\sum_{k=1}^2\int_0^1 t(\partial_{x_k}h_i)\circ \gamma_{\lin,x}(t) (x-x_\alpha)_k+h_i\circ \gamma_{\lin,x}(t)\\
            &\quad -\int_0^1 t\, dh(x-x_\alpha, e_i)\\
            &=\int_0^1 \frac{d}{dt}\left[t h_i\circ \gamma_{\lin,x}(t) \right]
            -\int_0^1 t\, d_{\gamma_{\lin,x}(t)}h(x-x_\alpha, e_i)\\
            &=h_i(x)-\int_0^1 t\, d_{\gamma_{\lin,x}(t)}h(x-x_\alpha, e_i).
        \end{split}
    \end{equation}
    Thus, if $h$ is a closed form, then we have $d\bar H_\alpha=h$.

    Next, let $\gamma\in \Sigma_\alpha$ and $\vf\in \cC^{q}(\gamma)$,  and set $H_\alpha=\psi_\alpha \bar H_\alpha$, $\vf_\alpha=\vf\psi_\alpha\circ \gamma$, then
    \[
        \begin{split}
            &\int_\gamma \vf\cdot   H_\alpha=\sum_{i=1}^2\int_a^b ds \int_0^1 dt  \vf_\alpha (s) \langle dx_i, \Theta_{\alpha,*} h\rangle(x_\alpha(1-t)+t\gamma(s))\cdot (\gamma(s)-x_\alpha)_i\\
            &=\sum_{i=1}^2 \int_0^1 dt\, t^{-1} \!\!\!\int_{ta}^{tb} ds \vf_\alpha (t^{-1}s) \langle dx_i,\Theta_{\alpha,*} h\rangle(x_\alpha(1-t)+t\gamma(t^{-1}s))\cdot (\gamma(t^{-1}s)-x_\alpha)_i .
        \end{split}
    \]
    If we define $\gamma_t(s)=x_\alpha(1-t)+t\gamma(t^{-1}s)$, then $\gamma'_t(s)=\gamma'(t^{-1}s)\in\cC^s$, and setting $\bar \vf_{\alpha,t}=\sum_{i=1}^2\vf_\alpha(t^{-1}s)(\gamma(t^{-1}s)-x_\alpha)_i dx_i$, we have, for some $c_{\alpha}\in (0,1)$,
    \begin{equation}\label{eq:close1}
        \int_\gamma \vf\cdot  H_\alpha=\int_{c_\alpha}^1 dt \, t^{-1} \int_{\gamma_t}   \langle\bar \vf_{\alpha,t}, h\rangle.
    \end{equation}
    Equation \eqref{eq:close1} implies that $H_\alpha$ is a continuous functional of $h$ hence it can be extended to all $h\in\cB^{0,q,1}$. By the same scheme we can define $H^+_\alpha=\psi^+_\alpha \bar H_\alpha$ when $h\in\cB^{0,q,1}$.
    Next, setting $x_{t,s}:=x_\alpha(1-t)+t\gamma(s)$ and using \eqref{eq:basic_der}, we have
    \begin{equation}\label{eq:derivative-H}
        \begin{split}
            \int_\gamma \vf\partial_{x_i}  H_\alpha=&\int_\gamma \vf \psi_\alpha \langle dx_i, h\rangle + \int_0^1 dt\, \int_{\gamma_t} \langle \vf_{\alpha,t},*dx_i\rangle *d h\\
            &+\int_\gamma \langle \vf dx_i,d \psi_\alpha \rangle  H^+_\alpha.
        \end{split}
    \end{equation}
    Hence, if $h$ is closed, $dH_\alpha= \psi_\alpha h +H^+_\alpha d\psi_\alpha$.
    If $h\in\cB^{1,q,1}$ is closed, then there exist smooth forms $h_n$ that converge to $h$. Moreover, by Lemmata \ref{lem:d-action}, \ref{lem:duality} it follows that $dh_n\to 0$ in $\cB^{0,q+1,2}$, hence equation \eqref{eq:derivative-H} implies
    \[
        \|H_{\alpha,n}-H_{\alpha,m}\|_{1, q-1,0}\leq \|h_n-h_m\|_{0,q,1}+\Const \|dh_n-dh_m\|_{0,q+1,2},
    \]
    thus $H_{\alpha,n}$ is a Cauchy sequence in $\cB^{1,q-1,1}$. Analogously, one can prove that $H^+_{\alpha,n}$ is Cauchy and, calling $H_\alpha$, $H^+_\alpha$ the limits, we have $dH_\alpha= \psi_\alpha h +H^+_\alpha d\psi_\alpha$.

    Similar arguments show that if $h\in \cB^{p,q,1}$ and closed then  $H_\alpha, H^+_\alpha\in\cB^{p+1,q-1,0}$ and $dH_\alpha= \psi_\alpha h +H^+_\alpha d\psi_\alpha$.
\end{proof}
\begin{lem}\label{lem:1-chain}
    There exist constants $c_{\alpha,\beta}\in\bC$ such that, for all $\alpha,\beta$,
    \[
        \psi_\alpha\psi_\beta [H^+_\alpha-H^+_\beta+c_{\alpha,\beta}]=0.
    \]
\end{lem}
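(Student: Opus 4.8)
The plan is to reduce the statement to the case of a smooth closed form and then pass to the limit, exploiting the explicit line--integral primitives $\bar H_\alpha$ constructed in the proof of Lemma~\ref{lem:primitiva}. So first I would fix smooth closed one--forms $h_n\to h$ in $\cB^{p,q,1}$ (as in that proof) and let $\bar H_{\alpha,n}$ be the primitives given by \eqref{eq:primitiva}, with $H_{\alpha,n}=\psi_\alpha\bar H_{\alpha,n}$ and $H^+_{\alpha,n}=\psi^+_\alpha\bar H_{\alpha,n}$; by the last part of the proof of Lemma~\ref{lem:primitiva} one has $H^+_{\alpha,n}\to H^+_\alpha$ in $\cB^{p+1,q-1,0}$.

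The core of the argument is the smooth case. Fix $n$. Since $h_n$ is closed, \eqref{eq:basic_der} gives $d\bar H_{\alpha,n}=h_n$ on $U_\alpha$ and $d\bar H_{\beta,n}=h_n$ on $U_\beta$, hence $\bar H_{\alpha,n}-\bar H_{\beta,n}$ is locally constant on $U_\alpha\cap U_\beta$. Because $\{U_\alpha\}$ is a good cover, $U_\alpha\cap U_\beta$ is contractible and in particular connected, so $\bar H_{\alpha,n}-\bar H_{\beta,n}$ equals a single constant $-c^{(n)}_{\alpha,\beta}\in\bC$ there. Now $\psi^+_\alpha\equiv 1$ on $U_\alpha$ and $\supp(\psi_\alpha\psi_\beta)\subset\supp\psi_\alpha\subset U_\alpha$ (and likewise $\subset U_\beta$), so $H^+_{\alpha,n}=\bar H_{\alpha,n}$ and $H^+_{\beta,n}=\bar H_{\beta,n}$ on the open neighbourhood $U_\alpha\cap U_\beta$ of $\supp(\psi_\alpha\psi_\beta)$. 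Therefore $H^+_{\alpha,n}-H^+_{\beta,n}+c^{(n)}_{\alpha,\beta}=0$ there, and multiplying by $\psi_\alpha\psi_\beta$ (which is supported inside $U_\alpha\cap U_\beta$) gives the identity $\psi_\alpha\psi_\beta\bigl(H^+_{\alpha,n}-H^+_{\beta,n}+c^{(n)}_{\alpha,\beta}\bigr)=0$.

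It then remains to let $n\to\infty$. If $\psi_\alpha\psi_\beta\equiv0$ there is nothing to prove (take $c_{\alpha,\beta}=0$). Otherwise multiplication by the smooth function $\psi_\alpha\psi_\beta$ is a bounded operator on $\cB^{p+1,q-1,0}$, so $\psi_\alpha\psi_\beta(H^+_{\alpha,n}-H^+_{\beta,n})\to\psi_\alpha\psi_\beta(H^+_\alpha-H^+_\beta)$; since $\psi_\alpha\psi_\beta$, viewed as an element of $\cB^{p+1,q-1,0}$, is nonzero (a nonnegative smooth function that is somewhere positive is a nonzero current), the relation $c^{(n)}_{\alpha,\beta}\,\psi_\alpha\psi_\beta=-\psi_\alpha\psi_\beta(H^+_{\alpha,n}-H^+_{\beta,n})$ forces the scalars $c^{(n)}_{\alpha,\beta}$ to converge to some $c_{\alpha,\beta}\in\bC$. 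Passing to the limit in the displayed identity yields $\psi_\alpha\psi_\beta\bigl(H^+_\alpha-H^+_\beta+c_{\alpha,\beta}\bigr)=0$, as claimed.

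I expect the only real subtlety to be the bookkeeping of supports: making sure that $\bar H_{\alpha,n}-\bar H_{\beta,n}$ is constant on a \emph{connected} open set that actually contains $\supp(\psi_\alpha\psi_\beta)$ — which is exactly what the good cover $\{U_\alpha\}$ together with the cutoffs $\psi^+_\alpha$ are designed to provide — and the elementary check that the constants $c^{(n)}_{\alpha,\beta}$ converge. A more direct route that bypasses the approximation would be to observe that $d$ obeys the Leibniz rule for multiplication by smooth functions on the currents $\cB^{p,q,\ell}$ (by density from smooth forms), so that $d H_\alpha=\psi_\alpha h+H^+_\alpha\,d\psi_\alpha$ together with $H_\alpha=\psi_\alpha H^+_\alpha$ gives $\psi_\alpha\,dH^+_\alpha=\psi_\alpha h$ and hence $\psi_\alpha\psi_\beta\,d(H^+_\alpha-H^+_\beta)=0$; Lemma~\ref{lem:d-action}, applied with a nonnegative cutoff supported on a connected open set between $\supp(\psi_\alpha\psi_\beta)$ and $U_\alpha\cap U_\beta$, would then give the conclusion directly. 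I would present the approximation argument as the clean proof and keep this as motivation.
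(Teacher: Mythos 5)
Your ``more direct route'' at the end is essentially the paper's proof, and it is the argument you should have led with: from $H_\alpha=\psi_\alpha H^+_\alpha$ and $dH_\alpha=\psi_\alpha h+H^+_\alpha d\psi_\alpha$ one computes, using the Leibniz rule on products of a smooth function with a $\cB^{p+1,q-1,0}$-current (valid by density), that
\[
d\bigl([H^+_\alpha-H^+_\beta]\psi_\alpha\psi_\beta\bigr)=d(H_\alpha\psi_\beta-H_\beta\psi_\alpha)=[H^+_\alpha-H^+_\beta]\,d(\psi_\alpha\psi_\beta),
\]
hence $\psi_\alpha\psi_\beta\, d(H^+_\alpha-H^+_\beta)=0$, and the second assertion of Lemma~\ref{lem:d-action} applied with $\psi=\psi_\alpha\psi_\beta$ gives the constant. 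One small caution on your phrasing: you propose invoking Lemma~\ref{lem:d-action} ``with a nonnegative cutoff supported on a connected open set between $\supp(\psi_\alpha\psi_\beta)$ and $U_\alpha\cap U_\beta$,'' but the identity you have is $\psi_\alpha\psi_\beta\, d(H^+_\alpha-H^+_\beta)=0$, not $\tilde\psi\, d(H^+_\alpha-H^+_\beta)=0$ for a larger cutoff $\tilde\psi$, so you must take $\psi=\psi_\alpha\psi_\beta$ itself.

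Your primary argument, however, has a genuine gap. You assume you can approximate the closed current $h$ by smooth \emph{closed} one-forms $h_n$, attributing this to the proof of Lemma~\ref{lem:primitiva}. That proof only takes smooth $h_n\to h$ in $\cB^{p,q,1}$ and uses that $dh_n\to 0$ in $\cB^{p-1,q+1,2}$ by continuity of $d$; it does not, and cannot without further work, arrange $dh_n=0$. Density of smooth closed forms among closed anisotropic currents is a nontrivial statement: to produce a closed approximant you would need to solve $d\eta_n=dh_n$ with $\|\eta_n\|_{\cB^{p,q,1}}\lesssim\|dh_n\|_{\cB^{p-1,q+1,2}}$, i.e.\ a Poincar\'e-type bounded right inverse for $d$ from two-currents to one-currents, which the paper does not establish (and which is precisely the sort of Hodge-theoretic input the authors say they are avoiding). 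Without $dh_n=0$, equation~\eqref{eq:basic_der} yields $d\bar H_{\alpha,n}=h_n+(\text{term involving }dh_n)$, so $\bar H_{\alpha,n}-\bar H_{\beta,n}$ is no longer locally constant on $U_\alpha\cap U_\beta$, and the crux of your smooth-case argument fails. Salvaging it would require quantitative estimates showing that $\bar H_{\alpha,n}-\bar H_{\beta,n}$ is constant up to an error controlled by $\|dh_n\|$ in the appropriate $\cB$-norm, which is substantially more work than the Leibniz-rule argument. The limiting step (extracting convergence of the scalars $c^{(n)}_{\alpha,\beta}$ from convergence of $c^{(n)}_{\alpha,\beta}\psi_\alpha\psi_\beta$) is fine; it is the premise that is unjustified.
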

\begin{proof}
    By Lemma \ref{lem:primitiva} follows
    \[
        \begin{split}
            d([H^+_\alpha-H^+_\beta]\psi_\alpha\psi_\beta)&=d(H_\alpha\psi_\beta-H_\beta\psi_\alpha)\\
            &=H^+_\alpha \psi_\beta d\psi_\alpha+ H_\alpha d\psi_\beta-H^+_\beta \psi_\alpha d\psi_\beta+ H_\beta d\psi_\alpha\\
            &=[H^+_\alpha-H^+_\beta]d(\psi_\alpha\psi_\beta).
        \end{split}
    \]
    This implies $\psi_\alpha\psi_\beta d[H^+_\alpha-H^+_\beta]=0$ and the Lemma follows  thanks to the last assertion of Lemma \ref{lem:d-action}.
\end{proof}
This fact allows to obtain our basic result.
\begin{lem}\label{lem:derham}
    The anisotropic de Rham cohomology for one forms is isomorphic to the standard  de Rham cohomology.
\end{lem}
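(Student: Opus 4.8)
The plan is to reproduce, at the level of the anisotropic currents $\cB^{p,q,\ell}$, the classical \v{C}ech--de Rham comparison. Fix $p,q$ large and let $H^1_{\textrm{aniso}}$ denote the closed elements of $\cB^{p,q,1}$ modulo the exact ones, i.e.\ modulo $d(\cB^{p+1,q-1,0})$ (a genuine subspace, being closed in $\cB^{p,q,1}$ by Lemma~\ref{lem:d-action}). Since the cover $\{U_\alpha\}$ is good, the classical isomorphism $\check{H}^1(\{U_\alpha\};\bC)\cong H^1_{\textrm{dR}}(M)$ is available, so it suffices to build an isomorphism $\Psi\colon H^1_{\textrm{aniso}}\to \check{H}^1(\{U_\alpha\};\bC)$. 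Given a closed $\omega\in\cB^{p,q,1}$, Lemma~\ref{lem:primitiva} furnishes local primitives $H_\alpha,H^+_\alpha\in\cB^{p+1,q-1,0}$ with $dH_\alpha=\psi_\alpha\omega+H^+_\alpha\,d\psi_\alpha$ and $H_\alpha=\psi_\alpha H^+_\alpha$, and Lemma~\ref{lem:1-chain} then provides constants $c_{\alpha,\beta}\in\bC$ with $\psi_\alpha\psi_\beta[H^+_\alpha-H^+_\beta+c_{\alpha,\beta}]=0$. On a triple overlap the telescoping sum $(H^+_\alpha-H^+_\beta)+(H^+_\beta-H^+_\gamma)+(H^+_\gamma-H^+_\alpha)$ vanishes; multiplying by $\psi_\alpha\psi_\beta\psi_\gamma$, invoking Lemma~\ref{lem:1-chain} again, and using that $U_\alpha\cap U_\beta\cap U_\gamma$ is connected (good cover) together with the last assertion of Lemma~\ref{lem:d-action} (a closed zero-current on a connected open set is constant), one gets $c_{\alpha,\beta}+c_{\beta,\gamma}+c_{\gamma,\alpha}=0$, so $\{c_{\alpha,\beta}\}$ is a \v{C}ech $1$-cocycle. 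Replacing $\omega$ by $\omega+d\eta$ with $\eta\in\cB^{p+1,q-1,0}$, or replacing the $H^+_\alpha$ by admissible alternatives, changes each $H^+_\alpha$ by an additive constant on $U^+_\alpha$ (again by Lemma~\ref{lem:d-action}), hence changes $\{c_{\alpha,\beta}\}$ by a \v{C}ech coboundary. This defines a linear map $\Psi([\omega])=[\{c_{\alpha,\beta}\}]$.

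For injectivity, assume $c_{\alpha,\beta}=c_\alpha-c_\beta$ with $c_\alpha\in\bC$. Put $H:=\sum_\alpha(H_\alpha+c_\alpha\psi_\alpha)\in\cB^{p+1,q-1,0}$. Using $\sum_\alpha\psi_\alpha\equiv1$, $\sum_\alpha d\psi_\alpha=0$, the identity $dH_\alpha=\psi_\alpha\omega+H^+_\alpha d\psi_\alpha$, and the relation of Lemma~\ref{lem:1-chain} rewritten on $\supp\psi_\alpha\cap\supp\psi_\beta$ (where $\supp d\psi_\alpha\subset\supp\psi_\alpha$ for the nonnegative bump functions), a short computation telescopes $\sum_\alpha(H^+_\alpha+c_\alpha)\,d\psi_\alpha$ to $0$ and yields $dH=\omega$; thus $\omega$ is exact and $[\omega]=0$ in $H^1_{\textrm{aniso}}$.

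For surjectivity, start from a \v{C}ech cocycle $\{c_{\alpha,\beta}\}$ and set $\lambda_\alpha:=-\sum_\gamma c_{\gamma,\alpha}\psi_\gamma\in\cC^\infty(U_\alpha)$; the cocycle identity gives $\lambda_\alpha-\lambda_\beta=c_{\alpha,\beta}$, a constant on $U_\alpha\cap U_\beta$, so the smooth $1$-forms $d\lambda_\alpha$ patch to a global closed \emph{smooth} $1$-form $\omega$, which in particular belongs to $\cB^{p,q,1}$. Computing $\Psi([\omega])$ with the $\lambda_\alpha$ as local primitives returns $[\{c_{\alpha,\beta}\}]$ (up to a harmless sign), so $\Psi$ is onto. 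Hence $\Psi$ is an isomorphism, and composing with $\check{H}^1(\{U_\alpha\};\bC)\cong H^1_{\textrm{dR}}(M)$ proves the lemma — and, as a byproduct, that $H^1_{\textrm{aniso}}$ is finite dimensional, as needed later.

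I expect the only real difficulty to be the partition-of-unity bookkeeping for currents: justifying the passage from relations of the form ``$\psi_\alpha\psi_\beta(\,\cdot\,)=0$'' to identities valid after multiplication by $\psi_\beta\,d\psi_\alpha$, and checking that the telescoping manipulations with $\sum_\alpha\psi_\alpha=1$ and $\sum_\alpha d\psi_\alpha=0$ interact correctly with multiplication of elements of $\cB^{p,q,0}$ by smooth functions and forms. Conceptually nothing beyond Lemmata~\ref{lem:primitiva}, \ref{lem:1-chain} and \ref{lem:d-action} is needed, and the topological inputs (existence of a good cover refining any cover, connectedness of finite intersections, and $\check{H}^1\cong H^1_{\textrm{dR}}$) are classical and can be cited.
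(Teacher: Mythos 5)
Your proposal is correct and takes essentially the same approach as the paper: both reduce the anisotropic cohomology to \v{C}ech cohomology of the good cover via the local primitives $H^+_\alpha$ of Lemma~\ref{lem:primitiva}, the overlap constants $c_{\alpha,\beta}$ of Lemma~\ref{lem:1-chain}, and the rigidity statement of Lemma~\ref{lem:d-action}, with the same partition-of-unity telescoping identities. You organize the argument as constructing an explicit map $\Psi$ into $\check H^1$ and checking well-definedness, injectivity, and surjectivity separately, whereas the paper phrases it as an exactness criterion (``$h$ is exact iff $\bar C$ is a coboundary'') and then invokes the isomorphism with \v{C}ech cohomology; your treatment is slightly more explicit, especially in spelling out surjectivity by realizing each \v{C}ech cocycle by a smooth closed one-form, a step the paper leaves implicit.
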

\begin{proof}
    The first task is to understand when $h\in\cB^{p,q,1}$ is exact.
    Let $\bar c=(c_\alpha)\in \bC^{N}$, where $N=\sharp\{U_\alpha\}$, and define $H(\bar c)=\sum_\alpha (H^+_\alpha+c_\alpha)\psi_\alpha$. If $h$ is exact, then there exists $\theta\in\cB^{p+1,q-1,0}$ such that $d\theta=h$ but then\footnote{ Note that Lemma \ref{lem:primitiva} implies that $\psi_\alpha dH^+_\alpha=h\psi_\alpha$.}
    \[
        \psi_\alpha d(\theta -H^+_\alpha)=0.
    \]
    Then Lemma \ref{lem:d-action} implies that there exists $c_\alpha$ such that $\psi_\alpha (\theta -H^+_\alpha-c_\alpha)=0$, hence for such a collection of constants $\bar c=\{c_\alpha\}$ we have $\theta=H(\bar c)$. It follows $h$ is exact if and only if it is possible to choose $\bar c$ so that $dH(\bar c)=h$.

    To start with we have thus to compute
    \begin{equation}\label{eq:exact}
        dH(\bar c) =\sum_{\alpha} \psi_\alpha h+\sum_\alpha (H^+_\alpha+c_\alpha)d\psi_\alpha=h+\sum_{\alpha,\beta}(H^+_\alpha+c_\alpha)\psi_\beta d\psi_\alpha.
    \end{equation}
    Accordingly, if
    \begin{equation}\label{eq:constriant}
        (H^+_\alpha+c_\alpha -H^+_\beta-c_\beta)\psi_\beta d\psi_\alpha=0,
    \end{equation}
    then,
    \[
        \sum_{\alpha,\beta}(H^+_\alpha+c_\alpha)\psi_\beta d\psi_\alpha=\sum_{\alpha,\beta}(H^+_\beta+c_\beta)\psi_\beta d\psi_\alpha=\sum_{\beta}(H^+_\beta+c_\beta)\psi_\beta\, d\left(\sum_\alpha\psi_\alpha\right)=0,
    \]
    and, recalling equation \eqref{eq:exact}, $dH(\bar c)=h$. To conclude note that the problem is now reduced to the study of the  \v{C}ech cohomology $\check H^1(\cU,\bC)$ where $\cU=\{U_\alpha\}$. Indeed, a $1$-cochain $f$ is a $1$-cocycle iff for each 2-simplex $(U_{\alpha_0}, U_{\alpha_1}, U_{\alpha_2} )$ holds:\footnote{ Recall that $\{U_{\alpha_0}, \dots, U_{\alpha_q}\}$ is a $q$-simplex if $\cap_{i=0}^qU_{\alpha_i}\neq\emptyset$ while a $q$-cochain is a function from the $q$-simplex to $\bC$.}
    \begin{equation}\label{eq:cocycle}
        f(U_{\alpha_1},U_{\alpha_2})-f(U_{\alpha_0},U_{\alpha_2})+f(U_{\alpha_0}, U_{\alpha_1})=0
    \end{equation}
    while it is a coboundary if there exists a $0$-cochain $g$ such that for all $1$-simplex $(U_{\alpha_0},U_{\alpha_1})$ holds
    \begin{equation}\label{eq:coboundary}
        f(U_0,U_1)=g(U_0)-g(U_1).
    \end{equation}
    Accordingly, we can interpret the constants $\bar c=\{c_\alpha\}$ as $0$-cochain and the constants $\bar C=\{c_{\alpha,\beta}\}$, in Lemma \ref{lem:1-chain}, as a $1$-cochain. Then Lemma \ref{lem:1-chain} implies that $\bar C$ must be a $1$-cocycle. To see it, given any 2-simplex $\{U_{\alpha_0}, U_{\alpha_1}, U_{\alpha_2}\}$ consider any smooth function $\vf$ such that its support is strictly contained in $U_{\alpha_0}\cap U_{\alpha_1}\cap U_{\alpha_2}$, then, by Lemma \ref{lem:1-chain} and the definition of $\{\psi_\alpha\}$,
    \[
        \begin{split}
            0=&\int_M \vf \left[ H^+_{\alpha_1}-H^+_{\alpha_2}+c_{\alpha_1,\alpha_2}-H^+_{\alpha_0}+H^+_{\alpha_2}-c_{\alpha_0,\alpha_2}
            +H^+_{\alpha_0}-H^+_{\alpha_1}+c_{\alpha_0,\alpha_1}\right]\\
            =&\int_M \vf \left[ c_{\alpha_1,\alpha_2}-c_{\alpha_0,\alpha_2}+c_{\alpha_0,\alpha_1}\right]
        \end{split}
    \]
    which implies $c_{\alpha_1,\alpha_2}-c_{\alpha_0,\alpha_2}+c_{\alpha_0,\alpha_1}=0$ by the arbitrariness of $\vf$.

    On the other hand equation \eqref{eq:constriant} is satisfied iff $\bar C$ is a $1$-coboundary.  To see this, let $\{U_{\alpha_0}, U_{\alpha_1}\}$ be a
    1-simplex. We can assume w.l.o.g. that $\psi_{\alpha_0}d\psi_{\alpha_1}\neq 0$ otherwise $\psi_{\alpha_1}$ would be constant different from zero and one on $\supp(\psi_{\alpha_0})$. But then for each sufficiently small $\theta$ such that $\supp(\theta)\subset \supp(\psi_{\alpha_0})$ the set  $\{\tilde \psi_\alpha\}:=\{\psi_\alpha\}_{\alpha\not\in\{\alpha_0,\alpha_1\}}\cup\{\psi_{\alpha_1}-\theta, \psi_{\alpha_0}+\theta\}$ would still be a partition of unity subordinated to $\cU$ and one can choose $\theta$ such that $\tilde \psi_{\alpha_0}d\tilde \psi_{\alpha_1}\neq 0$.
    We can then find an open set $U\subset U_{\alpha_0}\cap U_{\alpha_1}$ such that $\psi_{\alpha_0}d\psi_{\alpha_1}\neq 0$ in $U$. Then, using equation \eqref{eq:constriant} multiplied by $\vf (\psi_{\alpha_0}d\psi_{\alpha_1})^{-1}$ and the statement of Lemma \ref{lem:1-chain} multiplies by $\vf(\psi_{\alpha_0}\psi_{\alpha_1})^{-1}$, for each $\vf$ supported in $U$ we have
    \[
        0=\int_M\vf \left[H^+_{\alpha_0}+c_{\alpha_0} -H^+_{\alpha_1}-c_{\alpha_1}-H^+_{\alpha_0}+H^+_{\alpha_1}-c_{\alpha_0,\alpha_1}\right]
        =\int_M\vf \left[c_{\alpha_0} -c_{\alpha_1}-c_{\alpha_0,\alpha_1}\right]
    \]
    which, by the arbitrariness of $\vf$ implies $c_{\alpha_0,\alpha_1}=c_{\alpha_0} -c_{\alpha_1}$.

    The above discussion implies that $h$ is exact if and only if $\bar C$ is a $1$-coboundary. This implies the $\cB^{p,q,1}$ cohomology is isomorphic to the \v{C}ech cohomology, which is isomorphic to the de Rham cohomology.
\end{proof}

\subsection{Conclusion and comparisons}\label{sec:5comp}\ \\
While the results for the simple case studied in Section~\ref{sec:piecewise} are fully satisfactory, the results in Section~\ref{sec:smoothexp}-\ref{sec:hyp} are still partial.
Indeed, we show that the preset approach yields rather sharp results for the operator associated to the measure of maximal entropy, but less information is obtained, e.g., for the operator associated to the SRB measure.
It is possible that considering the commutation of different operators with the transfer operator more information can be obtained, but this requires further work.

Also, in sections~\ref{sec:smoothexp} and \ref{sec:non-unifexp} we consider only one dimensional maps, yet the present approach seems amenable to extension to the higher dimensional setting.
In particular, the arguments of section~\ref{sec:non-unifexp} should allow to considerably improve \cite{CV13}, at least for small potentials.

In the case of two dimensional hyperbolic maps, presented in section \ref{sec:hyp}, our approach reproduces in a unified manned all the known results.
Theorems  \ref{thm:max_hyp} and \ref{thm:maximal_entropy_gap} are a refinement of \cite[Corollary 2.5]{Baladi20}, which contains slightly stronger results than \cite{Forni20a}.
In addition, for the application to toral parabolic flows, we can obtain the exact equivalent of \cite[Corollary 2.3]{Baladi20} which is sharper than the corresponding results in \cite{Forni20a}. Indeed, if $h_t$ is the unit speed flow along the stable manifold of an Anosov map $f$ then our results yield (see \cite{GL19} for details)
\[
    \left|\int_0^T g\circ h_t (x)dt-T\mu_{\operatorname{top}}(g)\right|\leq \Const \|g\|_\infty
\]
which implies that the ergodic average either grows linearly, or $g$ is a cocycle.
(See also \cite{Carrand20} for a very recent and short proof of a logarithmic bound in a more general setting.)

We have thus seen that the present approach both reproduces the results in  \cite{Baladi20}, and enlightens  the connection with the action in cohomology  (already present, in some form, in \cite{Forni20a,Forni20b}).

In conclusion, the present strategy unifies and refines the existing results in all the cases we have presented. In addition, it appears amenable to further generalisation. In particular, it seems possible to extend it to the higher dimensional case.

Another promising direction would be to apply it to Anosov flows where some hints of the relevance of some type of cohomology already exists (e.g. see \cite{Ts18}). Along the same lines, it is reasonable that our ideas can yield relevant results if applied to pseudo-Anosov and partially hyperbolic maps.

\appendix
\section{Proof of Theorem~\ref{thm:large-gap1}}\label{app:one}
Before proving Theorem \ref{thm:large-gap1} we need a few preliminary lemmata.

In this case it is convenient to define $\psi(g)(x)=\int_0^x g(y)dy$ and
\[
    \cL_+ g=\cL_2 g+\cL_1(D_{f}\cdot \psi(g)).
\]
Note that $\cL_+$ is a positive operator: if $g\geq 0$, then
\begin{equation}\label{eq:cL+positive}
    \cL_+g\geq \cL_1(D_{f} \cdot \psi(g))\geq 0
\end{equation}
(here we use the assumption that $D_{f}\geq 0$).
This facilitates the study of its spectrum. There is an obvious connection with the operator we are interested in:
\begin{equation}\label{eq:conenction}
    \cL_{\star}g = \cL_{+} g- (\cL_1 D_{f})\cdot \int_0^1(1-y)g(y)dy,
\end{equation}
that is $\cL_\star$ is a rank one perturbation of $\cL_+$.

Before proceeding further we need some information on $\cL_+$.
Recall that, as defined in the Theorem, $\mu_*=\frac {1}{f'(1)}$.

\begin{lem} \label{lem:spec_one}
    The spectral radius of $\cL_+$, acting on $L^1$, is $\mu_*$. Moreover, $\mu_*$ is an eigenvalue of $\cL_+'$ (the dual operator to $\cL_+$), acting on $L^\infty$ with eigenvector given by the constant function one.
\end{lem}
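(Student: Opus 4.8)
The plan is to compute the adjoint operator $\cL_+'$ on $L^\infty$ explicitly, evaluate it at the constant function $1$, and then combine the positivity of $\cL_+$ with the standard identity $\sigma(\cL_+)=\sigma(\cL_+')$ to pin down the spectral radius.

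First I would note that $\cL_+$ is a bounded operator on $L^1$ (this follows from $\|\cL_2 h\|_{L^1}\le\lambda^{-1}\|h\|_{L^1}$, $\|\cL_1 h\|_{L^1}\le\|h\|_{L^1}$, the bound $\|\psi(g)\|_{L^\infty}\le\|g\|_{L^1}$, and $D_{f}\in L^\infty$), so that the adjoint on $(L^1)^*=L^\infty$ is well defined. Using the duality relation $\int_0^1\varphi\,\cL_1 h=\int_0^1(\varphi\circ f)\,h$, the identity $\cL_2 g=\cL_1(g/f')$, and Fubini's theorem to exchange the order of integration in the term $\int_0^1(\varphi\circ f)(x)D_{f}(x)\big(\int_0^x g(y)\,dy\big)\,dx$, I would obtain, for all $\varphi\in L^\infty$,
\[
\cL_+'\varphi(y)=\frac{\varphi(f(y))}{f'(y)}+\int_y^1\varphi(f(x))\,D_{f}(x)\,dx .
\]
Taking $\varphi\equiv 1$: since $f'(p_i^+)=f'(p_i^-)$, the function $x\mapsto 1/f'(x)$ is continuous on $[0,1]$ and piecewise $\cC^1$, hence absolutely continuous with a.e. derivative $D_{f}$; therefore $\int_y^1 D_{f}(x)\,dx=\frac1{f'(1)}-\frac1{f'(y)}$, and the two terms telescope to give $\cL_+'1\equiv \frac1{f'(1)}=\mu_*$. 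This proves the second assertion of the Lemma.

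For the spectral radius, the upper bound uses the positivity of $\cL_+$ established in \eqref{eq:cL+positive}: for any $g\in L^1$ one has $|\cL_+ g|\le\cL_+|g|$ pointwise, so $\|\cL_+ g\|_{L^1}=\int_0^1|\cL_+ g|\le\int_0^1\cL_+|g|=\int_0^1(\cL_+'1)\,|g|=\mu_*\,\|g\|_{L^1}$; hence the operator norm of $\cL_+$ on $L^1$, and a fortiori its spectral radius, is at most $\mu_*$. For the matching lower bound, $\mu_*$ is a genuine eigenvalue of $\cL_+'$ with the nonzero eigenvector $1$, so $\mu_*\in\sigma(\cL_+')$; since $\sigma(\cL_+')=\sigma(\cL_+)$ for the adjoint of a bounded operator on a Banach space, $\mu_*\in\sigma(\cL_+)$, and the spectral radius is at least $\mu_*$. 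Combining the two bounds yields that it equals $\mu_*$.

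The computations are routine; the only points needing a little care are the Fubini exchange (legitimate since $g\in L^1$ and $(\varphi\circ f)D_{f}\in L^\infty$) and the continuity of $1/f'$ across the partition points, which is precisely where the hypothesis $f'(p_i^+)=f'(p_i^-)$ enters and which makes the telescoping in the evaluation at $\varphi\equiv 1$ work. I do not foresee any genuine obstacle here; this Lemma is a bookkeeping step setting up the spectral analysis of $\cL_\star$ via the rank-one relation \eqref{eq:conenction}.
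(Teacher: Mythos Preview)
Your proof is correct and follows essentially the same route as the paper's. The only cosmetic difference is that you compute the full adjoint $\cL_+'\varphi$ via Fubini before specializing to $\varphi\equiv 1$, whereas the paper evaluates the pairing $\int_0^1 1\cdot\cL_+ g$ directly by recognizing the integrand as the total derivative $(\psi(g)/f')'$ and applying the fundamental theorem of calculus; both computations hinge on the same telescoping and on the continuity of $1/f'$ across the partition points, and the upper bound via positivity of $\cL_+$ is identical in both.
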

\begin{proof}
    Note that, for all $g\in L^1$,
    \begin{equation}\label{eq:average}
        \begin{split}
            \int_0^1 1\cdot \cL_+g(y)dy&=\int_0^1\left[\frac{g(y)}{f'(y)} +\left(\frac 1{f'(y)}\right)'\psi(g)(y)\right] dy=\int_{0}^{1} \left(\frac {\psi(g)}{f'}\right)'\!\!\!(y) \;dy\\
            &=\frac 1{f'(1)}\psi(g)(1)-\frac 1{f'(0)}\psi(g)(0)=\frac 1{f'(1)}\int_0^11 \cdot g(y) \ dy.
        \end{split}
    \end{equation}
    Hence, $\frac 1{f'(1)}$ is an eigenvalue of the dual of $\cL_+$ and hence it belongs to the spectrum of $\cL_+$.
    The lemma follows since
    \begin{equation}
        \label{eq:L1}
        \begin{split}
            \int_0^1\left|\cL_+g(y)\right|dy&\leq \int_0^1\cL_+|g|(y)dy=\frac 1{f'(1)}\int_0^1 |g(y)| \ dy.
        \end{split}
    \end{equation}
\end{proof}
\noindent Note that the above Lemma implies that the space $\bV_0=\{h\in L^1\;|\; \int_0^1 h=0\}$ is invariant under $\cL_+$. However, this does not give much information on the spectrum. To learn more it is convenient to study the operator $\cL_+$ acting on $W^{1,1}$.
\begin{lem}\label{lem:W11}
    For all $g\in W^{1,1}$ we have
    \[
        \begin{split}
            & \|\cL_+g\|_{L^1}\leq\mu_* \|g\|_{L^1}\\
            &  \|\cL_+g\|_{W^{1,1}}\leq  \mu_*^{2}\|g\|_{W^{1,1}}+(3{\|D_{f}\|}_\infty+{\|D_{f}'\|}_{L^1}+{\|D_{f}^2\|}_{L^1}+\mu_*)\|g\|_{L^1}.
        \end{split}
    \]
\end{lem}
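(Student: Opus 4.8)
The plan is to treat the two inequalities separately; the first will follow immediately from positivity and the second from a careful differentiation of $\cL_+$. For the $L^1$ estimate, observe that $D_{f}\geq 0$ forces $\abs{\psi(g)}\leq\psi(\abs g)$ pointwise, so that, using positivity of $\cL_1$ and $\cL_2$, $\abs{\cL_+ g}\leq\cL_2\abs g+\cL_1(D_{f}\psi(\abs g))=\cL_+\abs g$ pointwise; integrating and applying \eqref{eq:average} to the nonnegative function $\abs g$ gives $\|\cL_+ g\|_{L^1}\leq\int_0^1\cL_+\abs g=\mu_*\|g\|_{L^1}$. In other words the first displayed inequality is exactly \eqref{eq:L1}, so nothing new is needed there.

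For the $W^{1,1}$ estimate the $L^1$ part is already handled (it will contribute the lone $\mu_*$ to the coefficient of $\|g\|_{L^1}$, since $\mu_*<1$), so everything reduces to bounding $(\cL_+ g)'$ in $L^1$. I would first record the structural identity obtained by differentiating $\cL_+ g=\cL_2 g+\cL_1(D_{f}\psi(g))$: formula \eqref{eq:derivL} applied to $\cL_2 g$ gives $(\cL_2 g)'=\cL_3 g'+2\cL_2(gD_{f})$, while \eqref{eq:derivL} with $k=1$ applied to $\cL_1(D_{f}\psi(g))$, together with the product rule $(D_{f}\psi(g))'=D_{f}'\psi(g)+D_{f}g$ (valid because $D_{f}\in\cC^1$ and $\psi(g)\in W^{1,1}$ with $\psi(g)'=g$), gives $\bigl(\cL_1(D_{f}\psi(g))\bigr)'=\cL_2(D_{f}'\psi(g))+\cL_2(D_{f}g)+\cL_1(D_{f}^2\psi(g))$. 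Collecting terms,
\[
(\cL_+ g)'=\cL_3 g'+3\,\cL_2(gD_{f})+\cL_2(D_{f}'\psi(g))+\cL_1(D_{f}^2\psi(g)).
\]

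It then remains to estimate the four terms in $L^1$, using the identity $\cL_k h=\cL_1\bigl((f')^{-(k-1)}h\bigr)$ together with $\int_0^1\abs{\cL_1 u}\leq\int_0^1\abs u$, the bound $\|\psi(g)\|_{L^\infty}\leq\|g\|_{L^1}$, and one geometric input: since $D_{f}=(1/f')'\geq 0$ and $f'$ is continuous across the partition points (the hypothesis $f'(p_i^+)=f'(p_i^-)$), the function $1/f'$ is nondecreasing on $[0,1]$, hence $1/f'(x)\leq 1/f'(1)=\mu_*$ for all $x$. This monotonicity is exactly what upgrades the crude bound $\|\cL_3 g'\|_{L^1}\leq\|1/f'\|_{L^\infty}^2\|g'\|_{L^1}$ to the sharp $\mu_*^2\|g'\|_{L^1}$; the remaining three terms are bounded by $3\mu_*\|D_{f}\|_\infty\|g\|_{L^1}$, $\mu_*\|D_{f}'\|_{L^1}\|g\|_{L^1}$ and $\|D_{f}^2\|_{L^1}\|g\|_{L^1}$ respectively. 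Adding $\|\cL_+ g\|_{L^1}\leq\mu_*\|g\|_{L^1}$, discarding factors $\mu_*<1$ where convenient, and using $\mu_*^2\|g'\|_{L^1}\leq\mu_*^2\|g\|_{W^{1,1}}$, yields the claim. No real obstacle is expected; the only point demanding attention is the differentiation bookkeeping, so that the coefficient of $\cL_2(gD_{f})$ comes out correctly as $3$ and the factor $(f')^{-2}$ hidden in $\cL_3$ is matched with $\mu_*^2$ through the monotonicity of $1/f'$ rather than with a worse constant.
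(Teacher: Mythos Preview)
Your proof is correct and follows essentially the same route as the paper's: both derive the identity $(\cL_+ g)'=\cL_3 g'+3\cL_2(gD_{f})+\cL_2(D_{f}'\psi(g))+\cL_1(D_{f}^2\psi(g))$ via \eqref{eq:derivL}, then use that $D_{f}\geq 0$ forces $f'$ to be nonincreasing (hence $1/f'\leq \mu_*$) to get the sharp $\mu_*^2$ in front of $\|g'\|_{L^1}$, and finally bound the remaining terms using $\|\psi(g)\|_{L^\infty}\leq\|g\|_{L^1}$. Your write-up is in fact more explicit than the paper's --- you spell out why $1/f'$ is globally nondecreasing (continuity of $f'$ across the partition points) and where the factors $\mu_*<1$ are dropped --- but there is no substantive difference in strategy.
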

\begin{proof}
    The first inequality follows from \eqref{eq:L1}. Next, for each $g\in W^{1,1}$, using again \eqref{eq:derivL}, we have
    \begin{equation}\label{eq:oneder}
        (\cL_+g)'=\cL_3 g'+3\cL_2 D_{f}g+\cL_2D_{f}'\psi(g)+\cL_1 D_{f}^2\psi(g).
    \end{equation}
    Thus  (note that \(D_{f}\geq0\) implies that \(f''\leq 0\) and so \(f'(0)\geq f(x) \geq f'(1)\)), using also the estimates of Lemma~\ref{lem:Phi-est},
    \[
        \|(\cL_+g)'\|_{L^1}\leq  \mu_*^{2}{\|g'\|}_{L^1}+(3{\|D_{f}\|}_\infty+{\|D_{f}'\|}_{L^1}+{\|D_{f}^2\|}_{L^1}){\|g\|}_{L^1}.
    \]
    The Lemma follows using again \eqref{eq:L1}.
\end{proof}
\begin{lem}\label{lem:gap}
    $\mu_*$ is a simple eigenvalue of $\cL_+$.
    Moreover, \(h_{+}\), the eigenvector associated to \(\mu_{*}\), is strictly positive, i.e., \(h_{+}>0\).
    In addition, there exists $\mu_1<\mu_*$ such that $\sigma_{W^{1,1}}(\cL_+)\subset \{\mu_*\}\cup\{z\in\bC\;:\; |z|\leq \mu_1\}$.
\end{lem}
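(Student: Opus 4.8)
The plan is to run a Perron--Frobenius argument for the positive operator $\cL_+$ on $W^{1,1}$, exploiting that by Lemma~\ref{lem:spec_one} the dual eigenvector at the peripheral value $\mu_*$ is the strictly positive constant function $1$, together with the positivity \eqref{eq:cL+positive} (which uses $D_{f}\geq 0$). \textbf{Step 1: $\mu_*$ is an isolated eigenvalue and all peripheral eigenvalues are semisimple.} Iterating the two inequalities of Lemma~\ref{lem:W11} gives, for all $n\in\bN$ and $g\in W^{1,1}$, a bound of the form $\|\cL_+^n g\|_{W^{1,1}}\leq \mu_*^{2n}\|g\|_{W^{1,1}}+\Const\,\mu_*^{n-1}\|g\|_{L^1}$ (a geometric sum dominated by its last term, using $\mu_*<1$); combined with the compact embedding $W^{1,1}\hookrightarrow L^1$ and Hennion's Theorem \cite{Hennion93} (see also \cite[Appendix B]{DKL}), this shows the essential spectral radius of $\cL_+$ on $W^{1,1}$ is at most $\mu_*^2<\mu_*$ and the spectral radius at most $\mu_*$, and also that $\sup_n\|\mu_*^{-n}\cL_+^n\|_{W^{1,1}}<\infty$. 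On the other hand, by \eqref{eq:average} the constant function $1$, viewed in $(W^{1,1})'$, satisfies $\langle 1,\cL_+ g\rangle=\mu_*\langle 1,g\rangle$ for all $g\in W^{1,1}$, so $\mu_*\in\sigma_{W^{1,1}}(\cL_+)$; being outside the essential spectrum it is an isolated eigenvalue of finite algebraic multiplicity with spectral projection of range inside $W^{1,1}$, and by power boundedness and \cite[Lemma~VIII.8.1]{DS58} every eigenvalue of modulus $\mu_*$ is semisimple.

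\textbf{Step 2: every nonnegative eigenvector is strictly positive.} Let $w\in W^{1,1}\subset C^0([0,1])$ with $w\geq 0$, $w\not\equiv 0$ and $\cL_+ w=\mu_* w$. On the closed set $Z=\{w=0\}$ the two nonnegative functions $\cL_2 w$ and $\cL_1(D_{f}\psi(w))$ both vanish; from $\cL_2 w|_Z=0$ and the fact that $f$ is full branch one reads off $g_j(Z)\subseteq Z$ for every inverse branch $g_j$, i.e. $Z$ is invariant under the semigroup generated by the $g_j$. Since the cylinders of $f$ shrink to points, the orbit of any point of $Z$ under that semigroup is dense in $[0,1]$; as $Z$ is closed this forces $Z=[0,1]$, contradicting $w\not\equiv 0$. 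Hence $Z=\emptyset$ and $w>0$ on $[0,1]$.

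\textbf{Step 3: simplicity and absence of other peripheral eigenvalues.} If $\cL_+ h=\nu h$ with $|\nu|=\mu_*$, positivity gives $\cL_+|h|\geq |\cL_+ h|=\mu_*|h|$, and pairing with the functional $1$ and using $\int\cL_+|h|=\mu_*\int|h|$ forces $\cL_+|h|=\mu_*|h|$, so $|h|>0$ by Step 2. In particular every real $\mu_*$-eigenvector has constant sign (consider $u+|u|$), so the $\mu_*$-eigenspace contains a strictly positive $h_+$; given a second positive $\mu_*$-eigenvector $v$, the eigenvector $v-(\min_{[0,1]}v/h_+)\,h_+$ is nonnegative and vanishes somewhere, hence is $0$ by Step 2, so the $\mu_*$-eigenspace is one-dimensional and, by semisimplicity, $\mu_*$ is simple. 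Finally, if $|\nu|=\mu_*$ then $|h|=c\,h_+$ so $h=c\,h_+e^{i\theta}$ with $\theta$ continuous; writing out $\cL_+ h=\nu h$, taking moduli, and using $|\cL_2(h_+e^{i\theta})|\leq\cL_2 h_+$ and $|\psi(h_+e^{i\theta})|\leq\psi(h_+)$ together with $\cL_2 h_++\cL_1(D_{f}\psi(h_+))=\mu_* h_+$, all these inequalities become equalities; the equality case for $\psi$ (here $h_+>0$ is used) forces $\theta$ to be constant, whence $h\propto h_+$ and $\nu=\mu_*$.

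\textbf{Step 4: conclusion, and the main obstacle.} The spectrum of $\cL_+$ on $W^{1,1}$ is the union of $\{\mu_*\}$, finitely many eigenvalues in $\{\mu_*^2<|z|<\mu_*\}$ (possibly none), and a set contained in $\{|z|\leq\mu_*^2\}$; setting $\mu_1:=\sup\{|z|:z\in\sigma_{W^{1,1}}(\cL_+)\setminus\{\mu_*\}\}$, this supremum is attained and $\mu_1<\mu_*$, which together with Steps 2--3 is exactly the assertion of the Lemma. The only genuinely delicate point is Step 2, the ``irreducibility'' of $\cL_+$: it is handled by noting that the zero set of a nonnegative eigenvector is backward invariant --- a property already carried by the $\cL_2$ part of $\cL_+$ --- and that for a full branch map the only nonempty closed backward invariant set is the whole interval, so no quantitative lower bound on the distortion term is needed.
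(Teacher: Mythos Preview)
Your proof is correct and follows the same Perron--Frobenius strategy as the paper: Hennion's theorem plus power boundedness for the essential spectral radius and semisimplicity, then positivity arguments to pin down the peripheral spectrum. The one place where you genuinely diverge is Step~2, the strict positivity of a nonnegative eigenvector. The paper argues via the $\cL_1(D_f\psi(\cdot))$ term: if $h_+(\bar x)=0$, then at the \emph{maximal} preimage $y$ one reads off $D_f(y)\int_0^y h_+=0$, whence $h_+\equiv 0$ on $[0,y]$, and iterates. You instead observe that the vanishing of the $\cL_2$ term alone already forces $g_j(Z)\subseteq Z$ for the zero set $Z$, and then use that for a full branch uniformly expanding map the only closed nonempty backward-invariant set is $[0,1]$. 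Your route is a bit more robust: it uses no pointwise information on $D_f$ at all (the paper's step, read literally, needs $D_f(y)>0$ at the specific preimage to draw its conclusion), and it is the standard ``irreducibility via density of preimages'' mechanism. The rest of the argument --- Step~3 and the spectral-gap conclusion in Step~4 --- matches the paper's line (equality in the triangle inequalities for $\cL_2$ and for $\psi$ forcing the phase to be constant, hence $\nu=\mu_*$; then $\mu_1$ exists because outside the essential disk there are only finitely many eigenvalues, none on $|z|=\mu_*$ other than $\mu_*$).
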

\begin{proof}
    Lemma \ref{lem:W11} and \cite{Hennion93} (see also \cite[Appendix B]{DKL}) imply that the essential spectrum of $\cL_+$ is contained in a disk of size $\mu_*^2$. Thus, since $\mu_*$ is an eigenvalue of $\cL_+'$, $\mu_*$ must be an eigenvalue of $\cL_+$. Moreover, Lemma \ref{lem:W11} implies that $\{\mu_*^n\cL_+^n\}$ is uniformly bounded when acting on $W^{1,1}$, hence by \cite[Lemma VIII.8.1]{DS58} $\mu_*$ is a semi-simple eigenvalue (no Jordan blocks).

    Let $h_+\in W^{1,1}\setminus\{0\}\subset\cC^0$ be a corresponding eigenvector.
    Next, suppose that $\cL_+ g=\mu_*e^{i\theta}g$, then $\mu_*|g|\leq \cL_+|g|$, but then
    \[
        \int_0^1 \cL_+|g|-\mu_*|g|=0
    \]
    thus $\mu_*|g|= \cL_+|g|$. Accordingly, we can assume that $h_+\geq 0$. But then it must be $h_+>0$. Indeed, if there exists $\bar x$ such that $h_+(\bar x)=0$, then, calling $y$ the maximal element in $f^{-1}(\bar x)$,
    \[
        0=\mu_*h_+(\bar x)\geq \frac1{f'(y)}\left(\frac 1{f'}\right)'\!\!\!(y)\int_0^y h_+.
    \]
    Hence $h_+(x)=0$ for all $x\leq y$. Iterating the argument we have that $h_+(x)=0$ for all $x<1$, and, by continuity, $h_+\equiv 0$, contrary to the assumption. Accordingly, if there exists another $h$ such that $\cL_+ h=\mu_* h$, then it cannot be zero anywhere otherwise $|h|$, which is also an eigenvector, would be identically zero. But then there exists $\alpha\in\bR$ such that $\alpha h_+- |h|$ has a zero and hence, since $\alpha h_+- |h|$ also is an eigenvalue, $h=\alpha h_+$. Hence, $\mu_*$ is simple (the corresponding eigenspace has dimension one).

    Therefore, if $e^{i\theta}\mu_* g= \cL_+ g$ then there must exist $\vartheta\in \cC^0$ such that $g=e^{i\vartheta} h_+$. It follows
    \[
        \begin{split}
            0=&\mu_* h_+-e^{-i\theta-i\vartheta}\cL_+(e^{i\vartheta}h_+) =\cL_+ h_+-e^{-i\theta-i\vartheta}\cL_+(e^{i\vartheta}h_+)\\
            &=\cL_2\left[ 1-e^{-i\theta-i\vartheta\circ f+i\vartheta}\right] h_+ +\cL_1D_{f}\left[\psi(g)- e^{-i\theta-i\vartheta\circ f}\psi(e^{i\vartheta} h_+)\right].
        \end{split}
    \]
    Taking the real part and integrating yields
    \[
        0= \int_0^1\frac{1-\cos[\theta+\vartheta\circ f-\vartheta]}{f'}h_+ +\int_0^1\!\!\!\!dx D_{f}(x)\int_0^x \!\!\!\!dy \left[1- \cos[\theta+\vartheta\circ f(x)-\vartheta(y)\right]h_+(y).
    \]
    Since both terms are positive, the only possibility is $\theta+\vartheta\circ f(x)-\vartheta(y)=k\pi$. This implies that $\vartheta$ is constant and hence $g$ is proportional to $h_+$, hence it must be $\theta=0$. This proves that $\mu_*$ is the only peripheral eigenvalue and the spectral gap.
\end{proof}
We can now conclude our argument.
\begin{proof}[{\bf Proof of Theorem~\ref{thm:large-gap1}}]
    Equation \eqref{eq:conenction} and Lemma \ref{lem:W11} imply the bound on the essential spectral radius.

    Since  \(f'\) is continuous on \([0,1]\) we know that
    \[
        \int_0^1|(\tfrac 1{f'(y)})'|  dy
        =  \frac{1}{f'(1)} -   \frac {1}{f'(0)}=:\Delta.
    \]
    This means that the first statement of the theorem follows from Theorem~\ref{thm:simple-bound} where
    \[
        \tau=\lambda^{-1} +\int_0^1\left|\left(\tfrac 1{f'(y)}\right)'\right|  dy
        =
        \tfrac 2{f'(1)}-\tfrac 1{f'(0)}=\Delta+\mu_*.
    \]

    It remains to show the absence of eigenvalues in the sets $A_0,\ldots,A_4$.
    By equation \eqref{eq:conenction} we have that if $\cL_\star g=zg$ then
    \begin{equation}\label{eq:projected}
        (z-\cL_+)g=-\cL_1 D_{f}\int_0^1(1-y)g(y) \ dy.
    \end{equation}
    Recall  $\mu_1<\mu_*$  from Lemma~\ref{lem:gap}.
    Let  $|z|>\mu_1$ and suppose, for sake of contradiction, that the right hand side of the above equation is zero. This would mean, by Lemma \ref{lem:gap}, that $g=h_+$ and $z=\mu_*$. However this would then imply that the integral on the right was strictly positive and, since $D_{f}\not\equiv 0$, would contradict the assumption.
    This means that the  right hand side of the above equation cannot be zero.
    Moreover, $z\neq \mu_*$ since, if it were,
    $
        \int_0^1(\mu_*-\cL_+)g=0
    $
    by Lemma \ref{lem:spec_one}, and this would imply that the right hand side of \eqref{eq:projected} is zero, contrary to what we have seen.

    It follows that, possibly after a normalization, for $|z|>\mu_1$ we can write $g=(z-\cL_+)^{-1}\cL_1 D_{f}$, and substituting in \eqref{eq:projected} we have
    \[
        (z-\cL_+)^{-1}\cL_1 D_{f} (x)=-(z-\cL_+)^{-1}\cL_1 D_{f}(x)\int_0^1(1-y) (z-\cL_+)^{-1}\cL_1 D_{f}(y)dy.
    \]
    Accordingly, if we define
    \begin{equation}
        \label{eq:defXi}
        \Xi(z):=1+\int_0^1(1-y) (z-\cL_+)^{-1}\cL_1 D_{f}(y)dy
    \end{equation}
    we have that $z$ is an eigenvalue of \(\cL_\star\) if and only if $\Xi(z)=0$.
    In the following we will repeatedly use the following facts: By Lemma \ref{lem:spec_one}, the spectral radius of $\cL_+$ is $\mu_*$, and, for each $\vf\in L^1$, $\int_0^1\cL_+\vf=\mu_*\int_0^1\vf$, while, by definition, $\int_0^1\cL_1\vf=\int_0^1\vf$. Also, by \eqref{eq:cL+positive}, $\cL_+$ is a positive operator and so is, obviously, $\cL_1$. Moreover we set \(\int_0^1 D_{f} = \Delta\).

    Since by Lemma \ref{lem:spec_one} we have $\|\cL_+\|_{L^1}\leq \mu_*$,  for $|z|\geq \mu_*$ we can do the trivial estimate
    \begin{equation}\label{eq:integral}
        \begin{split}
            &\left | \int_0^1(1-y) (z-\cL_+)^{-1}\cL_1 D_{f}(y)dy\right|\leq \sum_{n=0}^\infty\int_0^1(1-y) |z|^{-n-1}\cL_+^{n}\cL_1 D_{f}(y)dy\\
            &\leq \sum_{n=0}^\infty\int_0^1 |z|^{-n-1}\cL_+^{n}\cL_1 D_{f}(y)dy=\frac{\Delta}{|z|-\mu_*}.
        \end{split}
    \end{equation}
    The above implies that $\Xi(z)\neq 0$ for all $|z|>\tau$, which we know already. Hence, to gain further informations we have to analyse \eqref{eq:integral} more in depth.

    For $z >\mu_*$,
    \[
        \begin{split}
            \int_0^1(1-y) (z-\cL_+)^{-1}\cL_1 D_{f}(y)dy&= \sum_{n=0}^\infty \int_0^1(1-y) z^{-n-1}\cL_+^n\cL_1 D_{f}(y)dy> 0.
        \end{split}
    \]
    which implies $\Xi(z)>1$, hence non zero. Moreover, by Lemma \ref{lem:gap} we get the spectral representation $\cL_+ h=\mu_*h_+\int_0^1 h+Qh$ where for all $\mu\in(\mu_1,\mu_*)$ there exists $C_\mu$ such that $\|Q^n\|_{W^{1,1}}\leq C_\mu \mu^n$. Thus, for $z \in (\mu, \mu_*)$,
    \[
        \begin{split}
            \int_0^1(1-y) (z-\cL_+)^{-1}\cL_1 D_{f}(y)dy=& \int_0^1(1-y) (z-\mu_*)^{-1}h_+(y) dy \left(\int_0^1\cL_1 D_{f}\right)\\
            &+\sum_{n=0}^\infty \int_0^1(1-y) z^{-n-1}Q^n\cL_1 D_{f}(y)dy.
        \end{split}
    \]
    Thus, for some $C>0$,
    \[
        \left|\Xi(z)-1- (z-\mu_*)^{-1}\left[\frac{1}{f'(1)}-\frac{1}{f'(0)}\right]\int_0^1(1-y) h_+\right|\leq C_\mu C(|z|-\mu)^{-1}
    \]
    Hence, there exists $\mu_2\in (\mu_1,\mu_*)$ such that $\Xi(z)=0$ has no solution for $z>\mu_2$.\footnote{ With some further work one could estimate $\mu_2$, but we believe the above suffices to show how to proceed.} This establishes the fact that the set $A_0$ does not belong to the spectrum.

    To study non positive $z$, note that $\overline{\Xi(z)}=\Xi(\bar z)$  and that \(( z-\cL_+)^{-1} - (\bar z-\cL_+)^{-1} = (\bar z - z)( z-\cL_+)^{-1}(\bar z-\cL_+)^{-1}\). Hence, if $z=a+ib$, we have
    \begin{equation}\label{eq:complex0}
        \begin{split}
            \Im(\Xi(z))&=ib\int_0^1(1-y) (\bar z-\cL_+)^{-1}(z-\cL_+)^{-1}\cL_1 D_{f}(y)dy\\
            &= ib\int_0^1(1-y) ([a-\cL_+]^2+b^2)^{-1}\cL_1 D_{f}(y)dy\\
            \Re(\Xi(z))&=1+\int_0^1(1-y)(a-\cL_+) ([a-\cL_+]^2+b^2)^{-1}\cL_1 D_{f}(y)dy.
        \end{split}
    \end{equation}
    Note that if $a> \mu_*$, then, by Lemma \ref{lem:spec_one} and \eqref{eq:cL+positive}, $[a-\cL_+]^{-1}=\sum_{n=0}^\infty a^{-n-1}\cL_+^n$ is a positive operator. Hence, if $b\neq 0$, $\Xi(z)=0$ implies
    \begin{equation}\label{eq:uff}
        \begin{split}
            0&=1+\int_0^1(1-y) (\Id+[a-\cL_+]^{-2}b^2)^{-1}[a-\cL_+]^{-1}\cL_1 D_{f}(y)dy\\
            &=1+\int_0^1(1-y) [a-\cL_+]^{-1}\cL_1 D_{f}(y)dy\\
            &\phantom{=}
            -\int_0^1(1-y) (\Id+[a-\cL_+]^{-2}b^2)^{-1}[a-\cL_+]^{-3}b^2\cL_1 D_{f}(y)dy.
        \end{split}
    \end{equation}
    Next we want to estimate the two integrals on the right hand side of \eqref{eq:uff}. Let us start with the first integral, assuming $a>\mu_*$,\footnote{ Note that in this computation we are sacrificing optimality of the result to the simplicity of the formulae.}
    \[
        \begin{split}
            \int_0^1(1-y) [a-\cL_+]^{-1}\cL_1 D_{f}(y)dy&\geq a^{-1}\int_0^1(1-y) \cL_1 D_{f}(y)dy\\
            &=a^{-1}\sum_{i=1}^N\int_{p_i}^{p_{i+1}}(1-f(y))\left(\frac1{f'(y)}\right)'dy\\
            &=a^{-1}\left(1-\sum_{i=1}^N\frac1{f'(p_i)}\right)=:a^{-1}\Gamma.
        \end{split}
    \]
    To estimate the second integral note that, for  each $g\geq 0$ and $|b|< a-\mu_*$, we can write
    \begin{equation}\label{eq:cancellation2}
        \begin{split}
            (\Id+[a-\cL_+]^{-2}b^2)^{-1}g&=\sum_{n=0}^\infty (-1)^{n}[a-\cL_+]^{-2n}b^{2n}g\\
            &\leq \sum_{n=0}^\infty [a-\cL_+]^{-4n}b^{4n}g.
        \end{split}
    \end{equation}
    Hence
    \[
        \begin{split}
            &\int_0^1(1-y) (\Id+[a-\cL_+]^{-2}b^2)^{-1}[a-\cL_+]^{-3}b^2\cL_1 D_{f}(y)dy\leq\\
            &\leq \sum_{n=0}^\infty\int_0^1 [a-\cL_+]^{-4n-3}b^{4n+2}\cL_1 D_{f}(y)dy\\
            &=\sum_{n=0}^\infty [a-\mu_*]^{-4n-3}b^{4n+2}\Delta=\frac{(a-\mu_*)b^2\Delta}{(a-\mu_*)^4-b^4}.
        \end{split}
    \]
    The above implies that $\Xi(z)=0$ has no solution if $a>\mu_*$, $|b|< a-\mu_*$ and
    \[
        1+a^{-1}\Gamma>\frac{b^2(a-\mu_*)\Delta}{(a-\mu_*)^4-b^4}
    \]
    which is implied by $a>\mu_*$ and
    \begin{equation}\label{eq:complex1}
        b^2 <\frac{(a-\mu_*)\Delta}{2(1+a^{-1}\Gamma)}\left[\sqrt{1+4\frac{(1+a^{-1}\Gamma)^2(a-\mu_*)^2}{\Delta^2}}-1\right].
    \end{equation}
    This establishes that $A_1$ is disjoint from the spectrum (apart from $1$).

    If  $a<-\mu_*$,  we cannot easily use positivity arguments since
    \[
        \Re(\Xi(z))=1-\int_0^1(1-y)(|a|+\cL_+) ([|a|+\cL_+]^2+b^2)^{-1}\cL_1 D_{f}(y)dy.
    \]
    We are thus left with the cruder estimate,
    $|(|a|+\cL_+)^{-1}g|\leq \sum_{n=0}^\infty |a|^{-n-1}\cL_+^n|g|=(|a|-\cL_+)^{-1}|g|$. Hence, for $|b|\leq |a|-\mu_*$ and recalling the computation for $b=0$, we have

    \[
        \begin{split}
            &\int_0^1(1-y)(|a|+\cL_+) ([|a|+\cL_+]^2+b^2)^{-1}\cL_1 D_{f}(y)dy\leq \int_0^1(1-y)(|a|+\cL_+)^{-1} \cL_1 D_{f}(y)dy\\
            &+\sum_{n=1}^\infty\int_0^1b^{2n}[|a|-\cL_+]^{-2n-1}\cL_1 D_{f}(y)dy.
        \end{split}
    \]
    The above decomposition allows to still use some positivity argument for the first term after the inequality. Indeed,
    \[
        \begin{split}
            \int_0^1(1-y) (|a|+\cL_+)^{-1}\cL_1 D_{f}(y)dy&= \sum_{n=0}^\infty |a|^{-1-n}(-1)^{n}\int_0^1(1-y) \cL_+^{n}\cL_1 D_{f}(y)dy\\
            &\leq \sum_{n=0}^\infty |a|^{-1-2n}\int_0^1 \cL_+^{2n}\cL_1 D_{f}(y)dy\\
            &\leq \sum_{n=0}^\infty |a|^{-1-2n}\mu_*^{2n}\Delta=\frac{|a|\Delta}{a^2-\mu_*^2}.
        \end{split}
    \]
    Accordingly,
    \[
        \begin{split}
            \int_0^1(1-y)(|a|+\cL_+) ([|a|+\cL_+]^2+b^2)^{-1}\cL_1 D_{f}(y)dy\leq &\frac{|a|\Delta}{a^2-\mu_*^2}\\
            &+\frac{\Delta b^2}{(|a|-\mu_*)[(|a|-\mu_*)^2-b^2]}.
        \end{split}
    \]
    which implies that $\Xi(x)=0$ has no solutions if $a<-\mu_*$ and
    \begin{equation}\label{eq:negative_bound}
        b^2< \frac{(|a|-\mu_*)^2(a^2-\mu_*^2-\Delta|a|)}{a^2-\mu_*^2+\Delta|a|}.
    \end{equation}
    This establishes that $A_2$ is disjoint from the spectrum.

    On the other hand, for $b\neq 0$, the equations \eqref{eq:complex0} and $\Xi(z)=0$ imply also
    \begin{equation}\label{eq:complex}
        0=1-\int_0^1(1-y)\cL_+ ([a-\cL_+]^2+b^2)^{-1}\cL_1 D_{f}(y)dy.
    \end{equation}

    If $|z|^2>2|a|\mu_*$, then
    \[
        \begin{split}
            &\int_0^1(1-y)\cL_+ (|z|^2-2a\cL_++\cL_+^2)^{-1}\cL_1 D_{f}(y)dy= \\
            &=\int_0^1(1-y)\cL_+ (\Id+(|z|^2-2a\cL_+)^{-1}\cL_+^2)^{-1}(|z|^2-2a\cL_+)^{-1}\cL_1 D_{f}(y)dy.
        \end{split}
    \]
    Note that if $a\geq 0$, then $(|z|^2-2a\cL_+)^{-1}$ is a positive operator, hence
    \[
        \begin{split}
            &\int_0^1(1-y)\cL_+ ([a-\cL_+]^2+b^2)^{-1}\cL_1 D_{f}(y)dy=\\
            &=\sum_{n=0}^\infty(-1)^n\int_0^1(1-y)\cL_+ (|z|^2-2a\cL_+)^{-n-1}\cL_+^{2n}\cL_1 D_{f}(y)dy\\
            &\leq\sum_{n=0}^\infty\int_0^1\cL_+ (|z|^2-2a\cL_+)^{-2n-1}\cL_+^{4n}\cL_1 D_{f}(y)dy\\
            &=\sum_{n=0}^\infty \mu_* (|z|^2-2a\mu_*)^{-2n-1}\mu_*^{4n}\Delta=\frac{\mu_*\Delta(|z|^2-2a\mu_*)}{(|z|^2-2a\mu_*)^2-\mu_*^4}.
        \end{split}
    \]
    It follows that equation \eqref{eq:complex} has no solution if $a>0$, $|z-\mu_*|>\mu_*$ and
    \[
        \mu_*\Delta(|z-\mu_*|^2-\mu_*^2)\leq(|z-\mu_*|^2-\mu_*^2)^2-\mu_*^4,
    \]
    that is $a>0$ and
    \begin{equation}\label{eq:complex2.0}
        |z-\mu_*|^2>\mu_*^2+\mu_*\frac{\Delta+\sqrt{4\mu_*^2+\Delta^2}}2.
    \end{equation}
    This establishes that $A_3$ is disjoint from the spectrum.

    If $a<0$, again it is harder to take advantage of the positivity, hence, for simplicity, we content ourselves with a cruder estiamte
    \[
        \begin{split}
            &\int_0^1(1-y)\cL_+ (|z|^2-2a\cL_++\cL_+^2)^{-1}\cL_1 D_{f}(y)dy= \\
            &= \sum_{n=0}^\infty |z|^{-2-2n}\int_0^1(1-y)\cL_+ (2a\cL_+-\cL_+^2)^{n}\cL_1 D_{f}(y)dy\\
            &\leq \sum_{n=0}^\infty |z|^{-2-2n}\int_0^1\cL_+ (\cL_+^2+2|a|\cL_+)^{n}\cL_1 D_{f}(y)dy\\
            &=\sum_{n=0}^\infty |z|^{-2-2n}\mu_*(\mu_*^2+2|a|\mu_*)^{n}\Delta=\frac{\mu_*\Delta}{|z|^2-\mu_*^2-2|a|\mu_*}.
        \end{split}
    \]
    It follows that equation \eqref{eq:complex} has no solution if
    \begin{equation}\label{eq:complex2}
        b^2>\mu_*\Delta+\mu_*^2+2|a|\mu_*-a^2.
    \end{equation}
    Finally, this establishes that $A_4$ is disjoint from the spectrum.

    Collecting \eqref{eq:complex1}, \eqref{eq:negative_bound}, \eqref{eq:complex2.0} and \eqref{eq:complex2} completes the proof of Theorem~\ref{thm:large-gap1}.
\end{proof}
\begin{rem} The previous proof it is not optimal. We refrain form walking this road further as our purpose was only to show that the study of the spectrum can be reduced to the study of a concrete function \eqref{eq:defXi}, which plays the role of a determinant but may be easier to study than the dynamical determinant.
\end{rem}


\begin{thebibliography}{10}

    \bibitem{Adam17}
    A.~Adam.
    Generic non-trivial resonances for Anosov diffeomorphisms.
    \emph{Nonlinearity} \textbf{30} (2017) 1146--1164.

    \bibitem{AFP} Ambrosio, Luigi; Fusco, Nicola; Pallara, Diego Functions of bounded variation and free discontinuity problems. Oxford Mathematical Monographs. The Clarendon Press, Oxford University Press, New York, 2000.

    \bibitem{BGNX16}
    W.~Bahsoun, S.~Galatolo, I.~Nisoli \& X.~Niu.
    Rigorous approximation of diffusion coefficients for expanding maps.
    \emph{J. Stat. Phys.} 163 (2016), no. {\bf 6}, 1486--1503.

    \bibitem{BL20}
    W.~Bahsoun \& C.~Liverani, Anosov diffeomorphisms, anisotropic BV spaces and regularity of foliations. Preprint arXiv:2003.06811.

    \bibitem{BB05}
    M.~Baillif \& V.~Baladi.
    Kneading determinants and spectra of transfer operators in higher dimensions: the isotropic case.
    \emph{Ergodic Theory Dynam. Systems} 25, 1437--1470 (2005).

    \bibitem{Baladi00}
    V.~Baladi.
    {\em  Positive transfer operators and decay of correlations. Advanced Series in Nonlinear Dynamics}, {\bf 16}. World Scientific Publishing Co., Inc., River Edge, NJ, 2000.

    \bibitem{Baladi18}
    V.~Baladi.
    {\em Dynamical zeta functions and dynamical determinants for hyperbolic maps. A functional approach.} Ergebnisse der Mathematik und ihrer Grenzgebiete. 3. Folge. A Series of Modern Surveys in Mathematics [Results in Mathematics and Related Areas. 3rd Series. A Series of Modern Surveys in Mathematics], {\bf  68}. Springer, Cham, (2018).

    \bibitem{Baladi20}
    V.~Baladi.
    {\em There are no deviations for the ergodic averages of the Giulietti-Liverani horocycle flows on the two-torus}.
    \newblock To appear in \emph{Ergodic Theory Dynam. Systems}, (2020).

    \bibitem{BK90}
    V.~Baladi \& G.~Keller.
    Zeta Functions and Transfer Operators for Piecewise
    Monotone Transformations.
    \emph{Commun. Math. Phys.} 127, 459--477 (1990).

    \bibitem{BJS17}
    O.F.~Bandtlow, W.~Just \& J.~Slipantschuk.
    \newblock {Spectral structure of transfer operators for expanding circle maps}.
    \emph{Ann. Inst. H. Poincaré Anal. Non Lin\'eaire} {\bf 34} (2017), 31--43.

    \bibitem{BN19}
    O.F.~Bandtlow \& F.~Naud.
    Lower bounds for the Ruelle spectrum of analytic expanding circle maps.\emph{Ergodic Theory Dynam. Systems}, 39(2), 289--310 (2019).

    \bibitem{BKL02}
    M.~Blank, G.~Keller \& C.~Liverani.
    \newblock {Ruelle-Perron-Frobenius spectrum for Anosov maps}.
    \emph{Nonlinearity} {\bf 15} (2002), no. 6, 1905--1973.

    \bibitem{Bowen75}
    R.~Bowen.
    {Equilibrium states and the ergodic theory of Anosov diffeomorphisms.} Lecture Notes in Math., Vol. {\bf 470}. Springer-Verlag, Berlin-New York, (1975).

    \bibitem{BG97}
    A.~Boyarsky \& P.~Góra.
    \newblock Laws of chaos. Invariant measures and dynamical systems in one dimension.
    \newblock Probability and its Applications, Birkhäuser, Boston, MA, 1997.

    \bibitem{BSTV97}
    F.~Brini, S.~Siboni, G.~Turchetti \& S.~Vaienti.
    \newblock Decay of correlations for the automorphisms of the torus \(\bT^2\).
    \newblock \emph{Nonlinearity}, 10, 1257--1268 (1997).

    \bibitem{Butterley16}
    O.~Butterley.
    A Note on Operator Semigroups Associated to Chaotic Flows.
    \emph{Ergod. Th. \& Dynam. Sys.}, 36:1396--1408, 2016 (Corrigendum: 36:1409--1410, 2016).

    \bibitem{BE17}
    O.~Butterley \& P.~Eslami.
    Exponential Mixing for Skew Products with Discontinuities.
    \emph{Trans. Amer. Math. Soc.}, 369:783--803, 2017.

    \bibitem{BS20}
    O.~Butterley \& L.D.~Simonelli.
    \newblock Parabolic Flows Renormalized by Partially Hyperbolic Maps.
    \newblock \emph{Boll. Unione Mat. Ital.}, 13, 341--360 (2020).

    \bibitem{BW20}
    O.~Butterley \& K.~War.
    Open Sets of Exponentially Mixing Anosov Flows.
    \emph{J. Eur. Math. Soc.}, 22, 2253--2285 (2020).

    \bibitem{Buzzi10}
    J.~Buzzi.
    {Entropy theory on the interval}.
    \emph{\'Ecole de Th\'eorie Ergodique, 39--82, S\'emin. Congr.}, {\bf 20}, Soc. Math. France, Paris, (2010).

    \bibitem{BYOM84}
    S.~Byon~Chol, N.~Yoshitake, H.~Okamoto \& H.~Mori.
    Correlations and spectra of an intermittent chaos near its onset point.
    \emph{J. Stat. Phys.} 36 (1984) 367--400.

    \bibitem{Carrand20}
    J.~Carrand.
    Logarithmic bounds for ergodic averages of constant type rotation number flows on the torus: a short proof.
    \emph{Preprint} (2020) arXiv:2012.07481.

    \bibitem{CL20}
    R.~Castorrini \& C.~Liverani.
    Quantitative statistical properties of two-dimensional partially hyperbolic systems.
    \emph{Preprint} (2020) arXiv:2007.05602.

    \bibitem{CV13}
    A.~Castro \& P.~Varandas.
    {Equilibrium states for non-uniformly expanding maps: decay of correlations and strong stability}.
    \emph{Ann. Inst. H. Poincar\'e Anal. Non Lin\'eaire} {\bf 30} (2013), no. 2, 225--249.

    \bibitem{CE04}
    P.~Collet \& J.-P.~Eckmann.
    Liapunov Multipliers and Decay of Correlations in Dynamical Systems.
    \emph{J. Stat. Phys.}, 115 (2004), 217--254.

    \bibitem{CI91}
    P.~Collet \& S.~Isola.
    On the Essential Spectrum of the Transfer Operator for Expanding Markov Maps.
    \emph{Commun. Math. Phys.} 139, 551--557 (1991)

    \bibitem{CFS82}
    I.P.~Cornfeld, S.V.~Fomin \& Ya.G.~Sinai.
    {\em Ergodic theory}. Translated from the Russian by A. B. Sosinskii. Grundlehren der Mathematischen Wissenschaften [Fundamental Principles of Mathematical Sciences], {\bf 245}. Springer-Verlag, New York, (1982).

    \bibitem{DKL}  M. Demers, N. Kiamari \& C.Liverani.  {\em Transfer operators in Hyperbolic Dynamics An introduction}. 33 Colloquio Brasilero de Matematica. Editora do IMPA. pp.252 (2021). ISBN 978-65-89124-26-9.

    \bibitem{Dolgopyat98}
    D.~Dolgopyat.
    \newblock {On decay of correlations in Anosov flows}.
    \newblock {\em Ann. of Math. (2)}, {147}(2):{357--390}, {1998}.

    \bibitem{Driebe97}
    D.J.~Driebe.
    Jordan blocks in a one-dimensional Markov map.
    \emph{Computers \& Mathematics with Applications}, 34 (1997) 103--109.

    \bibitem{Driebe99}
    D.J.~Driebe.
    Fully Chaotic Maps and Broken Time Symmetry.
    Kluwer Academic Publishers, Dordrecht (1999).

    \bibitem{DS58}
    N.~Dunford \& J.T.~Schwartz.
    \emph{Linear Operators, Vol 1}, Interscience Publishers, New York, 1958.

    \bibitem{DFG15} S.~Dyatlov, F.~Faure \& C.~Guillarmou.
    Power spectrum of the geodesic flow on hyperbolic manifolds. \emph{Anal. PDE}, 8 (2015), no. 4, 923--1000.

    \bibitem{DZ16}
    S.~Dyatlov \& M.~Zworski.
    \newblock Dynamical zeta function for Anosov flows via microlocal analysis.
    \newblock \emph{Annales de l'ENS}, 49 (2016), 543--577.

    \bibitem{DZ17}
    S.~Dyatlov \& M.~Zworski.
    \newblock Ruelle zeta function at zero for surfaces.
    \newblock \emph{Invent. Math.}, 210 (2017), 211--229.

    \bibitem{Faure07}
    F.~Faure.
    \newblock Prequantum chaos: resonances of the prequantum cat map.
    \newblock \emph{J.\,Mod.\,Dyn.} 1 (2007) 255--285.

    \bibitem{FGL19}
    F.~Faure, S.~Gou\"ezel \& E.~Lanneau.
    \newblock Ruelle spectrum of linear pseudo-Anosov maps. (2018)
    \newblock \emph{J.\,l'\'Ecole polytechnique -- Math\'ematiques} 6 (2019) 811--877.

    \bibitem{FT13}
    F.~Faure \& M.~Tsujii.
    \newblock Band structure of the Ruelle spectrum of contact Anosov flows.
    \newblock \emph{C. P. Math. Acad. Sci. Paris} 351 (9-10) (2013) 385--391.

    \bibitem{FT15}
    F.~Faure \& M.~Tsujii.
    \newblock Prequantum transfer operator for symplectic Anosov diffeomorphism.
    \newblock \emph{Ast\'erisque} v.375 Soci\'et\'e Math\'ematique de France (2015).

    \bibitem{FT17}
    F.~Faure \& M.~Tsujii.
    \newblock The semiclassical zeta function for geodesic flows on negatively curved manifolds.
    \newblock \emph{Invent. Math.} 208 (2017) 851--998.

    \bibitem{Forni20a}
    G.~Forni.
    \newblock On the equidistribution of unstable curves for pseudo-Anosov diffeomorphisms of compact surfaces.
    \emph{Preprint} (2020) {arXiv:2007.03144}.

    \bibitem{Forni20b}
    G.~Forni.
    Ruelle resonances from cohomological equations.
    \emph{Preprint} (2020) {arXiv:2007.03116}.

    \bibitem{Franks70}
    \newblock J.~Franks.
    \newblock  Anosov diffeomorphisms.
    \newblock{\em Proc.\,of Symposia in Pure Math.}, (1970) 61--93.

    \bibitem{GN16}
    S.~Galatolo \& I.~Nisoli.
    Rigorous computation of invariant measures and fractal dimension for maps with contracting fibers: 2D Lorenz-like maps.
    \emph{Ergodic Theory Dynam. Systems} {\bf 36} (2016), no. 6, 1865--1891.

    \bibitem{GL06}
    S.~Gou\"ezel \& C.~Liverani.
    {Banach spaces adapted to Anosov systems}.
    \emph{Ergodic Theory Dynam. Systems} {\bf 26} (2006), no. 1, 189--217.

    \bibitem{GL08}
    S.~Gou\"ezel \& C.~Liverani.
    {Compact locally maximal hyperbolic sets for smooth maps: fine statistical properties}.
    \emph{J. Differential Geom.} {\bf 79} (2008), no. 3, 433--477.

    \bibitem{GLP13}
    P.~Giulietti, C.~Liverani \& M.~Pollicott.
    {Anosov flows and dynamical zeta functions}.
    \emph{Ann. of Math. (2)} {\bf 178} (2013), no. 2, 687--773.

    \bibitem{GL19}
    P.~Giulietti \& C.~Liverani.
    {Parabolic dynamics and anisotropic Banach spaces}.
    \emph{J. Eur. Math. Soc.} 21, no. 9, 2793--2858 (2019).

    \bibitem{GGHW20}
    Y.~Guedes Bonthonneau, C.~Guillarmou, J.~Hilgert, T.~Weich.
    Ruelle-Taylor resonances of Anosov actions.
    \emph{Preprint} arXiv:2007.14275.

    \bibitem{GF18}
    C.~Guillarmou \& F.~Faure.
    Horocyclic invariance of Ruelle resonant states for contact Anosov flows in dimension 3.
    \emph{Math. Res. Lett.} {\bf 25} (2018), no.  5, 1405--1427.

    \bibitem{Hennion93}
    H.~Hennion.
    {Sur un theorome spectral et son application aux noyaux lipchitziens.} \emph{Proc. Amer. Math. Soc.} {\bf 118} (1993), no. 2, 627--634.

    \bibitem{Ippei11}
    O.~Ippei.
    {Computer-assisted verification method for invariant densities and rates of decay of correlations}.
    \emph{SIAM J. Appl. Dynam. Syst.}, {\bf 10}, (2011), 788–816.

    \bibitem{KH95}
    H.~Katok \& B.~Hasselblatt.
    Introduction to the modern theory of dynamical systems. With a supplementary chapter by Katok and Leonardo Mendoza. \emph{ Encyclopedia of Mathematics and its Applications}, {\bf 54}. Cambridge University Press, Cambridge, 1995.

    \bibitem{Katz71} Y.~Katznelson.
    Ergodic automorphisms of \(\bT^n\) are Bernoulli shifts.
        {\em Israel J. Math.} {\bf 10} (1971), 186--195.

    \bibitem{KL99}
    G.~Keller \& C.~Liverani.
    Stability of the spectrum for transfer operators.
        {\em Ann. Scuola Norm. Sup. Pisa Cl. Sci.} (4) {\bf 28} (1999), no. 1, 141--152.

    \bibitem{KR04}
    G.~Keller \& H.H.~Rugh.
    {Eigenfunctions for smooth expanding circle maps.}
    \emph{Nonlinearity} {\bf 17} (2004), no. 5, 1723--1730.

    \bibitem{KW20}
    B.~Küster \& T.~Weich.
    \newblock Pollicott-Ruelle resonant states and Betti numbers.
    \newblock To appear in \emph{Comm. Math. Phys.}, (2020).

    \bibitem{LY73}
    A.~Lasota \& J.~Yorke.
    {On the existence of invariant measures for piecewise monotonic transformations}.
    \emph{Trans. Amer. Math. Soc.} {\bf 186} (1973), 481--488 (1974).

    \bibitem{Lind82}
    D.A.~Lind.
    Dynamical properties of quasihyperbolic toral automorphisms.
        {\em Ergodic Theory Dynam. Systems} {\bf 2} (1982), no. 1, 49--68.

    \bibitem{Liverani95}
    C.~Liverani.
    {Decay of correlations}.
    \emph{Ann. of Math. (2)} {\bf 142} (1995), no. 2, 239--301.

    \bibitem{LSV98}
    C.~Liverani, B.~Saussol \& S.~Vaienti.
    Conformal Measure and Decay of Correlations for Covering Weighted Systems.
    \emph{Ergodic Theory Dynam. Systems} {\bf 18}  (1998) 1399--1420.

    \bibitem{LSV99}
    C.~Liverani, B.~Saussol \& S.~Vaienti.
    {A probabilistic approach to intermittency}.
    \emph{Ergodic Theory Dynam. Systems} {\bf 19} (1999), no. 3, 671--685.

    \bibitem{Liverani01}
    C.~Liverani.
    Rigorous numerical investigation of the statistical properties of piecewise expanding maps. A feasibility study.
    \emph{Nonlinearity} {\bf 14} (2001), no. 3, 463--490.

    \bibitem{Milnor88}
    J.~Milnor, W.~Thurston,
    On iterated maps of the interval.
    \emph{Dynamical Systems (College Park, MD, 1986–87),
        Lecture Notes in Mathematics, Springer-Verlag, Berlin} {\bf 1342} (1988), 465–563.

    \bibitem{MS80}
    M.~Misiurewicz \& W.~Szlenk.
    Entropy of piecewise monotone mappings.
        {\em Studia Math.} {\bf 67} (1980), no. 1, 45--63.

    \bibitem{MSO81}
    H.~Mori, B.-C.~So \& T.~Ose.
    Time correlation functions of one-dimensional transformations.
        {Progress Th.Phys.} 66 (1981) 1266-1283.

    \bibitem{Newhouse70}
    \newblock S.~Newhouse.
    \newblock  On codimension-one Anosov diffeomorphisms.
    \newblock{\em Amer.\,J.\,of Math. } 92 (1970) 761--770.

    \bibitem{Naud12}
    F.~Naud.
    The Ruelle spectrum of generic transfer operators.
        {\em Discrete Contin. Dyn. Syst.}, {\bf 32}, {2012}, {7}, 2521--2531.

    \bibitem{Nussbaum70}
    R.D.~Nussbaum.
    {The radius of the essential spectrum}.
    \emph{Duke Math. J.} {\bf 37} (1970), 473--478.

    \bibitem{Petersen89}
    K.~Petersen.
    {\em Ergodic theory}. Corrected reprint of the 1983 original. Cambridge Studies in Advanced Mathematics, {\bf 2}. Cambridge University Press, Cambridge, (1989).

    \bibitem{PM80}
    Y.~Pomeau \& P.~Manneville.
    {Intermittent  Transition to Turbulence in  Dissipative  Dynamical  Systems}.
    \emph{Comm. Math. Phys.}  {\bf 74}, 189--197  (1980).

    \bibitem{Ruelle76}    D. Ruelle. Zeta-functions for expanding maps and Anosov flows. Invent. Math., {\bf 34}, 231--242, 1976.

    \bibitem{Ruelle89}
    D.~Ruelle.
    {The thermodynamic formalism for expanding maps}.
    \emph{Comm. Math. Phys.} {bf 125} (1989), no. 2, 239--262.

    \bibitem{Ruelle90}
    D.~Ruelle.
    An extension of the theory of Fredholm determinants.
    \emph{Pub. math. de l’I.H.É.S.},  72 (1990), 175--193.

    \bibitem{RS75}
    D.~Ruelle \& D.~Sullivan.
    {Currents, flows and diffeomorphisms.}
    \emph{Topology} {\bf 14} (1975), no. 4, 319--327.

    \bibitem{SBJ13}
    J.~Slipantschuk, O.F.~Bandtlow \& W.~Just.
    \newblock Analytic expanding circle maps with explicit spectra.
    \emph{Nonlinearity} {\bf 26} (2013), no. 12, 3231--3245.

    \bibitem{SBJ13b}
    J.~Slipantschuk, O.F.~Bandtlow \& W.~Just.
    On the relation between Lyapunov exponents and exponential decay of correlations.
    \emph{J.Phys. A: Math. Theor.} 46 (2013) 075101.

    \bibitem{SBJ17}
    J.~Slipantschuk, O.F.~Bandtlow \& W.~Just.
    {Complete spectral data for analytic Anosov maps of the torus}.
    \emph{Nonlinearity} {\bf 30} (2017), no. 7, 2667--2686.

    \bibitem{Ts18}
    M.~Tsujii.
    {On cohomological theory of dynamical zeta functions.}
    \emph{Preprint} (2018) arXiv:1805.11992.

    \bibitem{TZ20}
    M.~Tsujii \& Z.~Zhang.
    Smooth mixing Anosov flows in dimension three are exponential mixing.
    \emph{Preprint} (2020) arXiv:2006.04293.

\end{thebibliography}
\end{document}